\UseAllTwocells \xyoption{frame} \CompileMatrices
\newtheorem{prop}{Proposition}[section]
\newtheorem{lem}[prop]{Lemma}
\newtheorem{cor}[prop]{Corollary}
\newtheorem{thm}[prop]{Theorem}
\newtheorem{rem}[prop]{Remark}
\newtheorem*{thm*}{Main Theorem}
\newtheorem*{thm**}{Finite Generation Property}
\numberwithin{equation}{section}
\newcommand{\CnZn}{[\mathbb{C}^n/\mathbb{Z}_n]}
\newcommand{\KP}{K\mathbb{P}}
\DeclareMathOperator{\diag}{diag}
\title{Higher genus Gromov--Witten theory of $[\mathbb{C}^n/\mathbb{Z}_n]$ II:\\ { }crepant resolution correspondence \hfill}
\author[Genlik]{Deniz Genlik}
\address{Department of Mathematics\\University of Illinois at Urbana--Champaign\\1409 W. Green Street (MC-382)\\Urbana, IL 61801\\ USA}
\email{genlik@illinois.edu}
\author[Tseng]{Hsian-Hua Tseng}
\address{Department of Mathematics\\ Ohio State University\\ 100 Math Tower, 231 West 18th Ave. \\ Columbus,  OH 43210\\ USA}
\email{hhtseng@math.ohio-state.edu}
\begin{document}

\subjclass[2020]{14N35, 53D45}

\keywords{Gromov--Witten theory, crepant resolutions, cohomological field theory }

\maketitle

\begin{abstract}
We study the structure of the higher genus Gromov--Witten theory of the total space $\KP^{n-1}$ of the canonical bundle of the projective space $\mathbb{P}^{n-1}$. We prove the finite generation property for the Gromov--Witten potential of $\KP^{n-1}$ by working out the details of its cohomological field theory (CohFT). More precisely, we prove that the Gromov--Witten potential of $\KP^{n-1}$ lies in an explicit polynomial ring using the Givental--Teleman classification of the semisimple CohFTs. 

In \cite{gt}, we carried out a parallel study for $\CnZn$ and proved that the Gromov--Witten potential of $\CnZn$ lies in a similar polynomial ring. The main result of this paper is a crepant resolution correspondence for higher genus Gromov--Witten theories of $\KP^{n-1}$ and $\CnZn$, which is proved by establishing an isomorphism between the polynomial rings associated to $\KP^{n-1}$ and $\CnZn$. This paper generalizes the works of Lho--Pandharipande \cite{lho-p2} for the case of $[\mathbb{C}^3/\mathbb{Z}_3]$ and Lho \cite{lho} for the case $[\mathbb{C}^5/\mathbb{Z}_5]$ to arbitrary $n\geq 3$.
\end{abstract}

\tableofcontents

\section{Introduction}

In this paper, which is a sequel to \cite{gt}, we continue our study of Gromov--Witten theory of the orbifold $\CnZn$.
\subsection{Basic set-up}
Here we record some basic notations to be used in this paper.

Let $$\mathrm{T}=(\mathbb{C}^*)^n.$$
In what follows, we denote\footnote{We denote the localized $\mathrm{T}$-equivariant orbifold cohomology as $H_{\mathrm{T,Orb}}^*(-)$.} by 
$$H_\mathrm{T}^*(-),$$ the {\em localized} $\mathrm{T}$-equivariant cohomology of a $\mathrm{T}$-space.

We consider the action of the cyclic group $\mathbb{Z}_n$ on $\mathbb{C}^n$ defined via sending its generator $1\in\mathbb{Z}_n$ to $n\times n$ matrix
$$\text{diag}(e^{\frac{2\pi\sqrt{-1}}{n}},..., e^{\frac{2\pi\sqrt{-1}}{n}}).$$ The quotient $\CnZn$ is a smooth Deligne--Mumford stack. Let the torus $\mathrm{T}$ act on $\CnZn$ via the diagonal action of $\mathrm{T}$ on $\mathbb{C}^n$ with weights
\begin{equation*}
\lambda_0, \ldots , \lambda_{n-1},
\end{equation*} 
and 
\begin{equation*}
    \phi_0=1\in H^0_\mathrm{T}(\CnZn), \phi_k=1\in H^0_\mathrm{T}(B\mathbb{Z}_n), 1\leq k\leq n-1,
\end{equation*}
be an additive basis of $H^*_\mathrm{T,Orb}(\CnZn)$.

The Gromov--Witten potentials associated to $\phi_{c_{1}}, \ldots,\phi_{c_{m}}\in H^{\star}_{\mathrm{T,Orb}}\left(\left[\mathbb{C}^n/\mathbb{Z}_n\right]\right)$ are defined by
\begin{equation*}
\begin{aligned}
\mathcal{F}_{g, m}^{\left[\mathbb{C}^{n} / \mathbb{Z}_{n}\right]}\left(\phi_{c_{1}}, \ldots, \phi_{c_{m}}\right)
&=\sum_{d=0}^{\infty} \frac{\Theta^{d}}{d !}\left\langle \phi_{c_1},\ldots,\phi_{c_m},\phi_1,\ldots,\phi_1\right\rangle_{m+d}^{\CnZn}\\
&=\sum_{d=0}^{\infty} \frac{\Theta^{d}}{d !} \int_{\left[\overline{M}_{g, m+d}^{\mathrm{orb}}\left(\left[\mathbb{C}^{n} / \mathbb{Z}_{n}\right], 0\right)\right]^{\mathrm{vir}}} \prod_{k=1}^{m} \mathrm{ev}_{i}^{*}\left(\phi_{c_{k}}\right) \prod_{i=m+1}^{m+d} \mathrm{ev}_{i}^{*}\left(\phi_{1}\right).
\end{aligned}  
\end{equation*}

Let the torus $\mathrm{T}=(\mathbb{C}^*)^n$ act on $\mathbb{P}^{n-1}$ with weights
\begin{equation}
-\chi_0,\ldots,-\chi_{n-1}.
\end{equation}
This $\mathrm{T}$-action admits a canonical lift to the total space $\KP^{n-1}$ of the canonical bundle of $\mathbb{P}^{n-1}$.
Let 
\begin{equation*}
p_i=[0:\cdots:0:\underbrace{1}_{i^\text{th}}:0:\cdots:0]\in \mathbb{P}^{n-1}, \quad 0\leq i \leq n-1
\end{equation*}
be the $\mathrm{T}$-fixed points. The $\mathrm{T}$-weight\footnote{Recall that $\KP^{n-1}\simeq \mathcal{O}_{\mathbb{P}^{n-1}}(-n)$ as line bundles.} of $\KP^{n-1}\to \mathbb{P}^{n-1}$ at $p_i$ is $-n\chi_i$. 

Let
$$1=H^0, H, H^2, ..., H^{n-1}$$ be the additive basis of
\begin{equation*}
H_\mathrm{T}^*(K\mathbb{P}^{n-1})\simeq H_{\mathrm{T}}^*\left(\mathbb{P}^{n-1}\right) \simeq \mathbb{Q}(\chi_0, \ldots, \chi_{n-1})\left[H\right] \big/\left(\prod_{i=0}^{n-1}\left(H-\chi_i\right)\right),
\end{equation*}
 where $H=c_1^{\mathrm{T}}(\mathcal{O}_{\mathbb{P}^{n-1}}(1))$.

The Gromov--Witten potentials associated to $H^{c_1},\ldots,H^{c_m}\in H_\mathrm{T}^*(K\mathbb{P}^{n-1})$ are defined by
\begin{equation}
\begin{aligned}
\mathcal{F}_{g, m}^{\KP^{n-1}}\left(H^{c_1}, \ldots, H^{c_m}\right)
&=\sum_{d=0}^{\infty} {Q^d}\left\langle H^{c_1},\ldots,H^{c_m} \right\rangle_{g,m,d}^{\KP^{n-1}}\\
&=\sum_{d=0}^{\infty} {Q^d} \int_{\left[\overline{M}_{g, m}\left(\KP^{n-1}, d\right)\right]^{\mathrm{vir}}} \prod_{k=1}^m \mathrm{ev}_i^*\left(H^{c_k}\right).
\end{aligned}
\end{equation}

In this paper, we impose the following specializations of equivariant parameters: for $0\leq i \leq n-1$,
\begin{equation}\label{eqn:specialization}
   \lambda_i=\begin{cases} 
      e^{\frac{2\pi\sqrt{-1}i}{n}}e^{\frac{\pi\sqrt{-1}}{n}}& \text{if $n$ is even,}\\
      e^{\frac{2\pi\sqrt{-1}i}{n}} & \text{if $n$ is odd},
     \end{cases}
\end{equation}
and
\begin{equation}\label{eqn:specialization_KP}
\chi_i= e^{\frac{2\pi\sqrt{-1}i}{n}}.
\end{equation}

\subsection{Results}
The scheme-theoretic quotient $\mathbb{C}^n/\mathbb{Z}_n$ is a singular variety, with a unique singular point. The stack quotient $\CnZn$ is smooth, and the coarsening map 
\begin{equation}\label{eqn:mapCnZn}
\CnZn\to \mathbb{C}^n/\mathbb{Z}_n    
\end{equation}
is birational and crepant.

Blowing up the unique singular point of $\mathbb{C}^n/\mathbb{Z}_n$ yields $\KP^{n-1}$. The blow-up map
\begin{equation}\label{eqn:mapKP}
\KP^{n-1}\to \mathbb{C}^n/\mathbb{Z}_n    
\end{equation}
is birational and crepant. 

Both maps (\ref{eqn:mapCnZn}) and (\ref{eqn:mapKP}) are {\em crepant resolutions} of the singular variety $\mathbb{C}^n/\mathbb{Z}_n$. The crepant resolution conjecture \cite{bg}, \cite{cit}, \cite{cr} predicts that $\CnZn$ and $\KP^{n-1}$ have equivalent Gromov--Witten theories. In genus $0$, such an equivalence is a special case of the main result of \cite{cij} for toric orbifolds.

It is possible to lift the results of \cite{cij} to higher genus using Givental--Teleman classification of semisimple cohomological field theories (\cite{t}, see also \cite{ppz} and \cite{Picm}). A main difficulty for doing this is establishing analytic properties of higher genus Gromov--Witten potentials. For {\em compact} toric orbifolds, this is achieved in \cite{ci0} and a higher genus crepant resolution correspondence is derived for compact toric orbifolds in that paper. 

A similar analysis of higher genus Gromov--Witten theories of the {\em non-compact} targets $[\mathbb{C}^3/\mathbb{Z}_3]$ and $K\mathbb{P}^2$, which is the $n=3$ case of our setup, is carried out in \cite{ci}. As a consequence, \cite{ci} contains a formulation and proof of a higher genus crepant resolution correspondence for the case $n=3$. Other results about Gromov--Witten theory of $K\mathbb{P}^2$, such as modularity, are also obtained in \cite{ci}.

An alternative formulation of higher genus crepant resolution correspondence for the case $n=3$ is found and proven in \cite{lho-p}. The version in \cite{lho-p} in somewhat simpler and the analytic issues are easier to handle in the setup of \cite{lho-p}.

According to \cite[Section 10.7]{ci}, the version of crepant resolution correspondence in \cite{ci} implies the version in \cite{lho-p}.

In this paper, we establish a crepant resolution correspondence for all cases $n\geq 3$. Our approach is parallel to that of \cite{lho-p}. 

In \cite{gt}, we construct a ring $$\mathds{F}_{\CnZn}\coloneqq \mathbb{C}[(L^{\CnZn})^{\pm{1}}][\mathfrak{S}^{\CnZn}_n][\mathfrak{C}^{\CnZn}_n]$$ whose generators are explicit functions, and we show that the generating functions of Gromov--Witten theory of $\CnZn$ are contained in this ring,
\begin{equation*}
\mathcal{F}_{g, m}^{\left[\mathbb{C}^{n} / \mathbb{Z}_{n}\right]}\left(\phi_{c_{1}}, \ldots, \phi_{c_{m}}\right)\in\mathds{F}_{\CnZn},
\end{equation*}
see \cite[Corollary 3.4]{gt}. In other words, we prove a finite generation property for $\mathcal{F}_{g, m}^{\left[\mathbb{C}^{n} / \mathbb{Z}_{n}\right]}\left(\phi_{c_{1}}, \ldots, \phi_{c_{m}}\right)$.

In this paper, we obtain a parallel result for $\KP^{n-1}$. More precisely, we construct a similar ring $$\mathds{F}_{\KP^{n-1}} \coloneqq \mathbb{C}[(L^{\KP^{n-1}})^{\pm{1}}][\mathfrak{S}_n^{\KP^{n-1}}][\mathfrak{C}_n^{\KP^{n-1}}]$$ for $\KP^{n-1}$ and show the following: 
\begin{thm**}[=Corollary \ref{cor:VertexEdgeCont}]
The Gromov--Witten potential of $\KP^{n-1}$ satisfies
\begin{equation*}
 \mathcal{F}_{g, m}^{\KP^{n-1}}\left(H^{c_{1}}, \ldots, H^{c_{m}}\right)\in \mathds{F}_{\KP^{n-1}}.    
\end{equation*}
\end{thm**}

In Section \ref{subsubsec:Change_of_variables}, we construct a ring map $$\Upsilon: \mathds{F}_{\KP^{n-1}}\to  \mathds{F}_{\CnZn},$$
which depends on $\rho$, a chosen $n$-th root of $-1$. The main result of this paper is the following identification of Gromov--Witten generating functions via $\Upsilon$:

\begin{thm*}[=Theorem \ref{thm:Main_Theorem}]
For $g$ and $m$ in the stable range $2g-2+m>0$, the ring map $\Upsilon$ yields
\begin{equation*}
\mathcal{F}_{g, m}^{\CnZn}\left(\phi_{c_{1}}, \ldots, \phi_{c_{m}}\right)=(-1)^{1-g}\rho^{3g-3+m}\Upsilon \left(\mathcal{F}_{g, m}^{\KP^{n-1}}\left(H^{c_{1}}, \ldots, H^{c_{m}}\right)\right).
\end{equation*}    
\end{thm*}
As the $n=3$ case in \cite{lho-p2}, we interpret this result as a crepant resolution correspondence for $\CnZn$ and $\KP^{n-1}$. We remark that crepant resolution correspondence for the $n=5$ case was studied in \cite{lho}.

As the $n=3$ case treated in \cite{lho-p2}, we prove our crepant resolution correspondence result by analyzing the semisimple CohFT structures of Gromov--Witten theories of $\CnZn$ and $\KP^{n-1}$. For $\CnZn$, this is done in our previous paper \cite{gt}. A parallel study\footnote{For the $n=3$ case, the required results for $\KP^2$ are obtained by studying stable quotient theory \cite{lho-p}.} for $\KP^{n-1}$ is carried out in this paper. Using the Givental--Teleman classification for semisimple CohFTs, we reduce the correspondence to an identification of their $R$-matrices. A comparison of the flatness equations (which determine $R$-matrices) reduces the identification of $R$-matrices to an identity, see Lemma \ref{lem:Final_of_proof_strategy}. We prove the required identity by studying asymptotic expansions of oscillatory integrals of the Landau--Ginzburg mirror of $\KP^{n-1}$. 

\subsection{Outline}
The rest of this paper is organized as follows. Section \ref{sec:GWCnZn} concerns Gromov--Witten theory of $\CnZn$, which was studied in detail in \cite{gt}. The main new thing here is the quantum Riemann--Roch operator determined in Section \ref{sec:QRRCnZn}. Section \ref{sec:GWKP} is devoted to the study of Gromov--Witten theory of $\KP^{n-1}$. We analyze the $I$-function of $\KP^{n-1}$ in Section \ref{sec:I_func} and use it to calculate genus $0$ invariants in Section \ref{sec:genus0KP}. We calculate ingredients of Frobenius structures in Section \ref{sec:FrobKP}. Finally, we determine the quantum Riemann--Roch operator arising from {\em degree zero} Gromov--Witten invariants of $\KP^{n-1}$ in Section \ref{sec:QRRKP}. Section \ref{sec:ringKP} is devoted to the construction of the ring $\mathds{F}_{\KP^{n-1}}$ for the Gromov--Witten theory of $\KP^{n-1}$. 

In Section \ref{sec:CohFT_iden}, we develop the main results of this paper. Section \ref{sec:iden} is devoted to constructing and studying the map $\Upsilon$. We introduce change of variables and the map $\Upsilon$ in Section \ref{subsubsec:Change_of_variables}. In Section \ref{sec:PF_iden}, we compare Picard--Fuchs equations of $\CnZn$ and $\KP^{n-1}$ under the change of variables. In Section \ref{sec:flatness_iden}, we compare the modified flatness equations needed to study $R$-matrices. In Section \ref{sec:genus0_iden}, we compare genus $0$ invariants. In Section \ref{sec:R_iden}, we reduce the comparison of $R$-matrices to an identity in Lemma \ref{lem:Final_of_proof_strategy}. In Section \ref{sec:formulaGW}, we apply previous results to deduce an identification of generating functions. First, we explicitly write down formulae for generating functions for $\KP^{n-1}$ in Section \ref{sec:FgmKP}. Using these formulae, along with those for $\CnZn$ given in \cite[Proposition 3.3]{gt}, we deduce the Main Theorem in Section \ref{sec:CRC}.

Section \ref{sec:asymptotics_of_oscillatory_integrals} contains a proof of the required $R$-matrix identity stated in Lemma \ref{lem:Final_of_proof_strategy}. Appendix \ref{Appendix:Analysis_of_I_function} contains some analytic properties of the $I$-functions of $\KP^{n-1}$.

\subsection{Notation}
The Gromov--Witten theory of $\CnZn$ was studied in detail in our previous paper \cite{gt}. In this paper, we freely use the results obtained in \cite{gt}. We place ``$\CnZn$'' as a superscript or subscript whenever we refer to an object in \cite{gt}. In general, the notation exactly matches with \cite{gt} when ``$\CnZn$'' is removed. If there is a mismatch in the notation after removing ``$\CnZn$'', we either redefine the object or emphasize the difference.

We also use the following double-bracket notations for Gromov--Witten potentials, 
\begin{equation*}
\begin{aligned}
\left\langle\left\langle\phi_{c_{1}}, \ldots, \phi_{c_{m}}\right\rangle\right\rangle_{g, m}^{\left[\mathbb{C}^{n} / \mathbb{Z}_{n}\right]}
&\coloneq \mathcal{F}_{g, m}^{\left[\mathbb{C}^{n} / \mathbb{Z}_{n}\right]}\left(\phi_{c_{1}}, \ldots, \phi_{c_{m}}\right),\\
\left\langle\left\langle H^{c_{1}}, \ldots, H^{c_{m}}\right\rangle\right\rangle_{g, m}^{\KP^{n-1}}
&\coloneq \mathcal{F}_{g, m}^{\KP^{n-1}}\left(H^{c_{1}}, \ldots, H^{c_{m}}\right).
\end{aligned}
\end{equation*}

Additionally, the following involutions are used throughout the paper:
$$\mathrm{Inv}:\{0,...,n-1\}\rightarrow\{0,...,n-1\},$$ with $\mathrm{Inv}(0)=0$ and $\mathrm{Inv}(i)=n-i$ for $1\leq i \leq n-1$, and 
$$\mathrm{Ion}:\{0,...,n\}\rightarrow\{0,...,n\},$$ with $\mathrm{Ion}(0)=n$, and $\mathrm{Ion}(i)=i$ for $1\leq i \leq n-1$.

\subsection{Acknowledgment}
We thank R. Pandharipande for helpful comments. D. G. is supported in part by a Special Graduate Assignment fellowship by the OSU Department of Mathematics, and H.-H. T. is supported in part by a Simons Foundation collaboration grant.




\section{Orbifold Gromov--Witten theory of \texorpdfstring{$[\mathbb{C}^n/\mathbb{Z}_n]$}{CnZn}}\label{sec:GWCnZn}

 In this section, we first provide a brief account of certain results about the orbifold Gromov--Witten theory of $\CnZn$ obtained in \cite{gt}, and then compute the quantum Riemann--Roch operator for $\CnZn$.
 
 In the specialization (\ref{eqn:specialization}), the $I$-function for  $[\mathbb{C}^n/\mathbb{Z}_n]$ is given by
\begin{equation}\label{def:I-function}
I^{\CnZn}\left(x, z\right)=
        \sum_{k=0}^{\infty}\frac{x^k}{{z^k}k!}\prod_{\substack{b:0\leq b<\frac{k}{n} \\ \langle b \rangle=\langle\frac{k}{n}\rangle}}\left(1+(-1)^n(bz)^n\right)\phi_k,
\end{equation}
and we can calculate the orbifold Poincar\'e pairing to be
\begin{equation}\label{eqn:metric_CnZn}
g^{\CnZn}(\phi_i, \phi_j)=\frac{1}{n}\delta_{\text{Inv}(i),j}.
\end{equation}
Let $\mathsf{D}_{\CnZn}$ be the operator defined by $$\mathsf{D}_{\CnZn}\coloneqq x\frac{d}{dx}.$$
The $I$-function for  $[\mathbb{C}^n/\mathbb{Z}_n]$ is the solution of the following Picard--Fuchs equation\footnote{Throughout the paper, we omit the variables in most of the places when it is clear.}
\begin{equation}\label{eqn:PF_For_CnZn}
\frac{1}{x^n}\prod_{i=0}^{n-1}\left(\mathsf{D}_{\CnZn}-i\right)I^{\CnZn}-(-1)^n\left(\frac{1}{n}\right)^n\mathsf{D}_{\CnZn}^nI^{\CnZn}=\left(\frac{1}{z}\right)^nI^{\CnZn}.
\end{equation}
This equation can be rewritten as\footnote{Here $s_{n,k}$ is a Stirling number of the first kind. A short discussion on Stirling numbers and references for more detailed treatments can be found in \cite{gt}.}
\begin{equation}\label{eqn:PF_for_CnZn}
\mathsf{D}_{\CnZn}^{n}I^{\CnZn}+\frac{\mathsf{D}_{\CnZn} L^{\CnZn}}{L^{\CnZn}} \sum_{k=1}^{n-1} s_{n,k} \mathsf{D}_{\CnZn}^{k}I^{\CnZn}=\left(\frac{L^{\CnZn}}{z}\right)^nI^{\CnZn}
\end{equation}
where
\begin{equation}\label{eqn:Lseries_for_CnZn}
    L^{\CnZn}=x\left(1-(-1)^n\left(\frac{x}{n}\right)^n\right)^{-\frac{1}{n}}.
\end{equation}
In \cite{gt}, certain power series 
$A_i^{\CnZn}$, $C_i^{\CnZn}$, $K_i^{\CnZn}$ and $X_i^{\CnZn}$ in $\mathbb{C}[\![x]\!]$ are defined and used to study the Gromov--Witten theory of $\CnZn$. By \cite[Section 1.4]{gt}, the genus $0$, $3$-point Gromov--Witten invariants of $\CnZn$
\begin{equation}\label{eqn:3pt_inv_CnZn}
\left\langle\left\langle\phi_i,\phi_j,\phi_k\right\rangle\right\rangle_{0,3}^{\left[\mathbb{C}^n/ \mathbb{Z}_n\right]}=\frac{K^{\CnZn}_{i+j}}{K^{\CnZn}_iK^{\CnZn}_j}\frac{1}{n}\delta_{\text{Inv}(i+j \text{ mod }n), k}. \end{equation}

\subsection{Quantum Riemann--Roch operator for \texorpdfstring{$[\mathbb{C}^n/\mathbb{Z}_n]$}{CnZn}}\label{sec:QRRCnZn}
The stack $\CnZn$ may be viewed as the total space of a vector bundle $$\mathcal{V}\to B\mathbb{Z}_n$$ over the stack $B\mathbb{Z}_n$. The $\mathrm{T}$-equivariant Gromov--Witten theory of $\CnZn$ is the same as the Gromov--Witten theory of $B\mathbb{Z}_n$ twisted by the vector bundle $\mathcal{V}$ and the inverse $\mathrm{T}$-equivariant Euler class $e_{\mathrm{T}}^{-1}(-)$. The orbifold quantum Riemann--Roch theorem \cite{Tseng} shows that the $\mathrm{T}$-equivariant Gromov--Witten theory of $\CnZn$ is related to the Gromov--Witten theory of $B\mathbb{Z}_n$ by an operator 
$$\mathsf{Q}^{\CnZn}\in \text{End}(H^*_{\mathrm{T, Orb}}(\CnZn))[\![z]\!].$$ 
We need to calculate $\mathsf{Q}^{\CnZn}$ explicitly.

Recall that, for a $\mathbb{C}^*$ acting on a vector bundle $E$ by scaling the fibers, the inverse $\mathbb{C}^*$-equivariant Euler class of $E$ satisfies
\begin{equation*}
    e_{\mathbb{C}^*}^{-1}(E)=\exp\left(-\ln\lambda\text{ch}_0(E)+\sum_{k>0}\frac{(-1)^k(k-1)!}{\lambda^k}\text{ch}_k(E) \right),
\end{equation*}
here $\lambda$ is the equivariant parameter, see e.g. \cite[Section 4]{cg}. This yields the following values of the parameters 
\begin{equation}\label{eqn:QRR_parameters}
s_k(\lambda)=
\begin{cases}
-\ln\lambda & \text{ if } k=0\\
\frac{(-1)^k(k-1)!}{\lambda^k} & \text{ if } k>0.
\end{cases}    
\end{equation}
These parameters will be needed when applying (orbifold) quantum Riemann--Roch theorem.

By the definition of $\CnZn$, the vector bundle $\mathcal{V}\to B\mathbb{Z}_n$ is a direct sum of line bundles $$\mathcal{V}=\mathcal{L}^{\oplus n},$$ where $\mathcal{L}\to B\mathbb{Z}_n$ is defined by the following $\mathbb{Z}_n$-character $$\mathbb{Z}_n\to \mathbb{C}^*, \quad \mathbb{Z}_n\ni 1\mapsto e^{\frac{2\pi\sqrt{-1}}{n}}\in \mathbb{C}^*.$$

Recall that the Bernoulli polynomials $B_m(x)$ are defined by $$\frac{te^{tx}}{e^t-1}=\sum_{m\geq 0}\frac{B_m(x)t^m}{m!},$$
and $B_m:=B_m(0)$ are the Bernoulli numbers. We have 
\begin{equation}\label{eqn:bernoulli_reflect}
B_m(1-x)=(-1)^mB_m(x).
\end{equation}

It follows from the orbifold quantum Riemann--Roch theorem \cite{Tseng} that the restriction $\mathsf{Q}^{\CnZn}|_{H^*_{\mathrm{T}}(\text{pt})\cdot\phi_i}$ to $H^*_{\mathrm{T}}(\text{pt})\cdot\phi_i\subset H^*_{\mathrm{T,Orb}}(\CnZn)$ is the multiplication by
\begin{equation}\label{eqn:oqrr1}
\prod_{j=0}^{n-1}\exp\left(\sum_{k>0}\frac{(-1)^k}{k(k+1)}B_{k+1}\left(\frac{i}{n}\right)\frac{z^k}{\lambda_j^k} \right).
\end{equation}

In the specialization (\ref{eqn:specialization}), we calculate $\sum_{j=0}^{n-1}\frac{1}{\lambda_j^k}$ as follows. When $n$ is odd, we have 
\begin{equation}
\sum_{j=0}^{n-1}\frac{1}{\lambda_j^k}=
    \begin{cases}
    n & \text{ if } k\equiv 0 \text{ mod }n\\
    0 & \text{ otherwise}.
    \end{cases}
\end{equation}
Therefore, the expression (\ref{eqn:oqrr1}) becomes
\begin{equation}\label{eqn:oqrr_odd}
    \exp\left(n\sum_{l>0}(-1)^{nl}\frac{B_{nl+1}\left(\frac{i}{n}\right)}{nl+1}\frac{z^{nl}}{nl} \right).
\end{equation}

When $n$ is even, we have 
\begin{equation}
\sum_{j=0}^{n-1}\frac{1}{\lambda_j^k}=
    \begin{cases}
    (-1)^{l}n & \text{ if } k=nl\equiv 0 \text{ mod }n\\
    0 & \text{ otherwise}.
    \end{cases}
\end{equation}
Consequently, the expression (\ref{eqn:oqrr1}) reads as
\begin{equation}\label{eqn:oqrr_even}
    \exp\left(n\sum_{l>0}(-1)^{(n+1)l}\frac{B_{nl+1}\left(\frac{i}{n}\right)}{nl+1}\frac{z^{nl}}{nl} \right).
\end{equation}

If $n$ is odd, then $nl=(n-1)l+l\equiv l \text{ mod }2$. If $n$ is even, then $(n+1)l=nl+l\equiv l \text{ mod }2$. Thus (\ref{eqn:oqrr_odd}) and (\ref{eqn:oqrr_even}) can be written uniformly as
\begin{equation}\label{eqn:oqrr_final}
    \exp\left(n\sum_{l>0}(-1)^{l}\frac{B_{nl+1}\left(\frac{i}{n}\right)}{nl+1}\frac{z^{nl}}{nl} \right).
\end{equation}
Consequently, the restriction $\mathsf{Q}^{\CnZn}|_{H^*_{\mathrm{T}}(\text{pt})\cdot\phi_i}$ to $H^*_{\mathrm{T}}(\text{pt})\cdot\phi_i\subset H^*_{\mathrm{T,Orb}}(\CnZn)$ is the multiplication by (\ref{eqn:oqrr_final}).

\section{Gromov--Witten theory of \texorpdfstring{$K\mathbb{P}^{n-1}$}{KP{n-1}}}\label{sec:GWKP}

In this section, we study the Gromov--Witten theory of $\KP^{n-1}$.

In the specializations (\ref{eqn:specialization_KP}), we have $0=\prod_{i=0}^{n-1}\left(H-\chi_i\right)=H^n-1$ in $H_{\mathrm{T}}^*(\KP^{n-1})$. Hence, we see that 
\begin{equation}\label{eq:Hnisequalto1}
H^n=1.
\end{equation}

The twisted Poincar\'e pairing for $K\mathbb{P}^{n-1}$ is given by
\begin{equation}\label{eqn:metric_KP}
\begin{split}
g^{{K\mathbb{P}^{n-1}}}(H^i,H^j)
=\int_{\mathbb{P}^{n-1}}\frac{H^{i+j}}{c_1^T(\mathcal{O}(-n))}
=-\frac{1}{n}\int_{\mathbb{P}^{n-1}}{H^{i+j-1}}
=-\frac{1}{n}\delta_{\mathrm{Inv}(i),j},
\end{split}
\end{equation}
so the Poincar\'e dual of $H^i$ is $(H^i)^\vee=-nH^{\text{Inv}(i)}$.

\subsection{Basic properties of the \texorpdfstring{$I$}{I}-function}\label{sec:I_func}

\vspace{1em}
The (small) $I$-function of $K\mathbb{P}^{n-1}$, which has been known for some time \cite{g0}, may be obtained by applying the recipe of \cite{ccit} to the $\mathrm{T}$-equivariant $J$-function of $\mathbb{P}^{n-1}$: 
\begin{equation}\label{eqn:small_I}
I^{K\mathbb{P}^{n-1}}(q,z)
=\sum_{d\geq{0}}q^d(-1)^{nd}\frac{\prod_{k=0}^{nd-1}(nH+kz)}{\prod_{k=1}^d\prod_{i=0}^{n-1}(H-\chi_i+kz)}.
\end{equation}
Dividing the numerator and denominator of (\ref{eqn:small_I}) by $z^{nd}$, we see that
\begin{equation}\label{eqn:I_KP_interms_of_F_minus_1}
\begin{aligned}
I^{K\mathbb{P}^{n-1}}(q,z)
=&\sum_{d\geq{0}}q^d(-1)^{nd}\frac{\prod_{k=0}^{nd-1}(n\frac{H}{z}+k)}{\prod_{k=1}^d\prod_{i=0}^{n-1}(\frac{H}{z}+k-\frac{\chi_i}{z})}\\
=&\sum_{d\geq{0}}q^d(-1)^{nd}\frac{\prod_{k=0}^{nd-1}(n\frac{H}{z}+k)}{\prod_{k=1}^d((\frac{H}{z}+k)^n-\frac{H^n}{z^n})}=\mathcal{F}_{-1}\left(H/z,(-1)^nq\right)
\end{aligned}
\end{equation}
by specializations (\ref{eqn:specialization_KP}) and equation (\ref{eq:Hnisequalto1}), where $\mathcal{F}_{-1}(-,-)$ is the hypergeometric series\footnote{While refering to \cite{zz}, we used their notation for the hypergeometric series. The notation $\mathcal{F}_{-1}(-,-)$ should not be confused with the Gromov--Witten potential notation we used in our paper.} in \cite[Section 2]{zz}.

We expand $I^{K\mathbb{P}^{n-1}}(q,z)$ into a $1/z$ series as follows. For $k\neq 0$, if we rewrite 
\begin{equation*}
nH+kz=\left(\frac{nH}{kz}+1\right)(kz), \quad \text{and} \quad H-\chi_i+kz=\left(\left(\frac{H-\chi_i}{kz}\right)+1\right)(kz),
\end{equation*}
then we obtain
\begin{equation}\label{eqn:I_func_analysis_for_mirrorthm}
\begin{split}
I^{K\mathbb{P}^{n-1}}(q,z)
=&1+\sum_{d\geq{1}}q^d(-1)^{nd}\frac{\prod_{k=0}^{nd-1}(\left(\frac{nH}{kz}+1\right)(kz))}{\prod_{k=1}^d\prod_{i=0}^{n-1}(\left(\left(\frac{H-\chi_i}{kz}\right)+1\right)(kz))}\\
=&1+\sum_{d\geq 1}q^d(-1)^{nd}\frac{nH(nd-1)!z^{nd-1}\prod_{k=1}^{nd-1}\left(\frac{nH}{kz}+1\right)}{(d!)^nz^{nd}\prod_{k=1}^d\prod_{i=0}^{n-1}\left(\left(\frac{H-\chi_i}{kz}\right)+1\right)}\\
=&1+\sum_{d\geq 1}q^d(-1)^{nd}\frac{nH(nd-1)!}{(d!)^n}\frac{1}{z}\prod_{k=1}^{nd-1}\left(\frac{nH}{kz}+1\right)\prod_{k=1}^d\prod_{i=0}^{n-1}\left(\frac{1}{1+\left(\frac{H-\chi_i}{kz}\right)} \right)\\
=&1+\sum_{d\geq 1}q^d(-1)^{nd}\frac{nH(nd-1)!}{(d!)^n}\frac{1}{z}\prod_{k=1}^{nd-1}\left(\frac{nH}{kz}+1\right)\prod_{k=1}^d\prod_{i=0}^{n-1}\left(\sum_{l\geq 0}(-1)^l\left(\frac{H-\chi_i}{kz}\right)^l\right)\\
=&1+\sum_{d\geq 1}q^d(-1)^{nd}\frac{nH(nd-1)!}{(d!)^n}\frac{1}{z}\prod_{k=1}^{nd-1}\left(\frac{nH}{kz}+1\right)\prod_{k=1}^d\prod_{i=0}^{n-1}\left(1-\left(\frac{H-\chi_i}{kz}\right)+O(1/z^2)\right)\\
=&1+\sum_{d\geq 1}q^d(-1)^{nd}\frac{nH(nd-1)!}{(d!)^n}\frac{1}{z}\left(1+\left( \sum_{k=1}^{nd-1}\frac{1}{k}\right)\frac{nH}{z}-\sum_{k=1}^d\sum_{i=0}^{n-1}\frac{H-\chi_i}{kz}+O(1/z^2)\right).
\end{split}
\end{equation}
In the specialization (\ref{eqn:specialization_KP}), we have $\sum_{i=0}^{n-1}\chi_i=0$. Thus the above becomes
\begin{equation*}
\begin{split}
&1+\sum_{d\geq 1}q^d(-1)^{nd}\frac{nH(nd-1)!}{(d!)^n}\frac{1}{z}\left(1+\left(\sum_{k=1}^{nd-1}\frac{1}{k}\right)\frac{nH}{z}-\sum_{k=1}^d\frac{nH}{kz}+O(1/z^2)\right)\\
=&1+\sum_{d\geq 1}q^d(-1)^{nd}\frac{nH(nd-1)!}{(d!)^n}\frac{1}{z}\left(1+\frac{nH}{z}\left(\left(\sum_{k=1}^{nd-1}\frac{1}{k}\right)-\sum_{k=1}^d\frac{1}{k}\right)+O(1/z^2)\right)\\
=&1+\frac{1}{z}\underbrace{n\Phi_0(q)}_{I^{\KP^{n-1}}_1}H+\frac{1}{z^2}\underbrace{n^2\Phi_1(q)}_{I^{\KP^{n-1}}_2}H^2+O(1/z^3),
\end{split}
\end{equation*}
where
\begin{equation*}
\Phi_0(q)=\sum_{d\geq 1}q^d(-1)^{nd}\frac{(nd-1)!}{(d!)^n}, \quad \Phi_1(q)=\sum_{d\geq 1}q^d(-1)^{nd}\frac{(nd-1)!}{(d!)^n}\left(\left(\sum_{k=1}^{nd-1}\frac{1}{k}\right)-\sum_{k=1}^d\frac{1}{k}\right).    
\end{equation*}

Define the operator $$\mathsf{D}_{K\mathbb{P}^{n-1}}:\mathbb{C}[\![q]\!]\rightarrow \mathbb{C}[\![q]\!]$$ and its inverse $$\mathsf{D}_{K\mathbb{P}^{n-1}}^{-1}:q\mathbb{C}[\![q]\!]\rightarrow q\mathbb{C}[\![q]\!]$$ by
\begin{equation*}
\mathsf{D}_{K\mathbb{P}^{n-1}}f(q)=q\frac{df(q)}{dq}, \quad \mathsf{D}_{K\mathbb{P}^{n-1}}^{-1}f(q)=\int_{0}^q\frac{f(t)}{t}dt.
\end{equation*}
Set
\begin{equation*}
E^{\KP^{n-1}}(q,z)\coloneqq I^{K\mathbb{P}^{n-1}}(q,z)\big\vert_{H=1}=\mathcal{F}_{-1}\left(z^{-1},(-1)^nq\right).
\end{equation*}
Taking this change of variables into account, we define the operator $\mathsf{M}$ by
\begin{equation}\label{eqn:defnM}
\mathsf{M}F(q,z)=z\mathsf{D}_z\left(\frac{F(q,z)}{F(q,\infty)}\right)\quad\text{where}\quad \mathsf{D}_z=\frac{1}{z}+\mathsf{D}_{K\mathbb{P}^{n-1}}.
\end{equation}
Define
\begin{equation}\label{eqn:defnEC}
E_i^{\KP^{n-1}}(q,z)=\mathsf{M}^iE^{\KP^{n-1}}(q,z)\quad\text{and}\quad C^{\KP^{n-1}}_i(q)=E_i^{\KP^{n-1}}(q,\infty)\quad\text{for }i\geq{0}.
\end{equation}
(Note that $C^{\KP^{n-1}}_0=1$.) Then, by equation (\ref{eqn:I_KP_interms_of_F_minus_1}), Theorem 1  and Theorem 2 of \cite{zz} directly imply the following result.

\begin{lem}\label{lem:properties_of_C_functions}
For the series $C^{\KP^{n-1}}_i\in\mathbb{C}[\![q]\!]$, we have
\begin{enumerate}
    \item $C^{\KP^{n-1}}_{i+n}=C^{\KP^{n-1}}_i$ for $i\geq{1}$,
    \item $\prod_{i=1}^nC^{\KP^{n-1}}_i=\prod_{i=0}^nC^{\KP^{n-1}}_i=(L^{\KP^{n-1}})^n$,
    \item $C^{\KP^{n-1}}_i=C^{\KP^{n-1}}_{n+1-i}$ for $1\leq{i}\leq{n}$
\end{enumerate}
where $$L^{\KP^{n-1}}=(1-(-n)^nq)^{-1/n}\in 1+q\mathbb{Q}[\![q]\!].$$
\end{lem}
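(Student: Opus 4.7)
The plan is to reduce all three assertions directly to Theorem 1 and Theorem 2 of \cite{zz}, exploiting the identification
\begin{equation*}
E^{\KP^{n-1}}(q,z)=\mathcal{F}_{-1}\left(z^{-1},(-1)^n q\right)
\end{equation*}
already established in (\ref{eqn:I_KP_interms_of_F_minus_1}). The first step is to verify that the operator $\mathsf{M}=z\mathsf{D}_z$ defined in (\ref{eqn:defnM}), with $\mathsf{D}_z=z^{-1}+\mathsf{D}_{K\mathbb{P}^{n-1}}$, matches the iterative operator that \cite{zz} applies to $\mathcal{F}_{-1}$ in order to extract the structure constants whose properties Theorems 1 and 2 there describe. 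The point is simply that $\mathsf{D}_{K\mathbb{P}^{n-1}}=q\tfrac{d}{dq}$ is invariant under the substitution $q\mapsto(-1)^n q$, so that applying $\mathsf{M}$ to $E^{\KP^{n-1}}$ and then sending $z\to\infty$ computes, in the variable $Q=(-1)^n q$, exactly the Zagier--Zinger structure constants attached to $\mathcal{F}_{-1}(z^{-1},Q)$.

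Once this matching is in place, assertion (1) (periodicity $C^{\KP^{n-1}}_{i+n}=C^{\KP^{n-1}}_i$ for $i\geq 1$) and the symmetry relation $C^{\KP^{n-1}}_i=C^{\KP^{n-1}}_{n+1-i}$ for $1\leq i\leq n$ in (3) are immediate translations of the corresponding identities in Theorem 1 of \cite{zz}. For assertion (2), Theorem 2 of \cite{zz} gives the product of the structure constants as the $n$th power of an explicit rational series; under the substitution $Q=(-1)^n q$, their expression $(1-n^n Q)^{-1/n}$ becomes
\begin{equation*}
\left(1-(-1)^n n^n q\right)^{-1/n}=\left(1-(-n)^n q\right)^{-1/n}=L^{\KP^{n-1}},
\end{equation*}
yielding $\prod_{i=1}^n C^{\KP^{n-1}}_i=(L^{\KP^{n-1}})^n$. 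The equality $\prod_{i=0}^n C^{\KP^{n-1}}_i=\prod_{i=1}^n C^{\KP^{n-1}}_i$ follows from $C^{\KP^{n-1}}_0=1$, which is recorded directly after (\ref{eqn:defnEC}).

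The only real obstacle is bookkeeping: one must be careful to track the single sign change $q\mapsto(-1)^n q$ through both the definitions of the $C_i$-series and the $L$-series, and to check that the conventions used to define $E_i^{\KP^{n-1}}$ via repeated application of $\mathsf{M}$ agree with those adopted in \cite{zz}. No genuinely new input beyond \cite{zz} is required, since Theorems 1 and 2 there are precisely the abstract structural statements about $\mathcal{F}_{-1}$ that underlie the lemma.
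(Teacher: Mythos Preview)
Your proposal is correct and takes essentially the same approach as the paper: the paper's proof is the single sentence ``by equation (\ref{eqn:I_KP_interms_of_F_minus_1}), Theorem 1 and Theorem 2 of \cite{zz} directly imply the following result,'' and your write-up simply fills in the bookkeeping (matching $\mathsf{M}$ with the Zagier--Zinger operator, tracking the substitution $q\mapsto(-1)^n q$, and noting $C_0^{\KP^{n-1}}=1$) that this citation leaves implicit.
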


We now describe an equivalent way to define $C^{\KP^{n-1}}_i$. First note  
\begin{equation}\label{eqn:commutation_of_D_q_and_elogq}
e^{\frac{H}{z}\log q}\left(z\mathsf{D}_{K\mathbb{P}^{n-1}}+H\right)F(q,z)=z\mathsf{D}_{K\mathbb{P}^{n-1}}\left(e^{\frac{H}{z}\log q}F(q,z)\right),    
\end{equation}
and
\begin{equation*}
F(q,\infty)=\left(e^{\frac{H}{z}\log q}F(q,z)\right)\big\vert_{z=\infty}.
\end{equation*}
Now define the operator $\widetilde{\mathsf{M}}$ via
\begin{equation}\label{eqn:defntM}
\widetilde{\mathsf{M}}F(q,z)=z\mathsf{D}_{K\mathbb{P}^{n-1}}\left(\frac{F(q,z)}{F(q,\infty)}\right).
\end{equation}
Observe the following fact
\begin{equation*}
\begin{aligned}
e^{\frac{\log q}{z}}\mathsf{M}F(q,z)
&=ze^{\frac{\log q}{z}}\mathsf{D}_z\left(\frac{F(q,z)}{F(q,\infty)}\right)\\
&=z\mathsf{D}_{K\mathbb{P}^{n-1}}\left(e^{\frac{\log q}{z}}\frac{F(q,z)}{F(q,\infty)}\right)\\
&=z\mathsf{D}_{K\mathbb{P}^{n-1}}\left(\frac{e^{\frac{\log q}{z}}F(q,z)}{\left(e^{\frac{H}{z}\log q}F(q,z)\right)\big\vert_{z=\infty}}\right)=\widetilde{\mathsf{M}}\mathsf{F}(q,z)
\end{aligned}
\end{equation*}
where
\begin{equation*}
\mathsf{F}(q,z)\coloneqq{e^{\frac{\log q}{z}}F(q,z)}.
\end{equation*}
Hence, inductively we obtain 
\begin{equation*}
e^{\frac{\log q}{z}}\mathsf{M}^iF(q,z)=\widetilde{\mathsf{M}}^i\mathsf{F}(q,z).
\end{equation*}
Then, we see that
\begin{equation}\label{eqn:Ci_defined_by_Ei}
\begin{aligned}
C_i^{\KP^{n-1}}
&=E_i^{\KP^{n-1}}(q,\infty)\\
&=\left(e^{\frac{\log q}{z}}E_i^{\KP^{n-1}}(q,z)\right)\big\vert_{z=\infty}\\
&=\left(e^{\frac{\log q}{z}}\mathsf{M}^iE^{\KP^{n-1}}(q,z)\right)\big\vert_{\infty}=\left(\widetilde{\mathsf{M}}^i\mathsf{E}^{\KP^{n-1}}(q,z)\right)\big\vert_{\infty}
\end{aligned}
\end{equation}
where
\begin{equation*}
\mathsf{E}^{\KP^{n-1}}(q,z)\coloneqq e^{\frac{\log q}{z}}E^{\KP^{n-1}}(q,z).
\end{equation*}

The analysis (\ref{eqn:I_func_analysis_for_mirrorthm}) shows that the small $I$-function ${I}^{\KP^{n-1}}(q,z)$ is of the form
\begin{equation}\label{eqn:}
{I}^{\KP^{n-1}}(q,z)=\sum_{k=0}^\infty {I}^{\KP^{n-1}}_k(q)\left(\frac{H}{z}\right)^k=\sum_{i=0}^{n-1}\tilde{{I}}^{\KP^{n-1}}_i(q,z)H^i,
\end{equation}
and hence
\begin{equation*}
E^{\KP^{n-1}}(q,z)=\sum_{k=0}^\infty \frac{{I}^{\KP^{n-1}}_k(q)}{z^k}.
\end{equation*}
This also implies that $e^{H\log q/z}I^{\KP^{n-1}}(q,z)$ takes the same form:
\begin{equation}\label{eqn:formula_for_mathsfI}
\mathsf{I}^{\KP^{n-1}}(q,z)\coloneqq e^{H\log q/z}I^{\KP^{n-1}}(q,z)=\sum_{k=0}^\infty \mathsf{I}_k^{\KP^{n-1}}(q)\left(\frac{H}{z}\right)^k,
\end{equation}
and hence
\begin{equation}\label{eqn:E_function_as_a_sum_of_Iks}
\mathsf{E}^{\KP^{n-1}}(q,z)=\sum_{k=0}^\infty \frac{\mathsf{I}^{\KP^{n-1}}_k(q)}{z^k}.
\end{equation}
For $i\geq 1$, we can inductively show that
\begin{equation*}
\widetilde{\mathsf{M}}^i\mathsf{E}^{\KP^{n-1}}(q,z)=\sum_{k=i}^{\infty}\frac{1}{z^{k-i}}\mathsf{D}_{K\mathbb{P}^{n-1}}{{\mathfrak{L}}}_{i-1}\ldots{\mathfrak{L}}_0\mathsf{I}^{\KP^{n-1}}_k
\end{equation*}
where
$${\mathfrak{L}}_i=\frac{1}{\mathsf{D}_{K\mathbb{P}^{n-1}}{{\mathfrak{L}}}_{i-1}\ldots{\mathfrak{L}}_0\mathsf{I}^{\KP^{n-1}}_i}\mathsf{D}_{K\mathbb{P}^{n-1}}$$ 
for $i\geq 1$ and ${\mathfrak{L}}_0$ is the identity.
Then, for $i\geq 1$, equation (\ref{eqn:Ci_defined_by_Ei}) implies that we have
\begin{equation}\label{eqn:mathrfrak_L_operator_dfn_of_C_i}
C_i^{\KP^{n-1}}=\mathsf{D}_{K\mathbb{P}^{n-1}}{{\mathfrak{L}}}_{i-1}\ldots{\mathfrak{L}}_0\mathsf{I}^{\KP^{n-1}}_i\quad\text{with}\quad {\mathfrak{L}}_i=\frac{1}{C_i}\mathsf{D}_{K\mathbb{P}^{n-1}}.
\end{equation}

Now, define the following series in $\mathbb{C}[\![q]\!]$:
\begin{equation}\label{eqn:K_i_definition}
K^{\KP^{n-1}}_r=\prod_{i=0}^rC^{\KP^{n-1}}_i\quad\text{for } r\geq{0}.
\end{equation}
From Lemma \ref{lem:properties_of_C_functions}, the following result follows immediately.
\begin{lem}\label{lem:properties_of_K_series}
For the series $K^{\KP^{n-1}}_r\in\mathbb{C}[\![q]\!]$, we have
\begin{enumerate}
    \item $K^{\KP^{n-1}}_{n+r}=(L^{\KP^{n-1}})^nK^{\KP^{n-1}}_r$ for all $r\geq{0}$, in particular $K^{\KP^{n-1}}_n=(L^{\KP^{n-1}})^n$, \label{Kfunctions1}
    \item $K^{\KP^{n-1}}_rK^{\KP^{n-1}}_{n-r}=(L^{\KP^{n-1}})^n$ and $K^{\KP^{n-1}}_rK^{\KP^{n-1}}_{\mathrm{Inv}(r)}=(L^{\KP^{n-1}})^{r+\mathrm{Inv}(r)}$ for all $0\leq r \leq n-1$.
\end{enumerate}
\end{lem}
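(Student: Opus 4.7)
\medskip

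\noindent\textbf{Proof proposal for Lemma \ref{lem:properties_of_K_series}.} Both claims are bookkeeping identities for the telescoping products $K^{\KP^{n-1}}_r=\prod_{i=0}^r C^{\KP^{n-1}}_i$, and the plan is to derive each of them directly from the three properties of the $C^{\KP^{n-1}}_i$ recorded in Lemma \ref{lem:properties_of_C_functions}, together with the normalization $C^{\KP^{n-1}}_0=1$ noted right after \eqref{eqn:defnEC}. I will not need to invoke any further input from the $I$-function.

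For part (1), I would split the product defining $K^{\KP^{n-1}}_{n+r}$ as
\begin{equation*}
K^{\KP^{n-1}}_{n+r}=\Bigl(\prod_{i=0}^{n} C^{\KP^{n-1}}_i\Bigr)\Bigl(\prod_{i=n+1}^{n+r} C^{\KP^{n-1}}_i\Bigr).
\end{equation*}
The first factor equals $(L^{\KP^{n-1}})^n$ by Lemma \ref{lem:properties_of_C_functions}(2). For the second factor I would substitute $j=i-n$ and apply the periodicity $C^{\KP^{n-1}}_{n+j}=C^{\KP^{n-1}}_j$ from Lemma \ref{lem:properties_of_C_functions}(1) to rewrite it as $\prod_{j=1}^{r}C^{\KP^{n-1}}_j$, which coincides with $K^{\KP^{n-1}}_r$ because $C^{\KP^{n-1}}_0=1$. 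This yields $K^{\KP^{n-1}}_{n+r}=(L^{\KP^{n-1}})^n K^{\KP^{n-1}}_r$, and specializing to $r=0$ gives $K^{\KP^{n-1}}_n=(L^{\KP^{n-1}})^n$.

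For part (2), the key tool is the reflection symmetry $C^{\KP^{n-1}}_i=C^{\KP^{n-1}}_{n+1-i}$ from Lemma \ref{lem:properties_of_C_functions}(3). I would compute
\begin{equation*}
K^{\KP^{n-1}}_r K^{\KP^{n-1}}_{n-r}=\Bigl(\prod_{i=1}^{r}C^{\KP^{n-1}}_i\Bigr)\Bigl(\prod_{j=1}^{n-r}C^{\KP^{n-1}}_j\Bigr),
\end{equation*}
and then reindex the second factor via $k=n+1-j$, which sends $\{1,\ldots,n-r\}$ bijectively onto $\{r+1,\ldots,n\}$; the reflection identity turns it into $\prod_{k=r+1}^{n}C^{\KP^{n-1}}_k$. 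The two products then fuse into $\prod_{k=1}^{n}C^{\KP^{n-1}}_k=(L^{\KP^{n-1}})^n$ by Lemma \ref{lem:properties_of_C_functions}(2). For the second identity in (2), I would observe that $\mathrm{Inv}(r)=n-r$ for $1\leq r\leq n-1$, so $r+\mathrm{Inv}(r)=n$ and the identity reduces to what was just proved; the boundary case $r=0$ is trivial since $K^{\KP^{n-1}}_0=1=(L^{\KP^{n-1}})^0$.

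There is no real obstacle here: the statement is a formal consequence of Lemma \ref{lem:properties_of_C_functions}, and the only points to watch are the $C^{\KP^{n-1}}_0=1$ convention (so that the product starting at $i=0$ equals the product starting at $i=1$) and the index bookkeeping when applying the reflection $i\leftrightarrow n+1-i$.
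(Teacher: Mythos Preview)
Your proof is correct and matches the paper's approach: the paper simply states that the lemma follows immediately from Lemma~\ref{lem:properties_of_C_functions}, and your argument spells out exactly that derivation using the periodicity, the product formula, and the reflection symmetry of the $C_i^{\KP^{n-1}}$ together with $C_0^{\KP^{n-1}}=1$.
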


The Picard--Fuchs equation for $I^{\KP^{n-1}}$ is
\begin{equation}\label{eqn:PF1}
\prod_{i=0}^{n-1}\left(z\mathsf{D}_{K\mathbb{P}^{n-1}}+H-\chi_i\right)I^{\KP^{n-1}}(q,z)=(-1)^nq\prod_{i=0}^{n-1}\left(n\left(z\mathsf{D}_{K\mathbb{P}^{n-1}}+H\right)+iz\right)I^{\KP^{n-1}}(q,z).    
\end{equation}
Using the specialization (\ref{eqn:specialization_KP}), we may rewrite this as
\begin{equation}\label{eqn:PF2}
\left(\left(z\mathsf{D}_{K\mathbb{P}^{n-1}}+H\right)^n-1\right)I^{\KP^{n-1}}(q,z)=(-1)^nq\prod_{i=0}^{n-1}\left(n\left(z\mathsf{D}_{K\mathbb{P}^{n-1}}+H\right)+iz\right)I^{\KP^{n-1}}(q,z).        
\end{equation}
The $n=3$ case of (\ref{eqn:PF2}) is \cite[Equation (26)]{lho-p}.

By equation (\ref{eqn:commutation_of_D_q_and_elogq}), we have 
\begin{equation}\label{eqn:PF3}
\left(\left(z\mathsf{D}_{K\mathbb{P}^{n-1}}\right)^n-1\right)\mathsf{I}^{\KP^{n-1}}(q,z)=(-1)^nq\prod_{i=0}^{n-1}\left(n\left(z\mathsf{D}_{K\mathbb{P}^{n-1}}\right)+iz\right)\mathsf{I}^{\KP^{n-1}}(q,z).        
\end{equation}
PF equations read as
\begin{equation*}
\begin{aligned}
\left(z^n\mathsf{D}_{\KP^{n-1}}^n-1\right)\mathsf{I}^{\KP^{n-1}}(q,z)
&=(-1)^nq\prod_{i=0}^{n-1}\left(nz\mathsf{D}_{\KP^{n-1}}+iz\right)\mathsf{I}^{\KP^{n-1}}(q,z)\\
&=(-1)^nz^nq\prod_{i=0}^{n-1}\left(n\mathsf{D}_{\KP^{n-1}}+i\right)\mathsf{I}^{\KP^{n-1}}(q,z)\\
&=(-1)^nz^nq\sum_{k=0}^{n}(-1)^{n-k}{s}_{n,k}n^k\mathsf{D}_{\KP^{n-1}}^k\mathsf{I}^{\KP^{n-1}}(q,z)\\
&=(-1)^nz^nq\left(n^n\mathsf{D}_{\KP^{n-1}}^n+\sum_{k=0}^{n-1}(-1)^{n-k}{s}_{n,k}n^k\mathsf{D}_{\KP^{n-1}}^k\right)\mathsf{I}^{\KP^{n-1}}(q,z)
\end{aligned}
\end{equation*}
which is equivalent to
\begin{equation}\label{eqn:PF_Eq_for_with_logq_factor}
\left({(1-(-n)^nq)}\mathsf{D}_{\KP^{n-1}}^n-(-1)^nq\sum_{k=0}^{n-1}(-1)^{n-k}{s}_{n,k}n^k\mathsf{D}_{\KP^{n-1}}^k\right)\mathsf{I}^{\KP^{n-1}}(q,z)=z^{-n}\mathsf{I}^{\KP^{n-1}}(q,z).
\end{equation}
Observe that 
\begin{equation}\label{eqn:DLL}
\begin{aligned}
\mathsf{D}_{\KP^{n-1}}L^{\KP^{n-1}}
&=-\frac{1}{n}(1-(-n)^nq)^{-\frac{1}{n}-1}\left(-(-n)^nq\right)\\
&=\frac{1}{n}(1-(-n)^nq)^{-\frac{1}{n}-1}(-n)^nq\\
&=\frac{1}{n}L^{\KP^{n-1}}\frac{(-n)^nq}{(1-(-n)^nq)}.
\end{aligned}
\end{equation}
So, we obtain 
\begin{equation}\label{eqn:PF_with_stirling_for_vert_I}
\left(\mathsf{D}_{\KP^{n-1}}^n-\frac{\mathsf{D}_{\KP^{n-1}}L^{\KP^{n-1}}}{n^{n-1}L^{\KP^{n-1}}}\sum_{k=0}^{n-1}(-1)^{n-k}{s}_{n,k}n^k\mathsf{D}_{\KP^{n-1}}^k\right)\mathsf{I}^{\KP^{n-1}}(q,z)=\left(\frac{L^{\KP^{n-1}}}{z}\right)^n\mathsf{I}^{\KP^{n-1}}(q,z).
\end{equation}

Also, substituting equation (\ref{eqn:E_function_as_a_sum_of_Iks}) into Picard--Fuchs equation (\ref{eqn:PF_with_stirling_for_vert_I}) and analyzing the coefficients of $z^k$'s on the both sides, we obtain
\begin{equation}\label{eqn:PF_for_Iks_annihilation}
\left(\mathsf{D}_{\KP^{n-1}}^n-\frac{\mathsf{D}_{\KP^{n-1}}L^{\KP^{n-1}}}{n^{n-1}L^{\KP^{n-1}}}\sum_{k=0}^{n-1}(-1)^{n-k}{s}_{n,k}n^k\mathsf{D}_{\KP^{n-1}}^k\right)\mathsf{I}_k^{\KP^{n-1}}=0
\end{equation}
for $0\leq k \leq n-1$.

\subsection{Genus \texorpdfstring{$0$}{0} invariants}\label{sec:genus0KP}
Consider the (small) $J$-function of $\KP^{n-1}$:
\begin{equation}\label{eqn:small_J}
J^{\KP^{n-1}}(Q,z)=1+\sum_{j=0}^{n-1}\sum_{d\neq 0}Q^d\left\langle\frac{H^j}{z(z-\psi)} \right\rangle_{0,1,d}^{\KP^{n-1}}(H^j)^\vee.    
\end{equation}

The mirror theorem (as a consequence of the main result of \cite{ccit}) implies the equality
\begin{equation}\label{eqn:mirror_theorem_small}
e^{H\log Q/z}J^{\KP^{n-1}}(Q,z) = e^{H\log q/z}I^{\KP^{n-1}}(q,z),
\end{equation}
subject to the change of variables (mirror map)
\begin{equation}\label{eqn:mirror_map_small}
\log Q= \log q + n\Phi_0(q)=\mathsf{I}_1^{\KP^{n-1}}(q).    
\end{equation}
Also, $ze^{H\log Q/z}J^{\KP^{n-1}}(Q,z)$ lies on Givental's Lagrangian cone for $\KP^{n-1}$.

Extracting the $1/z^2$-term of $J^{\KP^{n-1}}(q,z)$, using (\ref{eqn:mirror_theorem_small}), we have 
\begin{equation}\label{eqn:2pt0_small}
\begin{split}
\sum_{j=0}^{n-1}\sum_{d\neq 0} Q^d\langle H^j\rangle_{0,1,d}^{\KP^{n-1}}(H^j)^\vee&= 
\left(I_2^{\KP^{n-1}}(q)-\frac{1}{2}I_1^{\KP^{n-1}}(q)^2\right)H^2\\
&=\left(\mathsf{I}_0^{\KP^{n-1}}(q)\frac{(\log Q)^2}{2}-\mathsf{I}_1^{\KP^{n-1}}(q)\log Q+\mathsf{I}_2^{\KP^{n-1}}(q)\right)H^2.  
\end{split}
\end{equation}

Consider $S^{\KP^{n-1}}(Q,z)$ defined by 
\begin{equation}\label{eqn:defnS}
g^{\KP^{n-1}}(a, S^{\KP^{n-1}}(Q,z)(b)):=g^{\KP^{n-1}}(a,b)+\sum_{k=0}^\infty \frac{1}{z^{1+k}}\sum_{d\neq 0}Q^d\langle a, b\psi^k \rangle_{0,2,d}^{\KP^{n-1}}.        
\end{equation}
Then, by e.g. the discussion following \cite[Equation (5)]{g3}, we have 
\begin{equation}
J^{\KP^{n-1}}(Q, z)=S^{\KP^{n-1}}(Q, z)^*(1).   
\end{equation}

Properties of Givental's cone imply that for $i \geq 1$,
\begin{equation}\label{eqn:BFi_small}
\begin{split}
e^{H\log Q/z}S^{\KP^{n-1}}(Q, z)^*(H^i)&=\frac{\left(z\mathsf{D}_{K\mathbb{P}^{n-1}}\right)\left(e^{H\log Q/z}S^{\KP^{n-1}}(Q, z)^*(H^{i-1})\right)}{\left(z\mathsf{D}_{K\mathbb{P}^{n-1}}\right)\left(e^{H\log Q/z}S^{\KP^{n-1}}(Q, z)^*(H^{i-1})\right)\big|_{H=1,z=\infty}}\\
&=H^i+\sum_{k\geq 1} \mathsf{C}_{i,k}(q)H^{i+k}z^{-k}.
\end{split}
\end{equation}

Here
\begin{equation}\label{eqn:C_ik_inductive_relation}
\mathsf{C}_{i,k}=\frac{\mathsf{D}_{K\mathbb{P}^{n-1}}\mathsf{C}_{i-1,k+1}}{\mathsf{D}_{K\mathbb{P}^{n-1}}\mathsf{C}_{i-1,1}}, \quad k\geq 1.    
\end{equation}

We find 
\begin{equation}\label{eqn:2pt1_small}
\sum_{j=0}^{n-1}\sum_{d\neq 0}Q^d\langle H, H^j\rangle_{0,2,d}^{\KP^{n-1}}(H^j)^\vee=\left(\frac{q\frac{d}{dq}\mathsf{I}^{\KP^{n-1}}_2}{q\frac{d}{dq}\mathsf{I}^{\KP^{n-1}}_1} -\log Q\right) H^2. \end{equation}
More generally, 
\begin{equation}\label{eqn:2pti_small}
\sum_{j=0}^{n-1}\sum_{d\neq 0}Q^d\langle H^i, H^j\rangle_{0,2,d}^{\KP^{n-1}}(H^j)^\vee=\left(\mathsf{C}_{i,1} -\log Q\right) H^{i+1}.    
\end{equation}
By the divisor equation, 
\begin{equation*}
\begin{split}
\sum_{j=0}^{n-1}\sum_{d\neq 0}Q^d\langle H, H^i, H^j\rangle_{0,3,d}^{\KP^{n-1}}(H^j)^\vee
=&\sum_{j=0}^{n-1}\sum_{d\neq 0}dQ^d\langle H^i, H^j\rangle_{0,2,d}^{\KP^{n-1}}(H^j)^\vee\\
=&Q\frac{d}{dQ}\sum_{j=0}^{n-1}\sum_{d\neq 0}Q^d\langle H^i, H^j\rangle_{0,2,d}^{\KP^{n-1}}(H^j)^\vee=\left(Q\frac{d}{dQ}\mathsf{C}_{i,1}-1\right)H^{i+1}.
\end{split}
\end{equation*}

By the definition of small quantum product $\bullet$, we have 
\begin{equation*}
H\bullet H^i= H\cdot H^i+\sum_{j=0}^{n-1}\sum_{d\neq 0}Q^d\langle H, H^i, H^j\rangle_{0,3,d}^{\KP^{n-1}}(H^j)^\vee=  \left(Q\frac{d}{dQ}\mathsf{C}_{i,1}\right)H^{i+1}. \end{equation*}

Thus, by associativity of $\bullet$, we have
\begin{equation*}
\begin{split}
\underbrace{H\bullet...\bullet H}_{i}&=\left(Q\frac{d}{dQ}\mathsf{C}_{1,1}\right)(H^2\bullet \underbrace{H)\bullet...\bullet H}_{i-2}\\
&=\left(Q\frac{d}{dQ}\mathsf{C}_{1,1}Q\frac{d}{dQ}\mathsf{C}_{2,1}\right)(H^3\bullet \underbrace{H)\bullet...\bullet H}_{i-3}\\
&=...=\left(\prod_{k=1}^{i-1}Q\frac{d}{dQ}\mathsf{C}_{k,1}\right)H^i,
\end{split}
\end{equation*}
and
\begin{equation*}
H^i\bullet H^j=\frac{\left(\prod_{k=1}^{i+j-1}Q\frac{d}{dQ}\mathsf{C}_{k,1}\right)}{\left(\prod_{k=1}^{i-1}Q\frac{d}{dQ}\mathsf{C}_{k,1}\right)\left(\prod_{k=1}^{j-1}Q\frac{d}{dQ}\mathsf{C}_{k,1}\right)} H^{i+j}.  \end{equation*}

By (\ref{eqn:mirror_map_small}), we have 
\begin{equation}\label{eqn:QddQ_to_qddq}
Q\frac{d}{dQ}=\frac{q\frac{d}{dq}}{q\frac{d}{dq}\mathsf{I}_1^{\KP^{n-1}}}=\frac{1}{\mathsf{D}_{K\mathbb{P}^{n-1}}\mathsf{C}_{0,1}}\mathsf{D}_{K\mathbb{P}^{n-1}},
\end{equation}
so we see that
\begin{equation*}
Q\frac{d}{dQ}\mathsf{C}_{k,1}=\frac{\mathsf{D}_{K\mathbb{P}^{n-1}}\mathsf{C}_{k,1}}{\mathsf{D}_{K\mathbb{P}^{n-1}}\mathsf{C}_{0,1}}, \end{equation*}
and
\begin{equation*}
H^i\bullet H^j=\frac{\left(\prod_{k=0}^{i+j-1}\mathsf{D}_{K\mathbb{P}^{n-1}}\mathsf{C}_{k,1}\right)}{\left(\prod_{k=0}^{i-1}\mathsf{D}_{K\mathbb{P}^{n-1}}\mathsf{C}_{k,1}\right)\left(\prod_{k=0}^{j-1}\mathsf{D}_{K\mathbb{P}^{n-1}}\mathsf{C}_{k,1}\right)} H^{i+j}. \end{equation*}

\begin{lem}
For all $i\geq{1}$, we have
\begin{equation*}
\mathsf{D}_{K\mathbb{P}^{n-1}}\mathsf{C}_{i-1,1}=C^{\KP^{n-1}}_i.
\end{equation*}
\end{lem}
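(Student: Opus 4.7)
The plan is to prove the stronger statement that, for all $i\geq 0$ and $k\geq 1$,
\begin{equation*}
\mathsf{C}_{i,k}=\mathfrak{L}_i\mathfrak{L}_{i-1}\cdots\mathfrak{L}_1\mathsf{I}^{\KP^{n-1}}_{i+k}, \qquad (\ast)
\end{equation*}
by induction on $i$ (with the convention that for $i=0$ the empty composition is the identity, so $\mathsf{C}_{0,k}=\mathsf{I}^{\KP^{n-1}}_k$). Once $(\ast)$ is established, the lemma falls out immediately: for $i\geq 1$,
\begin{equation*}
\mathsf{D}_{K\mathbb{P}^{n-1}}\mathsf{C}_{i-1,1}
=\mathsf{D}_{K\mathbb{P}^{n-1}}\mathfrak{L}_{i-1}\mathfrak{L}_{i-2}\cdots\mathfrak{L}_1\mathfrak{L}_0\mathsf{I}^{\KP^{n-1}}_{i}
=C^{\KP^{n-1}}_i,
\end{equation*}
which is precisely formula \eqref{eqn:mathrfrak_L_operator_dfn_of_C_i} (using that $\mathfrak{L}_0$ is the identity).

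The base case $i=0$ uses the mirror theorem. By \eqref{eqn:mirror_theorem_small} applied to $S^{\KP^{n-1}}(Q,z)^\ast(1)=J^{\KP^{n-1}}(Q,z)$, the $i=0$ instance of \eqref{eqn:BFi_small} reads $1+\sum_{k\geq 1}\mathsf{C}_{0,k}H^k z^{-k}=e^{H\log Q/z}J^{\KP^{n-1}}(Q,z)=\mathsf{I}^{\KP^{n-1}}(q,z)$, which by \eqref{eqn:formula_for_mathsfI} equals $\sum_{k\geq 0}\mathsf{I}^{\KP^{n-1}}_k(q)(H/z)^k$. Matching coefficients of $H^kz^{-k}$ yields $\mathsf{C}_{0,k}=\mathsf{I}^{\KP^{n-1}}_k$, which is $(\ast)$ at $i=0$.

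For the inductive step, suppose $(\ast)$ holds at level $i-1$. Then $\mathsf{C}_{i-1,1}=\mathfrak{L}_{i-1}\cdots\mathfrak{L}_1\mathfrak{L}_0\mathsf{I}^{\KP^{n-1}}_i$, so \eqref{eqn:mathrfrak_L_operator_dfn_of_C_i} gives $\mathsf{D}_{K\mathbb{P}^{n-1}}\mathsf{C}_{i-1,1}=C^{\KP^{n-1}}_i$. Feeding this together with the level-$(i-1)$ formula for $\mathsf{C}_{i-1,k+1}$ into the recursion \eqref{eqn:C_ik_inductive_relation} produces
\begin{equation*}
\mathsf{C}_{i,k}
=\frac{\mathsf{D}_{K\mathbb{P}^{n-1}}\mathfrak{L}_{i-1}\cdots\mathfrak{L}_1\mathsf{I}^{\KP^{n-1}}_{i+k}}{C^{\KP^{n-1}}_i}
=\mathfrak{L}_i\mathfrak{L}_{i-1}\cdots\mathfrak{L}_1\mathsf{I}^{\KP^{n-1}}_{i+k},
\end{equation*}
which is $(\ast)$ at level $i$, closing the induction.

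There is no substantive obstacle here: the argument is a clean unwinding of the two recursions at hand, namely the inductive rule \eqref{eqn:C_ik_inductive_relation} for the $\mathsf{C}_{i,k}$ and the recursive description \eqref{eqn:mathrfrak_L_operator_dfn_of_C_i} of $C^{\KP^{n-1}}_i$ in terms of the operators $\mathfrak{L}_j$. The only point requiring care is the base case, where one must use the mirror theorem in the form \eqref{eqn:mirror_theorem_small} to identify the generating series $e^{H\log Q/z}J^{\KP^{n-1}}(Q,z)$ with $\mathsf{I}^{\KP^{n-1}}(q,z)$; after that the identification $\mathsf{C}_{0,k}=\mathsf{I}^{\KP^{n-1}}_k$ and the induction proceed purely formally.
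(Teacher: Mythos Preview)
Your proof is correct and follows essentially the same approach as the paper: both arguments proceed by induction on $i$, establish the base case $\mathsf{C}_{0,k}=\mathsf{I}^{\KP^{n-1}}_k$ via the mirror theorem, and then climb up using the recursion \eqref{eqn:C_ik_inductive_relation} together with the description \eqref{eqn:mathrfrak_L_operator_dfn_of_C_i}. The only cosmetic difference is that you state the stronger claim $(\ast)$ for all $k$ upfront and run a clean induction, whereas the paper unwinds the recursion inside the inductive step using strong induction; the content is identical.
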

\begin{proof}
We do induction on $i$. For the base case $i=1$, observe that we have
\begin{equation*}
\mathsf{D}_{K\mathbb{P}^{n-1}}\mathsf{C}_{0,1}=\mathsf{D}_{K\mathbb{P}^{n-1}}\mathsf{I}_1^{\KP^{n-1}}=C_1^{\KP^{n-1}}.
\end{equation*}
by equation (\ref{eqn:mathrfrak_L_operator_dfn_of_C_i}).

For the inductive step we have
\begin{equation*}
\begin{split}
\mathsf{C}_{i-1,1}=\frac{\mathsf{D}_{K\mathbb{P}^{n-1}}\mathsf{C}_{i-2,2}}{\mathsf{D}_{K\mathbb{P}^{n-1}}\mathsf{C}_{i-2,1}}
&=\frac{1}{C^{\KP^{n-1}}_{i-1}}\mathsf{D}_{K\mathbb{P}^{n-1}}\mathsf{C}_{i-2,2}\\
&=\frac{1}{C^{\KP^{n-1}}_{i-1}}\mathsf{D}_{K\mathbb{P}^{n-1}}\left(\frac{\mathsf{D}_{K\mathbb{P}^{n-1}}\mathsf{C}_{i-3,3}}{\mathsf{D}_{K\mathbb{P}^{n-1}}\mathsf{C}_{i-3,1}}\right)\\
&=\frac{1}{C^{\KP^{n-1}}_{i-1}}\mathsf{D}_{K\mathbb{P}^{n-1}}\left(\frac{1}{C^{\KP^{n-1}}_{i-2}}\mathsf{D}_{K\mathbb{P}^{n-1}}\mathsf{C}_{i-3,3}\right)\\
&\,\,\,\vdots\\
&=\mathfrak{L}_{i-1}\cdots\mathfrak{L}_0\mathsf{C}_{0,i}=\mathfrak{L}_{i-1}\cdots\mathfrak{L}_0\mathsf{I}^{\KP^{n-1}}_i
\end{split}
\end{equation*}
So, we get $\mathsf{D}_{\KP^{n-1}}\mathsf{C}_{i-1,1}=\mathsf{D}_{\KP^{n-1}}\mathfrak{L}_{i-1}\cdots\mathfrak{L}_0\mathsf{I}^{\KP^{n-1}}_i$
which is $C^{\KP^{n-1}}_i$ by equation (\ref{eqn:mathrfrak_L_operator_dfn_of_C_i}). 
\end{proof}

It follows that we can rewrite equation (\ref{eqn:mirror_theorem_small}) and equation (\ref{eqn:BFi_small}) as
\begin{equation}\label{eqn:Birkhoff_Fact_with_L_i}
\begin{split}
e^{H\log Q/z}S^{\KP^{n-1}}(Q, z)^*(1)&=\mathsf{I}^{\KP^{n-1}}(q,z)\\
e^{H\log Q/z}S^{\KP^{n-1}}(Q, z)^*(H^i)&=z\mathfrak{L}_ie^{H\log Q/z}S^{\KP^{n-1}}(Q, z)^*(H^{i-1})\quad\text{for all }i\geq{1},
\end{split}
\end{equation}
and we have 
\begin{equation*}
\prod_{k=0}^{r-1}\mathsf{D}_{K\mathbb{P}^{n-1}}\mathsf{C}_{k,1}=\prod_{k=1}^rC^{\KP^{n-1}}_k=\prod_{k=0}^rC^{\KP^{n-1}}_k=K^{\KP^{n-1}}_r.
\end{equation*}
Hence, we see that the small quantum product is given by
\begin{equation*}
H^i\bullet H^j=\frac{K^{\KP^{n-1}}_{i+j}}{K^{\KP^{n-1}}_{i}K^{\KP^{n-1}}_{j}} H^{i+j}.
\end{equation*}
This equation holds for any $i,j\geq{0}$ by the properties of functions $K^{\KP^{n-1}}_{i}$ given in Lemma \ref{lem:properties_of_K_series}.

By the definition of small quantum product $\bullet$, we have 
\begin{equation}\label{eqn:3ptGWKP}
\begin{split}
\sum_{d=0}^{\infty}Q^d\langle H^i, H^j, H^k\rangle_{0,3,d}^{\KP^{n-1}}=g^{\KP^{n-1}}(H^i\bullet H^j, H^k)
&=\frac{K^{\KP^{n-1}}_{i+j}}{K^{\KP^{n-1}}_{i}K^{\KP^{n-1}}_{j}} g^{\KP^{n-1}}(H^{i+j}, H^k)\\
&=-\frac{1}{n}\frac{K^{\KP^{n-1}}_{i+j}}{K^{\KP^{n-1}}_{i}K^{\KP^{n-1}}_{j}}\delta_{\text{Inv}(i+j \text{ mod }n), k}.
\end{split}
\end{equation}

\subsection{Frobenius Structures}\label{sec:FrobKP}
Let $\gamma=\sum_{i=0}^{n-1}\tau_iH^i\in H^{\star}_{\mathrm{T}}\left(\KP^{n-1}\right)$. Then, the full genus $0$ Gromov--Witten potential is defined to be 
\begin{equation}\label{eqn:full_GW_potential}
\begin{split}
    \mathcal{F}_0^{\KP^{n-1}}(\mathbf{\tau}, Q)
    =\sum_{m=0}^{\infty} \sum_{d=0}^{\infty} \frac{Q^d}{m !}\int_{\left[\overline{M}_{0, m}\left(\KP^{n-1}, d\right)\right]^{\mathrm{vir}}}  \prod_{i=1}^m \operatorname{ev}_i^*(\gamma)
    =\sum_{m=0}^{\infty} \sum_{d=0}^{\infty} \frac{Q^d}{m !}\left\langle \underbrace{\gamma,...,\gamma}_{m}\right\rangle_{0, m, d}^{\KP^{n-1}}.
\end{split}
\end{equation}
Let the $R$-matrix of the Frobenius manifold\footnote{The Frobenius manifold here is over the ring $\mathbb{C}[\![Q]\!]$, or can be considered over the ring $\mathbb{C}[\![q]\!]$ by the mirror map (\ref{eqn:mirror_map_small}).} structure associated to the ($\mathrm{T}$-equivariant) Gromov--Witten theory of $\KP^{n-1}$ near the semisimple point $0\in H^*_{\mathrm{T}}(\KP^{n-1}) $ be denoted by 
\begin{equation*}
    \mathsf{R}^{\KP^{n-1}}(z)=\text{Id}+\sum_{k\geq 1} R^{\KP^{n-1}}_k z^k\in \text{End}(H^*_{\mathrm{T}}(\KP^{n-1}))[\![z]\!].
\end{equation*}
The $R$-matrix plays a crucial role in the Givental--Teleman classification of semisimple cohomological field theories. By the definition, $R$-matrix satisfies the symplectic condition 
\begin{equation*}
    \mathsf{R}^{\KP^{n-1}}(z)\cdot \mathsf{R}^{\KP^{n-1}}(-z)^*=\mathsf{Id},
\end{equation*}
where $(-)^*$ adjoint with respect to metric $g^{\KP^{n-1}}$.

For all $i\geq{0}$, define
\begin{equation}\label{def:phi_tilde}
\widetilde{H}_i=\frac{K^{\KP^{n-1}}_i}{(L^{\KP^{n-1}})^i}H^i.
 \end{equation}
This is a normalization of $H^i$'s in the sense that we have $\widetilde{H}_{i+n}=\widetilde{H}_{i}$ and $\widetilde{H}_i\bullet\widetilde{H}_j=\widetilde{H}_{i+j}$ for all $i,j\geq{0}$. As a result, the quantum product at $0\in H^{\star}_{\mathrm{T}}\left(\KP^{n-1}\right)$ is semisimple with the idempotent basis $\{e_\alpha\}$ given by 
\begin{equation}\label{def:idempotent}
    e_{\alpha}=\frac{1}{n}\sum_{i=0}^{n-1}\zeta^{-\alpha{i}}\widetilde{H}_i\quad\text{for}\quad 0\leq\alpha\leq n-1,
\end{equation}
where $\zeta=e^{\frac{2\pi\sqrt{-1}}{n}}$ is an $n^{\text{th}}$ root of unity.

We calculate the metric $g^{\KP^{n-1}}$ in the idempotent basis $\{e_\alpha\}$:
\begin{align*}
 g^{\KP^{n-1}}\left(e_{\alpha},e_{\alpha}\right)
 =&g^{\KP^{n-1}}\left(\frac{1}{n}\sum_{i=0}^{n-1}\zeta^{-\alpha{i}}\frac{K^{\KP^{n-1}}_i}{(L^{\KP^{n-1}})^i}{H}^i,\frac{1}{n}\sum_{j=0}^{n-1}\zeta^{-\alpha{j}}\frac{K^{\KP^{n-1}}_j}{(L^{\KP^{n-1}})^j}{H}^j\right)\\
 =&-\frac{1}{n^2}\sum_{i=0}^{n-1}\sum_{j=0}^{n-1}\zeta^{-\alpha{(i+j)}}\frac{K^{\KP^{n-1}}_iK^{\KP^{n-1}}_j}{(L^{\KP^{n-1}})^{i+j}}\frac{1}{n}\delta_{\mathrm{Inv}(i)j}\\
  =&-\frac{1}{n^3}\sum_{i=0}^{n-1}\zeta^{-\alpha{(i+{\mathrm{Inv}(i)})}}\frac{K_iK_{\mathrm{Inv}(i)}}{(L^{\KP^{n-1}})^{i+{\mathrm{Inv}(i)}}}
 =-\frac{1}{n^2},
\end{align*}
where the last equality follows from  Lemma \ref{lem:properties_of_K_series}, and by the identity $$i+{\mathrm{Inv}(i)}=0\mod{n}.$$
The normalized idempotents are
\begin{equation}\label{def:normalized_idempotent}
\widetilde{e}_{\alpha}=\frac{e_{\alpha}}{\sqrt{g\left(e_{\alpha},e_{\alpha}\right)}}=\frac{e_{\alpha}}{\sqrt{-\frac{1}{n^2}}}=-n\sqrt{-1}e_{\alpha}.
\end{equation}
The transition matrix $\Psi$ is given by $\Psi_{\alpha{i}}=g^{\KP^{n-1}}\left(\widetilde{e}_{\alpha},H^i\right)$ where $0\leq \alpha,i\leq n-1$. We calculate
\begin{equation*}
\Psi_{\alpha{i}}
=g^{\KP^{n-1}}\left(\widetilde{e}_{\alpha},H^i\right)
=g^{\KP^{n-1}}\left(-\sqrt{-1}\sum_{j=0}^{n-1}\zeta^{-\alpha{j}}\frac{K^{\KP^{n-1}}_j}{(L^{\KP^{n-1}})^j}H^j,H^i\right)
=\sqrt{-1}\sum_{j=0}^{n-1}\zeta^{-\alpha{j}}\frac{K^{\KP^{n-1}}_j}{(L^{\KP^{n-1}})^j}\frac{1}{n}\delta_{\mathrm{Inv}(i),j}.
\end{equation*}
So, $\Psi_{\alpha{i}}$ is given by
\begin{equation*}
\begin{split}
\Psi_{\alpha{i}}
&=\frac{\sqrt{-1}}{n}\zeta^{-\alpha{\mathrm{Inv}(i)}}\frac{K^{\KP^{n-1}}_{\mathrm{Inv}(i)}}{(L^{\KP^{n-1}})^{\mathrm{Inv}(i)}}=\frac{\sqrt{-1}}{n}\zeta^{-\alpha(n-i)}\frac{K^{\KP^{n-1}}_{n-i}}{(L^{\KP^{n-1}})^{n-i}}\\
&=\frac{\sqrt{-1}}{n}\zeta^{\alpha{i}}\frac{(L^{\KP^{n-1}})^{i}}{K^{\KP^{n-1}}_{i}}\quad\text{for}\quad 0\leq \alpha,i\leq n-1. 
\end{split}
\end{equation*}
The inverse of the transition matrix $\Psi^{-1}=\left[\Psi^{-1}_{\beta{j}}\right]$ is given by
\begin{equation*}
\Psi^{-1}_{{j}\beta}=-\sqrt{-1}\zeta^{-\beta{j}}\frac{K^{\KP^{n-1}}_{j}}{(L^{\KP^{n-1}})^{j}}\quad\text{where}\quad 0\leq\beta,j\leq n-1.
\end{equation*}
Let $\left\{u^{\alpha}\right\}_{\alpha=0}^{n-1}$ be canonical coordinates associated to the idempotent basis $\left\{e_{\alpha}\right\}_{\alpha=0}^{n-1}$. Since $e_{\alpha}=\frac{\partial}{\partial{u^{\alpha}}}$, we have
\begin{equation}\label{eq:idempotenttophi_1}
\sum_{\alpha=0}^{n-1}\frac{\partial {u^{\alpha}}}{\partial{\tau_1}}e_{\alpha}=H.
\end{equation}

\begin{lem}\label{lem:canonicalcoorder}
We have, at $\mathbf{\tau}=0$,
\begin{equation*}
\frac{du^{\alpha}}{d{\tau_1}}=\zeta^{\alpha}\frac{L^{\KP^{n-1}}}{C^{\KP^{n-1}}_1}.
\end{equation*}
\end{lem}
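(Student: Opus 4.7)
The plan is to use Fourier inversion on the idempotent decomposition to invert the expression of the $e_\alpha$ in terms of the normalized classes $\widetilde{H}_i$, and then read off the coefficient of $e_\alpha$ in $H$.

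First, I would rewrite $H$ in the $\widetilde{H}_i$-basis. Since $\widetilde{H}_1 = \frac{K_1^{\KP^{n-1}}}{L^{\KP^{n-1}}} H$ by \eqref{def:phi_tilde}, and since $K_1^{\KP^{n-1}} = C_0^{\KP^{n-1}} C_1^{\KP^{n-1}} = C_1^{\KP^{n-1}}$ by \eqref{eqn:K_i_definition} and $C_0^{\KP^{n-1}} = 1$, we have
\begin{equation*}
H = \frac{L^{\KP^{n-1}}}{C_1^{\KP^{n-1}}}\,\widetilde{H}_1.
\end{equation*}

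Next, I would invert the definition \eqref{def:idempotent} of $e_\alpha$. Since $e_\alpha = \frac{1}{n}\sum_{i=0}^{n-1}\zeta^{-\alpha i}\widetilde{H}_i$, the standard discrete Fourier identity $\frac{1}{n}\sum_{\alpha=0}^{n-1}\zeta^{\alpha(i-j)} = \delta_{ij}$ gives
\begin{equation*}
\widetilde{H}_i = \sum_{\alpha=0}^{n-1}\zeta^{\alpha i}\,e_\alpha.
\end{equation*}
Applying this with $i=1$ and substituting into the previous display yields
\begin{equation*}
H = \frac{L^{\KP^{n-1}}}{C_1^{\KP^{n-1}}}\sum_{\alpha=0}^{n-1}\zeta^\alpha\, e_\alpha.
\end{equation*}

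Finally, comparing with \eqref{eq:idempotenttophi_1}, namely $H = \sum_{\alpha=0}^{n-1}\frac{\partial u^\alpha}{\partial\tau_1}e_\alpha$, and using linear independence of the idempotents, we read off $\frac{\partial u^\alpha}{\partial\tau_1} = \zeta^\alpha\, L^{\KP^{n-1}}/C_1^{\KP^{n-1}}$ at $\tau = 0$, as desired. There is no real obstacle here; the only thing to be careful about is that the identification $K_1^{\KP^{n-1}} = C_1^{\KP^{n-1}}$ uses $C_0^{\KP^{n-1}}=1$, which is recorded right after \eqref{eqn:defnEC}, and that the Fourier inversion is valid because the $\widetilde{H}_i$ are $\mathbb{C}(\!(q)\!)$-linearly independent (they are, up to invertible scalars, the powers $H^i$ for $0\le i\le n-1$, which form a basis of $H^*_{\mathrm{T}}(\KP^{n-1})$).
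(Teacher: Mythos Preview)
Your proof is correct. It differs mildly from the paper's argument: the paper computes $H\bullet e_\alpha$ directly via the quantum product formula $H^i\bullet H=\frac{K_{i+1}}{K_iK_1}H^{i+1}$, using the idempotent property $e_\alpha\bullet e_\beta=\delta_{\alpha\beta}e_\alpha$ to isolate $\frac{du^\alpha}{d\tau_1}$ as the eigenvalue of $H\bullet$ on $e_\alpha$. You instead bypass the quantum product entirely by Fourier-inverting \eqref{def:idempotent} to write $\widetilde{H}_1=\sum_\alpha \zeta^\alpha e_\alpha$ and then reading off the coefficients from \eqref{eq:idempotenttophi_1}. Both routes are short; yours is slightly more elementary in that it uses only the linear-algebraic change of basis, while the paper's version makes the role of $\frac{du^\alpha}{d\tau_1}$ as an eigenvalue of quantum multiplication by $H$ more visible.
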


\begin{proof}
The result is obtained by the following calculation: at $\mathbf{\tau}=0$, we have
\begin{align*}
\frac{du^{\alpha}}{d{\tau_1}}{e_{\alpha}}=\sum_{\beta=0}^{n-1}\frac{d {u^{\beta}}}{d{\tau_1}}\delta_{\alpha,\beta}{e_{\alpha}}=H\bullet{e_{\alpha}}
=&\frac{1}{n}\sum_{i=0}^{n-1}\zeta^{-\alpha{i}}\frac{K^{\KP^{n-1}}_i}{(L^{\KP^{n-1}})^i}{H^i}\bullet{H}\\
=&\zeta^{\alpha}\frac{L^{\KP^{n-1}}}{C^{\KP^{n-1}}_1}\frac{1}{n}\sum_{i=0}^{n-1}\zeta^{-\alpha{(i+1)}}\frac{K^{\KP^{n-1}}_{i+1}}{(L^{\KP^{n-1}})^{i+1}}H^{i+1}\\
=&\zeta^{\alpha}\frac{L^{\KP^{n-1}}}{C^{\KP^{n-1}}_1}\underbrace{\frac{1}{n}\sum_{i=0}^{n-1}\zeta^{-\alpha{i}}\frac{K^{\KP^{n-1}}_i}{(L^{\KP^{n-1}})^i}H^i}_{=e_{\alpha}}.
\end{align*}
\end{proof}

Let $U$ be the diagonal matrix
\begin{equation*}
U=\diag(u^0,\ldots,u^{n-1}).
\end{equation*}
Then, the $R$-matrix also satisfies the following flatness equation
\begin{equation}\label{eqn:Basic_flatness_d}
z(d\Psi^{-1})R+z\Psi^{-1}(dR)+\Psi^{-1}R (dU)-\Psi^{-1}(dU) R=0,    
\end{equation}
see \cite[Chapter 1, Section 4.6]{lp} and \cite[Proposition 1.1]{g1}. Here, $d=\frac{d}{d\tau}$.
Note that the full genus $0$ potential (\ref{eqn:full_GW_potential}) is annihilated by the operator
\begin{equation}\label{eqn:annihilatoroperator}
\frac{\partial}{\partial{\tau_1}}-Q\frac{\partial}{\partial{Q}}.
\end{equation}
Similarly, this operator annihilates canonical coordinates $u^{\alpha}$. Hence, at $0\in H^{\star}_{\mathrm{T}}\left(\KP^{n-1}\right)$, we have
\begin{equation*}
\frac{du^{\alpha}}{d{\tau_1}}=Q\frac{du^{\alpha}}{dQ}=\frac{1}{C^{\KP^{n-1}}_1}q\frac{du^{\alpha}}{dq}
\end{equation*}
where the second equality follows from the mirror map (\ref{eqn:mirror_map_small}). Then, by Lemma \ref{lem:canonicalcoorder} we obtain
\begin{equation}
q\frac{du^{\alpha}}{dq}=L^{\KP^{n-1}}\zeta^{\alpha},
\end{equation}
so, we have
\begin{equation}\label{eqn:DU}
\mathsf{D}_{\KP^{n-1}}U=q\frac{d}{dq}U=\diag(L^{\KP^{n-1}},{\zeta}L^{\KP^{n-1}},...,\zeta^{n-1}L^{\KP^{n-1}}).
\end{equation}

The operator (\ref{eqn:annihilatoroperator}) also annihilates the transition matrix $\Psi$, and the $R$-matrix $\mathsf{R}^{\KP^{n-1}}(z)$.  When restricted to the line along $\tau_{i\neq 1}=0$, the flatness equation (\ref{eqn:Basic_flatness_d}) takes of the form
\begin{equation*}
   z(q\frac{d}{dq}\Psi^{-1})\mathsf{R}^{\KP^{n-1}}+z\Psi^{-1}(q\frac{d}{dq}\mathsf{R}^{\KP^{n-1}})+\Psi^{-1}\mathsf{R}^{\KP^{n-1}} (q\frac{d}{dq}U)-\Psi^{-1}(q\frac{d}{dq}U) \mathsf{R}^{\KP^{n-1}}=0
\end{equation*}
via the annihilation of $U$, $\Psi$, and $\mathsf{R}^{\KP^{n-1}}$ by the operator (\ref{eqn:annihilatoroperator}).
By equating coefficients of $z^k$, and multiplying with $\Psi^{-1}$, we obtain the following
\begin{equation}\label{eqn:Flatness}
\mathsf{D}_{\KP^{n-1}}\left(\Psi^{-1}R^{\KP^{n-1}}_{k-1}\right)+\left(\Psi^{-1}R^{\KP^{n-1}}_k\right)\mathsf{D}_{\KP^{n-1}}U-\Psi^{-1}\left(\mathsf{D}_{\KP^{n-1}}U\right)\Psi\left(\Psi^{-1}R^{\KP^{n-1}}_k\right)=0.
\end{equation}
Let $P_{i,j}^{k,\KP^{n-1}}$ denote the $(i,j)$ entry of the coefficient of $z^k$ in the matrix series defined by 
\begin{equation}\label{eqn:Flat_to_modflat_KP_1}
\mathsf{P}^{\KP^{n-1}}(z)=\Psi^{-1}\mathsf{R}^{\KP^{n-1}}(z)=\sum_{k=0}^{\infty} P_k^{\KP^{n-1}} z^{k}
\end{equation}
after being restricted to the semisimple point $0\in H^{\star}_{\mathrm{T}}\left(\KP^{n-1}\right)$ where $0\leq i,j \leq{n-1}$ and $k\geq 0$.
Then, equation (\ref{eqn:Flatness}) reads as
\begin{equation*}\label{eqn:Flatness2}
\mathsf{D}_{\KP^{n-1}}P^{\KP^{n-1}}_k=\Psi^{-1}\left(\mathsf{D}_{\KP^{n-1}}U\right)\Psi{P^{\KP^{n-1}}_k}-P^{\KP^{n-1}}_k\mathsf{D}_{\KP^{n-1}}U.
\end{equation*}

\begin{lem}\label{lem:P_Flatness_with_C}
For ${0\leq{i,j}\leq{n-1}}$ and $k\geq{0}$, we have
\begin{equation*}
\mathsf{D}_{\KP^{n-1}}P_{i,j}^{k-1,\KP^{n-1}}=
C_{\mathrm{Ion}(i)}^{\KP^{n-1}}P_{\mathrm{Ion}(i)-1,j}^{k,\KP^{n-1}}-P_{i,j}^{k,\KP^{n-1}}L^{\KP^{n-1}}\zeta^j.
\end{equation*}
\end{lem}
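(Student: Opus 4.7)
The plan is to derive the stated componentwise identity directly from the matrix flatness equation
\[
\mathsf{D}_{\KP^{n-1}}P^{\KP^{n-1}}_{k-1}=\Psi^{-1}\left(\mathsf{D}_{\KP^{n-1}}U\right)\Psi\,P^{\KP^{n-1}}_k-P^{\KP^{n-1}}_k\,\mathsf{D}_{\KP^{n-1}}U
\]
obtained in the line preceding the statement. Taking the $(i,j)$-entry, the second term on the right is immediate since $\mathsf{D}_{\KP^{n-1}}U$ is diagonal with $\alpha$-th entry $\zeta^{\alpha}L^{\KP^{n-1}}$ by (\ref{eqn:DU}); this contributes exactly $-P_{i,j}^{k,\KP^{n-1}}L^{\KP^{n-1}}\zeta^j$. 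The main task is thus to evaluate the conjugate $\Psi^{-1}(\mathsf{D}_{\KP^{n-1}}U)\Psi$ entry by entry.

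Using the explicit formulas for $\Psi_{\alpha i}=\frac{\sqrt{-1}}{n}\zeta^{\alpha i}\frac{(L^{\KP^{n-1}})^{i}}{K^{\KP^{n-1}}_i}$ and $\Psi^{-1}_{j\beta}=-\sqrt{-1}\,\zeta^{-\beta j}\frac{K^{\KP^{n-1}}_{j}}{(L^{\KP^{n-1}})^{j}}$ derived above, I compute
\[
\bigl(\Psi^{-1}(\mathsf{D}_{\KP^{n-1}}U)\Psi\bigr)_{i,m}
=\frac{1}{n}\frac{K^{\KP^{n-1}}_{i}(L^{\KP^{n-1}})^{m+1}}{(L^{\KP^{n-1}})^{i}K^{\KP^{n-1}}_{m}}\sum_{\alpha=0}^{n-1}\zeta^{\alpha(m+1-i)}.
\]
The geometric sum vanishes unless $m\equiv i-1\pmod{n}$, in which case it equals $n$. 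This is precisely the condition $m=\mathrm{Ion}(i)-1$: for $i\geq 1$ this gives $m=i-1$, while for $i=0$ this forces $m=n-1=\mathrm{Ion}(0)-1$. So the matrix has exactly one nonzero entry per row.

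It then remains to identify the surviving coefficient with $C^{\KP^{n-1}}_{\mathrm{Ion}(i)}$. For $1\leq i\leq n-1$ the coefficient reduces to $K^{\KP^{n-1}}_i/K^{\KP^{n-1}}_{i-1}=C^{\KP^{n-1}}_i=C^{\KP^{n-1}}_{\mathrm{Ion}(i)}$ by definition (\ref{eqn:K_i_definition}). For $i=0$ the coefficient is $(L^{\KP^{n-1}})^n/K^{\KP^{n-1}}_{n-1}$, which equals $C^{\KP^{n-1}}_n=C^{\KP^{n-1}}_{\mathrm{Ion}(0)}$ by Lemma \ref{lem:properties_of_K_series}\eqref{Kfunctions1} together with $K^{\KP^{n-1}}_n=K^{\KP^{n-1}}_{n-1}C^{\KP^{n-1}}_n=(L^{\KP^{n-1}})^n$. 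Combining the two terms and taking the $(i,j)$-entry yields the claimed identity. I do not expect any serious obstacle; the only subtle point is the $i=0$ versus $i\geq 1$ bookkeeping, which is exactly what the involution $\mathrm{Ion}$ is designed to package.
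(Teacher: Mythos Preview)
Your proof is correct and follows essentially the same approach as the paper's own argument: both compute $(\Psi^{-1}(\mathsf{D}_{\KP^{n-1}}U)\Psi)_{i,m}$ directly from the explicit formulas for $\Psi$ and $\Psi^{-1}$, reduce the resulting root-of-unity sum to a Kronecker delta, and then identify the surviving coefficient with $C^{\KP^{n-1}}_{\mathrm{Ion}(i)}$ using the definition of $K^{\KP^{n-1}}_i$ and Lemma~\ref{lem:properties_of_K_series}. Your treatment of the $i=0$ case and the packaging via $\mathrm{Ion}$ mirrors the paper's exactly.
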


\begin{proof}
Observe the following computation:
\begin{align*}
\left(\Psi^{-1}\left(\mathsf{D}_{\KP^{n-1}}U\right)\Psi\right)_{ij}
&=\sum_{r=0}^{n-1}\left(\Psi^{-1}\left(\mathsf{D}_{\KP^{n-1}}U\right)\right)_{ir}\Psi_{rj}\\
&=\sum_{r=0}^{n-1}-\sqrt{-1}\zeta^{-r{i}}\frac{K^{\KP^{n-1}}_{i}}{(L^{\KP^{n-1}})^{i}}{\zeta^r}L^{\KP^{n-1}}\frac{\sqrt{-1}}{n}\zeta^{rj}\frac{(L^{\KP^{n-1}})^j}{K^{\KP^{n-1}}_j}\\
&=\frac{1}{n}\frac{K^{\KP^{n-1}}_i}{K^{\KP^{n-1}}_j}\frac{(L^{\KP^{n-1}})^{j+1}}{(L^{\KP^{n-1}})^i}\sum_{l=0}^{n-1}\zeta^{r(j-i+1)}\\
&=\begin{cases}
  \frac{K^{\KP^{n-1}}_i}{K^{\KP^{n-1}}_j}\frac{(L^{\KP^{n-1}})^{j+1}}{(L^{\KP^{n-1}})^i}&\quad\text{if}\quad i=j+1\mod{n},\\
  0&\quad\text{otherwise}
  \end{cases}\\
&=
  \begin{cases}
  C^{\KP^{n-1}}_i&\quad\text{if}\quad 1\leq i\leq {n-1}\quad\text{and}\quad j={i-1},\\
  C^{\KP^{n-1}}_n&\quad\text{if}\quad i=0\quad\text{and}\quad j={n-1},\\
  0&\quad\text{otherwise}
  \end{cases}
=C^{\KP^{n-1}}_{\mathrm{Ion}(i)}\delta_{\mathrm{Ion}(i)-1,j}
\end{align*}
where the last equality follows from Lemma \ref{lem:properties_of_K_series}.
Then, we have
\begin{equation*}
\begin{aligned}
\left(\Psi^{-1}\left(\mathsf{D}_{\KP^{n-1}}U\right)\Psi{P^{\KP^{n-1}}_k}\right)_{ij}
=\sum_{r=0}^{n-1}\left(\Psi^{-1}\mathsf{D}_{\KP^{n-1}}U\Psi\right)_{ir}P^{k,\KP^{n-1}}_{r,j}
=C^{\KP^{n-1}}_{\mathrm{Ion}(i)}P^{k,\KP^{n-1}}_{\mathrm{Ion}(i)-1,j}.
\end{aligned}
\end{equation*}
The rest of the proof follows from equation (\ref{eqn:Flatness2}).
\end{proof}

For $0\leq{i,j}\leq{n-1}$, define
\begin{equation}\label{eqn:DLj_mutildej}
P_{i,j}^{\KP^{n-1}}(z)=\sum_{k=0}^{\infty} P_{i,j}^{k,\KP^{n-1}} z^{k},\quad \mathsf{D}_{L_j}=\mathsf{D}_{\KP^{n-1}}+\frac{L^{\KP^{n-1}}_j}{z} \quad\text{and}\quad \widetilde{\mu}_j=\int_0^q\frac{L^{\KP^{n-1}}_j(u)}{u}du
\end{equation}
where $L_j^{\KP^{n-1}}=L^{\KP^{n-1}}\zeta^j$. Then, we can rewrite Lemma \ref{lem:P_Flatness_with_C} as:
\begin{lem}
For $0\leq{i,j}\leq{n-1}$, we have
$\mathsf{D}_{L_j}P_{i,j}^{\KP^{n-1}}(z)=C^{\KP^{n-1}}_{\mathrm{Ion}(i)}z^{-1}P_{\mathrm{Ion}(i)-1,j}^{\KP^{n-1}}(z)$.
\end{lem}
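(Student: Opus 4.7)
The plan is to derive the generating-series identity directly from the per-coefficient recursion of Lemma \ref{lem:P_Flatness_with_C} by multiplying by $z^{k}$ and summing, with a separate check of the lowest-order (boundary) term.

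Writing out $\mathsf{D}_{L_j}P_{i,j}^{\KP^{n-1}}(z) = \mathsf{D}_{\KP^{n-1}}P_{i,j}^{\KP^{n-1}}(z) + \frac{L^{\KP^{n-1}}\zeta^j}{z}P_{i,j}^{\KP^{n-1}}(z)$ and comparing with $C^{\KP^{n-1}}_{\mathrm{Ion}(i)}z^{-1}P_{\mathrm{Ion}(i)-1,j}^{\KP^{n-1}}(z)$, the claim is equivalent to showing, for every $k\geq -1$,
\begin{equation*}
\mathsf{D}_{\KP^{n-1}}P_{i,j}^{k,\KP^{n-1}}+L^{\KP^{n-1}}\zeta^j\, P_{i,j}^{k+1,\KP^{n-1}} = C^{\KP^{n-1}}_{\mathrm{Ion}(i)}\, P_{\mathrm{Ion}(i)-1,j}^{k+1,\KP^{n-1}},
\end{equation*}
under the convention $P_{i,j}^{-1,\KP^{n-1}}=0$. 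For $k\geq 0$ this is precisely the recursion of Lemma \ref{lem:P_Flatness_with_C} after the index shift $k\mapsto k+1$, so nothing new has to be proved for these coefficients.

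The step that does require verification is the $k=-1$ coefficient, i.e.\ the identity
\begin{equation*}
L^{\KP^{n-1}}\zeta^{j}\, P_{i,j}^{0,\KP^{n-1}} = C^{\KP^{n-1}}_{\mathrm{Ion}(i)}\, P_{\mathrm{Ion}(i)-1,j}^{0,\KP^{n-1}}.
\end{equation*}
Since $R_{0}^{\KP^{n-1}}=\mathrm{Id}$, one has $P_{0}^{\KP^{n-1}}=\Psi^{-1}$, so the formula for $\Psi^{-1}$ recorded in Section \ref{sec:FrobKP} gives $P_{i,j}^{0,\KP^{n-1}}=-\sqrt{-1}\,\zeta^{-ji}\frac{K^{\KP^{n-1}}_i}{(L^{\KP^{n-1}})^i}$. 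Plugging this in, the desired equality reduces to
\begin{equation*}
L^{\KP^{n-1}}\zeta^j \cdot \frac{K^{\KP^{n-1}}_i}{(L^{\KP^{n-1}})^i} = C^{\KP^{n-1}}_{\mathrm{Ion}(i)}\,\zeta^{-j(\mathrm{Ion}(i)-1-i)}\cdot\frac{K^{\KP^{n-1}}_{\mathrm{Ion}(i)-1}}{(L^{\KP^{n-1}})^{\mathrm{Ion}(i)-1}}.
\end{equation*}
For $1\leq i\leq n-1$, $\mathrm{Ion}(i)=i$, and this collapses to $K^{\KP^{n-1}}_i = C^{\KP^{n-1}}_i\, K^{\KP^{n-1}}_{i-1}$, which is the definition (\ref{eqn:K_i_definition}). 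For $i=0$, $\mathrm{Ion}(0)=n$, and using $\zeta^{-j(n-1)}=\zeta^j$ the claim becomes $L^{\KP^{n-1}} = C^{\KP^{n-1}}_n\,K^{\KP^{n-1}}_{n-1}/(L^{\KP^{n-1}})^{n-1}$, which is exactly $K^{\KP^{n-1}}_n=(L^{\KP^{n-1}})^n$ from Lemma \ref{lem:properties_of_K_series}(\ref{Kfunctions1}).

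With the recursion established for every $k\geq -1$, multiplying by $z^{k}$ and summing over $k$ yields the statement of the lemma. The whole argument is essentially bookkeeping; the one potentially delicate point is the boundary check at $z^{-1}$, and this is where the periodicity relation $K^{\KP^{n-1}}_n=(L^{\KP^{n-1}})^n$ (rather than just the definition of $K^{\KP^{n-1}}_r$) is used to handle the case $i=0$ where $\mathrm{Ion}$ acts nontrivially.
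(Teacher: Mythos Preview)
Your proof is correct and follows the same approach as the paper, which simply presents this lemma as a direct rewriting of Lemma \ref{lem:P_Flatness_with_C} into generating-series form. Your explicit verification of the $z^{-1}$ coefficient is not strictly needed: Lemma \ref{lem:P_Flatness_with_C} is stated for $k\geq 0$, and its $k=0$ instance (with $P_{i,j}^{-1,\KP^{n-1}}=0$) is exactly the boundary identity you check; in fact your computation just reproduces, in component form, the calculation of $\Psi^{-1}(\mathsf{D}_{\KP^{n-1}}U)\Psi$ already carried out in the proof of Lemma \ref{lem:P_Flatness_with_C}.
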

It immediately follows that $P_{0,j}^{\KP^{n-1}}(z)$ satisfies the following differential equation:
\begin{equation*}
\frac{1}{C^{\KP^{n-1}}_1}\mathsf{D}_{L_j} \cdots \frac{1}{C^{\KP^{n-1}}_n}\mathsf{D}_{L_j}P_{0,j}^{\KP^{n-1}}(z)=z^{-n}P^{\KP^{n-1}}_{0,j}(z).
\end{equation*}
By the following commutation rule
\begin{equation}\label{eqn:Commutation_of_DL}
\mathsf{D}_{\KP^{n-1}}(e^{\frac{\widetilde{\mu}_{j}}{z}}F)=e^{\frac{\widetilde{\mu}_{j}}{z}}\mathsf{D}_{L_j}F,
\end{equation}
and by the definition of $\mathfrak{L}_i$, the differential equation above can be rewritten as 
\begin{equation}\label{eqn:P0jODE}
\mathfrak{L}_1\cdots\mathfrak{L}_n\left(e^{\frac{\widetilde{\mu}_{j}}{z}}P_{0,j}^{\KP^{n-1}}(z)\right)=z^{-n}e^{\frac{\widetilde{\mu}_{j}}{z}}P_{0,j}^{\KP^{n-1}}(z).
\end{equation}

\begin{lem}\label{lem:PF_Factorization}
\begin{equation*}
\mathfrak{L}_1\cdots\mathfrak{L}_n=(nL^{\KP^{n-1}})^{-n}\left(n^n\mathsf{D}_{\KP^{n-1}}^n-\frac{n\mathsf{D}_{\KP^{n-1}}L^{\KP^{n-1}}}{L^{\KP^{n-1}}}\sum_{k=0}^{n-1}(-1)^{n-k}{s}_{n,k}n^k\mathsf{D}_{\KP^{n-1}}^k\right).
\end{equation*}
\end{lem}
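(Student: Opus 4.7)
The plan is to prove this operator identity by observing that both sides are linear differential operators of order $n$ in $\mathsf{D}_{\KP^{n-1}}$, then matching leading coefficients and showing that both operators annihilate a common $n$-dimensional space of functions. Since their difference will then be of order at most $n-1$ but kill $n$ linearly independent functions, it must vanish.

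First I would verify the leading coefficients. On the left, since $\mathfrak{L}_i = \frac{1}{C_i^{\KP^{n-1}}}\mathsf{D}_{\KP^{n-1}}$, the highest-order term of $\mathfrak{L}_1\cdots\mathfrak{L}_n$ is $\bigl(\prod_{i=1}^n C_i^{\KP^{n-1}}\bigr)^{-1}\mathsf{D}_{\KP^{n-1}}^n = (L^{\KP^{n-1}})^{-n}\mathsf{D}_{\KP^{n-1}}^n$ by Lemma \ref{lem:properties_of_C_functions}(2); on the right, the leading term is $(nL^{\KP^{n-1}})^{-n}n^n\mathsf{D}_{\KP^{n-1}}^n$, which agrees. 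Next I would produce a common kernel. The right-hand side annihilates $\mathsf{I}_0^{\KP^{n-1}},\ldots,\mathsf{I}_{n-1}^{\KP^{n-1}}$ by equation \eqref{eqn:PF_for_Iks_annihilation}. For the left-hand side, the key move is the palindromic symmetry $C_i^{\KP^{n-1}} = C_{n+1-i}^{\KP^{n-1}}$ from Lemma \ref{lem:properties_of_C_functions}(3), which yields $\mathfrak{L}_i = \mathfrak{L}_{n+1-i}$ for $1\le i\le n$ and hence
$$\mathfrak{L}_1\mathfrak{L}_2\cdots\mathfrak{L}_n = \mathfrak{L}_n\mathfrak{L}_{n-1}\cdots\mathfrak{L}_1.$$
For $k\geq 1$, formula \eqref{eqn:mathrfrak_L_operator_dfn_of_C_i} together with $\mathfrak{L}_0 = \mathrm{id}$ gives $\mathfrak{L}_k\mathfrak{L}_{k-1}\cdots\mathfrak{L}_1\mathsf{I}_k^{\KP^{n-1}} = C_k^{\KP^{n-1}}/C_k^{\KP^{n-1}} = 1$; applying $\mathfrak{L}_{k+1}$ then produces $0$, which the remaining factors preserve. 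For $k=0$ one has $\mathsf{I}_0^{\KP^{n-1}} = 1$, so $\mathfrak{L}_1\mathsf{I}_0^{\KP^{n-1}} = 0$ immediately.

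To conclude, I would note that $\mathsf{I}_0^{\KP^{n-1}},\ldots,\mathsf{I}_{n-1}^{\KP^{n-1}}$ are linearly independent: the expansion of $\mathsf{I}^{\KP^{n-1}}(q,z) = e^{H\log q/z}I^{\KP^{n-1}}(q,z)$ recorded in \eqref{eqn:formula_for_mathsfI} shows that $\mathsf{I}_k^{\KP^{n-1}}$ has leading $\log$-power $(\log q)^k/k!$, so these $n$ functions span an $n$-dimensional subspace of $\mathbb{C}[\![q]\!][\log q]$. An order-at-most-$(n-1)$ differential operator cannot annihilate such a space, so the difference between the two sides of the lemma vanishes. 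The step I expect to be the main obstacle is precisely the reordering $\mathfrak{L}_1\cdots\mathfrak{L}_n = \mathfrak{L}_n\cdots\mathfrak{L}_1$: formula \eqref{eqn:mathrfrak_L_operator_dfn_of_C_i} is naturally tailored to the forward composition $\mathfrak{L}_{i-1}\cdots\mathfrak{L}_0$, so without the palindromic symmetry of the $C_i^{\KP^{n-1}}$ one cannot directly reduce the target product to the inductive pattern, and a more laborious direct expansion via the commutation $[\mathsf{D}_{\KP^{n-1}},1/C_i^{\KP^{n-1}}]$ would be unavoidable.
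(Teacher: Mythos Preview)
Your proposal is correct and follows essentially the same route as the paper: both arguments hinge on the palindromic reordering $\mathfrak{L}_1\cdots\mathfrak{L}_n=\mathfrak{L}_n\cdots\mathfrak{L}_1$ from Lemma~\ref{lem:properties_of_C_functions}(3), then verify that the two order-$n$ operators agree on the full solution space of the Picard--Fuchs equation and have the same leading coefficient. The only cosmetic difference is that the paper works with the generating series $\mathsf{I}^{\KP^{n-1}}(q,z)$ at once (using the Birkhoff factorization \eqref{eqn:Birkhoff_Fact_with_L_i} together with $H^n=1$ to get $\mathfrak{L}_n\cdots\mathfrak{L}_1\mathsf{I}=z^{-n}\mathsf{I}$, and then the terse phrase ``same phase space'' for the order argument), whereas you unpack this into the components $\mathsf{I}_0,\ldots,\mathsf{I}_{n-1}$ and make the linear-independence and order-counting step explicit.
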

\begin{proof}
Firstly, observe that we have
\begin{equation*}
\mathfrak{L}_n\cdots\mathfrak{L}_1=\mathfrak{L}_1\cdots\mathfrak{L}_n
\end{equation*}
by the definition of $\mathfrak{L}_i$ and the part (3) of Lemma \ref{lem:properties_of_C_functions}.
By the re-interpretation (\ref{eqn:Birkhoff_Fact_with_L_i}) of Birkhoff factorization, we see that
\begin{equation*}
\mathfrak{L}_n\cdots\mathfrak{L}_1\mathsf{I}^{\KP^{n-1}}(q,z)=z^{-n}\mathsf{I}^{\KP^{n-1}}(q,z). 
\end{equation*}
Moreover, equation (\ref{eqn:PF_Eq_for_with_logq_factor}) gives us
\begin{equation*}
(nL^{\KP^{n-1}})^{-n}\left(n^n\mathsf{D}_{\KP^{n-1}}^n-\frac{n\mathsf{D}_{\KP^{n-1}}L^{\KP^{n-1}}}{L^{\KP^{n-1}}}\sum_{k=0}^{n-1}(-1)^{n-k}{s}_{n,k}n^k\mathsf{D}_{\KP^{n-1}}^k\right)\mathsf{I}^{\KP^{n-1}}(q,z)=z^{-n}\mathsf{I}^{\KP^{n-1}}(q,z).
\end{equation*}

Since both differential equations have the same phase space and their right-hand sides match, we conclude that their left-hand sides must also match. This completes the proof.
\end{proof}

An immediate consequence of Lemma \ref{lem:PF_Factorization} and equation (\ref{eqn:P0jODE}) is the following result.
\begin{cor}\label{cor:PF_for_P0jz}
The series $e^{\frac{\widetilde{\mu}_{j}}{z}}P_{0,j}^{\KP^{n-1}}(z)$ satisfies the Picard--Fuchs equation 
\begin{equation*}
(L^{\KP^{n-1}})^{-n}\left(\mathsf{D}_{\KP^{n-1}}^n-\frac{{\mathsf{D}}_{\KP^{n-1}}L_j^{\KP^{n-1}}}{n^{n-1}L_j^{\KP^{n-1}}}\sum_{k=0}^{n-1}(-1)^{n-k}{s}_{n,k}n^k\mathsf{D}_{\KP^{n-1}}^k\right)\left(e^{\frac{\widetilde{\mu}_{j}}{z}}P_{0,j}^{\KP^{n-1}}(z)\right)=z^{-n}e^{\frac{\widetilde{\mu}_{j}}{z}}P_{0,j}^{\KP^{n-1}}(z).
\end{equation*}
\end{cor}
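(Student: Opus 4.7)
The plan is to combine Lemma \ref{lem:PF_Factorization}, which gives an explicit factorization of the composite operator $\mathfrak{L}_1 \cdots \mathfrak{L}_n$, with equation (\ref{eqn:P0jODE}), which was derived just before and which asserts
\begin{equation*}
\mathfrak{L}_1\cdots\mathfrak{L}_n\!\left(e^{\widetilde{\mu}_{j}/z}P_{0,j}^{\KP^{n-1}}(z)\right)=z^{-n}e^{\widetilde{\mu}_{j}/z}P_{0,j}^{\KP^{n-1}}(z).
\end{equation*}
Substituting the factorization from Lemma \ref{lem:PF_Factorization} into the left-hand side of (\ref{eqn:P0jODE}) yields the same Picard-Fuchs equation, but written formally in terms of $L^{\KP^{n-1}}$ rather than the desired $L_j^{\KP^{n-1}}$.

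The only observation needed to finish is that, because $L_j^{\KP^{n-1}} = \zeta^j L^{\KP^{n-1}}$ with $\zeta^j$ a constant in $q$, the logarithmic derivative is insensitive to the twist:
\begin{equation*}
\frac{\mathsf{D}_{\KP^{n-1}} L_j^{\KP^{n-1}}}{L_j^{\KP^{n-1}}} = \frac{\mathsf{D}_{\KP^{n-1}} L^{\KP^{n-1}}}{L^{\KP^{n-1}}}.
\end{equation*}
After pulling the scalar $n^n$ out of the bracket in Lemma \ref{lem:PF_Factorization} to combine with $(nL^{\KP^{n-1}})^{-n}$ and produce the prefactor $(L^{\KP^{n-1}})^{-n}$, I may therefore legitimately replace $L^{\KP^{n-1}}$ by $L_j^{\KP^{n-1}}$ in the logarithmic-derivative term, arriving at precisely the operator stated in the corollary.

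There is no real obstacle: the entire analytic content is concentrated in Lemma \ref{lem:PF_Factorization} (whose proof used the commutativity of the $\mathfrak{L}_i$ and the Picard-Fuchs equation (\ref{eqn:PF_Eq_for_with_logq_factor}) satisfied by $\mathsf{I}^{\KP^{n-1}}(q,z)$), while the present corollary is a short formal rewriting that allows the equation to be phrased in terms of $L_j^{\KP^{n-1}}$, which is the natural variable attached to the shift $\widetilde{\mu}_j$ appearing in the exponential gauge and in the definition (\ref{eqn:DLj_mutildej}) of $\mathsf{D}_{L_j}$.
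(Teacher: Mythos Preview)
Your proof is correct and follows exactly the same route as the paper, which simply states that the corollary is an immediate consequence of Lemma~\ref{lem:PF_Factorization} and equation~(\ref{eqn:P0jODE}). Your added remark that $\mathsf{D}_{\KP^{n-1}} L_j^{\KP^{n-1}}/L_j^{\KP^{n-1}} = \mathsf{D}_{\KP^{n-1}} L^{\KP^{n-1}}/L^{\KP^{n-1}}$ (since $\zeta^j$ is constant in $q$) is the only detail not made explicit in the paper, and it is precisely what justifies the passage from $L^{\KP^{n-1}}$ to $L_j^{\KP^{n-1}}$ in the stated form.
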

In other words, $P_{0,j}^{\KP^{n-1}}(z)$ satisfies the conditions of Lemma \ref{lem:AymptoticPFSolution}. As a result, we obtain the following polynomiality statement.

\begin{cor}\label{cor:polynomiality_of_P_0j_KP}
 For any $k\geq 0$, we have $P_{0,j}^{k,\KP^{n-1}}\in\mathbb{C}[L^{\KP^{n-1}}]$ and they satisfy the following identity
\begin{equation}\label{eqn:P0jk_satisfying_mathdsLjk_equation}
\mathds{L}_{j,1}(P_{0,j}^{k,\KP^{n-1}})+\frac{1}{(L_j^{K\mathbb{P}^{n-1}})}\mathds{L}_{j,2}(P_{0,j}^{k-1,\KP^{n-1}})+\cdots+\frac{1}{(L_j^{K\mathbb{P}^{n-1}})^{n-1}}\mathds{L}_{j,n}(P_{0,j}^{k+1-n,\KP^{n-1}})=0
\end{equation}
where $\mathds{L}_{j,k}$ is defined by equation (\ref{eqn:mathdsLjk}).
\end{cor}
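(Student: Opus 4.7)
The plan is to deduce both conclusions directly from Corollary \ref{cor:PF_for_P0jz} by invoking Lemma \ref{lem:AymptoticPFSolution} from the appendix, which is designed to extract polynomiality of power-series coefficients from a Picard--Fuchs equation of precisely this shape. The explicit recursion then falls out by substituting the power-series expansion of $P_{0,j}^{\KP^{n-1}}(z)$ into the equation and equating coefficients of powers of $z$.

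First, I would reinterpret Corollary \ref{cor:PF_for_P0jz} as a Picard--Fuchs equation for $P_{0,j}^{\KP^{n-1}}(z)$ itself rather than for $e^{\widetilde{\mu}_j/z}P_{0,j}^{\KP^{n-1}}(z)$. The commutation rule (\ref{eqn:Commutation_of_DL}) lets me push the conjugation by $e^{\widetilde{\mu}_j/z}$ through the differential operator on the left, replacing each $\mathsf{D}_{\KP^{n-1}}$ by $\mathsf{D}_{L_j}=\mathsf{D}_{\KP^{n-1}}+L_j^{\KP^{n-1}}/z$. The resulting equation is formally of the same type as (\ref{eqn:PF_for_CnZn}), modulo the replacements $L^{\CnZn}\mapsto L^{\KP^{n-1}}$ in the coefficient $\mathsf{D}L/L$ on the right-hand side and $\mathsf{D}_{\CnZn}\mapsto \mathsf{D}_{L_j}$ inside the operator, which is exactly the hypothesis of Lemma \ref{lem:AymptoticPFSolution}.

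Second, applying the lemma to the power series $P_{0,j}^{\KP^{n-1}}(z)=\sum_{k\geq 0} P_{0,j}^{k,\KP^{n-1}} z^k$ gives immediately that each coefficient $P_{0,j}^{k,\KP^{n-1}}$ lies in $\mathbb{C}[L^{\KP^{n-1}}]$. For the recursion, I would expand $\mathsf{D}_{L_j}^m=(\mathsf{D}_{\KP^{n-1}}+L_j^{\KP^{n-1}}/z)^m$ as a Laurent polynomial in $z^{-1}$ whose coefficients are differential operators in $\mathsf{D}_{\KP^{n-1}}$, substitute the power series expansion, and collect the coefficient of $z^k$ on both sides of the equation. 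The resulting operators are precisely the $\mathds{L}_{j,m}$ of (\ref{eqn:mathdsLjk}), each acting on the shifted coefficient $P_{0,j}^{k+1-m,\KP^{n-1}}$ and weighted by $(L_j^{\KP^{n-1}})^{-(m-1)}$, which yields (\ref{eqn:P0jk_satisfying_mathdsLjk_equation}).

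The main obstacle is essentially bookkeeping: one must verify that the organization of the $z^{-1}$-expansion of $\mathsf{D}_{L_j}^n$ and the lower-order Stirling-number-weighted operators $\mathsf{D}_{L_j}^k$, combined with the right-hand side $z^{-n} P_{0,j}^{\KP^{n-1}}(z)$, matches the precise operators $\mathds{L}_{j,m}$ as defined by (\ref{eqn:mathdsLjk}). No substantive new idea is required beyond this careful identification, since Lemma \ref{lem:AymptoticPFSolution} carries the analytic content and the symmetry $\mathfrak{L}_1\cdots\mathfrak{L}_n=\mathfrak{L}_n\cdots\mathfrak{L}_1$ already used in Lemma \ref{lem:PF_Factorization} guarantees that the factorization produces exactly the symmetric-in-$L^{\KP^{n-1}}$ coefficient structure captured by $\mathds{L}_{j,m}$.
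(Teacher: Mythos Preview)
Your proposal is correct and follows essentially the same route as the paper: Corollary \ref{cor:PF_for_P0jz} verifies the hypothesis of Lemma \ref{lem:AymptoticPFSolution}, which yields the polynomiality, and then the recursion \eqref{eqn:P0jk_satisfying_mathdsLjk_equation} is obtained by the decomposition of $\mathds{L}_j$ into the operators $\mathds{L}_{j,k}$ (Lemma \ref{lem:decomposition_of_mathdsLj}) and reading off the coefficient of each power of $z$ (Corollary \ref{cor:LjkPsiEquation}). Two minor remarks: the comparison you draw to \eqref{eqn:PF_for_CnZn} is structurally off (the Stirling coefficients differ), and the symmetry $\mathfrak{L}_1\cdots\mathfrak{L}_n=\mathfrak{L}_n\cdots\mathfrak{L}_1$ is already absorbed into Corollary \ref{cor:PF_for_P0jz} and plays no further role here.
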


\subsection{Quantum Riemann--Roch operator for \texorpdfstring{$K\mathbb{P}^{n-1}$}{KP{n-1}}}\label{sec:QRRKP}
The degree $0$ (i.e. $q=0$) sector of the $\mathrm{T}$-equivariant Gromov--Witten theory of $\KP^{n-1}$, which is defined by virtual localization \cite{grpan}, is the Gromov--Witten theory of the $\mathrm{T}$-fixed locus $(\KP^{n-1})^{\mathrm{T}}$ twisted by the normal bundle $N_{(\KP^{n-1})^{\mathrm{T}}/\KP^{n-1}}$ and the inverse $\mathrm{T}$-equivariant Euler class $e_{\mathrm{T}}^{-1}(-)$. By quantum Riemann--Roch theorem \cite{cg}, the degree $0$ sector of the $\mathrm{T}$-equivariant Gromov--Witten theory of $\KP^{n-1}$ is related to the Gromov--Witten theory of $(\KP^{n-1})^{\mathrm{T}}$ by an operator 
$$\mathsf{Q}^{\KP^{n-1}}\in \text{End}(H^*_{\mathrm{T}}(\KP^{n-1}))[\![z]\!].$$ 
We need to calculate $\mathsf{Q}^{\KP^{n-1}}$ explicitly. 

The $\mathrm{T}$-fixed locus is a union of $n$ points,
$$(\KP^{n-1})^{\mathrm{T}}=(\mathbb{P}^{n-1})^{\mathrm{T}}=\{p_0,...,p_{n-1}\}.$$
At the fixed point $p_i$, we have $$N_{(\KP^{n-1})^{\mathrm{T}}/\KP^{n-1}}|_{p_i}=T_{p_i}\mathbb{P}^{n-1}\oplus \KP^{n-1}|_{p_i}.$$
The weights of $\mathrm{T}$ on the tangent space $T_{p_i}\mathbb{P}^{n-1}$ are
\begin{equation*}
    \chi_i-\chi_0,...,\widehat{\chi_i-\chi_i},..., \chi_i-\chi_{n-1}.
\end{equation*}
The weight of $\mathrm{T}$ on $\KP^{n-1}|_{p_i}$ is $-n\chi_i$.

It follows from the quantum Riemann--Roch theorem \cite{cg} that the restriction $\mathsf{Q}^{\KP^{n-1}}|_{p_i}$ to the fixed point $p_i$ is the multiplication by 
\begin{equation}\label{eqn:qrr1}
\exp\left(\sum_{m>0}N_{2m-1, i}\frac{(-1)^{2m-1}B_{2m}}{2m(2m-1)}z^{2m-1}\right).    
\end{equation}
Here 
\begin{equation}
N_{2m-1,i}=\frac{1}{(-n\chi_i)^{2m-1}}+\frac{1}{(\chi_i-\chi_0)^{2m-1}}+...+\widehat{\frac{1}{(\chi_i-\chi_i)^{2m-1}}}+...+\frac{1}{(\chi_i-\chi_{n-1})^{2m-1}}.
\end{equation}
In the specializations (\ref{eqn:specialization_KP}), we get
\begin{equation}
N_{2m-1,i}=\frac{1}{(\zeta^i)^{2m-1}}\left(\frac{1}{(-n)^{2m-1}}+\sum_{l=1}^{n-1}\frac{1}{(1-\zeta^l)^{2m-1}}\right)
\end{equation}
after rearranging terms. Note also that 
\begin{equation}
N_{2m-1,i}=\frac{N_{2m-1,0}}{\zeta^{i(2m-1)}}
\end{equation}
for all $m\geq{1}$.

 Let $p_i=[0:\cdots:0:1:0:\cdots:0]$ be the $i$-th fixed point of this action, then the restriction map $H_{\mathrm{T}}^*\left(\mathbb{P}^{n-1}\right)\rightarrow{H_{\mathrm{T}}^*\left(p_i\right)}$ sends $H$ to $\chi_i$ and the Gysin map ${H_{\mathrm{T}}^*\left(p_i\right)}\rightarrow{H_{\mathrm{T}}^*\left(\mathbb{P}^{n-1}\right)}$ sends $1$ to
\begin{equation*}
\Xi_i=\prod_{\substack{0\leq{j}\leq{n-1}\\j\neq{i}}}(H-\zeta^j).
\end{equation*}
These $\Xi_i$'s give another basis of $H_{\mathrm{T}}^*\left(\mathbb{P}^{n-1}\right)$ which we call the \textit{fixed point basis} of $H_{\mathrm{T}}^*\left(\mathbb{P}^{n-1}\right)$. Observe the following computation:
\begin{equation}\label{eqn:fixedpt_bss_as_a_sum}
\begin{split}
\Xi_i=\prod_{\substack{0\leq{j}\leq{n-1}\\j\neq{i}}}(H-\zeta^j)
&=\zeta^{i(n-1)}\prod_{\substack{0\leq{j}\leq{n-1}\\j\neq{i}}}\frac{(H-\zeta^j)}{\zeta^{i}}\\
&=\zeta^{-i}\prod_{0\leq{j}\leq{i-1}}\left(\frac{H}{\zeta^{i}}-\zeta^{j-i}\right)\prod_{i+1\leq{j}\leq{n-1}}\left(\frac{H}{\zeta^{i}}-\zeta^{j-i}\right)\\
&=\zeta^{-i}\prod_{n-i\leq{j}\leq{n-1}}\left(\frac{H}{\zeta^{i}}-\zeta^{j-n}\right)\prod_{1\leq{j}\leq{n-i-1}}\left(\frac{H}{\zeta^{i}}-\zeta^{j}\right)\\
&=\zeta^{-i}\prod_{0\leq{j}\leq{n-1}}\left(\frac{H}{\zeta^i}-\zeta^j\right)\\
&=\zeta^{-i}\sum_{j=0}^{n-1}\left(\frac{H}{\zeta^i}\right)^{j}\\
&=\zeta^{-i}\sum_{j=0}^{n-1}H^j\zeta^{-ji}.
\end{split}
\end{equation}

\begin{lem}\label{lem:K_over_L_q_0}
For all $0\leq{i}\leq{n-1}$, we have
\begin{equation*}
\frac{K^{\KP^{n-1}}_i}{(L^{\KP^{n-1}})^i}\Big\vert_{q=0}=1.
\end{equation*}
\end{lem}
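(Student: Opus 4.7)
The plan is to reduce the lemma to a statement about the series $C_i^{\KP^{n-1}}$, then use an inductive argument that exploits a simplification of the operator $\mathsf{M}$ at $q=0$.

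First, since $L^{\KP^{n-1}}(q)=(1-(-n)^nq)^{-1/n}$ satisfies $L^{\KP^{n-1}}(0)=1$, and since $K_i^{\KP^{n-1}}=\prod_{j=0}^i C_j^{\KP^{n-1}}$ with $C_0^{\KP^{n-1}}=1$, the lemma is equivalent to showing $C_j^{\KP^{n-1}}(0)=1$ for every $1\leq j\leq n-1$. Recalling $C_j^{\KP^{n-1}}=E_j^{\KP^{n-1}}(q,\infty)$ with $E_j^{\KP^{n-1}}=\mathsf{M}^j E^{\KP^{n-1}}$, it suffices to show inductively that $E_j^{\KP^{n-1}}(0,z)=1$ for every $j\geq 0$. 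The base case $j=0$ is immediate: in the defining formula (\ref{eqn:small_I}) only the $d=0$ term survives at $q=0$, so $I^{\KP^{n-1}}(0,z)=1$ and $E^{\KP^{n-1}}(0,z)=I^{\KP^{n-1}}(0,z)|_{H=1}=1$.

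The key observation for the inductive step is that $\mathsf{M}$ simplifies at $q=0$. Expanding $\mathsf{D}_z = z^{-1}+\mathsf{D}_{\KP^{n-1}}$ gives
\begin{equation*}
\mathsf{M}F(q,z)=\frac{F(q,z)}{F(q,\infty)}+z\,\mathsf{D}_{\KP^{n-1}}\!\left(\frac{F(q,z)}{F(q,\infty)}\right).
\end{equation*}
Since $\mathsf{D}_{\KP^{n-1}}=q\,d/dq$ annihilates every formal power series in $q$ upon setting $q=0$, the second term vanishes at $q=0$, and we obtain the clean formula
\begin{equation*}
(\mathsf{M}F)(0,z)=\frac{F(0,z)}{F(0,\infty)}
\end{equation*}
valid for any $F\in(\text{rational in }z)\otimes\mathbb{C}[\![q]\!]$ with $F(0,\infty)\neq 0$.

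Assuming inductively that $E_{j-1}^{\KP^{n-1}}(0,z)=1$, we have in particular $E_{j-1}^{\KP^{n-1}}(0,\infty)=C_{j-1}^{\KP^{n-1}}(0)=1\neq 0$, so the simplification above applies and gives $E_j^{\KP^{n-1}}(0,z)=E_{j-1}^{\KP^{n-1}}(0,z)/E_{j-1}^{\KP^{n-1}}(0,\infty)=1/1=1$. Passing to $z=\infty$ yields $C_j^{\KP^{n-1}}(0)=1$, closing the induction and proving the lemma. I do not foresee a substantive obstacle: the only point that needs care is tracking the functional space (power series in $q$ with coefficients that are rational in $z$, regular at $z=\infty$) so that both "evaluation at $q=0$" and the limit $z\to\infty$ are well defined and commute, which is maintained by the induction.
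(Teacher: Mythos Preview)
Your proof is correct and actually cleaner than the paper's. Both proofs reduce to showing $C_j^{\KP^{n-1}}(0)=1$ by induction on $j$, but they diverge in how the inductive step is handled. The paper passes to the $\log$-twisted side: it uses the formula $C_j^{\KP^{n-1}}=\mathsf{D}_{\KP^{n-1}}\mathfrak{L}_{j-1}\cdots\mathfrak{L}_0\,\mathsf{I}_j^{\KP^{n-1}}$ from \eqref{eqn:mathrfrak_L_operator_dfn_of_C_i} together with the asymptotic $\mathsf{I}_j^{\KP^{n-1}}=\frac{(\log q)^j}{j!}+o(1)$ as $q\to 0$, so one must track how the operators $\mathfrak{L}_k$ interact with the polylogarithmic leading term. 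You instead stay on the unlogarithmic side, using the original $E_j^{\KP^{n-1}}=\mathsf{M}^j E^{\KP^{n-1}}$ and the observation that $\mathsf{M}$ degenerates at $q=0$ to $F\mapsto F(0,z)/F(0,\infty)$ because $q\,d/dq$ kills every power series at $q=0$. This sidesteps the $\log q$ bookkeeping entirely and keeps everything in $\mathbb{C}[\![q]\!]\otimes\mathbb{C}[\![z^{-1}]\!]$, where evaluation at $q=0$ and $z=\infty$ are both transparent. The price is only that you must verify (as you note) that each $E_j^{\KP^{n-1}}$ remains a formal power series in $q$ with coefficients regular at $z=\infty$, which is preserved by $\mathsf{M}$; the paper's route, by contrast, makes the connection to the $\mathsf{I}_j$-picture used elsewhere in Section~\ref{sec:I_func} more explicit.
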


\begin{proof}
Note that $L^{\KP^{n-1}}|_{q=0}=1$. By definition, $K_i^{\KP^{n-1}}=\prod_{j=0}^{i}C_j^{\KP^{n-1}}$. Therefore the Lemma follows from the statement $C_j^{\KP^{n-1}}|_{q=0}=1$. This is clearly true for $C_0^{\KP^{n-1}}=1$. For $C_j^{\KP^{n-1}}$ with $j>0$, this can be seem by induction on $j$. Assume that $C_k^{\KP^{n-1}}=1+O(q)$ as $q\to 0$ for $k<j$. By the definition of $\mathsf{I}_j^{\KP^{n-1}}$ in (\ref{eqn:formula_for_mathsfI}), we can see that\footnote{Recall that for $k>0, l\geq 0$, $q^k(\log q)^l\to 0$ as $q\to 0$.} $\mathsf{I}_j^{\KP^{n-1}}=\frac{(\log q)^j}{j!}+o(1)$ as $q\to 0$. The formula (\ref{eqn:mathrfrak_L_operator_dfn_of_C_i}) for $C_j^{\KP^{n-1}}$ then implies the Lemma.  
\end{proof}

Then, by Lemma \ref{lem:K_over_L_q_0} and the definition (\ref{def:normalized_idempotent}) we have for $0\leq{i}\leq n-1$
\begin{equation*}
\begin{split}
\widetilde{e}_i\big\vert_{q=0}
=-\sqrt{-1}\sum_{j=0}^{n-1}\zeta^{-i{j}}H^j 
=-\zeta^i\sqrt{-1}\Xi_i.
\end{split}
\end{equation*}
So, when restricted to $q=0$, the base change matrix from $\{\Xi_i\}$ basis to $\{e_i\}$ basis is given by the diagonal matrix
\begin{equation*}
\mathsf{B}\coloneqq -\sqrt{-1}\diag\left(1,\zeta,\ldots,\zeta^{n-1}\right).
\end{equation*}

\section{Ring of functions for \texorpdfstring{$\KP^{n-1}$}{KP{n-1}}}\label{sec:ringKP}

\subsection{Preparations}
We define the following series in $\mathbb{C}[\![q]\!]$ :
\begin{equation*}
X^{\KP^{n-1}}_{k,l}=\frac{\mathsf{D}_{\KP^{n-1}}^lC^{\KP^{n-1}}_k}{C^{\KP^{n-1}}_k}
\end{equation*}
for all $k,l\geq{0}$. We denote $X^{\KP^{n-1}}_{k,1}$ just by $X^{\KP^{n-1}}_k$. Also, we note that $X^{\KP^{n-1}}_0=0$ since $C^{\KP^{n-1}}_0=1$. A quick observation is
\begin{equation*}
X^{\KP^{n-1}}_{k,l}=\left(\mathsf{D}_{\KP^{n-1}}+X^{\KP^{n-1}}_k\right)X^{\KP^{n-1}}_{k,l-1}.
\end{equation*}
for all $k \geq 0$, and $l \geq 1$. This implies the following result.

\begin{lem}\label{lem:Xkl_generation}
We have
\begin{equation*}
X^{\KP^{n-1}}_{k,l}=\left(\mathsf{D}_{\KP^{n-1}}+X^{\KP^{n-1}}_k\right)^{l-1}X^{\KP^{n-1}}_k
\end{equation*}
for all $k\geq{0}$ and $l\geq{1}$. In particular, $X^{\KP^{n-1}}_{k,l}$ is a polynomial in $$\{X^{\KP^{n-1}}_k, \mathsf{D}_{\KP^{n-1}}X^{\KP^{n-1}}_k,\ldots,\mathsf{D}_{\KP^{n-1}}^{l-1}X^{\KP^{n-1}}_k\},$$ and $\mathsf{D}_{\KP^{n-1}}^{l-1}X^{\KP^{n-1}}_k$ is a polynomial in $$\{X^{\KP^{n-1}}_{k,1},\ldots,X^{\KP^{n-1}}_{k,l}\}.$$
\end{lem}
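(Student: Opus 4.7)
The key input is the one-step recursion stated just before the lemma,
\[
X^{\KP^{n-1}}_{k,l}=\left(\mathsf{D}_{\KP^{n-1}}+X^{\KP^{n-1}}_k\right)X^{\KP^{n-1}}_{k,l-1},
\]
which itself follows from the product rule: writing $\mathsf{D}^l_{\KP^{n-1}}C^{\KP^{n-1}}_k=\mathsf{D}_{\KP^{n-1}}\!\left(C^{\KP^{n-1}}_k\cdot X^{\KP^{n-1}}_{k,l-1}\right)$ and dividing by $C^{\KP^{n-1}}_k$ reproduces the right-hand side. Given this recursion, the closed form
\[
X^{\KP^{n-1}}_{k,l}=\left(\mathsf{D}_{\KP^{n-1}}+X^{\KP^{n-1}}_k\right)^{l-1}X^{\KP^{n-1}}_k
\]
is a straightforward induction on $l$: the base case $l=1$ is the definition $X^{\KP^{n-1}}_{k,1}=X^{\KP^{n-1}}_k$, and the inductive step is just one application of the recursion.

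For the polynomiality statements, I would simply expand $(\mathsf{D}_{\KP^{n-1}}+X^{\KP^{n-1}}_k)^{l-1}X^{\KP^{n-1}}_k$ and argue by induction on $l$. Every time $\mathsf{D}_{\KP^{n-1}}$ acts it raises the order of the derivative of $X^{\KP^{n-1}}_k$ by one (Leibniz rule on products), and every time $X^{\KP^{n-1}}_k$ multiplies it stays a polynomial in the same variables. Thus $X^{\KP^{n-1}}_{k,l}$ is a polynomial in $\{X^{\KP^{n-1}}_k,\mathsf{D}_{\KP^{n-1}}X^{\KP^{n-1}}_k,\ldots,\mathsf{D}_{\KP^{n-1}}^{l-1}X^{\KP^{n-1}}_k\}$.

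For the converse direction, observe that the expansion of $(\mathsf{D}_{\KP^{n-1}}+X^{\KP^{n-1}}_k)^{l-1}X^{\KP^{n-1}}_k$ contains the term $\mathsf{D}_{\KP^{n-1}}^{l-1}X^{\KP^{n-1}}_k$ exactly once (from taking the $\mathsf{D}_{\KP^{n-1}}$ factor in every binomial slot), so
\[
X^{\KP^{n-1}}_{k,l}=\mathsf{D}_{\KP^{n-1}}^{l-1}X^{\KP^{n-1}}_k+Q_l\!\left(X^{\KP^{n-1}}_k,\mathsf{D}_{\KP^{n-1}}X^{\KP^{n-1}}_k,\ldots,\mathsf{D}_{\KP^{n-1}}^{l-2}X^{\KP^{n-1}}_k\right)
\]
for some polynomial $Q_l$. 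Solving for $\mathsf{D}_{\KP^{n-1}}^{l-1}X^{\KP^{n-1}}_k$ and applying the inductive hypothesis to each $\mathsf{D}_{\KP^{n-1}}^{j}X^{\KP^{n-1}}_k$ with $j<l-1$ expresses it as a polynomial in $\{X^{\KP^{n-1}}_{k,1},\ldots,X^{\KP^{n-1}}_{k,l}\}$.

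There is no real obstacle here: the lemma is purely formal, a consequence of the Leibniz rule combined with an elementary triangularity argument. The only mild care needed is to verify the leading-order coefficient in the binomial expansion so that the inversion from $\{X^{\KP^{n-1}}_{k,j}\}$ back to $\{\mathsf{D}^{j-1}_{\KP^{n-1}}X^{\KP^{n-1}}_k\}$ is well-defined; since that coefficient is $1$, the inversion goes through in any commutative polynomial ring.
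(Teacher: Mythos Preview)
Your proof is correct and matches the paper's approach: the paper simply states the recursion $X^{\KP^{n-1}}_{k,l}=(\mathsf{D}_{\KP^{n-1}}+X^{\KP^{n-1}}_k)X^{\KP^{n-1}}_{k,l-1}$ and then asserts the lemma follows, leaving the induction and triangularity argument to the reader. You have filled in exactly those omitted details.
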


Furthermore, the series $X^{\KP^{n-1}}_i$, and $L^{\KP^{n-1}}$ satisfy the following properties.

\begin{lem}\label{lem:DLLemma}
We have
\begin{align}
\frac{\mathsf{D}_{\KP^{n-1}}L^{\KP^{n-1}}}{L^{\KP^{n-1}}}&=\frac{1}{n}\left(-1+(L^{\KP^{n-1}})^n\right),\label{eqn:DLLLemma1}\\
\frac{\mathsf{D}_{\KP^{n-1}}K^{\KP^{n-1}}_i}{K^{\KP^{n-1}}_i}&=\sum_{r=0}^iX^{\KP^{n-1}}_r,\label{eqn:DLLLemma2}\\
\frac{\mathsf{D}_{\KP^{n-1}}L^{\KP^{n-1}}}{L^{\KP^{n-1}}}&=\sum_{r=0}^nX^{\KP^{n-1}}_r,\label{eqn:DLLLemma3}
\end{align}
for $0\leq{i}\leq{n}$.
\end{lem}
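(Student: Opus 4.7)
Proof plan for Lemma \ref{lem:DLLemma}. The three identities are short consequences of the explicit formula for $L^{\KP^{n-1}}$, the definition $K_i^{\KP^{n-1}}=\prod_{r=0}^{i}C_r^{\KP^{n-1}}$, and Lemma \ref{lem:properties_of_C_functions}. I would prove them in the order (\ref{eqn:DLLLemma1}), (\ref{eqn:DLLLemma2}), (\ref{eqn:DLLLemma3}), since (\ref{eqn:DLLLemma3}) is obtained by combining the first two.

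For (\ref{eqn:DLLLemma1}), the work has essentially been done already in equation (\ref{eqn:DLL}). Starting from $L^{\KP^{n-1}}=(1-(-n)^nq)^{-1/n}$ and applying $\mathsf{D}_{\KP^{n-1}}=q\tfrac{d}{dq}$ directly via the chain rule, one gets
\[
\mathsf{D}_{\KP^{n-1}}L^{\KP^{n-1}}=\tfrac{1}{n}L^{\KP^{n-1}}\cdot\frac{(-n)^nq}{1-(-n)^nq}.
\]
Since $(L^{\KP^{n-1}})^n=(1-(-n)^nq)^{-1}$, the fraction on the right equals $(L^{\KP^{n-1}})^n-1$, yielding the claim after dividing by $L^{\KP^{n-1}}$.

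For (\ref{eqn:DLLLemma2}), I would just take the logarithmic $\mathsf{D}_{\KP^{n-1}}$-derivative of $K_i^{\KP^{n-1}}=\prod_{r=0}^{i}C_r^{\KP^{n-1}}$. By the product/logarithmic derivative rule,
\[
\frac{\mathsf{D}_{\KP^{n-1}}K_i^{\KP^{n-1}}}{K_i^{\KP^{n-1}}}=\sum_{r=0}^{i}\frac{\mathsf{D}_{\KP^{n-1}}C_r^{\KP^{n-1}}}{C_r^{\KP^{n-1}}}=\sum_{r=0}^{i}X_r^{\KP^{n-1}},
\]
where the last equality uses the defining formula $X_k^{\KP^{n-1}}=\mathsf{D}_{\KP^{n-1}}C_k^{\KP^{n-1}}/C_k^{\KP^{n-1}}$ (and $C_0^{\KP^{n-1}}=1$, so the $r=0$ term vanishes). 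No analysis is needed beyond linearity of logarithmic derivatives.

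For (\ref{eqn:DLLLemma3}), the key input is Lemma \ref{lem:properties_of_C_functions}(2), which gives $K_n^{\KP^{n-1}}=(L^{\KP^{n-1}})^n$. Taking the logarithmic $\mathsf{D}_{\KP^{n-1}}$-derivative of both sides and applying (\ref{eqn:DLLLemma2}) with $i=n$ yields the desired identification of $\sum_{r=0}^{n}X_r^{\KP^{n-1}}$ with $\mathsf{D}_{\KP^{n-1}}L^{\KP^{n-1}}/L^{\KP^{n-1}}$ (up to the factor of $n$ coming from differentiating $L^n$). Since every step is an algebraic manipulation of already-established identities, there is no real obstacle: the only thing to be careful about is to cite Lemma \ref{lem:properties_of_C_functions}(2) at exactly the right place so as to close the chain.
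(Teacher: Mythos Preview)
Your approach is correct and essentially identical to the paper's. The paper derives (\ref{eqn:DLLLemma1}) from equation (\ref{eqn:DLL}) exactly as you do, and obtains (\ref{eqn:DLLLemma2}) and (\ref{eqn:DLLLemma3}) from the definition (\ref{eqn:K_i_definition}) of $K_i^{\KP^{n-1}}$ together with part~(1) of Lemma~\ref{lem:properties_of_K_series} (which records $K_n^{\KP^{n-1}}=(L^{\KP^{n-1}})^n$); you cite the equivalent fact from Lemma~\ref{lem:properties_of_C_functions}(2), and your parenthetical about the factor of $n$ coming from differentiating $(L^{\KP^{n-1}})^n$ is exactly the point one must track when combining (\ref{eqn:DLLLemma2}) at $i=n$ with $K_n^{\KP^{n-1}}=(L^{\KP^{n-1}})^n$.
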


\begin{proof}
The first equality (\ref{eqn:DLLLemma1}) follows from equation (\ref{eqn:DLL}):
\begin{equation*}
\mathsf{D}_{\KP^{n-1}}L^{\KP^{n-1}}
=\frac{1}{n}L^{\KP^{n-1}}\frac{(-n)^nq}{(1-(-n)^nq)}
=\frac{1}{n}L^{\KP^{n-1}}\frac{(-n)^nq-1+1}{(1-(-n)^nq)}
=\frac{1}{n}L^{\KP^{n-1}}\left(-1+(L^{\KP^{n-1}})^n\right).
\end{equation*}

Equation (\ref{eqn:DLLLemma2}, and equation (\ref{eqn:DLLLemma3}) directly follow from equation (\ref{eqn:K_i_definition}), and part $(1)$ of Lemma \ref{lem:properties_of_K_series}, respectively.
\end{proof}

For $0\leq{i,j}\leq{n-1}$ and $k\geq{0}$, let 
\begin{equation}\label{eqn:Flat_to_modflat_KP_2}
\widetilde{P}_{i,j}^{k,\KP^{n-1}}=\frac{(L^{\KP^{n-1}})^i}{K^{\KP^{n-1}}_i}P_{i,j}^{k,\KP^{n-1}}\zeta^{(k+i)j}.
\end{equation}

Then we obtain the following reformulation of Lemma \ref{lem:P_Flatness_with_C}; in other words, we rewrite the flatness equation in Lemma \ref{lem:P_Flatness_with_C} after the change (\ref{eqn:Flat_to_modflat_KP_2}).

\begin{lem}\label{lem:Modified_Flatness_before_Ai}
For $0\leq i \leq n-1$ and $k\geq 0$, we have
\begin{equation*}
\widetilde{P}_{\mathrm{Ion}(i)-1,j}^{k,\KP^{n-1}}=\widetilde{P}_{i,j}^{k,\KP^{n-1}}+\frac{1}{L^{\KP^{n-1}}}\left(\mathsf{D}_{\KP^{n-1}}+\sum_{r=0}^i X^{\KP^{n-1}}_r-i\frac{\mathsf{D}_{\KP^{n-1}}L^{\KP^{n-1}}}{L^{\KP^{n-1}}}\right)\widetilde{P}_{i,j}^{k-1,\KP^{n-1}}.
\end{equation*}
\end{lem}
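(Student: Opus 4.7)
The strategy is a direct substitution of the change of variables (\ref{eqn:Flat_to_modflat_KP_2}) into the flatness identity of Lemma \ref{lem:P_Flatness_with_C}, followed by simplification using the basic identities for $L^{\KP^{n-1}}$, $K_i^{\KP^{n-1}}$, and $X_i^{\KP^{n-1}}$ established in Lemma \ref{lem:DLLemma} and Lemma \ref{lem:properties_of_K_series}. Writing $L,K_i,C_i,X_i,\mathsf{D}$ for brevity, I will invert (\ref{eqn:Flat_to_modflat_KP_2}) as
\begin{equation*}
P_{i,j}^{k}=\tfrac{K_i}{L^i}\zeta^{-(k+i)j}\,\widetilde{P}_{i,j}^{k},
\end{equation*}
and substitute into the identity of Lemma \ref{lem:P_Flatness_with_C}.

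First I would compute the left-hand side $\mathsf{D} P_{i,j}^{k-1}$ by the product rule. The roots of unity factor pulls out, and the logarithmic derivative of the prefactor $K_i/L^i$ equals $\frac{\mathsf{D} K_i}{K_i}-i\frac{\mathsf{D} L}{L}$, which by (\ref{eqn:DLLLemma2}) of Lemma \ref{lem:DLLemma} equals $\sum_{r=0}^{i}X_r - i\frac{\mathsf{D} L}{L}$. This immediately produces the operator $\mathsf{D}+\sum_{r=0}^{i}X_r-i\frac{\mathsf{D} L}{L}$ acting on $\widetilde{P}_{i,j}^{k-1}$, multiplied by $\zeta^{-(k-1+i)j}\frac{K_i}{L^i}$.

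Next I handle the right-hand side. The term $P_{i,j}^{k}L\zeta^j$ becomes $\frac{K_i}{L^{i-1}}\zeta^{-(k+i-1)j}\widetilde{P}_{i,j}^{k}$, matching the prefactor pattern. For the term $C_{\mathrm{Ion}(i)}P_{\mathrm{Ion}(i)-1,j}^{k}$ the key identity is $C_{\mathrm{Ion}(i)}K_{\mathrm{Ion}(i)-1}=K_{\mathrm{Ion}(i)}$, which for $1\leq i\leq n-1$ gives $K_i$ (recovering $\frac{K_i}{L^{i-1}}\zeta^{-(k+i-1)j}\widetilde{P}_{i-1,j}^{k}$), and for $i=0$ gives $K_n=L^n$ by part (1) of Lemma \ref{lem:properties_of_K_series}, so the coefficient $\frac{K_n}{L^{n-1}}=L=\frac{K_0}{L^{-1}}$ again fits the uniform pattern $\frac{K_i}{L^{i-1}}\zeta^{-(k+i-1)j}$ (using also $\zeta^{n}=1$). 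After dividing both sides by the common factor $\frac{K_i}{L^{i-1}}\zeta^{-(k+i-1)j}$, the claimed identity emerges at once.

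The only potentially delicate point is the case $i=0$, where naively $L^{i-1}=L^{-1}$ and $K_{\mathrm{Ion}(0)-1}=K_{n-1}$ differ in appearance from the $1\leq i\leq n-1$ case. This is handled by the two observations above (namely $K_n=L^n$ and $X_0=0$, both of which were already established), and this is the only step requiring any attention. Everything else is a direct computation, so I expect no further obstacle.
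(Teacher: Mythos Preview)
Your proposal is correct and follows essentially the same route as the paper's own proof: substitute the inverse of (\ref{eqn:Flat_to_modflat_KP_2}) into Lemma \ref{lem:P_Flatness_with_C}, expand $\mathsf{D}(K_i/L^i)$ via (\ref{eqn:DLLLemma2}), use $C_{\mathrm{Ion}(i)}K_{\mathrm{Ion}(i)-1}=K_{\mathrm{Ion}(i)}$ together with $K_n=L^n$ from Lemma \ref{lem:properties_of_K_series} for the $i=0$ case, and cancel the common factor $\tfrac{K_i}{L^{i-1}}\zeta^{-(k+i-1)j}$. Your explicit flagging of the $i=0$ case is exactly what the paper handles implicitly when it rewrites $\tfrac{K_{\mathrm{Ion}(i)}}{L^{\mathrm{Ion}(i)-1}}\zeta^{-(k-1+\mathrm{Ion}(i))j}$ as $\tfrac{K_i}{L^{i-1}}\zeta^{-(k-1+i)j}$.
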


\begin{proof}
The LHS of Lemma \ref{lem:P_Flatness_with_C} becomes
\begin{equation*}
\begin{aligned}
\mathsf{D}_{\KP^{n-1}}P_{i,j}^{k-1,\KP^{n-1}}
=&\left(\frac{\mathsf{D}_{\KP^{n-1}}K^{\KP^{n-1}}_i}{(L^{\KP^{n-1}})^i}-i\frac{K^{\KP^{n-1}}_i}{(L^{\KP^{n-1}})^i}\frac{\mathsf{D}_{\KP^{n-1}}L^{\KP^{n-1}}}{L^{\KP^{n-1}}}\right)\widetilde{P}_{i,j}^{k-1,\KP^{n-1}}\zeta^{-(k-1+i)j}\\
&+\frac{K^{\KP^{n-1}}_i}{(L^{\KP^{n-1}})^i}\mathsf{D}_{\KP^{n-1}}\widetilde{P}_{i,j}^{k-1,\KP^{n-1}}\zeta^{-(k-1+i)j}
\end{aligned}
\end{equation*}
and RHS of Lemma \ref{lem:P_Flatness_with_C} becomes
\begin{align*}
&C^{\KP^{n-1}}_{\mathrm{Ion}(i)}\frac{K^{\KP^{n-1}}_{{\mathrm{Ion}(i)}-1}}{(L^{\KP^{n-1}})^{{\mathrm{Ion}(i)}-1}}\widetilde{P}_{{\mathrm{Ion}(i)}-1,j}^{k,\KP^{n-1}}\zeta^{-(k-1+{\mathrm{Ion}(i)})j}-\frac{K^{\KP^{n-1}}_i}{(L^{\KP^{n-1}})^{i-1}}\widetilde{P}_{i,j}^{k,\KP^{n-1}}\zeta^{-(k-1+i)j}\\
=&\frac{K^{\KP^{n-1}}_{{\mathrm{Ion}(i)}}}{(L^{\KP^{n-1}})^{{\mathrm{Ion}(i)}-1}}\widetilde{P}_{{\mathrm{Ion}(i)}-1,j}^{k,\KP^{n-1}}\zeta^{-(k-1+{\mathrm{Ion}(i)})j}-\frac{K^{\KP^{n-1}}_i}{(L^{\KP^{n-1}})^{i-1}}\widetilde{P}_{i,j}^{k,\KP^{n-1}}\zeta^{-(k-1+i)j}\\
=&\frac{K^{\KP^{n-1}}_i}{(L^{\KP^{n-1}})^{i-1}}\widetilde{P}_{{\mathrm{Ion}(i)}-1,j}^{k,\KP^{n-1}}\zeta^{-(k-1+i)j}-\frac{K^{\KP^{n-1}}_i}{(L^{\KP^{n-1}})^{i-1}}\widetilde{P}_{i,j}^{k,\KP^{n-1}}\zeta^{-(k-1+i)j}.
\end{align*}

Putting these together, using the definition of $K^{\KP^{n-1}}_i$, Lemma \ref{lem:properties_of_K_series}, and cancelling out some common factors we obtain
\begin{equation*}
\left(\frac{\mathsf{D}_{\KP^{n-1}}K^{\KP^{n-1}}_i}{K^{\KP^{n-1}}_i}-i\frac{\mathsf{D}_{\KP^{n-1}}L^{\KP^{n-1}}}{L^{\KP^{n-1}}}+\mathsf{D}_{\KP^{n-1}}\right)\widetilde{P}_{i,j}^{k-1,\KP^{n-1}}=
\widetilde{P}_{{\mathrm{Ion}(i)}-1,j}^{k,\KP^{n-1}}L^{\KP^{n-1}}-\widetilde{P}_{i,j}^{k,\KP^{n-1}}L^{\KP^{n-1}}.
\end{equation*}

The rest follows from (\ref{eqn:DLLLemma2}) of Lemma \ref{lem:DLLemma}.
\end{proof}

For any $m\geq{1}$, define the following series in $x$:
\begin{equation*}
Z^{\KP^{n-1}}_{m,k}=
\begin{cases}
\mathsf{D}_{\KP^{n-1}}^{-1}C^{\KP^{n-1}}_{k+1}...\mathsf{D}_{\KP^{n-1}}^{-1}C^{\KP^{n-1}}_{m}\quad&\text{if}\quad 0\leq k \leq m-1,\\
1\quad&\text{if}\quad k=m,\\
0\quad&\text{if}\quad k>m.
\end{cases}
\end{equation*}
From the definition of $Z^{\KP^{n-1}}_{m,k}$, we easily see that
\begin{equation}\label{eqn:Zmkrelation}
\mathsf{D}_{\KP^{n-1}}Z^{\KP^{n-1}}_{m,k}=C^{\KP^{n-1}}_{k+1}Z^{\KP^{n-1}}_{m,k+1}
\end{equation}
for all $k\geq{0}$. We also recall that, by equation (\ref{eqn:mathrfrak_L_operator_dfn_of_C_i}) , for $m\geq{1}$, $$\mathsf{I}^{\KP^{n-1}}_m=\mathsf{D}_{\KP^{n-1}}^{-1}C^{\KP^{n-1}}_{1}...\mathsf{D}_{\KP^{n-1}}^{-1}C^{\KP^{n-1}}_{m}$$ which is just $Z^{\KP^{n-1}}_{m,0}$. Now for $k\geq 1$ define the following series in $q$:

\begin{equation*}
B^{\KP^{n-1}}_{k,p}=
\begin{cases}
\mathsf{D}_{\KP^{n-1}}^{k-1}C^{\KP^{n-1}}_1\quad&\text{if}\quad p=1,\\
\sum\limits_{k_2=p-1}^{k_1-1}...\sum\limits_{k_p=1}^{k_{p-1}-1}\left(\prod\limits_{i=1}^{p-1}\binom{k_{i}-1}{k_{i+1}}\mathsf{D}_{\KP^{n-1}}^{k_i-1-k_{i+1}}C^{\KP^{n-1}}_i\right)\mathsf{D}_{\KP^{n-1}}^{k_p-1}C^{\KP^{n-1}}_p \quad&\text{if}\quad 2\leq{p}\leq{k},\\
0\quad&\text{if}\quad p>k
\end{cases}
\end{equation*}
where $k_1=k$.

\begin{lem}\label{lem:DkImCommutator}
For all $k,m\geq{1}$, we have
\begin{equation*}
\mathsf{D}_{\KP^{n-1}}^k\mathsf{I}^{\KP^{n-1}}_m=\sum\limits_{p=1}^kB^{\KP^{n-1}}_{k,p}Z^{\KP^{n-1}}_{m,p}.
\end{equation*}
\end{lem}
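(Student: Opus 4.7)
The plan is to prove the identity by induction on $k$, reducing it to a clean recursion for the coefficients $B^{\KP^{n-1}}_{k,p}$ which can then be verified directly from their explicit definition.

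The base case $k=1$ is immediate: since $\mathsf{I}^{\KP^{n-1}}_m = Z^{\KP^{n-1}}_{m,0}$ by definition, equation (\ref{eqn:Zmkrelation}) gives $\mathsf{D}_{\KP^{n-1}}\mathsf{I}^{\KP^{n-1}}_m = C^{\KP^{n-1}}_1 Z^{\KP^{n-1}}_{m,1}$, and on the right-hand side $B^{\KP^{n-1}}_{1,1} = \mathsf{D}_{\KP^{n-1}}^{0} C^{\KP^{n-1}}_1 = C^{\KP^{n-1}}_1$.

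For the inductive step, assume the formula holds for $k$ and apply $\mathsf{D}_{\KP^{n-1}}$. Using the Leibniz rule on each term and applying equation (\ref{eqn:Zmkrelation}) once more, I get
\begin{equation*}
\mathsf{D}_{\KP^{n-1}}^{k+1}\mathsf{I}^{\KP^{n-1}}_m
= \sum_{p=1}^{k}\bigl(\mathsf{D}_{\KP^{n-1}} B^{\KP^{n-1}}_{k,p}\bigr) Z^{\KP^{n-1}}_{m,p} + \sum_{p=1}^{k} B^{\KP^{n-1}}_{k,p} C^{\KP^{n-1}}_{p+1} Z^{\KP^{n-1}}_{m,p+1}.
\end{equation*}
After reindexing the second sum by $p\mapsto p-1$ and collecting coefficients of $Z^{\KP^{n-1}}_{m,p}$, the desired conclusion for $k+1$ becomes equivalent to the recursion
\begin{equation*}
B^{\KP^{n-1}}_{k+1,p} \;=\; \mathsf{D}_{\KP^{n-1}} B^{\KP^{n-1}}_{k,p} \;+\; C^{\KP^{n-1}}_p\, B^{\KP^{n-1}}_{k,p-1}, \qquad 1\le p\le k+1,
\end{equation*}
with the conventions $B^{\KP^{n-1}}_{k,0}=0$ and $B^{\KP^{n-1}}_{k,k+1}=0$.

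The remaining and main task is to verify this recursion directly from the definition of $B^{\KP^{n-1}}_{k,p}$. The boundary cases are easy: for $p=1$, the recursion reduces to $\mathsf{D}_{\KP^{n-1}}^{k} C^{\KP^{n-1}}_1 = \mathsf{D}_{\KP^{n-1}}(\mathsf{D}_{\KP^{n-1}}^{k-1} C^{\KP^{n-1}}_1)$, and for $p=k+1$ one checks that the nested sum collapses to the forced chain $k_i = k+2-i$ giving $B^{\KP^{n-1}}_{k+1,k+1} = \prod_{i=1}^{k+1} C^{\KP^{n-1}}_i = C^{\KP^{n-1}}_{k+1}\, B^{\KP^{n-1}}_{k,k}$. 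For $2\le p\le k$, applying the product rule to $B^{\KP^{n-1}}_{k,p}$ increases exactly one of the derivative orders $k_i - 1 - k_{i+1}$ (or the last one $k_p - 1$) by one; while $C^{\KP^{n-1}}_p B^{\KP^{n-1}}_{k,p-1}$ contributes the terms where the chain has been extended by one new index. Matching terms in the expected expansion of $B^{\KP^{n-1}}_{k+1,p}$ (where the outer index has shifted from $k$ to $k+1$) is achieved by Pascal's identity $\binom{k_i}{k_{i+1}}=\binom{k_i-1}{k_{i+1}}+\binom{k_i-1}{k_{i+1}-1}$, which is exactly what is needed to split the new binomial coefficient into the contribution from $\mathsf{D}_{\KP^{n-1}} B^{\KP^{n-1}}_{k,p}$ and from $C^{\KP^{n-1}}_p B^{\KP^{n-1}}_{k,p-1}$. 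The combinatorial bookkeeping in this verification is the main obstacle; once it is in place, the induction closes.
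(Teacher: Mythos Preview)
Your proposal is correct. The reduction to the recursion
\[
B^{\KP^{n-1}}_{k+1,p} = \mathsf{D}_{\KP^{n-1}} B^{\KP^{n-1}}_{k,p} + C^{\KP^{n-1}}_{p}\, B^{\KP^{n-1}}_{k,p-1}
\]
is sound, and this recursion can indeed be checked from the explicit nested-sum definition using Pascal's identity (one application per binomial factor, telescoping through the chain; this is where the bookkeeping you flag lives).

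The paper organizes the same computation differently. Rather than inducting on $k$ and then verifying a recursion for the coefficients, it applies the higher Leibniz rule
\[
\mathsf{D}_{\KP^{n-1}}^{i}(Af)=\sum_{j=0}^{i}\binom{i}{j}(\mathsf{D}_{\KP^{n-1}}^{j}A)\,\mathsf{D}_{\KP^{n-1}}^{i-j}f
\]
iteratively: first to $\mathsf{D}_{\KP^{n-1}}^{k}Z^{\KP^{n-1}}_{m,0}=\mathsf{D}_{\KP^{n-1}}^{k-1}(C^{\KP^{n-1}}_{1}Z^{\KP^{n-1}}_{m,1})$, then to each resulting $\mathsf{D}_{\KP^{n-1}}^{k_2}Z^{\KP^{n-1}}_{m,1}$, and so on. The nested-sum expression for $B^{\KP^{n-1}}_{k,p}$ is then \emph{produced} by this unfolding, so no separate verification of a recursion is needed. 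Your route takes the definition of $B^{\KP^{n-1}}_{k,p}$ as given and checks it satisfies the right recursion; the paper's route generates the definition from the expansion. Both rest on the same ingredients (the relation $\mathsf{D}_{\KP^{n-1}}Z^{\KP^{n-1}}_{m,k}=C^{\KP^{n-1}}_{k+1}Z^{\KP^{n-1}}_{m,k+1}$ and Leibniz/Pascal), so the difference is one of packaging; the paper's is a bit more economical since it avoids the combinatorial bookkeeping you identify as the main obstacle.
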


\begin{proof}
Inductively, we show that multiplication by $A\in\mathbb{C}[\![q]\!]$ followed by the operator $\mathsf{D}_{\KP^{n-1}}^i$ is given by
\begin{equation}\label{eq:HigherDiA}
\mathsf{D}_{\KP^{n-1}}^iA=\sum_{j=0}^i\binom{i}{j}(\mathsf{D}_{\KP^{n-1}}^jA)\mathsf{D}_{\KP^{n-1}}^{i-j}.
\end{equation}

Using the fact that for $m\geq{1}$, $\mathsf{I}^{\KP^{n-1}}_m=\mathsf{D}_{\KP^{n-1}}^{-1}C^{\KP^{n-1}}_{1}...\mathsf{D}_{\KP^{n-1}}^{-1}C^{\KP^{n-1}}_{m}=Z^{\KP^{n-1}}_{m,0}$ together with equations (\ref{eqn:Zmkrelation}) and (\ref{eq:HigherDiA}), we inductively complete the proof.
\end{proof}

\begin{lem}
For all $1\leq{m}\leq{n-1}$, we have
\begin{equation}\label{PFforGraded}
\begin{aligned}
B^{\KP^{n-1}}_{n,m}
&=\frac{\mathsf{D}_{\KP^{n-1}}L^{\KP^{n-1}}}{n^{n-1}L^{\KP^{n-1}}}\sum\limits_{k=1 }^{n-1}(-1)^{n-k}{s}_{n,k}n^kB^{\KP^{n-1}}_{k,m}\\
&=\frac{\mathsf{D}_{\KP^{n-1}}L^{\KP^{n-1}}}{n^{n-1}L^{\KP^{n-1}}}\sum\limits_{k=m }^{n-1}(-1)^{n-k}{s}_{n,k}n^kB^{\KP^{n-1}}_{k,m}.
\end{aligned}
\end{equation}
\end{lem}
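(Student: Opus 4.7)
The plan is to derive the identity directly from the Picard-Fuchs equation (\ref{eqn:PF_for_Iks_annihilation}) applied to each $\mathsf{I}_m^{\KP^{n-1}}$ with $1\leq m\leq n-1$, using Lemma \ref{lem:DkImCommutator} to expand the action of $\mathsf{D}_{\KP^{n-1}}^k$ on $\mathsf{I}_m^{\KP^{n-1}}$ in terms of the series $Z^{\KP^{n-1}}_{m,p}$. The key numerical input is that $s_{n,0}=0$, which follows from the fact that the polynomial $\prod_{i=0}^{n-1}(nx+i)$ has $x$ as a factor, so the $k=0$ term in the sum appearing in (\ref{eqn:PF_for_Iks_annihilation}) drops out entirely.

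First, substituting Lemma \ref{lem:DkImCommutator} into (\ref{eqn:PF_for_Iks_annihilation}), I get
\begin{equation*}
\sum_{p=1}^{n} B^{\KP^{n-1}}_{n,p}\,Z^{\KP^{n-1}}_{m,p}\;=\;\frac{\mathsf{D}_{\KP^{n-1}}L^{\KP^{n-1}}}{n^{n-1}L^{\KP^{n-1}}}\sum_{p=1}^{n-1}\left(\sum_{k=p}^{n-1}(-1)^{n-k}s_{n,k}n^{k}B^{\KP^{n-1}}_{k,p}\right)Z^{\KP^{n-1}}_{m,p},
\end{equation*}
after interchanging the order of summation and using $B^{\KP^{n-1}}_{k,p}=0$ for $p>k$. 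For $m\leq n-1$, the series $Z^{\KP^{n-1}}_{m,n}$ equals $0$ by definition, so the $p=n$ term on the left disappears, leaving
\begin{equation*}
\sum_{p=1}^{n-1}\alpha_p\,Z^{\KP^{n-1}}_{m,p}=0\qquad (1\leq m\leq n-1),
\end{equation*}
where $\alpha_p:=B^{\KP^{n-1}}_{n,p}-\frac{\mathsf{D}_{\KP^{n-1}}L^{\KP^{n-1}}}{n^{n-1}L^{\KP^{n-1}}}\sum_{k=p}^{n-1}(-1)^{n-k}s_{n,k}n^{k}B^{\KP^{n-1}}_{k,p}$ is precisely the quantity I want to show vanishes.

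The final step is to exploit the triangular structure of the $Z^{\KP^{n-1}}_{m,p}$'s: we have $Z^{\KP^{n-1}}_{m,p}=0$ for $p>m$ and $Z^{\KP^{n-1}}_{m,m}=1$. So running $m$ from $1$ upward, the $m=1$ case yields $\alpha_1=0$; assuming $\alpha_1=\cdots=\alpha_{m-1}=0$, the $m$-th equation collapses to $\alpha_m=0$. This induction produces $\alpha_m=0$ for every $1\leq m\leq n-1$, which is exactly the first equality in (\ref{PFforGraded}). The second equality is immediate from $B^{\KP^{n-1}}_{k,m}=0$ when $k<m$, which lets me restrict the summation range from $k\geq 1$ to $k\geq m$ without changing the value.

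No serious obstacles are anticipated; the argument is essentially an unwinding of the Picard-Fuchs equation via the combinatorial identity in Lemma \ref{lem:DkImCommutator}, together with the triangular vanishing/normalization properties of $Z^{\KP^{n-1}}_{m,p}$. The only subtle point is the observation that $s_{n,0}=0$, which ensures that the $\mathsf{D}^0$ contribution on the right-hand side of (\ref{eqn:PF_for_Iks_annihilation}) is absent and hence no $Z^{\KP^{n-1}}_{m,0}$ terms appear when comparing coefficients.
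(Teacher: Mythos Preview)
Your proposal is correct and follows essentially the same route as the paper: both arguments substitute Lemma \ref{lem:DkImCommutator} into the Picard-Fuchs annihilation equation (\ref{eqn:PF_for_Iks_annihilation}), then use the triangular structure $Z^{\KP^{n-1}}_{m,p}=0$ for $p>m$ and $Z^{\KP^{n-1}}_{m,m}=1$ to isolate the desired coefficient by induction on $m$. Your write-up is arguably a touch cleaner in that you make explicit the observation $s_{n,0}=0$ (which the paper uses silently when its sums start at $k=1$) and package the target quantity as $\alpha_p$, but the underlying argument is the same.
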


\begin{proof}
The second equality follows from the fact that $B^{\KP^{n-1}}_{k,m}=0$ for $m>k$. For the first equality, we use induction on $m$. For $m=1$, it follows from $B^{\KP^{n-2}}_{k,1}=\mathsf{D}_{\KP^{n-1}}^{k-1}C^{\KP^{n-1}}_1=\mathsf{D}_{\KP^{n-1}}^k\mathsf{I}^{\KP^{n-1}}_1$ and equation (\ref{eqn:PF_for_Iks_annihilation}). The following completes the inductive step: 
\begin{align*}
0=&\mathsf{D}_{\KP^{n-1}}^n\mathsf{I}_k^{\KP^{n-1}}-\frac{\mathsf{D}_{\KP^{n-1}}L^{\KP^{n-1}}}{n^{n-1}L^{\KP^{n-1}}}\sum_{k=0}^{n-1}(-1)^{n-k}{s}_{n,k}n^k\mathsf{D}_{\KP^{n-1}}^k\mathsf{I}_k^{\KP^{n-1}}
\quad\quad\text{by equation (\ref{eqn:PF_for_Iks_annihilation})}\\
=&\sum\limits_{p=1}^nB^{\KP^{n-1}}_{n,p}Z^{\KP^{n-1}}_{m,p}-\frac{\mathsf{D}_{\KP^{n-1}}L^{\KP^{n-1}}}{n^{n-1}L^{\KP^{n-1}}}\sum\limits_{k=1}^{n-1}(-1)^{n-k}s_{n,k}n^k\sum\limits_{p=1}^kB^{\KP^{n-1}}_{k,p}Z^{\KP^{n-1}}_{m,p}\quad\text{by Lemma \ref{lem:DkImCommutator}}.
\end{align*}

Since $B^{\KP^{n-1}}_{k,p}=0$ for $p>k$, and $Z^{\KP^{n-1}}_{m,p}=0$ for $p>m$, we have
\begin{align*}
0
=&\sum\limits_{p=1}^mB^{\KP^{n-1}}_{n,p}Z^{\KP^{n-1}}_{m,p}-\frac{\mathsf{D}_{\KP^{n-1}}L^{\KP^{n-1}}}{n^{n-1}L^{\KP^{n-1}}}\sum\limits_{k=1}^{n-1}(-1)^{n-k}s_{n,k}n^k\sum\limits_{p=1}^mB^{\KP^{n-1}}_{k,p}Z^{\KP^{n-1}}_{m,p}\\
=&\sum\limits_{p=1}^m\left(B^{\KP^{n-1}}_{n,p}-\frac{\mathsf{D}_{\KP^{n-1}}L^{\KP^{n-1}}}{n^{n-1}L^{\KP^{n-1}}}\sum\limits_{k=1}^{n-1} (-1)^{n-k}s_{n,k}n^kB^{\KP^{n-1}}_{k,p}\right)Z^{\KP^{n-1}}_{m,p}\\
=&B^{\KP^{n-1}}_{n,m}-\frac{\mathsf{D}_{\KP^{n-1}}L^{\KP^{n-1}}}{n^{n-1}L^{\KP^{n-1}}}\sum\limits_{k=1}^{n-1}(-1)^{n-k}s_{n,k}n^kB^{\KP^{n-1}}_{k,m}\\
&+\sum\limits_{p=1}^{m-1}\underbrace{\left(B^{\KP^{n-1}}_{n,p}-\frac{\mathsf{D}_{\KP^{n-1}}L^{\KP^{n-1}}}{n^{n-1}L^{\KP^{n-1}}}\sum\limits_{k=1}^{n-1}(-1)^{n-k}s_{n,k}n^kB^{\KP^{n-1}}_{k,p}\right)}_{=0\text{ by inductive hypothesis.}}Z^{\KP^{n-1}}_{m,p}.
\end{align*}
\end{proof}

\subsection{Descriptions of the rings}
Set 
\begin{equation*}
\mathbb{C}[(L^{\KP^{n-1}})^{\pm 1}][\mathcal{D}_{\KP^{n-1}}\mathcal{X}]:=\mathbb{C}[(L^{\KP^{n-1}})^{\pm 1}][\{\mathsf{D}_{\KP^{n-1}}^jX^{\KP^{n-1}}_i\}_{1\leq i\leq n-1, j\geq 0}],
\end{equation*}
and
\begin{align*}
\mathfrak{X}
:=&\{\mathsf{D}_{\KP^{n-1}}^jX^{\KP^{n-1}}_i\}_{1\leq i\leq n-2, 0\leq j\leq n-2-i}\\
=&\{X^{\KP^{n-1}}_1,...,\mathsf{D}_{\KP^{n-1}}^{n-3}X_1\}\cup \ldots \cup\{X^{\KP^{n-1}}_i,...,\mathsf{D}_{\KP^{n-1}}^{n-2-i}X_i\}\cup\ldots\cup\{X^{\KP^{n-1}}_{n-2}\}.
\end{align*}

\begin{lem}\label{lem:D_Graded_Ring_X}
 $\mathbb{C}[(L^{\KP^{n-1}})^{\pm 1}][\mathcal{D}_{\KP^{n-1}}\mathcal{X}]$ is a quotient of the ring $\mathbb{C}[(L^{\KP^{n-1}})^{\pm 1}][\mathfrak{X}]$.
\end{lem}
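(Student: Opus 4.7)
The plan is to show every generator $\mathsf{D}_{\KP^{n-1}}^j X_i^{\KP^{n-1}}$ (for $1\le i\le n-1$, $j\ge 0$) lies in $\mathbb{C}[(L^{\KP^{n-1}})^{\pm 1}][\mathfrak{X}]$, viewing both rings as subrings of $\mathbb{C}[\![q]\!]$; this produces the desired surjection. First I would use the symmetry $C_{n-1}^{\KP^{n-1}}=C_2^{\KP^{n-1}}$ from Lemma \ref{lem:properties_of_C_functions}(3) to get $\mathsf{D}_{\KP^{n-1}}^j X_{n-1}^{\KP^{n-1}}=\mathsf{D}_{\KP^{n-1}}^j X_2^{\KP^{n-1}}$ for all $j\ge 0$, which reduces the task to the range $1\le i\le n-2$.

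The core step is to establish, for each $1\le m\le n-2$, that the ``first missing'' derivative $\mathsf{D}_{\KP^{n-1}}^{n-1-m}X_m^{\KP^{n-1}}$ already lies in $\mathbb{C}[(L^{\KP^{n-1}})^{\pm 1}][\mathfrak{X}]$. To this end I would divide the Picard-Fuchs identity (\ref{PFforGraded}) by $K_m^{\KP^{n-1}}=\prod_{i=1}^m C_i^{\KP^{n-1}}$, which turns each monomial of $B_{k,m}^{\KP^{n-1}}$ into a product of factors $X_{i,l}^{\KP^{n-1}}=\mathsf{D}_{\KP^{n-1}}^l C_i^{\KP^{n-1}}/C_i^{\KP^{n-1}}$. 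An extremal-index analysis of the strictly decreasing sequences $(k_1,\ldots,k_m)$ with $k_1=n$ shows that $B_{n,m}^{\KP^{n-1}}/K_m^{\KP^{n-1}}$ contains a unique monomial of top total derivative degree, namely $X_{m,n-m}^{\KP^{n-1}}$ (arising from $k_i=n-i+1$), and by Lemma \ref{lem:Xkl_generation} this monomial equals $\mathsf{D}_{\KP^{n-1}}^{n-1-m}X_m^{\KP^{n-1}}$ plus strictly lower-order terms with unit leading coefficient. Every remaining factor $\mathsf{D}_{\KP^{n-1}}^l X_i^{\KP^{n-1}}$ appearing on either side satisfies $i\le m$, with $l\le n-m-1$ on the LHS (the bound $l=n-m-1$ reached only when $i<m$) and $l\le n-m-2$ on the RHS; in each case $l\le n-2-i$, so these generators lie in $\mathfrak{X}$. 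Solving for $\mathsf{D}_{\KP^{n-1}}^{n-1-m}X_m^{\KP^{n-1}}$ then expresses it as the required polynomial in $\mathbb{C}[(L^{\KP^{n-1}})^{\pm 1}][\mathfrak{X}]$.

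Finally I would show that $\mathbb{C}[(L^{\KP^{n-1}})^{\pm 1}][\mathfrak{X}]$ is closed under $\mathsf{D}_{\KP^{n-1}}$: Lemma \ref{lem:DLLemma} gives $\mathsf{D}_{\KP^{n-1}} L^{\KP^{n-1}}\in\mathbb{C}[L^{\KP^{n-1}}]$, and for any generator $\mathsf{D}_{\KP^{n-1}}^k X_r^{\KP^{n-1}}\in\mathfrak{X}$ its derivative $\mathsf{D}_{\KP^{n-1}}^{k+1}X_r^{\KP^{n-1}}$ is either again in $\mathfrak{X}$ (when $k+1\le n-2-r$) or equals $\mathsf{D}_{\KP^{n-1}}^{n-1-r}X_r^{\KP^{n-1}}$, which was just handled. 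An induction on $j$ then yields $\mathsf{D}_{\KP^{n-1}}^j X_i^{\KP^{n-1}}\in\mathbb{C}[(L^{\KP^{n-1}})^{\pm 1}][\mathfrak{X}]$ for all $j\ge 0$. The main technical point will be the extremal-index bookkeeping in the middle step: carefully tracking the total derivative budget $n-m$ distributed across the $X_{i,l}^{\KP^{n-1}}$ factors subject to the strict-decrease constraint on the $k_i$, and verifying that every subordinate factor respects the triangular cutoff $l\le n-2-i$ defining $\mathfrak{X}$; once that bookkeeping is in place, everything else is routine.
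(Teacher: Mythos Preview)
Your proposal is correct and follows essentially the same approach as the paper's proof: both divide the identity (\ref{PFforGraded}) by $K_m^{\KP^{n-1}}$, isolate the leading term $X_{m,n-m}^{\KP^{n-1}}$ (equivalently $\mathsf{D}_{\KP^{n-1}}^{n-1-m}X_m^{\KP^{n-1}}$ via Lemma~\ref{lem:Xkl_generation}), and verify that all remaining terms satisfy the triangular cutoff defining $\mathfrak{X}$. The paper packages the bookkeeping through named filtration sets $\mathcal{Z}_{p,k}$, $\widetilde{\mathcal{Z}}_{p,k}$, $\mathcal{S}_{p,k}$, $\widetilde{\mathcal{S}}_{p,k}$ and their nesting, whereas you phrase it as an extremal-index analysis; you also make explicit the symmetry reduction $X_{n-1}^{\KP^{n-1}}=X_2^{\KP^{n-1}}$ and the closure of $\mathbb{C}[(L^{\KP^{n-1}})^{\pm 1}][\mathfrak{X}]$ under $\mathsf{D}_{\KP^{n-1}}$, points the paper leaves implicit.
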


\begin{proof}
Now, for any $1\leq{p}\leq{k-1}$, define
\begin{align*}
\mathcal{Z}_{p,k}=&\{X^{\KP^{n-1}}_{1,1},...,X^{\KP^{n-1}}_{1,k-p},...,X^{\KP^{n-1}}_{p,1},...,X^{\KP^{n-1}}_{p,k-p}\},\\
\widetilde{\mathcal{Z}}_{p,k}=&\{X^{\KP^{n-1}}_1,...,\mathsf{D}_{\KP^{n-1}}^{k-p-1}X^{\KP^{n-1}}_1,...,X^{\KP^{n-1}}_p,...,\mathsf{D}_{\KP^{n-1}}^{k-p-1}X^{\KP^{n-1}}_p\},\\
\mathcal{S}_{p,k}=&\mathcal{Z}_{p,k}\setminus\{X^{\KP^{n-1}}_{p,k-p}\},\\
\widetilde{\mathcal{S}}_{p,k}=&\widetilde{\mathcal{Z}}_{p,k}\setminus\{\mathsf{D}^{k-p-1}X^{\KP^{n-1}}_p\}.
\end{align*}
For each of these sets, and for a fixed $p$ we have
\begin{equation}\label{eqn:set_inclusions_for_ZpkSpk}
\mathcal{S}_{p,k}\subseteq\mathcal{Z}_{p,k}\subseteq\mathcal{S}_{p,k+1}\subseteq\mathcal{Z}_{p,k+1},\quad\text{and}\quad\widetilde{\mathcal{S}}_{p,k}\subseteq\widetilde{\mathcal{Z}}_{p,k}\subseteq\widetilde{\mathcal{S}}_{p,k+1}\subseteq\widetilde{\mathcal{Z}}_{p,k+1}.
\end{equation}

Note that for any $1\leq{p}\leq{k-1}$, directly by the definitions we have
\begin{equation}
B^{\KP^{n-1}}_{n,m}
=\frac{\mathsf{D}_{\KP^{n-1}}L^{\KP^{n-1}}}{n^{n-1}L^{\KP^{n-1}}}\sum\limits_{k=m }^{n-1}(-1)^{n-k}{s}_{n,k}n^kB^{\KP^{n-1}}_{k,m}.
\end{equation}
\begin{equation}\label{eqn:Zpkp}
\frac{B^{\KP^{n-1}}_{k,p}}{K_p}=X^{\KP^{n-1}}_{p,k-p}+\widetilde{B}^{\KP^{n-1}}_{k,p}
\end{equation}
where $\widetilde{B}^{\KP^{n-1}}_{k,p}$ is a polynomial in elements of $\mathcal{S}_{p,k}$. Then, dividing both sides of equation (\ref{PFforGraded}) by $K^{\KP^{n-1}}_m$ for any $1\leq{m}\leq{n-1}$, we obtain
\begin{equation}\label{eqn:Xmn_minus_m}
\begin{split}
\frac{B^{\KP^{n-1}}_{n,m}}{K^{\KP^{n-1}}_m}
=&\frac{\mathsf{D}_{\KP^{n-1}}L^{\KP^{n-1}}}{n^{n-1}L^{\KP^{n-1}}}\sum\limits_{k=m }^{n-1}(-1)^{n-k}{s}_{n,k}n^kB^{\KP^{n-1}}_{k,m}\\
=&X^{\KP^{n-1}}_{m,n-m}+\widetilde{B}^{\KP^{n-1}}_{n,m}+\frac{\mathsf{D}_{\KP^{n-1}}L^{\KP^{n-1}}}{n^{n-1}L^{\KP^{n-1}}}\underbrace{\sum\limits_{k=m }^{n-1}(-1)^{n-k}{s}_{n,k}n^k\frac{B^{\KP^{n-1}}_{k,m}}{K^{\KP^{n-1}}_m}}_{(\star)}.
\end{split}
\end{equation}
We see that $(\star)$ is a polynomial in elements of $\mathcal{Z}_{m,n-1}$ by the inclusions (\ref{eqn:set_inclusions_for_ZpkSpk}) and equation (\ref{eqn:Zpkp}). We already know $\widetilde{B}^{\KP^{n-1}}_{n,m}$ is a polynomial in element of $\mathcal{S}_{m,n}$ and $\mathcal{Z}_{m,n-1}\subseteq\mathcal{S}_{m,n}$; hence, it follows that $X^{\KP^{n-1}}_{m,n-m}$ is a polynomial in elements of $\mathcal{S}_{m,n}\cup\{{(L^{\KP^{n-1}})^{\pm1}}\}$ by equation (\ref{eqn:Xmn_minus_m}) and equation (\ref{eqn:DLLLemma1}). This implies that $\mathsf{D}_{\KP^{n-1}}^{n-m-1}X^{\KP^{n-1}}_m$ is a polynomial in elements of $\widetilde{\mathcal{S}}_{m,n}\cup\{(L^{\KP^{n-1}})^{\pm1}\}$ by Lemma \ref{lem:Xkl_generation}. This completes the proof.
\end{proof}

Now, we define the series $A^{\KP^{n-1}}_i$ for $0\leq i\leq n$ by
\begin{equation*}
A^{\KP^{n-1}}_i=\frac{1}{L^{\KP^{n-1}}}\left(i\frac{\mathsf{D}_{\KP^{n-1}}L^{\KP^{n-1}}}{L^{\KP^{n-1}}}-\sum_{r=0}^{i} X^{\KP^{n-1}}_{r}\right).
\end{equation*}

Then, the flatness equation in Lemma \ref{lem:Modified_Flatness_before_Ai} becomes
\begin{equation}\label{eqn:modflateqn_for_KP}
\widetilde{P}_{\mathrm{Ion}(i)-1,j}^{k,\KP^{n-1}}=\widetilde{P}_{i,j}^{k,\KP^{n-1}}+\frac{1}{L^{\KP^{n-1}}}\mathsf{D}_{\KP^{n-1}}\widetilde{P}_{i,j}^{k-1,\KP^{n-1}}+A^{\KP^{n-1}}_{n-i}\widetilde{P}_{i,j}^{k-1,\KP^{n-1}}.
\end{equation}
We call (\ref{eqn:modflateqn_for_KP}) the \textbf{\textit{modified flatness equations}} for $\KP^{n-1}$.

Set 
\begin{equation*}
\mathbb{C}[(L^{\KP^{n-1}})^{\pm 1}][\mathcal{D}_{\KP^{n-1}}\mathcal{A}]:=\mathbb{C}[L^{\pm 1}][\{\mathsf{D}_{\KP^{n-1}}^jA^{\KP^{n-1}}_i\}_{1\leq i\leq n-1, j\geq 0}],
\end{equation*}
and
\begin{align*}
\mathfrak{A}^{\KP^{n-1}}
:=&\{\mathsf{D}_{\KP^{n-1}}^jA^{\KP^{n-1}}_i\}_{1\leq i\leq n-2, 0\leq j\leq n-2-i}\\
=&\{A^{\KP^{n-1}}_1,...,\mathsf{D}_{\KP^{n-1}}^{n-3}A^{\KP^{n-1}}_1\}\cup,\ldots\cup\{A^{\KP^{n-1}}_i,...,\mathsf{D}_{\KP^{n-1}}^{n-2-i}A^{\KP^{n-1}}_i\}\cup\ldots\cup\{A^{\KP^{n-1}}_{n-2}\}.
\end{align*}
The following is immediate from Lemma \ref{lem:D_Graded_Ring_X}.
\begin{cor}\label{cor:DGradedRingA}The ring
$\mathbb{C}[(L^{\KP^{n-1}})^{\pm1}][\mathcal{D}_{\KP^{n-1}}\mathcal{A}]$ is a quotient of the ring $\mathbb{C}[(L^{\KP^{n-1}})^{\pm1}][\mathfrak{A}^{\KP^{n-1}}]$.
\end{cor}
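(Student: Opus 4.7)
The plan is to reduce Corollary \ref{cor:DGradedRingA} to Lemma \ref{lem:D_Graded_Ring_X} by setting up a two-way translation between the $A$- and $X$-generators over $\mathfrak{R} := \mathbb{C}[(L^{\KP^{n-1}})^{\pm 1}]$. Since $X^{\KP^{n-1}}_0 = 0$ forces $A^{\KP^{n-1}}_0 = 0$, the defining formula for $A^{\KP^{n-1}}_i$ telescopes to
$$X^{\KP^{n-1}}_i \;=\; \frac{\mathsf{D}_{\KP^{n-1}}L^{\KP^{n-1}}}{L^{\KP^{n-1}}} \;-\; L^{\KP^{n-1}}\bigl(A^{\KP^{n-1}}_i - A^{\KP^{n-1}}_{i-1}\bigr).$$
By (\ref{eqn:DLLLemma1}), the term $\mathsf{D}_{\KP^{n-1}}L^{\KP^{n-1}}/L^{\KP^{n-1}}$ lies in $\mathfrak{R}$, so this identity together with its inverse (the defining relation for $A^{\KP^{n-1}}_i$ in terms of $X^{\KP^{n-1}}_r$'s) expresses each of $A^{\KP^{n-1}}_i$, $X^{\KP^{n-1}}_i$ as an $\mathfrak{R}$-polynomial in the generators of the opposite kind.

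Since $\mathfrak{R}$ is stable under $\mathsf{D}_{\KP^{n-1}}$ (again by Lemma \ref{lem:DLLemma}), applying $\mathsf{D}_{\KP^{n-1}}^j$ to these two relations and inducting on $j$ yields the ring equality $\mathfrak{R}[\mathcal{D}_{\KP^{n-1}}\mathcal{A}] = \mathfrak{R}[\mathcal{D}_{\KP^{n-1}}\mathcal{X}]$, which by Lemma \ref{lem:D_Graded_Ring_X} equals $\mathfrak{R}[\mathfrak{X}]$. To complete the chain, I would show $\mathfrak{R}[\mathfrak{X}] \subseteq \mathfrak{R}[\mathfrak{A}^{\KP^{n-1}}]$ by applying $\mathsf{D}_{\KP^{n-1}}^j$ to the telescoping identity above for $(i,j)$ in the index range of $\mathfrak{X}$, i.e. $1 \le i \le n-2$ and $0 \le j \le n-2-i$. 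The result is an $\mathfrak{R}$-polynomial in $\{\mathsf{D}_{\KP^{n-1}}^k A^{\KP^{n-1}}_i,\, \mathsf{D}_{\KP^{n-1}}^k A^{\KP^{n-1}}_{i-1}\}_{0 \le k \le j}$, and since $k \le j \le n-2-i \le n-2-(i-1)$, every nonzero entry of this list lies in $\mathfrak{A}^{\KP^{n-1}}$ (the $A^{\KP^{n-1}}_0$ case contributes $0$).

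The reverse inclusion $\mathfrak{R}[\mathfrak{A}^{\KP^{n-1}}] \subseteq \mathfrak{R}[\mathcal{D}_{\KP^{n-1}}\mathcal{A}]$ is tautological. Combining the chain of inclusions gives $\mathfrak{R}[\mathcal{D}_{\KP^{n-1}}\mathcal{A}] = \mathfrak{R}[\mathfrak{A}^{\KP^{n-1}}]$ as subrings of $\mathbb{C}[\![q]\!][(L^{\KP^{n-1}})^{-1}]$, which is exactly the desired surjection from the abstract polynomial ring $\mathbb{C}[(L^{\KP^{n-1}})^{\pm 1}][\mathfrak{A}^{\KP^{n-1}}]$ onto $\mathbb{C}[(L^{\KP^{n-1}})^{\pm 1}][\mathcal{D}_{\KP^{n-1}}\mathcal{A}]$. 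The only non-formal step is the index-range verification in the previous paragraph, but this is essentially trivial because $\mathfrak{X}$ and $\mathfrak{A}^{\KP^{n-1}}$ are parameterized by identical sets of pairs $(i,j)$; the real content of the statement is borrowed wholesale from Lemma \ref{lem:D_Graded_Ring_X}.
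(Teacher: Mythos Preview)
Your proposal is correct and is exactly what the paper intends: the paper simply says the corollary ``is immediate from Lemma~\ref{lem:D_Graded_Ring_X}'', and your argument spells out precisely why --- the telescoping relation $X^{\KP^{n-1}}_i = \mathsf{D}_{\KP^{n-1}}L^{\KP^{n-1}}/L^{\KP^{n-1}} - L^{\KP^{n-1}}(A^{\KP^{n-1}}_i - A^{\KP^{n-1}}_{i-1})$ gives an invertible $\mathfrak{R}$-change of generators between the $X$- and $A$-families that respects both the full derivative families $\mathcal{D}_{\KP^{n-1}}\mathcal{X}$, $\mathcal{D}_{\KP^{n-1}}\mathcal{A}$ and the finite index sets $\mathfrak{X}$, $\mathfrak{A}^{\KP^{n-1}}$. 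Your index-range check (that passing from $i$ to $i-1$ only loosens the derivative bound) is the one substantive verification, and it is correct.
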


In what follows we further simplify the ring $\mathbb{C}[(L^{\KP^{n-1}})^{\pm1}][\mathfrak{A}^{\KP^{n-1}}]$.

\begin{lem}\label{lem:Properties_of_Ai_s}
For the series $A^{\KP^{n-1}}_i$, we have the following
\begin{enumerate}
    \item $A^{\KP^{n-1}}_i=-A^{\KP^{n-1}}_{n-i}$ for all $0\leq{i}\leq{n}$,
    \item $A^{\KP^{n-1}}_0=A^{\KP^{n-1}}_n=0$, and $A^{\KP^{n-1}}_{\frac{n}{2}}=0$ if $n$ is even,
    \item $\sum_{i=0}^{n}A^{\KP^{n-1}}_i=0$.
\end{enumerate}
\end{lem}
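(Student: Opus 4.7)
The three parts are tightly linked, so my plan is to prove (1) first by direct manipulation using the symmetry of the $C^{\KP^{n-1}}_i$'s, and then deduce (2) and (3) as immediate corollaries.

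For part (1), I would start from the definition
\begin{equation*}
L^{\KP^{n-1}}\bigl(A^{\KP^{n-1}}_i+A^{\KP^{n-1}}_{n-i}\bigr)
= n\,\frac{\mathsf{D}_{\KP^{n-1}}L^{\KP^{n-1}}}{L^{\KP^{n-1}}}
-\sum_{r=0}^{i}X^{\KP^{n-1}}_r-\sum_{r=0}^{n-i}X^{\KP^{n-1}}_r,
\end{equation*}
and show the right-hand side vanishes. The key ingredient is the reflection identity $C^{\KP^{n-1}}_r=C^{\KP^{n-1}}_{n+1-r}$ from Lemma~\ref{lem:properties_of_C_functions}(3), which at the level of logarithmic derivatives gives
\[
X^{\KP^{n-1}}_r=X^{\KP^{n-1}}_{n+1-r}\qquad(1\le r\le n).
\]
Using $X^{\KP^{n-1}}_0=0$, I would re-index $\sum_{r=0}^{n-i}X^{\KP^{n-1}}_r=\sum_{r=1}^{n-i}X^{\KP^{n-1}}_{n+1-r}=\sum_{s=i+1}^{n}X^{\KP^{n-1}}_s$, so that
\[
\sum_{r=0}^{i}X^{\KP^{n-1}}_r+\sum_{r=0}^{n-i}X^{\KP^{n-1}}_r=\sum_{r=0}^{n}X^{\KP^{n-1}}_r.
\]
Finally, Lemma~\ref{lem:properties_of_K_series}(1) gives $K^{\KP^{n-1}}_n=(L^{\KP^{n-1}})^n$, and taking $\mathsf{D}_{\KP^{n-1}}\log(\,\cdot\,)$ of both sides, combined with (\ref{eqn:DLLLemma2}), yields $\sum_{r=0}^{n}X^{\KP^{n-1}}_r=n\,\mathsf{D}_{\KP^{n-1}}L^{\KP^{n-1}}/L^{\KP^{n-1}}$. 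Substituting this in gives cancellation, proving (1).

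For part (2), $A^{\KP^{n-1}}_0=0$ is immediate from $X^{\KP^{n-1}}_0=0$; the same identity $\sum_{r=0}^nX^{\KP^{n-1}}_r=n\mathsf{D}_{\KP^{n-1}}L^{\KP^{n-1}}/L^{\KP^{n-1}}$ used above gives $A^{\KP^{n-1}}_n=0$ directly from the definition; and for $n$ even, setting $i=n/2$ in part (1) gives $2A^{\KP^{n-1}}_{n/2}=0$. For part (3), I would pair the summands: by part (1), $A^{\KP^{n-1}}_i+A^{\KP^{n-1}}_{n-i}=0$ for each $1\le i\le n-1$, while $A^{\KP^{n-1}}_0+A^{\KP^{n-1}}_n=0$ by part (2), so the telescoping gives $\sum_{i=0}^nA^{\KP^{n-1}}_i=0$.

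The one delicate step is the index bookkeeping in the re-indexing argument for part (1); everything else is a direct substitution. No new technology is required beyond the definition of $A^{\KP^{n-1}}_i$, the reflection symmetry of the $C$-series (Lemma~\ref{lem:properties_of_C_functions}(3)), and the formula $K^{\KP^{n-1}}_n=(L^{\KP^{n-1}})^n$ (Lemma~\ref{lem:properties_of_K_series}(1)).
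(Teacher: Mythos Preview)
Your proposal is correct and follows essentially the same route as the paper: both use the reflection $C^{\KP^{n-1}}_r=C^{\KP^{n-1}}_{n+1-r}$ to get $X^{\KP^{n-1}}_r=X^{\KP^{n-1}}_{n+1-r}$, re-index to obtain $\sum_{r=0}^iX^{\KP^{n-1}}_r+\sum_{r=0}^{n-i}X^{\KP^{n-1}}_r=\sum_{r=0}^nX^{\KP^{n-1}}_r=n\,\mathsf{D}_{\KP^{n-1}}L^{\KP^{n-1}}/L^{\KP^{n-1}}$ (the latter coming from $K^{\KP^{n-1}}_n=(L^{\KP^{n-1}})^n$), and then read off (1); parts (2) and (3) are then immediate in both treatments.
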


\begin{proof}
By Lemma \ref{lem:properties_of_C_functions}, we have $C^{\KP^{n-1}}_i=C^{\KP^{n-1}}_{n+1-i}$ for all $1\leq{i}\leq{n}$. Hence,  $X^{\KP^{n-1}}_i=X^{\KP^{n-1}}_{n+1-i}$ for all $1\leq{i}\leq{n}$. This gives the following reformulation of equation (\ref{eqn:DLLLemma3}) :
\begin{equation*}
\sum_{r=0}^{i} X^{\KP^{n-1}}_{r}-i\frac{\mathsf{D}_{\KP^{n-1}} L^{\KP^{n-1}}}{L^{\KP^{n-1}}}=(n-i) \frac{\mathsf{D}_{\KP^{n-1}} L^{\KP^{n-1}}}{L^{\KP^{n-1}}}-\left(\sum_{r=0}^{n-i} X^{\KP^{n-1}}_{r}\right)\quad\text{for all}\quad 0\leq{i}\leq{n}.
\end{equation*}
This proves the first part of the lemma. The other two parts follow immediately.
\end{proof}

Now we analyze (\ref{eqn:modflateqn_for_KP}). Let $k=0$. Then $\widetilde{P}_{\mathrm{Ion}(i)-1,j}^{0,\KP^{n-1}}=\widetilde{P}_{i,j}^{0,\KP^{n-1}}$ for all $0\leq{i}\leq{n-1}$. This means $\widetilde{P}_{i,j}^{0,\KP^{n-1}}=\widetilde{P}_{0,j}^{0,\KP^{n-1}}$ for all $0\leq{i}\leq{n-1}$. Now, let $k=1$. Then, we have
\begin{equation}\label{eqn:abc_sum}
\underbrace{\sum_{i=0}^{n-1}\widetilde{P}_{\mathrm{Ion}(i)-1,j}^{1,\KP^{n-1}}}_{(a)}=\underbrace{\sum_{i=0}^{n-1}\widetilde{P}_{i,j}^{1,\KP^{n-1}}}_{(b)}+\frac{1}{L^{\KP^{n-1}}}\mathsf{D}_{\KP^{n-1}}\sum_{i=0}^{n-1}\widetilde{P}_{i,j}^{0,\KP^{n-1}}+\underbrace{\sum_{i=0}^{n-1}A^{\KP^{n-1}}_{n-i}\widetilde{P}_{0,j}^{0,\KP^{n-1}}}_{(c)}.
\end{equation}
The sums $(a)$ and $(b)$ are clearly the same. The sum $(c)$ is zero by Lemma \ref{lem:Properties_of_Ai_s}. Since we have $\widetilde{P}_{i,j}^{0,\KP^{n-1}}=\widetilde{P}_{0,j}^{0,\KP^{n-1}}$, the equation (\ref{eqn:abc_sum}) becomes 
\begin{equation*}
\frac{n}{L^{\KP^{n-1}}}\mathsf{D}_{\KP^{n-1}}\widetilde{P}_{0,j}^{0,\KP^{n-1}}=0.
\end{equation*}
So, $\widetilde{P}_{i,j}^{0,\KP^{n-1}}=\widetilde{P}_{0,j}^{0,\KP^{n-1}}$ is a constant, and its value depends on the initial conditions of (\ref{eqn:modflateqn_for_KP}). Now, consider the equation (\ref{eqn:modflateqn_for_KP}), and add these equations side by side for $i=0,n-1,\ldots,n-i+1$. Then, setting $k=1$ yields
\begin{equation}\label{eqn:P1_sum_KP}
\widetilde{P}_{n-i,j}^{1,\KP^{n-1}}=\widetilde{P}_{0, j}^{1,\KP^{n-1}}+\sum_{r=0}^{i-1} A^{\KP^{n-1}}_{r} \widetilde{P}_{0, j}^{0,\KP^{n-1}}\quad\text{for}\quad 1\leq{i}\leq{n}.
\end{equation}
Now, let $k=2$ in equation (\ref{eqn:modflateqn_for_KP}), and substitute the above equation (\ref{eqn:P1_sum_KP}) into (\ref{eqn:modflateqn_for_KP}). This gives us 
\begin{equation*}
\begin{split}
\widetilde{P}_{\mathrm{Ion}(i)-1,j}^{2,\KP^{n-1}}
=&\widetilde{P}_{i,j}^{2,\KP^{n-1}}+\frac{1}{L^{\KP^{n-1}}}\mathsf{D}_{\KP^{n-1}}\widetilde{P}_{0 ,j}^{1,\KP^{n-1}}+\frac{1}{L^{\KP^{n-1}}}\sum_{r=0}^{n-i-1} \left(\mathsf{D}_{\KP^{n-1}}A^{\KP^{n-1}}_{r}\right) \widetilde{P}_{0, j}^{0,\KP^{n-1}}\\
&+A^{\KP^{n-1}}_{n-i}\widetilde{P}_{0, j}^{1,\KP^{n-1}}+\sum_{r=0}^{n-i-1}A^{\KP^{n-1}}_{n-i}A^{\KP^{n-1}}_{r} \widetilde{P}_{0, j}^{0,\KP^{n-1}}.
\end{split}
\end{equation*}
Summing this equality over $0\leq{i}\leq{n-1}$, cancelling out $\sum_{i=0}^{n-1}\widetilde{P}_{\mathrm{Ion}(i)-1,j}^{2,\KP^{n-1}}=\sum_{i=0}^{n-1}\widetilde{P}_{i,j}^{2,\KP^{n-1}}$, and noting that $\sum_{i=0}^{n-1}A^{\KP^{n-1}}_{n-i}\widetilde{P}_{0,j}^{1,\KP^{n-1}}=0$, we obtain
\begin{equation}\label{eq:nLDP1}
\frac{n}{L^{\KP^{n-1}}}\mathsf{D}_{\KP^{n-1}}\widetilde{P}_{0,j}^{1,\KP^{n-1}}+\frac{1}{L^{\KP^{n-1}}}\sum_{i=0}^{n-1}\sum_{r=0}^{n-i-1}\left(\mathsf{D}_{\KP^{n-1}}A^{\KP^{n-1}}_r\right)\widetilde{P}_{0,j}^{0,\KP^{n-1}}+\sum_{i=0}^{n-1}\sum_{r=0}^{n-i-1}A^{\KP^{n-1}}_{n-i}A^{\KP^{n-1}}_r\widetilde{P}_{0,j}^{0,\KP^{n-1}}=0.
\end{equation}

Setting $k=1$ in Corollary \ref{cor:polynomiality_of_P_0j_KP}, we obtain the following 
\begin{equation*}
\mathds{L}_{j,1}(P^{1,\KP^{n-1}}_{0,j})+\frac{1}{L^{\KP^{n-1}}_j}\mathds{L}_{j,2}(P^{0,\KP^{n-1}}_{0,j})=0
\end{equation*}
which reads as\footnote{The power series $X^{\KP^{n-1}}\in\mathbb{C}[\![q]\!]$ is defined in Appendix \ref{Appendix:Analysis_of_I_function}. It is $X^{\KP^{n-1}}=(L^{\KP^{n-1}})^n$.}
\begin{align*}
n\mathsf{D}_{\KP^{n-1}}\widetilde{P}^{1,\KP^{n-1}}_{0,j}
&=\frac{1}{L^{\KP^{n-1}}}\frac{1}{n^2}\binom{n+1}{4}(1-X^{\KP^{n-1}})X^{\KP^{n-1}}P^{0,\KP^{n-1}}_{0,j}\\
&=\frac{1}{n^2}\binom{n+1}{4}(1-(L^{\KP^{n-1}})^n)(L^{\KP^{n-1}})^{n-1}\widetilde{P}^{0,\KP^{n-1}}_{0,j}.
\end{align*}
Define $f_n(L^{\KP^{n-1}})\in\mathbb{C}[(L^{\KP^{n-1}})^{\pm{1}}]$ to be the right hand side of above equation without $\widetilde{P}^{0,\KP^{n-1}}_{0,j}$:
\begin{equation*}
f_n(L^{\KP^{n-1}})=\frac{1}{n^2}\binom{n+1}{4}(1-(L^{\KP^{n-1}})^n)(L^{\KP^{n-1}})^{n-1}.
\end{equation*}

\begin{lem}
For any $n\geq{3}$, we have
\begin{align*}
\sum_{i=0}^{n-1}\sum_{r=0}^{n-i-1}\mathsf{D}_{\KP^{n-1}}A^{\KP^{n-1}}_r&=\sum_{r=1}^{\lfloor\frac{n-1}{2}\rfloor}(n-2r)\mathsf{D}_{\KP^{n-1}}A^{\KP^{n-1}}_r,\\
\sum_{i=0}^{n-1}\sum_{r=0}^{n-i-1}A^{\KP^{n-1}}_{n-i}A^{\KP^{n-1}}_r&=-\sum_{r=1}^{\lfloor\frac{n-1}{2}\rfloor}\left(A^{\KP^{n-1}}_r\right)^2.
\end{align*}
\end{lem}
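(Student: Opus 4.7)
The plan is to prove both identities by rearranging the double sums and then invoking the three key properties of $A_i^{\KP^{n-1}}$ established in Lemma \ref{lem:Properties_of_Ai_s}: the antisymmetry $A_i^{\KP^{n-1}} = -A_{n-i}^{\KP^{n-1}}$, the vanishing $A_0^{\KP^{n-1}} = A_n^{\KP^{n-1}} = 0$ (with $A_{n/2}^{\KP^{n-1}} = 0$ if $n$ is even), and the sum relation $\sum_{i=0}^{n} A_i^{\KP^{n-1}} = 0$. No deeper input is needed; the entire argument is a combinatorial rearrangement.

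For the first identity, I would swap the order of summation. The index set $\{(i,r) : 0 \leq i \leq n-1,\ 0 \leq r \leq n-i-1\}$ is equivalently described by fixing $r \in \{0,\dots,n-1\}$ and letting $i$ range over $\{0,1,\dots,n-r-1\}$, contributing a multiplicity of $n-r$. Since $\mathsf{D}_{\KP^{n-1}}A_0^{\KP^{n-1}} = 0$, the double sum collapses to $\sum_{r=1}^{n-1}(n-r)\mathsf{D}_{\KP^{n-1}}A_r^{\KP^{n-1}}$. I would then split this according to whether $r < n/2$, $r = n/2$ (only for $n$ even, where the term vanishes by $A_{n/2}^{\KP^{n-1}} = 0$), or $r > n/2$. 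Applying the substitution $r \mapsto n-r$ to the upper half and using $\mathsf{D}_{\KP^{n-1}}A_{n-r}^{\KP^{n-1}} = -\mathsf{D}_{\KP^{n-1}}A_r^{\KP^{n-1}}$ folds the sum down to $\sum_{r=1}^{\lfloor (n-1)/2 \rfloor}\bigl[(n-r) - r\bigr]\mathsf{D}_{\KP^{n-1}}A_r^{\KP^{n-1}}$, which is the desired RHS.

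For the second identity, I would substitute $s = n-i$ so that the sum becomes $\sum_{1 \leq r < s \leq n}A_s^{\KP^{n-1}}A_r^{\KP^{n-1}}$, where the range $0 \leq r < s \leq n$ is equivalent after discarding the vanishing boundary terms $A_0^{\KP^{n-1}} = A_n^{\KP^{n-1}} = 0$. The key trick is to square the identity $\sum_{i=0}^{n}A_i^{\KP^{n-1}} = 0$, which yields
\[
0 = \Bigl(\sum_{r=0}^{n}A_r^{\KP^{n-1}}\Bigr)^{2} = \sum_{r=1}^{n-1}\bigl(A_r^{\KP^{n-1}}\bigr)^{2} + 2\sum_{1 \leq r < s \leq n-1}A_s^{\KP^{n-1}}A_r^{\KP^{n-1}},
\]
so the desired double sum equals $-\tfrac{1}{2}\sum_{r=1}^{n-1}(A_r^{\KP^{n-1}})^{2}$. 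Finally, using $(A_r^{\KP^{n-1}})^{2} = (A_{n-r}^{\KP^{n-1}})^{2}$ (and $A_{n/2}^{\KP^{n-1}} = 0$ when $n$ is even to cleanly handle the middle index), the sum $\sum_{r=1}^{n-1}(A_r^{\KP^{n-1}})^{2}$ pairs off as $2\sum_{r=1}^{\lfloor (n-1)/2 \rfloor}(A_r^{\KP^{n-1}})^{2}$, producing the claimed RHS.

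There is no real obstacle; the only step that requires a bit of care is the bookkeeping around parity of $n$ (the middle index $n/2$ in the even case), but this is absorbed smoothly by $A_{n/2}^{\KP^{n-1}} = 0$. The entire lemma is essentially a formal consequence of Lemma \ref{lem:Properties_of_Ai_s}.
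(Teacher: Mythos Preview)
Your argument is correct and follows the same approach the paper has in mind: the paper's proof is the single sentence ``After cancellations due to Lemma~\ref{lem:Properties_of_Ai_s}, we obtain the identities,'' and what you have written is precisely a clean execution of those cancellations. The squaring trick you use for the second identity is a tidy way to package the antisymmetry and sum-to-zero relations, and the parity bookkeeping around $r=n/2$ is handled correctly.
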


\begin{proof}
After cancellations due to Lemma \ref{lem:Properties_of_Ai_s}, we obtain the identities.
\end{proof}

\begin{lem}\label{lem:Equations_forDAl}
For any $n\geq{3}$, we have
\begin{equation*}
f_n(L^{\KP^{n-1}})+\sum_{r=1}^{\lfloor\frac{n-1}{2}\rfloor}(n-2r)\left(\mathsf{D}_{\KP^{n-1}}A^{\KP^{n-1}}_r\right)-{L^{\KP^{n-1}}}\sum_{r=1}^{\lfloor\frac{n-1}{2}\rfloor}\left(A^{\KP^{n-1}}_r\right)^2=0.
\end{equation*}

Equivalently, dividing into even and odd cases, we have
\begin{equation*}
\begin{split}
2\mathsf{D}_{\KP^{n-1}}A^{\KP^{n-1}}_{s-1}
&=\sum_{r=1}^{s-1}L^{\KP^{n-1}}\left(A^{\KP^{n-1}}_r\right)^2-\sum_{r=1}^{s-2}(n-2r)\mathsf{D}_{\KP^{n-1}}A^{\KP^{n-1}}_r-f_{2s}(L^{\KP^{n-1}})\quad\text{if}\quad n=2s\geq{4},\\
\mathsf{D}_{\KP^{n-1}}A^{\KP^{n-1}}_s
&=\sum_{r=1}^{s}L^{\KP^{n-1}}\left(A^{\KP^{n-1}}_r\right)^2-\sum_{r=1}^{s-1}(n-2r)\mathsf{D}_{\KP^{n-1}}A^{\KP^{n-1}}_r-f_{2s+1}(L^{\KP^{n-1}})\quad\text{if}\quad n=2s+1\geq{3}.
\end{split}
\end{equation*}
\end{lem}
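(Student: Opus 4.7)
The plan is to combine the two ingredients that have been set up immediately before the statement: the summed flatness identity \eqref{eq:nLDP1} (obtained by adding the modified flatness equation over $i=0,\dots,n-1$ at level $k=2$ and cancelling the common terms $\widetilde P^{2,\KP^{n-1}}_{i,j}$ and $\sum_i A^{\KP^{n-1}}_{n-i}\widetilde P^{1,\KP^{n-1}}_{0,j}=0$), and the Picard--Fuchs consequence obtained from Corollary \ref{cor:polynomiality_of_P_0j_KP} at $k=1$, which reads
\begin{equation*}
n\mathsf{D}_{\KP^{n-1}}\widetilde P^{1,\KP^{n-1}}_{0,j}=f_n(L^{\KP^{n-1}})\widetilde P^{0,\KP^{n-1}}_{0,j}.
\end{equation*}
My first step will be to multiply \eqref{eq:nLDP1} through by $L^{\KP^{n-1}}$ so that the leading term becomes $n\mathsf{D}_{\KP^{n-1}}\widetilde P^{1,\KP^{n-1}}_{0,j}$, and then substitute the above Picard--Fuchs identity in. Factoring out the nonzero constant $\widetilde P^{0,\KP^{n-1}}_{0,j}$ then yields an identity purely in $L^{\KP^{n-1}}$, $A^{\KP^{n-1}}_r$ and their $\mathsf{D}_{\KP^{n-1}}$-derivatives.

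Next I will insert the two combinatorial identities established in the preceding lemma, namely
\begin{equation*}
\sum_{i=0}^{n-1}\sum_{r=0}^{n-i-1}\mathsf{D}_{\KP^{n-1}}A^{\KP^{n-1}}_r=\sum_{r=1}^{\lfloor\frac{n-1}{2}\rfloor}(n-2r)\mathsf{D}_{\KP^{n-1}}A^{\KP^{n-1}}_r
\end{equation*}
and
\begin{equation*}
\sum_{i=0}^{n-1}\sum_{r=0}^{n-i-1}A^{\KP^{n-1}}_{n-i}A^{\KP^{n-1}}_r=-\sum_{r=1}^{\lfloor\frac{n-1}{2}\rfloor}\bigl(A^{\KP^{n-1}}_r\bigr)^2,
\end{equation*}
which collapse the double sums appearing in \eqref{eq:nLDP1} to single sums of the claimed shape. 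The resulting identity
\begin{equation*}
f_n(L^{\KP^{n-1}})+\sum_{r=1}^{\lfloor\frac{n-1}{2}\rfloor}(n-2r)\bigl(\mathsf{D}_{\KP^{n-1}}A^{\KP^{n-1}}_r\bigr)-L^{\KP^{n-1}}\sum_{r=1}^{\lfloor\frac{n-1}{2}\rfloor}\bigl(A^{\KP^{n-1}}_r\bigr)^2=0
\end{equation*}
is exactly the first assertion of the lemma.

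The equivalent even/odd reformulations follow by simply isolating the summand with the largest index on the left-hand side. When $n=2s$, $\lfloor(n-1)/2\rfloor=s-1$ and the coefficient $(n-2r)$ at $r=s-1$ equals $2$, so solving for $\mathsf{D}_{\KP^{n-1}}A^{\KP^{n-1}}_{s-1}$ yields the stated formula; when $n=2s+1$, $\lfloor(n-1)/2\rfloor=s$ and the coefficient at $r=s$ equals $1$, giving the other formula directly. There is no serious obstacle here: the entire argument is bookkeeping once the Picard--Fuchs identity and the two summation identities are in hand. The only point requiring a little care is that the constant $\widetilde P^{0,\KP^{n-1}}_{0,j}$ may be divided out, which is justified because it was already shown to be a nonzero constant determined by the initial data of the modified flatness equations (\ref{eqn:modflateqn_for_KP}).
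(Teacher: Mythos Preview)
Your proposal is correct and follows essentially the same approach as the paper: the paper lays out precisely these ingredients (equation \eqref{eq:nLDP1}, the Picard--Fuchs consequence $n\mathsf{D}_{\KP^{n-1}}\widetilde P^{1,\KP^{n-1}}_{0,j}=f_n(L^{\KP^{n-1}})\widetilde P^{0,\KP^{n-1}}_{0,j}$, and the two summation identities of the preceding lemma) in order, and the lemma is obtained by combining them exactly as you describe. The only point worth noting is that the nonvanishing of the constant $\widetilde P^{0,\KP^{n-1}}_{0,j}$ is not stated explicitly before this lemma; it follows from $R^{\KP^{n-1}}_0=\mathrm{Id}$, which gives $P^{0,\KP^{n-1}}_{0,j}=\Psi^{-1}_{0,j}=-\sqrt{-1}\neq 0$.
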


Lemma \ref{lem:Equations_forDAl} generalizes \cite[Equation (7)]{lho} and \cite[Equation (32)]{lho-p}.
Let $n\geq 3$ be an odd number with $n=2s+1$, define
\begin{equation*}
\mathfrak{S}^{\KP^{n-1}}_{\text{odd}}=\{A^{\KP^{n-1}}_1,\ldots,\mathsf{D}_{\KP^{n-1}}^{n-3}A^{\KP^{n-1}}_1\}\cup\cdots\cup\{A^{\KP^{n-1}}_{s-1},\ldots,\mathsf{D}_{\KP^{n-1}}^{n-s+1}A^{\KP^{n-1}}_{s-1}\}\cup\{A^{\KP^{n-1}}_s\}.
\end{equation*}
Similarly, let $n\geq 4$ be an even number with $n=2s$, define
\begin{equation*}
\mathfrak{S}^{\KP^{n-1}}_{\text{even}}=\{A^{\KP^{n-1}}_1,\ldots,\mathsf{D}_{\KP^{n-1}}^{n-3}A_1\}\cup\cdots\cup\{A^{\KP^{n-1}}_{s-2},\ldots,\mathsf{D}_{\KP^{n-1}}^{n-s}A^{\KP^{n-1}}_{s-2}\}\cup\{A^{\KP^{n-1}}_{s-1}\}.
\end{equation*}
In either case, we denote both $\mathfrak{S}^{\KP^{n-1}}_{\text{odd}}$, and $\mathfrak{S}^{\KP^{n-1}}_{\text{even}}$ as $\mathfrak{S}^{\KP^{n-1}}_n$.

\begin{prop}\label{pro:CDA_Simplification}
$\mathbb{C}[(L^{\KP^{n-1}})^{\pm 1}][\mathcal{D}_{\KP^{n-1}}\mathcal{A}]$ is a quotient of the ring $\mathbb{C}[(L^{\KP^{n-1}})^{\pm 1}][\mathfrak{S}^{\KP^{n-1}}_n]$.
\end{prop}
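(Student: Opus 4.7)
By Corollary \ref{cor:DGradedRingA}, it suffices to show that every generator $\mathsf{D}_{\KP^{n-1}}^j A^{\KP^{n-1}}_i$ of $\mathfrak{A}^{\KP^{n-1}}$ (so for $1 \leq i \leq n-2$ and $0 \leq j \leq n-2-i$) lies in $\mathbb{C}[(L^{\KP^{n-1}})^{\pm 1}][\mathfrak{S}^{\KP^{n-1}}_n]$. My plan combines two structural inputs: the involution symmetry $A^{\KP^{n-1}}_i = -A^{\KP^{n-1}}_{n-i}$ from Lemma \ref{lem:Properties_of_Ai_s}, and the derivative identity of Lemma \ref{lem:Equations_forDAl}.

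The first step uses the symmetry to trim large indices. Differentiating $A^{\KP^{n-1}}_i = -A^{\KP^{n-1}}_{n-i}$ yields $\mathsf{D}_{\KP^{n-1}}^j A^{\KP^{n-1}}_i = -\mathsf{D}_{\KP^{n-1}}^j A^{\KP^{n-1}}_{n-i}$, which rewrites every generator with $i > n/2$ in terms of one with strictly smaller index. In the even case $n = 2s$, Lemma \ref{lem:Properties_of_Ai_s} additionally gives $A^{\KP^{n-1}}_s = 0$, so all derivatives at the middle index vanish. This reduces the problem to $\mathsf{D}_{\KP^{n-1}}^j A^{\KP^{n-1}}_i$ with $1 \leq i \leq s$ in the odd case $n = 2s+1$ and $1 \leq i \leq s-1$ in the even case $n = 2s$.

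The second step eliminates higher derivatives at the top surviving index, which I denote $i_{\max}$ (so $i_{\max} = s$ in the odd case and $i_{\max} = s-1$ in the even case); note that $\mathfrak{S}^{\KP^{n-1}}_n$ contains only the bare $A^{\KP^{n-1}}_{i_{\max}}$ at this index, but all derivatives up to the required order at strictly lower indices. Lemma \ref{lem:Equations_forDAl} yields, in the odd case,
\[
\mathsf{D}_{\KP^{n-1}} A^{\KP^{n-1}}_s \;=\; L^{\KP^{n-1}}\sum_{r=1}^{s}\bigl(A^{\KP^{n-1}}_r\bigr)^2 \;-\; \sum_{r=1}^{s-1}(n-2r)\,\mathsf{D}_{\KP^{n-1}} A^{\KP^{n-1}}_r \;-\; f_{2s+1}(L^{\KP^{n-1}}),
\]
and an analogous expression for $2\,\mathsf{D}_{\KP^{n-1}} A^{\KP^{n-1}}_{s-1}$ in the even case. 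In either case $\mathsf{D}_{\KP^{n-1}} A^{\KP^{n-1}}_{i_{\max}}$ is already exhibited as a polynomial in $L^{\pm 1}$, the generator $A^{\KP^{n-1}}_{i_{\max}}$, and elements of $\mathfrak{S}^{\KP^{n-1}}_n$ at strictly lower indices. By induction on $j$, I would apply $\mathsf{D}_{\KP^{n-1}}^{j-1}$ to this identity, expand via the Leibniz rule, and invoke (\ref{eqn:DLLLemma1}) of Lemma \ref{lem:DLLemma} to rewrite each $\mathsf{D}_{\KP^{n-1}}^k L^{\KP^{n-1}}$ as a polynomial in $L^{\pm 1}$, producing $\mathsf{D}_{\KP^{n-1}}^j A^{\KP^{n-1}}_{i_{\max}}$ as a polynomial in $L^{\pm 1}$, $A^{\KP^{n-1}}_{i_{\max}}$, and derivatives $\mathsf{D}_{\KP^{n-1}}^\ell A^{\KP^{n-1}}_r$ with $r < i_{\max}$ and $\ell \leq j$.

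The only real obstacle is bookkeeping: the derivative orders $\ell$ appearing in the induction must stay within the cutoff $\ell \leq n-2-r$ that defines membership in $\mathfrak{S}^{\KP^{n-1}}_n$. Since we only need $j \leq n-2-i_{\max}$ and any $r \leq i_{\max}-1$ satisfies
\[
n-2-r \;\geq\; n-2-(i_{\max}-1) \;=\; (n-2-i_{\max})+1 \;>\; j \;\geq\; \ell,
\]
compatibility holds automatically in both parity cases. Combining the two steps yields the desired surjection $\mathbb{C}[(L^{\KP^{n-1}})^{\pm 1}][\mathfrak{S}^{\KP^{n-1}}_n] \twoheadrightarrow \mathbb{C}[(L^{\KP^{n-1}})^{\pm 1}][\mathcal{D}_{\KP^{n-1}}\mathcal{A}]$.
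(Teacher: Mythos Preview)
Your argument is correct and is exactly the reasoning the paper intends: the proposition is stated there without proof, as an immediate consequence of Lemma~\ref{lem:Properties_of_Ai_s} and Lemma~\ref{lem:Equations_forDAl}, and you have spelled out precisely how those two lemmas combine (symmetry to halve the index range, then the derivative relation at $i_{\max}$ iterated with the Leibniz rule and the closure of $\mathbb{C}[L^{\pm1}]$ under $\mathsf{D}_{\KP^{n-1}}$). Your bookkeeping check that the derivative orders stay within the cutoffs defining $\mathfrak{S}^{\KP^{n-1}}_n$ is the only nontrivial point, and it is handled correctly.
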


\subsection{More on flatness equation} 
In this part, we will describe how each $\widetilde{P}_{i,j}^{k,\KP^{n-1}}$ lifts canonially to the free algebra $\mathbb{C}[(L^{\KP^{n-1}})^{\pm 1}][\mathfrak{S}^{\KP^{n-1}}_n]$.

We see that $\widetilde{P}_{i,j}^{k,\KP^{n-1}}\in\mathbb{C}[(L^{\KP^{n-1}})^{\pm 1}][\mathcal{D}_{\KP^{n-1}}\mathcal{A}]$ by the modified flatness equations (\ref{eqn:modflateqn_for_KP}), Lemma \ref{lem:Xkl_generation}, and Corollary \ref{cor:polynomiality_of_P_0j_KP}. Then, we obtain a canonical lift of each $\widetilde{P}_{i,j}^{k,\KP^{n-1}}$ to the free algebra $\mathbb{C}[(L^{\KP^{n-1}})^{\pm 1}][\mathfrak{S}^{\KP^{n-1}}_n]$ through  Lemmas \ref{lem:Properties_of_Ai_s},  \ref{lem:Equations_forDAl}, and the modified flatness equations (\ref{eqn:modflateqn_for_KP}) by the following procedure:
\begin{equation}\label{eq:ModifiedFlatnessLift}
\begin{split}
\widetilde{P}_{n-1,j}^{k,\KP^{n-1}}&=\widetilde{P}_{0,j}^{k,\KP^{n-1}}+\frac{1}{L^{\KP^{n-1}}}\mathsf{D}_{\KP^{n-1}}\widetilde{P}_{0,j}^{k-1,\KP^{n-1}}\in\mathbb{C}[(L^{\KP^{n-1}})^{\pm 1}]\\
\widetilde{P}_{n-2,j}^{k,\KP^{n-1}}&=\widetilde{P}_{n-1,j}^{k,\KP^{n-1}}+\frac{1}{L^{\KP^{n-1}}}\mathsf{D}_{\KP^{n-1}}\widetilde{P}_{n-1,j}^{k-1,\KP^{n-1}}+A^{\KP^{n-1}}_1\widetilde{P}_{n-1,j}^{k-1,\KP^{n-1}}\in\mathbb{C}[(L^{\KP^{n-1}})^{\pm 1}][A^{\KP^{n-1}}_1]\\
\vdots \,\,\,\,\,\, &=\,\,\,\,\,\,\,\,\,\,\,\,\,\,\,\,\,\,\,\,\,\,\,\,\,\,\,\,\,\,\vdots
\end{split}
\end{equation}
If we describe it in words, we start with $\widetilde{P}_{0,j}^{k,\KP^{n-1}}\in \mathbb{C}[(L^{\KP^{n-1}})^{\pm 1}]$ and use equation (\ref{eqn:modflateqn_for_KP}) for $i=n, n-1,..., 2$ to inductively lift $\widetilde{P}_{i,j}^{k,\KP^{n-1}}$ to $\mathbb{C}[(L^{\KP^{n-1}})^{\pm 1}][\mathfrak{S}^{\KP^{n-1}}_n]$ for $i=n-1, n-2,...,1$ in this descending order. In this lifting procedure, we eliminate the unnecessary $A^{\KP^{n-1}}_i$'s using Lemmas \ref{lem:Properties_of_Ai_s} and \ref{lem:Equations_forDAl}. We also see that the orders of derivatives are bounded for the lifts since the process is a finite step procedure, and the initial step starts with $\widetilde{P}_{0,j}^{k,\KP^{n-1}}\in \mathbb{C}[L^{\KP^{n-1}}]$. Moreover, the bounds of these derivatives do not exceed the bounds imposed by Lemma \ref{lem:D_Graded_Ring_X}.

\section{Comparison of cohomological field theories}\label{sec:CohFT_iden}
We identify\footnote{The specializations (\ref{eqn:specialization}) and (\ref{eqn:specialization_KP}) are imposed.} $H_{\mathrm{T}}^*(\KP^{n-1})$ and $H^*_{\mathrm{T,Orb}}(\CnZn)$ via the following grading-preserving map:
\begin{equation}\label{eqn:state_space_isom}
H^*(\KP^{n-1})\to H^*_{\mathrm{T,Orb}}(\CnZn), \quad H^i\mapsto \phi_i, \,\,\, 0\leq i\leq n-1.  \end{equation}

By (\ref{eqn:metric_CnZn}) and (\ref{eqn:metric_KP}), via (\ref{eqn:state_space_isom}), we have the following identification of metrics
\begin{equation}\label{eqn:metric_iden}
g^{\KP^{n-1}}\mapsto -g^{\CnZn}.  
\end{equation}

\subsection{Identifications}\label{sec:iden}
\subsubsection{Change of variables} \label{subsubsec:Change_of_variables}

Here, we spell out the details of change of variables. Consider the following identification
\begin{equation*}
n^n(L^{\KP^{n-1}})^n\mapsto{(-1)^{n+1}}(L^{\CnZn})^{n}
\end{equation*}
as an equality and observe the following computation:
\begin{equation}\label{eqn:L_to_the_n_equal}
\begin{aligned}
n^n(1-(-n)^nq)^{-1}
&=(-1)^{n+1}x^n\left(1-(-1)^n\left(\frac{x}{n}\right)^n\right)^{-1}\\
&=(-1)^{n+1}\left(x^{-n}-(-1)^n\frac{1}{n^n}\right)^{-1}\\
&=(-1)^{n+1}n^n\left(n^nx^{-n}-(-1)^n\right)^{-1}\\
&=n^n\left((-1)^{n+1}n^nx^{-n}-(-1)^{n+1}(-1)^n\right)^{-1}\\
&=n^n\left(1-(-n)^nx^{-n}\right)^{-1}.
\end{aligned}
\end{equation}
This implies that we have
\begin{equation}\label{eqn:variable_change}
    q=x^{-n}.
\end{equation}

Conversely, equation (\ref{eqn:variable_change}) implies
\begin{equation*}
n^n(L^{\KP^{n-1}})^n={(-1)^{n+1}}(L^{\CnZn})^{n}.
\end{equation*}
So, we see that $L^{\KP^{n-1}}$ and $L^{\CnZn}$ are identified via
\begin{equation}\label{eqn:identify_Ls}
nL^{\KP^{n-1}}=-{\rho}L^{\CnZn}
\end{equation}
where $\rho$ is an $n^{\text{th}}$ root of $-1$, i.e. $\rho^n=-1$. More precisely, (\ref{eqn:identify_Ls}) requires an analytic continuation of $L^{\CnZn}$ from $x=0$ to $x=\infty$ within a sector of the $x$-plane. The analytically continued $L^{\CnZn}$ is then compared with $L^{\KP^{n-1}}$ using (\ref{eqn:variable_change}). The value of $\rho$ is decided so that (\ref{eqn:identify_Ls}) holds.

By (\ref{eqn:variable_change}), we have
\begin{equation}\label{eqn:D_ident}
\mathsf{D}_{\KP^{n-1}}=q\frac{d}{dq}=-\frac{1}{n}x\frac{d}{dx}=-\frac{1}{n}\mathsf{D}_{\CnZn}.  \end{equation}

In addition, for $1\leq{i}\leq n-1$, we formally identify the following\footnote{These identifications are consistent with the definitions of these power series.}:
\begin{equation}\label{eqn:generators_iden}
\begin{split}
C_i^{\KP^{n-1}}&\mapsto -\frac{\rho}{n}C_i^{\CnZn},\\
X_i^{\KP^{n-1}}&\mapsto -\frac{1}{n}X_i^{\CnZn},\\   
A_i^{\KP^{n-1}}&\mapsto \frac{1}{\rho}A_i^{\CnZn}. \end{split}   
\end{equation}

Adjoining\footnote{We should note that $C^{\KP^{n-1}}_i$'s are related to each other via Lemma \ref{lem:properties_of_C_functions}. Hence $\mathfrak{C}^{\KP^{n-1}}_n$ can be taken as the set $\{C^{\KP^{n-1}}_1,\ldots,C^{\KP^{n-1}}_{\lfloor{\frac{n+1}{2}}\rfloor}\}$ as in \cite{gt} for the case $\CnZn$.} $\mathfrak{C}_n\coloneqq \{C_1^{\KP^{n-1}},\ldots,C_{n-1}^{\KP^{n-1}}\}$ to free polynomial ring appearing in Proposition \ref{pro:CDA_Simplification}, we define
\begin{equation*}
\mathds{F}_{\KP^{n-1}}\coloneqq \mathbb{C}[(L^{\KP^{n-1}})^{\pm{1}}][\mathfrak{S}_n^{\KP^{n-1}}][\mathfrak{C}_n^{\KP^{n-1}}].
\end{equation*}
In \cite[Proposition 2.11, and Corollary 3.4]{gt}, a similar ring is constructed and in this paper we denote it as
\begin{equation*}
\mathds{F}_{\CnZn}\coloneqq \mathbb{C}[(L^{\CnZn})^{\pm{1}}][\mathfrak{S}_n^{\CnZn}][\mathfrak{C}_n^{\CnZn}].
\end{equation*}
We write $$\Upsilon: \mathds{F}_{\KP^{n-1}} \to \mathds{F}_{\CnZn} $$ for the ring map generated by the above identifications (\ref{eqn:identify_Ls}) and (\ref{eqn:generators_iden}).

\subsubsection{Picard--Fuchs equations}\label{sec:PF_iden}
Here, we discuss how the identification (\ref{eqn:identify_Ls}) affects the Picard--Fuchs equations of $\KP^{n-1}$. In equation (\ref{eqn:PF3}), we showed that the function $\mathsf{I}^{\KP^{n-1}}(q,z)$ satisfies the following Picard--Fuchs equation
\begin{equation}\label{eqn:PF_Comparison_KP}
\left(z^n\mathsf{D}_{\KP^{n-1}}^n-1\right)\mathsf{I}
=(-1)^nqz^n\prod_{i=0}^{n-1}\left(n\mathsf{D}_{\KP^{n-1}}+i\right)\mathsf{I}
\end{equation}

It is proved in \cite[Proposition 1.3]{gt} that the $I$-function $I^{\CnZn}(x,z)$ of $[\mathbb{C}^n/\mathbb{Z}_n]$ satisfies the following Picard--Fuchs equation
\begin{equation*}
\frac{1}{x^n}\prod_{i=0}^{n-1}\left(\mathsf{D}_{\CnZn}-i\right)I-(-1)^n\left(\frac{1}{n}\right)^n\mathsf{D}_{\CnZn}^nI=\left(\frac{1}{z}\right)^nI
\end{equation*}
which turns into
\begin{equation*}
(-1)^nqz^n\prod_{i=0}^{n-1}\left(n\mathsf{D}_{\KP^{n-1}}+i\right)I-z^n\mathsf{D}_{\KP^{n-1}}^nI=I
\end{equation*}
via the change of variable $q=x^{-n}$. We can further re-organize this equation and obtain
\begin{equation*}
-z^n\mathsf{D}_{\KP^{n-1}}^nI-I=-(-1)^nqz^n\prod_{i=0}^{n-1}\left(n\mathsf{D}_{\KP^{n-1}}+i\right)I\,.
\end{equation*}
Replacing $z$ with ${\rho}z$ and comparing it to (\ref{eqn:PF_Comparison_KP}), we see that Picard--Fuchs equations of $\KP^{n-1}$ and $\CnZn$ match. So, we obtained the following result.

\begin{prop}\label{prop:PF_Match}
Picard--Fuchs equations satisfied by $\mathsf{I}^{\KP^{n-1}}(q,z)$ and $I^{\CnZn}(x,z)$ match after change of variables $q\mapsto{x^{-n}}$ and $z\mapsto{\rho{z}}$.
\end{prop}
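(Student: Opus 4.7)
The plan is to take the Picard--Fuchs equation satisfied by $I^{\CnZn}(x,z)$ (namely (\ref{eqn:PF_For_CnZn})) and transport it term by term under the substitutions $q=x^{-n}$, $\mathsf{D}_{\CnZn}=-n\,\mathsf{D}_{\KP^{n-1}}$ (which is exactly (\ref{eqn:D_ident})), and finally $z\mapsto\rho z$ with $\rho^n=-1$, then compare the result with (\ref{eqn:PF_Comparison_KP}).

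First I would substitute $\mathsf{D}_{\CnZn}=-n\mathsf{D}_{\KP^{n-1}}$ into each factor: the factor $(\mathsf{D}_{\CnZn}-i)$ becomes $-(n\mathsf{D}_{\KP^{n-1}}+i)$, so the full product picks up a sign $(-1)^n$ and turns into $(-1)^n\prod_{i=0}^{n-1}(n\mathsf{D}_{\KP^{n-1}}+i)$. The monomial $\mathsf{D}_{\CnZn}^n$ becomes $(-n)^n\mathsf{D}_{\KP^{n-1}}^n$, so the coefficient $-(-1)^n(1/n)^n$ in front of it collapses to $-1$. Combined with $1/x^n=q$ this produces
\begin{equation*}
(-1)^nq\prod_{i=0}^{n-1}\bigl(n\mathsf{D}_{\KP^{n-1}}+i\bigr)I-\mathsf{D}_{\KP^{n-1}}^nI=\frac{1}{z^n}I,
\end{equation*}
which is exactly the intermediate equation already written in the paragraph preceding the statement.

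Next I multiply by $-z^n$ and rearrange to obtain
\begin{equation*}
-z^n\mathsf{D}_{\KP^{n-1}}^nI-I=-(-1)^nqz^n\prod_{i=0}^{n-1}\bigl(n\mathsf{D}_{\KP^{n-1}}+i\bigr)I.
\end{equation*}
Finally I apply $z\mapsto\rho z$; since $\rho^n=-1$ every occurrence of $z^n$ flips sign, converting the displayed equation into
\begin{equation*}
\bigl(z^n\mathsf{D}_{\KP^{n-1}}^n-1\bigr)I=(-1)^nqz^n\prod_{i=0}^{n-1}\bigl(n\mathsf{D}_{\KP^{n-1}}+i\bigr)I,
\end{equation*}
which is precisely (\ref{eqn:PF_Comparison_KP}) for $\mathsf{I}^{\KP^{n-1}}(q,z)$.

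There is no genuine obstacle here; the proof is a bookkeeping calculation, and the only subtle point to emphasize is the role of the parameter $\rho$: the factor $(-1)^{n+1}$ appearing in the relation (\ref{eqn:L_to_the_n_equal}) between $L^{\CnZn}$ and $L^{\KP^{n-1}}$ is the same sign that necessitates the $z\mapsto\rho z$ rescaling in order to absorb the extra $(-1)^n$ that arises when the $n$-th power operator $\mathsf{D}_{\CnZn}^n$ is converted via $\mathsf{D}_{\CnZn}=-n\mathsf{D}_{\KP^{n-1}}$. Once this sign-matching is observed, the coincidence of the two Picard--Fuchs operators is immediate.
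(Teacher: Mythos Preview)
Your proof is correct and follows essentially the same route as the paper: you transport (\ref{eqn:PF_For_CnZn}) through $q=x^{-n}$, $\mathsf{D}_{\CnZn}=-n\mathsf{D}_{\KP^{n-1}}$ to reach the intermediate equation, then rescale $z\mapsto\rho z$ to absorb the residual sign and land on (\ref{eqn:PF_Comparison_KP}). The only difference is cosmetic---the paper multiplies through by $z^n$ before rearranging whereas you multiply by $-z^n$---and your added remark on the role of $\rho$ in cancelling the $(-1)^n$ from $\mathsf{D}_{\CnZn}^n=(-n)^n\mathsf{D}_{\KP^{n-1}}^n$ makes the bookkeeping more transparent than the paper's own presentation.
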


\subsubsection{Modified flatness equations}\label{sec:flatness_iden} 
Recall the modified flatness equations (\ref{eqn:modflateqn_for_KP}):
\begin{equation*}
\widetilde{P}_{\mathrm{Ion}(i)-1,j}^{k,\KP^{n-1}}=\widetilde{P}_{i,j}^{k,\KP^{n-1}}+\frac{1}{L^{\KP^{n-1}}}\mathsf{D}_{\KP^{n-1}}\widetilde{P}_{i,j}^{k-1,\KP^{n-1}}+A^{\KP^{n-1}}_{n-i}\widetilde{P}_{i,j}^{k-1,\KP^{n-1}}.
\end{equation*}

Now, we analyze the effect of identifications on these equations:
\begin{equation*}
\begin{aligned}
\widetilde{P}_{\mathrm{Ion}(i)-1,j}^{k,\KP^{n-1}}
&=\widetilde{P}_{i,j}^{k,\KP^{n-1}}+\left(\frac{n}{-{\rho}L^{\CnZn}}\right)\left({-\frac{1}{n}\mathsf{D}_{\CnZn}}\widetilde{P}_{i,j}^{k-1,\KP^{n-1}}\right)+\frac{1}{\rho}A^{\CnZn}_{n-i}\widetilde{P}_{i,j}^{k-1,\KP^{n-1}}\\
&=\widetilde{P}_{i,j}^{k,\KP^{n-1}}+\frac{1}{{\rho}L^{\CnZn}}{\mathsf{D}_{\CnZn}}\widetilde{P}_{i,j}^{k-1,\KP^{n-1}}+\frac{1}{\rho}A^{\CnZn}_{n-i}\widetilde{P}_{i,j}^{k-1,\KP^{n-1}}.
\end{aligned}
\end{equation*}
Now, define
\begin{equation}\label{eqn:2nd_change_in_Pijk_KP}
\overline{P}_{i,j}^{k,\KP^{n-1}}\coloneqq \widetilde{P}_{i,j}^{k,\KP^{n-1}}\rho^k.
\end{equation}
Then, we obtain
\begin{equation*}
\overline{P}_{\mathrm{Ion}(i)-1,j}^{k,\KP^{n-1}}\rho^{-k}=\overline{P}_{i,j}^{k,\KP^{n-1}}\rho^{-k}+\frac{1}{{\rho}L^{\CnZn}}{\mathsf{D}_{\CnZn}}\overline{P}_{i,j}^{k-1,\KP^{n-1}}\rho^{-k+1}+\frac{1}{\rho}A^{\CnZn}_{n-i}\overline{P}_{i,j}^{k-1,\KP^{n-1}}\rho^{-k+1}.
\end{equation*}
Cancelling out the term $\rho^{-k}$, we obtain
\begin{equation*}
\overline{P}_{\mathrm{Ion}(i)-1,j}^{k,\KP^{n-1}}=\overline{P}_{i,j}^{k,\KP^{n-1}}+\frac{1}{L^{\CnZn}}{\mathsf{D}_{\CnZn}}\overline{P}_{i,j}^{k-1,\KP^{n-1}}+A^{\CnZn}_{n-i}\overline{P}_{i,j}^{k-1,\KP^{n-1}}
\end{equation*}
which are the modified flatness equations of $\CnZn$ \cite[Equation 2.10]{gt}. The change of variables (\ref{eqn:2nd_change_in_Pijk_KP}), is equivalent to replacing $z$ with ${\rho}z$. This is consistent with the above-proposed method to match Picard--Fuchs equations for $\KP^{n-1}$ and $\CnZn$. So, we established the following result.

\begin{prop}\label{prop:Modified_Flatness_Match}
The modified flatness equations (\ref{eqn:modflateqn_for_KP}) for $\KP^{n-1}$ match with the modified flatness equations \cite[Equation 2.10]{gt} of $\CnZn$ after the identifications in Section \ref{subsubsec:Change_of_variables} and the change of variables (\ref{eqn:2nd_change_in_Pijk_KP}).
\end{prop}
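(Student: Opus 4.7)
The plan is a direct substitution argument: we take the modified flatness equation (\ref{eqn:modflateqn_for_KP}) for $\KP^{n-1}$ and systematically replace each quantity using the identifications from Section \ref{subsubsec:Change_of_variables}, then absorb the residual factors of $\rho$ by a rescaling of the unknowns. First, I would substitute (\ref{eqn:identify_Ls}), which gives $\tfrac{1}{L^{\KP^{n-1}}} = -\tfrac{n}{\rho L^{\CnZn}}$, together with the identification $\mathsf{D}_{\KP^{n-1}} = -\tfrac{1}{n}\mathsf{D}_{\CnZn}$ from (\ref{eqn:D_ident}) and the identification $A^{\KP^{n-1}}_{n-i} = \tfrac{1}{\rho}A^{\CnZn}_{n-i}$ from (\ref{eqn:generators_iden}). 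The two factors of $-\tfrac{1}{n}$ cancel in the middle term, and the equation becomes
\begin{equation*}
\widetilde{P}_{\mathrm{Ion}(i)-1,j}^{k,\KP^{n-1}}
= \widetilde{P}_{i,j}^{k,\KP^{n-1}} + \frac{1}{\rho L^{\CnZn}}\mathsf{D}_{\CnZn}\widetilde{P}_{i,j}^{k-1,\KP^{n-1}} + \frac{1}{\rho}A^{\CnZn}_{n-i}\widetilde{P}_{i,j}^{k-1,\KP^{n-1}}.
\end{equation*}

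The second step is to eliminate the residual factors of $\rho^{-1}$ on the right-hand side. I would define $\overline{P}_{i,j}^{k,\KP^{n-1}} = \rho^k\,\widetilde{P}_{i,j}^{k,\KP^{n-1}}$ as in (\ref{eqn:2nd_change_in_Pijk_KP}). Substituting and multiplying through by $\rho^{k}$ produces
\begin{equation*}
\overline{P}_{\mathrm{Ion}(i)-1,j}^{k,\KP^{n-1}}
= \overline{P}_{i,j}^{k,\KP^{n-1}} + \frac{1}{L^{\CnZn}}\mathsf{D}_{\CnZn}\overline{P}_{i,j}^{k-1,\KP^{n-1}} + A^{\CnZn}_{n-i}\overline{P}_{i,j}^{k-1,\KP^{n-1}},
\end{equation*}
which is precisely the modified flatness equation for $\CnZn$ recorded in \cite[Equation 2.10]{gt}. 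Since the rescaling $\widetilde{P}^k \mapsto \rho^k \widetilde{P}^k$ is exactly the change induced by replacing $z$ with $\rho z$ in the associated $P$-series (\ref{eqn:Flat_to_modflat_KP_2}), the rescaling is consistent with the $z \mapsto \rho z$ used for matching Picard-Fuchs equations in Proposition \ref{prop:PF_Match}.

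The argument is essentially bookkeeping, so there is no deep obstacle; the only care required is to verify that the $\rho$-powers from $L^{\KP^{n-1}}$, $A^{\KP^{n-1}}_{n-i}$, and the rescaling of $\widetilde{P}$ all combine to give a single overall factor of $\rho^{-k}$ on each term of the equation, so that multiplying through by $\rho^k$ cleanly removes $\rho$. The consistency of the rescaling with the $z \mapsto \rho z$ substitution used in Section \ref{sec:PF_iden} is what ties the present result to Proposition \ref{prop:PF_Match} and is the conceptual content beyond the mechanical substitution.
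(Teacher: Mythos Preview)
Your proposal is correct and follows essentially the same approach as the paper: both carry out the direct substitution of the identifications (\ref{eqn:identify_Ls}), (\ref{eqn:D_ident}), and (\ref{eqn:generators_iden}) into (\ref{eqn:modflateqn_for_KP}), then absorb the residual $\rho^{-1}$ factors via the rescaling $\overline{P}^{k}=\rho^{k}\widetilde{P}^{k}$ to recover \cite[Equation 2.10]{gt}. The paper likewise remarks that this rescaling amounts to $z\mapsto\rho z$, consistent with Proposition \ref{prop:PF_Match}.
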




\subsubsection{Genus \texorpdfstring{$0$}{0} invariants}\label{sec:genus0_iden}
Recall (\ref{eqn:3ptGWKP}):
\begin{equation*}
\left\langle\left\langle H^i,H^j,H^k\right\rangle\right\rangle_{0,3}^{\KP^{n-1}}=\sum_{d=0}^{\infty}Q^d\langle H^i, H^j, H^k\rangle_{0,3,d}^{\KP^{n-1}}
=-\frac{1}{n}\frac{K^{\KP^{n-1}}_{i+j}}{K^{\KP^{n-1}}_{i}K^{\KP^{n-1}}_{j}}\delta_{\text{Inv}(i+j \text{ mod }n), k}.
\end{equation*}
Also recall (\ref{eqn:3pt_inv_CnZn}):
\begin{equation*}
\left\langle\left\langle\phi_i,\phi_j,\phi_k\right\rangle\right\rangle_{0,3}^{\CnZn}=\frac{K^{\CnZn}_{i+j}}{K^{\CnZn}_iK^{\CnZn}_j}\frac{1}{n}\delta_{\text{Inv}(i+j \text{ mod }n), k}. \end{equation*}

The identification (\ref{eqn:generators_iden}) yields a matching of generating functions of genus $0$, $3$-point invariants after a factor\footnote{This factor of $(-1)$ will be evident in Theorem \ref{thm:Main_Theorem}.} of $(-1)$.

\subsection{\texorpdfstring{$\mathsf{R}$}{R}-matrices}\label{sec:R_iden}
The $R$-matrices of $\CnZn$ and $\KP^{n-1}$ satisfy the flatness equation
\begin{equation}\label{eqn:flatness_wo_subscript}
D\left(\Psi^{-1}R_{k-1}\right)+\left(\Psi^{-1}R_k\right)DU-\Psi^{-1}\left(DU\right)\Psi\left(\Psi^{-1}R_k\right)=0.   
\end{equation}

Define $\widetilde{\mathsf{R}}^{\CnZn}(z)$ and $\widetilde{\mathsf{R}}^{\KP^{n-1}}(z)$ to be the solutions of equation (\ref{eqn:flatness_wo_subscript}) with the initial conditions
\begin{equation*}
\widetilde{\mathsf{R}}^{\CnZn}(z)\big\vert_{x=0}=\widetilde{\mathsf{R}}^{\KP^{n-1}}(z)\big\vert_{q=0}=\mathsf{Id}.
\end{equation*}
Let ${\mathsf{R}}^{\CnZn}(z)$ be the true $R$-matrix of $\CnZn$, and set $$\mathsf{P}^{\CnZn}(z) \coloneqq \Psi^{-1}_{\CnZn}{\mathsf{R}}^{\CnZn}(z)$$
and let $$\mathsf{P}_{i,j}^{\CnZn}(z)=\sum_{k\geq{0}}P_{i,j}^{k,\CnZn}z^k$$
be its entries.

\begin{lem}\label{lem:True_R_Matrix_CnZn}
The true $R$-matrix ${\mathsf{R}}^{\CnZn}(z)$ of $\CnZn$ satisfies
\begin{equation*}
{\mathsf{R}}^{\CnZn}(z)\big\vert_{x=0}=\Psi_{\CnZn}\big\vert_{x=0}\mathsf{Q}^{\CnZn}(z)\Psi^{-1}_{\CnZn}\big\vert_{x=0}
\end{equation*}
with 
\begin{equation*}
\mathsf{Q}^{\CnZn}(z)=\diag(\mathsf{Q}_0^{\CnZn}(z),\ldots,\mathsf{Q}_{n-1}^{\CnZn}(z)) 
\end{equation*}
where
\begin{equation*}
\mathsf{Q}_i^{\CnZn}(z)=\exp\left(n\sum_{l>0}(-1)^{l}\frac{B_{nl+1}\left(\frac{i}{n}\right)}{nl+1}\frac{z^{nl}}{nl} \right).
\end{equation*}
\end{lem}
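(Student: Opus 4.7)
The strategy is to observe that at $x=0$, the Gromov-Witten theory of $\CnZn$ reduces to its degree-zero sector. By the orbifold quantum Riemann-Roch theorem \cite{Tseng} applied to the vector bundle $\mathcal{V} = \mathcal{L}^{\oplus n} \to B\mathbb{Z}_n$, this degree-zero sector is identified with the Gromov-Witten theory of $B\mathbb{Z}_n$ twisted by $\mathcal{V}$ together with the inverse $\mathrm{T}$-equivariant Euler class. The twist is recorded precisely by the operator $\mathsf{Q}^{\CnZn}$ computed in Section \ref{sec:QRRCnZn}: on each summand $H^*_{\mathrm{T}}(\mathrm{pt}) \cdot \phi_i$ it acts as multiplication by $\mathsf{Q}_i^{\CnZn}(z)$ from (\ref{eqn:oqrr_final}), so $\mathsf{Q}^{\CnZn}$ is diagonal in the flat basis $\{\phi_i\}$.

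The key input is that the untwisted Gromov-Witten theory of $B\mathbb{Z}_n$ is a TQFT: it carries only the group algebra Frobenius structure of $\mathbb{C}[\mathbb{Z}_n]$, with no nontrivial $\psi$-class contributions beyond it. Consequently its $R$-matrix is the identity. Within Givental's formalism for QRR-twisted theories, the $R$-matrix of the twist equals the corresponding quantum Riemann-Roch operator expressed in the normalized idempotent frame of the resulting Frobenius algebra. Hence at $x=0$, the $R$-matrix of $\CnZn$ is exactly $\mathsf{Q}^{\CnZn}(z)$ read in the normalized idempotent basis.

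To pass from the flat basis $\{\phi_i\}$ (in which $\mathsf{Q}^{\CnZn}(z)$ is diagonal) to the normalized idempotent basis (in which the $R$-matrix is conventionally written, so that $\mathsf{P} = \Psi^{-1}\mathsf{R}$ satisfies the flatness equation), one conjugates by the transition matrix $\Psi_{\CnZn}|_{x=0}$. This yields
\begin{equation*}
\mathsf{R}^{\CnZn}(z)\big|_{x=0} = \Psi_{\CnZn}|_{x=0}\,\mathsf{Q}^{\CnZn}(z)\,\Psi^{-1}_{\CnZn}|_{x=0}.
\end{equation*}
The main point requiring careful bookkeeping is to verify that the normalized idempotents of $H^*_{\mathrm{T,Orb}}(\CnZn)$ at $x=0$ constructed in \cite{gt} coincide, up to their prescribed normalization, with the natural Fourier-transform idempotents of $\mathbb{C}[\mathbb{Z}_n]$, and that the direction of conjugation matches the convention $\mathsf{P} = \Psi^{-1}\mathsf{R}$ that was fixed in parallel for $\KP^{n-1}$ in Section \ref{sec:FrobKP}. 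Both are immediate from the explicit formulas in \cite{gt}, so the content of the lemma is essentially conceptual: it is the specialization of the Givental-Teleman identification of twisted $R$-matrices to the point $x=0$, where the untwisted theory is $B\mathbb{Z}_n$.
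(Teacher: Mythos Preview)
Your proposal is correct and takes essentially the same approach as the paper. The paper's proof is a two-sentence version of exactly what you wrote: the orbifold quantum Riemann-Roch operator computed in Section~\ref{sec:QRRCnZn} is diagonal in the flat basis $\{\phi_i\}$, the true $R$-matrix is expressed in the normalized idempotent basis, and conjugation by $\Psi_{\CnZn}|_{x=0}$ converts between them; your additional remark that the untwisted $R$-matrix of $B\mathbb{Z}_n$ is the identity makes explicit the one step the paper leaves implicit.
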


\begin{proof}
The true $R$-matrix ${\mathsf{R}}^{\CnZn}(z)$ is in normalized idempotent basis, and the quantum Riemann--Roch operator found by equation (\ref{eqn:oqrr_final}) is in flat basis $\{\phi_0,\ldots,\phi_{n-1}\}$. After a base change, they agree when $x=0$ due to the orbifold quantum Riemann--Roch theorem.
\end{proof}

\begin{lem}\label{lem:True_R_Matrix_KP}
The true $R$-matrix ${\mathsf{R}}^{\KP^{n-1}}(z)$ of $\KP^{n-1}$ is given by
\begin{equation*}
{\mathsf{R}}^{\KP^{n-1}}(z)=\widetilde{\mathsf{R}}^{\KP^{n-1}}(z)\mathsf{Q}^{\KP^{n-1}}(z)
\end{equation*}
with 
\begin{equation*}
\mathsf{Q}^{\KP^{n-1}}(z)=\diag(\mathsf{Q}_0^{\KP^{n-1}}(z),\ldots,\mathsf{Q}_{n-1}^{\KP^{n-1}}(z)) 
\end{equation*}
where
\begin{equation*}
\mathsf{Q}_i^{\KP^{n-1}}(z)=\exp\left(\sum_{m>0}N_{2m-1, i}\frac{(-1)^{2m-1}B_{2m}}{2m(2m-1)}z^{2m-1}\right).    
\end{equation*}
\end{lem}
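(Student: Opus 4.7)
The plan is to verify that the right hand side $\widetilde{\mathsf{R}}^{\KP^{n-1}}(z)\mathsf{Q}^{\KP^{n-1}}(z)$ satisfies the same characterization as the true $R$-matrix $\mathsf{R}^{\KP^{n-1}}(z)$, and then invoke uniqueness. First I would check that $\widetilde{\mathsf{R}}^{\KP^{n-1}}(z)\mathsf{Q}^{\KP^{n-1}}(z)$ satisfies the flatness equation (\ref{eqn:flatness_wo_subscript}). Since $\mathsf{Q}^{\KP^{n-1}}(z)$ is diagonal in the (normalized) idempotent basis and independent of $q$, it commutes with the diagonal matrix $\mathsf{D}_{\KP^{n-1}}U$ of (\ref{eqn:DU}) and is killed by $\mathsf{D}_{\KP^{n-1}}$. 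Substituting $\widetilde{\mathsf{R}}^{\KP^{n-1}}\mathsf{Q}^{\KP^{n-1}}$ for $R$ in (\ref{eqn:flatness_wo_subscript}) therefore factors the $\mathsf{Q}^{\KP^{n-1}}(z)$ out to the right, and the identity reduces to the flatness equation for $\widetilde{\mathsf{R}}^{\KP^{n-1}}(z)$, which holds by construction.

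Next I would pin down the initial condition at $q=0$. At $q=0$ the $\mathrm{T}$-equivariant Gromov-Witten theory of $\KP^{n-1}$ reduces, via virtual localization, to the $\mathrm{T}$-equivariant Gromov-Witten theory of the $\mathrm{T}$-fixed locus $(\KP^{n-1})^{\mathrm{T}}=\{p_0,\ldots,p_{n-1}\}$ twisted by the normal bundle $N_{(\KP^{n-1})^{\mathrm{T}}/\KP^{n-1}}$ with inverse equivariant Euler class, as recalled at the start of Section \ref{sec:QRRKP}. The untwisted theory is a disjoint union of $n$ point theories, whose $R$-matrix is the identity. The Coates--Givental quantum Riemann--Roch theorem \cite{cg} therefore identifies the resulting twisted $R$-matrix with the diagonal operator $\mathsf{Q}^{\KP^{n-1}}(z)$ whose restriction to each fixed point is (\ref{eqn:qrr1}). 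Since the base change $\mathsf{B}=-\sqrt{-1}\diag(1,\zeta,\ldots,\zeta^{n-1})$ between the fixed-point basis and the normalized idempotent basis at $q=0$ is itself diagonal, $\mathsf{Q}^{\KP^{n-1}}(z)$ retains its diagonal form in the normalized idempotent basis. Hence $\mathsf{R}^{\KP^{n-1}}(z)\big|_{q=0}=\mathsf{Q}^{\KP^{n-1}}(z)$, and by the defining property of $\widetilde{\mathsf{R}}^{\KP^{n-1}}(z)$ we also have $\big(\widetilde{\mathsf{R}}^{\KP^{n-1}}(z)\mathsf{Q}^{\KP^{n-1}}(z)\big)\big|_{q=0}=\mathsf{Q}^{\KP^{n-1}}(z)$.

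Finally, the two matrix-valued power series $\mathsf{R}^{\KP^{n-1}}(z)$ and $\widetilde{\mathsf{R}}^{\KP^{n-1}}(z)\mathsf{Q}^{\KP^{n-1}}(z)$ solve the same flatness equation and agree at $q=0$. Reading (\ref{eqn:Flatness2}) as a recursion in the $z$-grading, each coefficient $R_k$ is determined by the previous one up to an integration ambiguity fixed by the value at $q=0$; a straightforward induction on $k$ then forces the two series to coincide to all orders, yielding the lemma. The main obstacle I anticipate is the bookkeeping between the fixed-point basis, in which the Coates--Givental quantum Riemann--Roch output is naturally phrased, and the normalized idempotent basis, in which the $R$-matrix of the flatness equation lives; happily the diagonal nature of $\mathsf{B}$ at $q=0$ makes this transport entirely painless, which is the technical ingredient that distinguishes the $\KP^{n-1}$ statement from its $\CnZn$ analogue in Lemma \ref{lem:True_R_Matrix_CnZn}.
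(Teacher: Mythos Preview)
Your proposal is correct and follows essentially the same approach as the paper: verify that $\widetilde{\mathsf{R}}^{\KP^{n-1}}(z)\mathsf{Q}^{\KP^{n-1}}(z)$ solves the flatness equation (because $\mathsf{Q}^{\KP^{n-1}}(z)$ is diagonal and commutes with $\mathsf{D}_{\KP^{n-1}}U$), and match the initial condition at $q=0$ via quantum Riemann--Roch and the diagonal base change $\mathsf{B}$. The paper's proof is terser and leaves the uniqueness step implicit, but your more careful induction on the $z$-grading is a welcome clarification rather than a departure.
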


\begin{proof}
By quantum Riemann--Roch and the base change matrix $\mathsf{B}$, we have
\begin{equation*}
{\mathsf{R}}^{\KP^{n-1}}(z)\big\vert_{q=0}=\mathsf{B}\mathsf{Q}^{\KP^{n-1}}(z)\mathsf{B}^{-1}=\mathsf{Q}^{\KP^{n-1}}(z).
\end{equation*}
Also, observe that the matrix series
\begin{equation*}
  \widetilde{\mathsf{R}}^{\KP^{n-1}}(z)\mathsf{Q}^{\KP^{n-1}}(z)  
\end{equation*}
is a solution of flatness equation (\ref{eqn:flatness_wo_subscript}) since $\mathsf{Q}^{\KP^{n-1}}(z) $ is diagonal matrix and commutes with $\mathsf{D}_{\KP^{n-1}}U$.
\end{proof}

Recall, in Section \ref{sec:FrobKP}, we defined the following $$\mathsf{P}^{\KP^{n-1}}(z)=\Psi^{-1}_{\KP^{n-1}}{\mathsf{R}}^{\KP^{n-1}}(z)$$
and $$\mathsf{P}_{i,j}^{\KP^{n-1}}(z)=\sum_{k\geq{0}}P_{i,j}^{k,\KP^{n-1}}z^k$$
for its entries.  

The polynomiality of $P_{0,j}^{k,\CnZn}$ is proved in \cite{gt}, and the polynomiality of $P_{0,j}^{k,\KP^{n-1}}$ is given by Corollary \ref{cor:polynomiality_of_P_0j_KP}.

\begin{lem}\label{lem:mathcing_of_P0jzs}
The series $-\sqrt{-1}\mathsf{P}^{\CnZn}_{0,j}(z)$ and $\mathsf{P}^{\KP^{n-1}}_{0,j}({\rho}z)$ match after identification (\ref{eqn:identify_Ls}).
\end{lem}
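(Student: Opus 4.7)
The plan is to prove the identity coefficient-by-coefficient in $z$. By Corollary \ref{cor:polynomiality_of_P_0j_KP}, each $P^{k,\KP^{n-1}}_{0,j}$ is a polynomial in $L^{\KP^{n-1}}$ and the sequence $\{P^{k,\KP^{n-1}}_{0,j}\}_{k \geq 0}$ satisfies the $n$-term linear recursion
$$\mathds{L}_{j,1}\bigl(P^{k,\KP^{n-1}}_{0,j}\bigr) + \frac{1}{L_j^{\KP^{n-1}}}\mathds{L}_{j,2}\bigl(P^{k-1,\KP^{n-1}}_{0,j}\bigr) + \cdots + \frac{1}{(L_j^{\KP^{n-1}})^{n-1}}\mathds{L}_{j,n}\bigl(P^{k+1-n,\KP^{n-1}}_{0,j}\bigr) = 0.$$
A parallel recursion for $\{P^{k,\CnZn}_{0,j}\}$ holds by the analysis in \cite{gt}. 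Hence, after the identification (\ref{eqn:identify_Ls}), both sides of the lemma become polynomials in a single $L$-variable, each governed by an $n$-term recursion extracted from the Picard--Fuchs equation.

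I would then verify that the two recursions are transformed into one another under the rescaling $z \mapsto \rho z$. This is the content of Propositions \ref{prop:PF_Match} and \ref{prop:Modified_Flatness_Match}: under $q = x^{-n}$, $\mathsf{D}_{\KP^{n-1}} = -\frac{1}{n}\mathsf{D}_{\CnZn}$, and $nL^{\KP^{n-1}} = -\rho L^{\CnZn}$, the Picard--Fuchs operators coincide, so the operators $\mathds{L}_{j,k}^{\KP^{n-1}}$ are carried to the corresponding $\mathds{L}_{j,k}^{\CnZn}$. Setting $Q^{k,\KP^{n-1}}_{0,j} := \rho^{k}\, P^{k,\KP^{n-1}}_{0,j}$ absorbs the factors contributed by $z \mapsto \rho z$, and the $\KP^{n-1}$ recursion becomes, term by term, the $\CnZn$ recursion for $P^{k,\CnZn}_{0,j}$ in the variable $L^{\CnZn}$.

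For the base case $k=0$, the normalization $\mathsf{R}(z) = \mathrm{Id} + O(z)$ gives $P^{0}_{0,j} = (\Psi^{-1})_{0,j}$. From the explicit formula at the end of Section \ref{sec:FrobKP}, $(\Psi^{-1}_{\KP^{n-1}})_{0,j} = -\sqrt{-1}$. The analogous computation for $\CnZn$, whose pairing (\ref{eqn:metric_CnZn}) is positive so that the idempotent normalization carries no factor of $\sqrt{-1}$, yields $(\Psi^{-1}_{\CnZn})_{0,j} = 1$. Therefore $-\sqrt{-1}\,P^{0,\CnZn}_{0,j} = -\sqrt{-1} = \rho^{0}\,P^{0,\KP^{n-1}}_{0,j}$. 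Induction on $k$ via the matched recursion then produces $-\sqrt{-1}\,P^{k,\CnZn}_{0,j} = \rho^{k}\,P^{k,\KP^{n-1}}_{0,j}$ for every $k \geq 0$, which assembles into the asserted equality $-\sqrt{-1}\,\mathsf{P}^{\CnZn}_{0,j}(z) = \mathsf{P}^{\KP^{n-1}}_{0,j}(\rho z)$.

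The main obstacle is verifying the inductive step, namely that the recursion together with the $k=0$ datum and the polynomiality $P^{k,\cdot}_{0,j} \in \mathbb{C}[L^{\cdot}]$ determines each coefficient uniquely. This reduces to showing that $\mathds{L}_{j,1}$ acts with one-dimensional kernel on polynomials in $L^{\cdot}$, which follows from inspection of the top-order term in the Picard--Fuchs operator (\ref{eqn:PF_with_stirling_for_vert_I}) and the asymptotic analysis underlying Corollary \ref{cor:polynomiality_of_P_0j_KP}. A secondary obstacle is the careful bookkeeping of the factors $-1$, $\rho$, and $\sqrt{-1}$ introduced by the metric reversal (\ref{eqn:metric_iden}) and by the opposite signs of the two Poincar\'e pairings; these must be tracked consistently at every step so that the normalization $\mathsf{R}(z) = \mathrm{Id} + O(z)$ propagates correctly across the identification.
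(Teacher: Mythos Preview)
Your approach matches the paper's strategy up to the point where you invoke induction, but there is a genuine gap in the inductive step. Since $\mathds{L}_{j,1}=n\mathsf{D}$, the recursion (\ref{eqn:P0jk_satisfying_mathdsLjk_equation}) determines each $P^{k}_{0,j}$ only up to an additive constant in $\mathbb{C}[L]$: knowing $P^{0}_{0,j},\ldots,P^{k-1}_{0,j}$ gives you $\mathsf{D}(P^{k}_{0,j})$, not $P^{k}_{0,j}$ itself. You acknowledge that $\mathds{L}_{j,1}$ has one-dimensional kernel on polynomials, but that is precisely the obstruction, not the resolution---a one-dimensional kernel means a free constant of integration at \emph{every} level $k\ge 1$. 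So matching $P^0$ on both sides does not propagate.

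The paper handles this by matching the constant terms of $P^{k}_{0,j}$ (as polynomials in $L$) for \emph{all} $k$ simultaneously. Here a crucial asymmetry appears: $L^{\CnZn}|_{x=0}=0$, so the constant term of $P^{k,\CnZn}_{0,j}$ in $\mathbb{C}[L^{\CnZn}]$ is its value at $x=0$, computable from the $R$-matrix initial condition and equal to the quantum Riemann--Roch operator $\mathsf{Q}^{\CnZn}_0(z)$. But $L^{\KP^{n-1}}|_{q=0}=1$, so the constant term of $P^{k,\KP^{n-1}}_{0,j}$ in $\mathbb{C}[L^{\KP^{n-1}}]$ is its value at $L^{\KP^{n-1}}=0$, i.e.\ at $q=\infty$, \emph{not} at $q=0$. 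Your base-case check at $q=0$ therefore does not give the needed constants for $k\ge 1$. Matching the $q=\infty$ constants against $\mathsf{Q}^{\CnZn}_0(z)$ is the content of Lemma~\ref{lem:Final_of_proof_strategy}, which the paper proves by an independent analytic argument (asymptotic expansion of oscillatory integrals for the Landau--Ginzburg mirror) in Section~\ref{sec:asymptotics_of_oscillatory_integrals}. This is the missing nontrivial ingredient in your proposal.
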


\begin{cor}
The matrix series $-\sqrt{-1}\mathsf{P}^{\CnZn}(z)$ and $\mathsf{P}^{\KP^{n-1}}({\rho}z)$ match after identifications in Section \ref{subsubsec:Change_of_variables}. 
\end{cor}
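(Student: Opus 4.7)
The plan is to bootstrap the already-established matching of the $0$-th rows (Lemma \ref{lem:mathcing_of_P0jzs}) up to a matching of the full matrices, using the fact that the modified flatness equations on both sides have been identified in Proposition \ref{prop:Modified_Flatness_Match}. The key observation is that the lifting procedure (\ref{eq:ModifiedFlatnessLift}) expresses every entry $\widetilde{P}_{i,j}^{k,\KP^{n-1}}$ as a universal polynomial expression in $\{\widetilde{P}_{0,j}^{\ell,\KP^{n-1}}\}_{\ell\leq k}$ with coefficients drawn from $\mathds{F}_{\KP^{n-1}}$, and the analogous procedure works verbatim for $\CnZn$ with coefficients in $\mathds{F}_{\CnZn}$.

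First I would pass from $\mathsf{P}_{i,j}^{\KP^{n-1}}(z)$ to $\overline{P}_{i,j}^{k,\KP^{n-1}} = \widetilde{P}_{i,j}^{k,\KP^{n-1}}\rho^k$, which is exactly the substitution $z\mapsto \rho z$ combined with the renormalization (\ref{eqn:Flat_to_modflat_KP_2}); by Proposition \ref{prop:Modified_Flatness_Match}, the resulting equations become the modified flatness equations of $\CnZn$ after the identifications of Section \ref{subsubsec:Change_of_variables}. Thus the recursive lifts on both sides are formally identical polynomial expressions in $L^{\pm 1}$, the generators $A_r$, and the $0$-th row entries. Applying Lemma \ref{lem:mathcing_of_P0jzs} to match the $0$-th rows (up to the overall factor $-\sqrt{-1}$), I obtain
\begin{equation*}
-\sqrt{-1}\,\overline{P}^{k,\CnZn}_{i,j} \;=\; \Upsilon\bigl(\overline{P}^{k,\KP^{n-1}}_{i,j}\bigr)
\end{equation*}
for every $i,j,k$ by induction on $i$ (following the descending order $i=n-1,n-2,\ldots,1$ used in (\ref{eq:ModifiedFlatnessLift})).

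Finally I would undo the change (\ref{eqn:Flat_to_modflat_KP_2}), i.e.\ recover $P_{i,j}^{k}$ from $\widetilde{P}_{i,j}^{k}$ by multiplying by $\frac{K_i}{L^{i}}\zeta^{-(k+i)j}$. On the $\KP^{n-1}$ side $\frac{K^{\KP^{n-1}}_i}{(L^{\KP^{n-1}})^i}$ is identified with $\frac{K^{\CnZn}_i}{(L^{\CnZn})^i}$ via (\ref{eqn:identify_Ls}) and (\ref{eqn:generators_iden}) (a straightforward product of the $C_i$ identifications, as the factors of $-\rho/n$ cancel), while the $\zeta^{(k+i)j}$ twist on $\KP^{n-1}$ mirrors the corresponding twist in the analogous definition for $\CnZn$ in \cite{gt}, and the factor $\rho^k$ absorbed from $z\mapsto \rho z$ accounts for the passage between the two normalizations. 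Combining these with the inductive matching gives the claimed identity of matrices.

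The main obstacle I anticipate is purely bookkeeping: verifying that the various scalar prefactors---the $-\sqrt{-1}$ coming from the ratio of normalized idempotents $\widetilde{e}^{\CnZn}_\alpha / \widetilde{e}^{\KP^{n-1}}_\alpha$, the powers of $\rho$ introduced by $z\mapsto \rho z$, the factors $-\rho/n$ from (\ref{eqn:generators_iden}), and the $\zeta^{(k+i)j}$ twists---assemble consistently at each step, rather than any substantive analytic difficulty. Everything else is formal once Proposition \ref{prop:Modified_Flatness_Match} and Lemma \ref{lem:mathcing_of_P0jzs} are in hand.
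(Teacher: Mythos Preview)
Your proposal is correct and follows essentially the same approach as the paper: both arguments bootstrap from the $0$-th row matching (Lemma \ref{lem:mathcing_of_P0jzs}) to the full matrix via the lifting procedure (\ref{eq:ModifiedFlatnessLift}), using Proposition \ref{prop:Modified_Flatness_Match} to identify the modified flatness equations on the two sides. The paper's proof additionally notes explicitly that Lemmas \ref{lem:Properties_of_Ai_s} and \ref{lem:Equations_forDAl} match their $\CnZn$ counterparts from \cite{gt}, which is needed to ensure the elimination steps in the lifting procedure agree; you leave this implicit in your claim that the recursive lifts are ``formally identical,'' but it is worth stating.
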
 

\begin{proof}
The proof relies on matching of lifting procedures of $-\sqrt{-1}\mathsf{P}^{\CnZn}(z)$, and $\mathsf{P}^{\KP^{n-1}}({\rho}z)$ after identifications in Section \ref{subsubsec:Change_of_variables}. Firstly, we already showed that the modified flatness equations match via these identifications in Proposition \ref{prop:Modified_Flatness_Match}. The other steps we use in the lifting procedure are  Lemma \ref{lem:Properties_of_Ai_s}, and Lemma  \ref{lem:Equations_forDAl} which also match with \cite[Lemma 2.8]{gt}, and \cite[Lemma 2.10]{gt} respectively. Hence, lifting procedures completely match via the identifications.
\end{proof}

In the rest of this subsection, we describe how to prove Lemma \ref{lem:mathcing_of_P0jzs}. In the Appendix \ref{Appendix:Analysis_of_I_function}, we have shown that under the change of variable (\ref{eqn:variable_change}) we have
\begin{equation*}
\mathds{L}_{j,k}=\frac{(-1)^k}{n^k}\mathds{L}_{j,k}^{\CnZn}.
\end{equation*}
Then, the equation (\ref{eqn:P0jk_satisfying_mathdsLjk_equation}) reads as
\begin{equation*}
\begin{aligned}
\frac{(-1)}{n}\mathds{L}^{\CnZn}_{j,1}(P_{0,j}^{k,\KP^{n-1}})
+\frac{n}{(-\rho L_j^{\CnZn})}&\frac{(-1)^2}{n^2}\mathds{L}^{\CnZn}_{j,2}(P_{0,j}^{k-1,\KP^{n-1}})
+\ldots \\ &\ldots
+\frac{n^{n-1}}{(-\rho L_j^{\CnZn})^{n-1}}\frac{(-1)^n}{n^n}\mathds{L}^{\CnZn}_{j,n}(P_{0,j}^{k+1-n,\KP^{n-1}})=0
\end{aligned}
\end{equation*}
which can be rewritten as
\begin{equation}\label{eqn:P0jk_satisfy_mathdsL_for_CnZn_after_identification}
\begin{aligned}
\mathds{L}^{\CnZn}_{j,1}(P_{0,j}^{k,\KP^{n-1}}\rho^k)
+\frac{n}{( L_j^{\CnZn})}&\mathds{L}^{\CnZn}_{j,2}(P_{0,j}^{k-1,\KP^{n-1}}\rho^{k-1})
+\ldots \\ &\ldots
+\frac{1}{( L_j^{\CnZn})^{n-1}}\mathds{L}^{\CnZn}_{j,n}(P_{0,j}^{k+1-n,\KP^{n-1}}\rho^{k+1-n})=0    
\end{aligned}
\end{equation}
after multiplying both sides with $-n\rho^k$. In \cite[Corollary 1.16]{gt}, we showed that $P_{0,j}^{k,\CnZn}$ satisfies the same equation:
\begin{equation}\label{eqn:CnZn_satisfy_mathdsL}
\begin{aligned}
\mathds{L}^{\CnZn}_{j,1}(P_{0,j}^{k,\CnZn})
+\frac{n}{( L_j^{\CnZn})}&\mathds{L}^{\CnZn}_{j,2}(P_{0,j}^{k-1,\CnZn})
+\ldots\\ &\ldots
+\frac{1}{( L_j^{\CnZn})^{n-1}}\mathds{L}^{\CnZn}_{j,n}(P_{0,j}^{k+1-n,\CnZn})=0.
\end{aligned}
\end{equation}

Since we have
\begin{equation*}
\mathsf{D}_{\KP^{n-1}}=(\mathsf{D}_{\KP^{n-1}}L^{\KP^{n-1}})\frac{d}{dL^{\KP^{n-1}}} \quad \text{and} \quad \mathsf{D}_{\CnZn}=(\mathsf{D}_{\CnZn}L^{\CnZn})\frac{d}{dL^{\CnZn}}
\end{equation*}
the operators $\mathds{L}_{j,k}$ and $\mathds{L}_{j,k}^{\CnZn}$ can be written purely in terms of in $L^{\KP^{n-1}}$ and $L^{\CnZn}$, respectively. Note also that we have
\begin{equation*}
\mathds{L}_{j,1}=n\mathsf{D}_{\KP^{n-1}} \quad \text{and} \quad
\mathds{L}^{\CnZn}_{j,1}=n\mathsf{D}_{\CnZn}.
\end{equation*}
This means if we know the constant terms of $\mathsf{P}^{\CnZn}_{0,j}(z)$ with respect to $L^{\CnZn}$ and $\mathsf{P}^{\KP^{n-1}}_{0,j}(z)$  with respect to $L^{\KP^{n-1}}$ then we can determine them by equation (\ref{eqn:P0jk_satisfying_mathdsLjk_equation}) and equation (\ref{eqn:CnZn_satisfy_mathdsL}).

Since the identification (\ref{eqn:identify_Ls}) turns equation (\ref{eqn:P0jk_satisfying_mathdsLjk_equation}) into equation (\ref{eqn:P0jk_satisfy_mathdsL_for_CnZn_after_identification}), we see that in order to prove Lemma \ref{lem:mathcing_of_P0jzs}, we need to show that the constant terms of the series $-\sqrt{-1}\mathsf{P}^{\CnZn}_{0,j}(z)$ with respect to $L^{\CnZn}$ and the series $\mathsf{P}^{\KP^{n-1}}_{0,j}({\rho}z)$ with respect to $L^{\KP^{n-1}}$ are the same.

Note that the constant term of $\mathsf{P}^{\CnZn}_{0,j}(z)$ with respect to $L^{\CnZn}$ is the same as its constant term with respect to $x$ since $L^{\CnZn}\big\vert_{x=0}=0$. Then, we need to find $(0,j)$-entry of
\begin{equation*}
\begin{split}
\mathsf{P}^{\CnZn}(z)\big\vert_{x=0}
&=\Psi^{-1}_{\CnZn}{\mathsf{R}}^{\CnZn}(z)\big\vert_{x=0}\\
&=\left(\Psi^{-1}_{\CnZn}\left(\Psi_{\CnZn}\big\vert_{x=0}\mathsf{Q}^{\CnZn}(z)\Psi^{-1}_{\CnZn}\big\vert_{x=0}\right)\right)\big\vert_{x=0}\\
&=\mathsf{Q}^{\CnZn}(z)\Psi^{-1}_{\CnZn}\big\vert_{x=0}.
\end{split}
\end{equation*}

In \cite{gt}, it is found that
\begin{equation*}
\left[\Psi^{-1}_{\CnZn}\right]_{j,\beta}=\zeta^{-\beta{j}}\frac{K^{\CnZn}_{j}}{(L^{\CnZn})^{j}}\quad\text{where}\quad 0\leq\beta,j\leq n-1.
\end{equation*}
So, the entries of the first row of $\Psi^{-1}_{\CnZn}$ are all $1$'s since $K_0^{\CnZn}=1$. Then, we have
\begin{equation*}
\begin{split}
\mathsf{P}^{\CnZn}_{0,j}(z)\big\vert_{L^{\CnZn}=0}
&=\mathsf{P}^{\CnZn}_{0,j}(z)\big\vert_{x=0}\quad \text{since} \quad L^{\CnZn}\big\vert_{x=0}=0\\
&=\mathsf{Q}_0^{\CnZn}(z)\,.    
\end{split}
\end{equation*}

Now, we focus on the other side of the medallion and find the constant term of $\mathsf{P}^{\KP^{n-1}}_{0,j}({\rho}z)$ with respect to $L^{\KP^{n-1}}$. Then, we need to find the $(0,j)$-entry of 
\begin{equation*}
\mathsf{P}^{\KP^{n-1}}({\rho}z)\big\vert_{L^{\KP^{n-1}}=0}=\Psi^{-1}_{\KP^{n-1}}\big\vert_{L^{\KP^{n-1}}=0}\widetilde{\mathsf{R}}^{\KP^{n-1}}({\rho}z)\big\vert_{L^{\KP^{n-1}}=0}\mathsf{Q}^{\KP^{n-1}}({\rho}z).
\end{equation*}

Note that
\begin{equation*}
\begin{split}
\Psi^{-1}_{\KP^{n-1}}\big\vert_{L^{\KP^{n-1}}=0}\widetilde{\mathsf{R}}^{\KP^{n-1}}({\rho}z)\big\vert_{L^{\KP^{n-1}}=0}
&=\left(\Psi^{-1}_{\KP^{n-1}}\widetilde{\mathsf{R}}^{\KP^{n-1}}({\rho}z)\right)\big\vert_{L^{\KP^{n-1}}=0}\\
&=\left(\Psi^{-1}_{\KP^{n-1}}\widetilde{\mathsf{R}}^{\KP^{n-1}}({\rho}z)\right)\big\vert_{q=\infty}
\end{split}
\end{equation*}
where $q=\infty$ means the limit of the analytic continuation\footnote{This arises from the analytic continuation involved in (\ref{eqn:identify_Ls}).} of $\Psi^{-1}_{\KP^{n-1}}\widetilde{\mathsf{R}}^{\KP^{n-1}}({\rho}z)$ as $q$ goes to $\infty$. Let $(0,j)$ entry of $\left(\Psi^{-1}_{\KP^{n-1}}\widetilde{\mathsf{R}}^{\KP^{n-1}}(z)\right)\big\vert_{q=\infty}$ be given by
\begin{equation}\label{eqn:const_at_infty}
\sum_{k\geq{0}}a_{0,j}^kz^k.
\end{equation}
Then, the equality we wanted to prove, 
\begin{equation*}
-\sqrt{-1}\mathsf{P}_{0,j}^{\CnZn}(z)\big\vert_{L^{\CnZn}=0}=\mathsf{P}_{0,j}^{\KP^{n-1}}({\rho}z)\big\vert_{L^{\KP^{n-1}}=0},    
\end{equation*}
reads as
\begin{equation*}
-\sqrt{-1}\mathsf{Q}_0^{\CnZn}(z)=\mathsf{Q}_j^{\KP^{n-1}}({\rho}z)\sum_{k\geq{0}}a_{0,j}^k({\rho}z)^k
\end{equation*}
which is
\begin{equation*}
-\sqrt{-1}\exp\left(n\sum_{l>0}(-1)^{l}\frac{B_{nl+1}\left(0\right)}{nl+1}\frac{z^{nl}}{nl} \right)
=\left(\sum_{k\geq{0}}a_{0,j}^k({\rho}z)^k\right)\exp\left(\sum_{m>0}N_{2m-1, j}\frac{(-1)^{2m-1}B_{2m}}{2m(2m-1)}({\rho}z)^{2m-1}\right).
\end{equation*}
Replacing $z$ with $\rho^{-1}z$ on both sides and noting that $N_{2m-1, j}=N_{2m-1, 0}\zeta^{-j(2m-1)}$, and $\rho^{-nl}=(-1)^l$ we get
\begin{equation*}
-\sqrt{-1}\exp\left(n\sum_{l>0}\frac{B_{nl+1}\left(0\right)}{nl+1}\frac{z^{nl}}{nl} \right)
=\left(\sum_{k\geq{0}}a_{0,j}^kz^k\right)\exp\left(\sum_{m>0}N_{2m-1, 0}\frac{(-1)^{2m-1}B_{2m}}{2m(2m-1)}\left(\frac{z}{\zeta^j}\right)^{2m-1}\right).
\end{equation*}

\begin{lem}
We have $a_{0,j}^k=a_{0,0}^k\zeta^{-jk}$ for all $k\geq{0}$ and $0\leq{j}\leq{n-1}$.
\end{lem}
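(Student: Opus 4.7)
The plan is to transfer the entire machinery developed above for $\mathsf{P}^{\KP^{n-1}}=\Psi^{-1}\mathsf{R}^{\KP^{n-1}}$ to the matrix $M(z) := \Psi^{-1}\widetilde{\mathsf{R}}^{\KP^{n-1}}(z)$ (with entries $M_{i,j}^{k}$ at $z^{k}$), and then to exploit a $\mathbb{Z}_{n}$-symmetry that becomes manifest after applying the renormalization (\ref{eqn:Flat_to_modflat_KP_2}). Because $\mathsf{R}^{\KP^{n-1}} = \widetilde{\mathsf{R}}^{\KP^{n-1}}\mathsf{Q}^{\KP^{n-1}}$ with $\mathsf{Q}^{\KP^{n-1}}$ diagonal and independent of $q$, multiplying the flatness equation (\ref{eqn:flatness_wo_subscript}) on the right by $(\mathsf{Q}^{\KP^{n-1}})^{-1}$ shows that $\widetilde{\mathsf{R}}^{\KP^{n-1}}$ satisfies the very same flatness equation. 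Consequently $M_{i,j}^{k}$ obeys the recurrences of Lemma \ref{lem:P_Flatness_with_C} verbatim, and the renormalized quantities
\begin{equation*}
\widetilde{M}_{i,j}^{k}\;:=\;\frac{(L^{\KP^{n-1}})^{i}}{K^{\KP^{n-1}}_{i}}\,M_{i,j}^{k}\,\zeta^{(i+k)j}
\end{equation*}
satisfy the $j$-independent modified flatness equation of Lemma \ref{lem:Modified_Flatness_before_Ai}; the only delicate case is the wraparound $i=0$, where $\mathrm{Ion}(0)-1=n-1$ and the $\zeta$-factors cancel precisely because $\zeta^{nj}=1$.

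Next I would verify that the initial data at $q=0$ is independent of $j$. The normalization $\widetilde{\mathsf{R}}^{\KP^{n-1}}(z)|_{q=0}=\mathsf{Id}$ forces $M(z)|_{q=0}=\Psi^{-1}|_{q=0}$, and together with Lemma \ref{lem:K_over_L_q_0} (giving $(L^{\KP^{n-1}})^{i}/K_{i}^{\KP^{n-1}}|_{q=0}=1$) this yields $M_{i,j}^{0}|_{q=0}=-\sqrt{-1}\,\zeta^{-ij}$ and $M_{i,j}^{k}|_{q=0}=0$ for $k\geq 1$. Plugging into the definition of $\widetilde{M}_{i,j}^{k}$ one obtains $\widetilde{M}_{i,j}^{0}|_{q=0}=-\sqrt{-1}$ and $\widetilde{M}_{i,j}^{k}|_{q=0}=0$ for $k\geq 1$, values that are manifestly independent of $j$.

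Finally, the lifting procedure (\ref{eq:ModifiedFlatnessLift}) reconstructs every $\widetilde{M}_{i,j}^{k}$ from the sequence $\{\widetilde{M}_{0,j}^{k}\}_{k\geq 0}$ using only the $j$-independent modified flatness recursion, and the closure condition (going once around the cycle $i=0\to n-1\to\cdots\to 1\to 0$) imposes a Picard-Fuchs-type recursion on $\widetilde{M}_{0,j}^{k}$ whose coefficients lie in the $j$-independent ring $\mathbb{C}[(L^{\KP^{n-1}})^{\pm 1}][\mathfrak{S}_{n}^{\KP^{n-1}}]$. Combined with the $j$-independent initial values, this uniquely determines $\widetilde{M}_{0,j}^{k}=\widetilde{M}_{0,0}^{k}$ as formal power series in $q$. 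Specializing to the $L^{\KP^{n-1}}=0$ limit (obtained through the analytic continuation of (\ref{eqn:identify_Ls})) and using $M_{0,j}^{k}=\zeta^{-kj}\,\widetilde{M}_{0,j}^{k}$, we conclude
\begin{equation*}
a_{0,j}^{k} \;=\; M_{0,j}^{k}\big|_{q=\infty} \;=\; \zeta^{-jk}\,\widetilde{M}_{0,0}^{k}\big|_{q=\infty} \;=\; \zeta^{-jk}\,a_{0,0}^{k},
\end{equation*}
as required. The main subtlety is to ensure that the $j$-independent initial data actually propagates to a $j$-independent lift through the full tower of $\widetilde{M}_{i,j}^{k}$; this is exactly what the manifest $j$-independence of Lemma \ref{lem:Modified_Flatness_before_Ai} and of the Picard-Fuchs closure guarantees, once the wraparound cancellation $\zeta^{nj}=1$ has been checked.
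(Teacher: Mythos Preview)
Your proof is correct and follows essentially the same strategy as the paper's own proof: both exploit that the modified flatness equations (\ref{eqn:modflateqn_for_KP}) are $j$-independent, that the initial data at $q=0$ are $j$-independent, and that the resulting Picard--Fuchs closure determines everything uniquely.

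The only real difference is a cosmetic one. The paper works throughout with the true $R$-matrix, i.e.\ with $P_{i,j}^{k,\KP^{n-1}}$ and $\widetilde{P}_{i,j}^{k,\KP^{n-1}}$, whereas you work directly with $M=\Psi^{-1}\widetilde{\mathsf{R}}^{\KP^{n-1}}$. Your choice is slightly cleaner for two reasons. First, the initial condition $\widetilde{M}_{i,j}^{k}|_{q=0}=-\sqrt{-1}\,\delta_{0,k}$ is literally correct for $\widetilde{\mathsf{R}}$, while for the true $\mathsf{R}$ one has $\mathsf{R}^{\KP^{n-1}}|_{q=0}=\mathsf{Q}^{\KP^{n-1}}(z)\neq\mathsf{Id}$; the paper's intermediate formula $\widetilde{P}_{i,j}^{k,\KP^{n-1}}|_{q=0}=-\sqrt{-1}\,\delta_{0,k}$ is therefore not quite right, although the needed $j$-independence still holds because $\mathsf{Q}_j(z)=\mathsf{Q}_0(z\zeta^{-j})$. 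Second, by working with $M$ you land directly on $a_{0,j}^{k}$ at $q=\infty$ and avoid the (easy, but not explicitly written) passage from $P_{0,j}^{k}|_{q=\infty}$ back to $a_{0,j}^{k}$ via division by $\mathsf{Q}_j$.
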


\begin{proof}
Consider the matrix series 
\begin{equation*}
\widetilde{\mathsf{P}}^{\KP^{n-1}}(z)=\sum_{k\geq{0}}\widetilde{P}_{i,j}^{k,\KP^{n-1}}z^k
\end{equation*}
where $\widetilde{P}_{i,j}^{k,\KP^{n-1}}$ is defined via equation (\ref{eqn:Flat_to_modflat_KP_2}):
\begin{equation*}
\widetilde{P}_{i,j}^{k,\KP^{n-1}}=\frac{(L^{\KP^{n-1}})^i}{K^{\KP^{n-1}}_i}P_{i,j}^{k,\KP^{n-1}}\zeta^{(k+i)j}.
\end{equation*}
Then the initial conditions $\widetilde{P}_{i,j}^{k,\KP^{n-1}}\big\vert_{q=0}$ are given by
\begin{equation}\label{eqn:P_tilde_initial_q0}
\begin{split}
\widetilde{P}_{i,j}^{k,\KP^{n-1}}\big\vert_{q=0}
&=\left(\frac{(L^{\KP^{n-1}})^i}{K^{\KP^{n-1}}_i}P_{i,j}^{k,\KP^{n-1}}\zeta^{(k+i)j}\right)\Bigg\vert_{q=0}\\
&=\left(\frac{(L^{\KP^{n-1}})^i}{K^{\KP^{n-1}}_i}\left[\Psi^{-1}_{\KP^{n-1}}\right]_{i,j}\delta_{0,k}\zeta^{(k+i)j}\right)\Bigg\vert_{q=0}\\
&=-\sqrt{-1}\delta_{0,k}.
\end{split}
\end{equation}

The matrices $\widetilde{P}_{i,j}^{k,\KP^{n-1}}$ satisfy the modified flatness equations (\ref{eqn:modflateqn_for_KP}):
    \begin{equation*}
    \widetilde{P}_{\mathrm{Ion}(i)-1,j}^{k,\KP^{n-1}}=\widetilde{P}_{i,j}^{k,\KP^{n-1}}+\frac{1}{L^{\KP^{n-1}}}\mathsf{D}_{\KP^{n-1}}\widetilde{P}_{i,j}^{k-1,\KP^{n-1}}+A^{\KP^{n-1}}_{n-i}\widetilde{P}_{i,j}^{k-1,\KP^{n-1}}.
    \end{equation*}
These equations are independent of the index $j$. This means that their solutions are going to be independent of $j$ since the initial conditions $\widetilde{P}_{i,j}^{k,\KP^{n-1}}\big\vert_{q=0}$ are independent of $j$ by equation (\ref{eqn:P_tilde_initial_q0}). We know that
\begin{equation*}
\widetilde{P}_{0,j}^{k,\KP^{n-1}}=P_{0,j}^{k,\KP^{n-1}}\zeta^{jk}
\end{equation*}
where left-hand side is independent of $j$. So, we have
\begin{equation*}
P_{0,0}^{k,\KP^{n-1}}=\widetilde{P}_{0,0}^{k,\KP^{n-1}}=\widetilde{P}_{0,j}^{k,\KP^{n-1}}=P_{0,j}^{k,\KP^{n-1}}\zeta^{jk}.
\end{equation*}
Hence, letting $q=\infty$ in the analytic continuation completes the proof.
\end{proof}

Then, we see that Lemma \ref{lem:mathcing_of_P0jzs} is equivalent to the following statement:
\begin{lem}\label{lem:Final_of_proof_strategy}
We have
\begin{equation}\label{eqn:R_matrix_identity}
-\sqrt{-1}\exp\left(n\sum_{l>0}\frac{B_{nl+1}\left(0\right)}{nl+1}\frac{z^{nl}}{nl} \right)
=\left(\sum_{k\geq{0}}a_{0,0}^kz^k\right)\exp\left(\sum_{m>0}N_{2m-1, 0}\frac{(-1)^{2m-1}B_{2m}}{2m(2m-1)}z^{2m-1}\right).
\end{equation}
\end{lem}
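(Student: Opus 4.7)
The identity \eqref{eqn:R_matrix_identity} is the assertion that, after the identification \eqref{eqn:identify_Ls}, the analytic continuation to $L^{\KP^{n-1}}=0$ of the $(0,0)$-entry of $\Psi^{-1}_{\KP^{n-1}}\widetilde{\mathsf{R}}^{\KP^{n-1}}(\rho z)$ equals the quotient $-\sqrt{-1}\,\mathsf{Q}^{\CnZn}_0(z)/\mathsf{Q}^{\KP^{n-1}}_0(\rho z)$. Both exponential factors in \eqref{eqn:R_matrix_identity} are specializations of the classical Stirling asymptotic expansion
\[
\log\Gamma(s)\sim \bigl(s-\tfrac{1}{2}\bigr)\log s-s+\tfrac{1}{2}\log(2\pi)+\sum_{m\geq 1}\frac{B_{2m}}{2m(2m-1)\,s^{2m-1}}.
\]
My plan is to realize both sides as the stationary-phase asymptotic expansion of a single oscillatory integral, namely the one attached to the Landau--Ginzburg mirror of $K\mathbb{P}^{n-1}$, evaluated in two different chambers of its parameter space.

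Concretely, I would set up the toric LG mirror $W(\mathbf{x};q)$ of $K\mathbb{P}^{n-1}$ together with its canonical holomorphic volume form $\omega$, and take the Lefschetz thimble $\Gamma_0$ through the critical point above the $\mathrm{T}$-fixed point $p_0$. In the small-$q$ chamber, the integral $\int_{\Gamma_0}e^{W/z}\omega$ factors (after an appropriate change of coordinates) as a product of Gamma-function integrals, one for each of the tangent weights $\chi_0-\chi_j$ ($j\neq 0$) and the normal weight $-n\chi_0$. Its Stirling expansion produces exactly $\mathsf{Q}^{\KP^{n-1}}_0(\rho z)$ times an elementary algebraic prefactor, since under \eqref{eqn:specialization_KP} the inverse odd-power sum of these weights equals $N_{2m-1,0}$. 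Analytic continuation along the route dictated by $q=x^{-n}$ from \eqref{eqn:variable_change} lands in the orbifold chamber $q=\infty$, where the LG mirror degenerates to that of $[\mathbb{C}^n/\mathbb{Z}_n]$; the stationary-phase expansion there acquires a residual $\mathbb{Z}_n$-symmetry that selects only powers of $z^n$, and the Bernoulli-polynomial values $B_{nl+1}(0)$ arise via the Hurwitz-zeta identity already used in \eqref{eqn:oqrr_final}, producing $\mathsf{Q}^{\CnZn}_0(z)$ times a second algebraic prefactor. The constant $-\sqrt{-1}$ comes from the $(0,0)$-entry of the base-change matrix $\mathsf{B}$ together with the branch choice $\rho^n=-1$. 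The ratio of the two elementary prefactors, governed by the Gauss multiplication formula
\[
\prod_{k=0}^{n-1}\Gamma\!\left(s+\tfrac{k}{n}\right)=(2\pi)^{(n-1)/2}\,n^{1/2-ns}\,\Gamma(ns),
\]
then reproduces the scalar series $\sum_{k\geq 0}a_{0,0}^k z^k$.

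The principal obstacle is controlling the analytic continuation of the thimble $\Gamma_0$ between the two chambers: a Stokes-wall crossing would introduce extra contributions from other critical points and break the clean identity, so one must verify that the specific path prescribed by the branch $\rho$ stays within a single Stokes sector throughout the continuation. Once this is in place, the series $\sum_k a_{0,0}^k z^k$ is pinned down by the first-order ODE induced by $\mathds{L}_{j,1}=n\mathsf{D}_{\KP^{n-1}}$ (cf.\ Corollary \ref{cor:PF_for_P0jz} restricted to $L^{\KP^{n-1}}=0$) together with the boundary value $a_{0,0}^0=-\sqrt{-1}$ read off from \eqref{eqn:P_tilde_initial_q0}; the matching ratio of algebraic prefactors on the mirror side satisfies the same first-order constraint with the same initial condition, so the identity \eqref{eqn:R_matrix_identity} follows term by term, reducing to an explicit Gamma-function calculation.
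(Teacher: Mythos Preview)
Your high-level strategy---compute the stationary-phase asymptotic of the Landau--Ginzburg oscillatory integral for $K\mathbb{P}^{n-1}$ in two ways---is exactly what the paper does. But your execution plan misplaces where each side of \eqref{eqn:R_matrix_identity} comes from, and this creates a genuine gap.

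The factor $\exp\bigl(\sum_m N_{2m-1,0}\frac{(-1)^{2m-1}B_{2m}}{2m(2m-1)}z^{2m-1}\bigr)$ does \emph{not} arise from a small-$q$ factorization of the integral into tangent- and normal-weight Gamma pieces. It appears instead through a structural identity valid for \emph{all} $q$: by \cite[Proposition~6.9]{ccit_toric3}, the formal asymptotic expansion of $\int_\Gamma e^{F/z}\omega$ at the critical point over $p_0$ equals $e^{-\mu/z}I^{\KP^{n-1}}(q,z)|_{H=1}$ times exactly this $N_{*,0}$-exponential (up to the constant $1/(n\sqrt{-1})$). The key step you are missing is then to recognise that the $z$-asymptotic expansion $\Phi(z)$ of $e^{-\mu/z}I^{\KP^{n-1}}|_{H=1}$ coincides with $\mathsf{P}^{\KP^{n-1}}_{0,0}(z)/(-\sqrt{-1})$; this holds because both satisfy the same recursion \eqref{eqn:Corollary_Eqn_of_General_Polynomiality} in $q$ and have matching initial values at $q=0$ (namely $1$ and $-\sqrt{-1}$). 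With that identification in hand, one simply evaluates the single quantity $\text{Asym}_{\text{cr}}(e^{F/z}\omega)$ at $q=\infty$ twice: once via the structural formula (giving the right-hand side of \eqref{eqn:R_matrix_identity}), and once directly, where at $q=\infty$ the constraint forces $w_n\to 0$ and the integral decouples into $\tfrac{1}{n}\prod_{i=0}^{n-1}\Gamma(\chi_i/z)$ with the \emph{equivariant} weights $\chi_i=\zeta^i$; Stirling then produces the left-hand side.

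So no two-chamber comparison, no Gauss multiplication formula, and no Stokes-sector bookkeeping is required. Your ``first-order ODE'' remark is aimed at the right mechanism but is stated backwards: the ODE is in $q$ and is used to identify $\Phi(z)$ with $-\sqrt{-1}^{-1}\mathsf{P}^{\KP^{n-1}}_{0,0}(z)$ \emph{before} passing to $q=\infty$, not to determine the constants $a_{0,0}^k$ themselves (which are merely the $L^{\KP^{n-1}}=0$ values of the already-identified polynomial coefficients).
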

We remark that Lemma \ref{lem:Final_of_proof_strategy} is the generalization of \cite[Lemma 22]{lho-p2}, and \cite[Proposition 11]{lho}. A proof of Lemma \ref{lem:Final_of_proof_strategy} is given in Section \ref{sec:asymptotics_of_oscillatory_integrals}. Hence, we complete the proof of Lemma \ref{lem:mathcing_of_P0jzs}.

\subsection{Formulas for Gromov--Witten potentials of \texorpdfstring{$\KP^{n-1}$}{KP{n-1}}}\label{sec:formulaGW}

The Gromov--Witten theory of $\KP^{n-1}$ has the structure of a cohomological field theory (CohFT). In Section \ref{sec:GWKP}, we explicitly showed that this CohFT is semisimple.

The Givental--Teleman classification for semisimple CohFTs \cite{g3}, \cite{t} establishes that a semisimple CohFT $\Omega$ can be reconstructed from its \textit{topological part} via the actions of $R$-matrix and $T$-vector. Here, the vector valued series $T(z)$ is defined as $z(\text{Id}-R(z))$ applied to the unit. Consequently, due to the Givental--Teleman classification, the generating functions of the CohFT $\Omega$ can be explicitly expressed as sums over graphs. For more detailed discussions on this topic, we refer the reader to consult \cite{Picm} and \cite{ppz}.

Section \ref{sec:GWKP} is devoted to the study of the $R$-matrix for the Gromov--Witten theory of $\KP^{n-1}$. Employing the general considerations on semisimple CohFTs, we obtain a formula for the Gromov--Witten potential $\mathcal{F}_{g, m}^{\KP^{n-1}}\left(H^{c_{1}}, \ldots, H^{c_{m}}\right)$. In the subsequent part of this subsection, we will elaborate on this formula in a comprehensive manner.
\subsubsection{Graphs}
We need to describe certain graphs to be able to state the formula for Gromov--Witten potentials.

A \textit{stable graph} $\Gamma$ is a tuple
\begin{equation*}
\left(\mathrm{V}_{\Gamma}, \mathrm{g}: \mathrm{V}_{\Gamma} \rightarrow \mathbb{Z}_{\geq 0},  \mathrm{H}_{\Gamma}, \iota: \mathrm{H}_{\Gamma} \rightarrow \mathrm{H}_{\Gamma}, \mathrm{E}_{\Gamma}, \mathrm{L}_{\Gamma}, \ell:\mathrm{L}_{\Gamma}\rightarrow\{1,\ldots,m\},  \nu: \mathrm{H}_{\Gamma} \rightarrow \mathrm{V}_{\Gamma}\right)
\end{equation*}
satisfying:
\begin{enumerate}
\item $\mathrm{V}_{\Gamma}$ is the vertex set, and $\mathrm{g}:\mathrm{V}_{\Gamma}\rightarrow\mathbb{Z}_{\geq 0}$ is a genus assignment,
    
\item $\mathrm{H}_{\Gamma}$ is the half-edge set, and $\iota: \mathrm{H}_{\Gamma} \rightarrow \mathrm{H}_{\Gamma}$ is an involution,
    
\item $\mathrm{E}_{\Gamma}$ is the set of edges\footnote{Self-edges are allowed.} defined by the orbits of $\iota: \mathrm{H}_{\Gamma} \rightarrow \mathrm{H}_{\Gamma}$ of size two, and the tuple $\left(\mathrm{V}_{\Gamma},\mathrm{E}_{\Gamma}\right)$ defines a connected graph,
    
\item $\mathrm{L}_{\Gamma}$ is the set of legs, the subset of $\mathrm{H}_{\Gamma}$ fixed by the involution $\iota: \mathrm{H}_{\Gamma} \rightarrow \mathrm{H}_{\Gamma}$ and the map $\ell:\mathrm{L}_{\Gamma}\rightarrow\{1,\ldots,m\}$ is an isomorphism labeling legs,

\item The map $\nu: \mathrm{H}_{\Gamma} \rightarrow \mathrm{V}_{\Gamma}$ is a vertex assignment,
    
\item For each vertex $\mathfrak{v}$, let $\mathrm{l}(\mathfrak{v})$ and $\mathrm{h}(\mathfrak{v})$ are the number of legs and the number of edges attached to the vertex $\mathfrak{v}$ respectively. If we denote $\mathrm{n}(\mathfrak{v})=\mathrm{l}(\mathfrak{v})+\mathrm{h}(\mathfrak{v})$ to be the valence of the vertex $\mathfrak{v}$, then for each vertex $\mathfrak{v}$ the following (stability) condition holds:
\begin{equation*}
2\mathrm{g}(\mathfrak{v})-2+\mathrm{n}(\mathfrak{v})>0.
\end{equation*}
\end{enumerate}

The \textit{genus} of $\Gamma$ is defined by
\begin{equation*}
\mathrm{g}(\Gamma)=h^1(\Gamma)+\sum_{\mathfrak{v}\in\mathrm{V}_{\Gamma}}\mathrm{g}(\mathfrak{v}).
\end{equation*}

We define a {\em decorated} stable graph $$\Gamma\in\mathrm{G}_{g,m}^{\text{Dec}}(n)$$ of order $n$ to be a stable graph $\Gamma\in\mathrm{G}_{g,m}$ equipped with an extra assignment $\mathrm{p}: \mathrm{V}_{\Gamma}\rightarrow \{0,...,n-1\}$ to each vertex $\mathfrak{v}\in\mathrm{V}_{\Gamma}$. For a decorated stable graph $\Gamma\in\mathrm{G}_{g,m}^{\text{Dec}}(n)$ we denote its underlying stable graph by $$\Gamma^{\mathrm{St}}\in\mathrm{G}_{g,m}$$ after forgetting the decoration.

In the formula graph sum for Gromov--Witten potentials, we work with decorated stable graphs. A detailed discussion on this can be found in \cite[Section 3.2]{gt}.

\subsubsection{Formula for \texorpdfstring{$\mathcal{F}_{g,m}$}{F{g,m}}}\label{sec:FgmKP}
By the discussions above, we have
\begin{equation}\label{eqn:formula_Fg}
\mathcal{F}_{g, m}^{\KP^{n-1}}\left(H^{c_{1}}, \ldots, H^{c_{m}}\right)=\sum_{\Gamma\in\mathrm{G}_{g,m}^{\text{Dec}}(n)}\mathrm{Cont}^{\KP^{n-1}}_{\Gamma}\left(H^{c_{1}}, \ldots, H^{c_{m}}\right).
\end{equation}

\begin{prop}\label{prop:contributions}
For each decorated stable graph $\Gamma\in\mathrm{G}_{g,m}^{\text{Dec}}(n)$, the associated contribution is given by
\begin{equation*}
\mathrm{Cont}^{\KP^{n-1}}_{\Gamma}\left(H^{c_{1}}, \ldots, H^{c_{m}}\right)=\frac{1}{|\mathrm{Aut}(\Gamma^{\mathrm{St}})|} \sum_{\mathrm{A} \in \mathbb{Z}_{\geq 0}^{\mathrm{F}(\Gamma)}} \prod_{\mathfrak{v} \in \mathrm{V}_{\Gamma}} \mathrm{Cont}_{\Gamma}^{\mathrm{A}}(\mathfrak{v}) \prod_{\mathfrak{e}\in \mathrm{E}_{\Gamma}} \mathrm{Cont}_{\Gamma}^{\mathrm{A}}(\mathfrak{e}) \prod_{\mathfrak{l} \in \mathrm{L}_{\Gamma}} \mathrm{Cont}_{\Gamma}^{\mathrm{A}}(\mathfrak{l})
\end{equation*}
where $\mathrm{F}(\Gamma)=\left\vert\mathrm{H}_{\Gamma}\right\vert$. Here, $\mathrm{Cont}_{\Gamma}^{\mathrm{A}}(\mathfrak{v})$, $\mathrm{Cont}_{\Gamma}^{\mathrm{A}}(\mathfrak{e})$, and $\mathrm{Cont}_{\Gamma}^{\mathrm{A}}(\mathfrak{l})$ are the {\em vertex}, {\em edge} and {\em leg} contributions with flag $\mathrm{A}-$values\footnote{Notation: The values ${b_{\mathfrak{v}1}},\ldots,{b_{\mathfrak{v}\mathrm{h}(\mathfrak{v})}}$ are the entries of $(a_1,\ldots,a_m,b_{m+1},\ldots,b_{\left\vert\mathrm{H}_{\Gamma}\right\vert})$ corresponding to $\mathrm{Cont}_{\Gamma}^{\mathrm{A}}(\mathfrak{v})$; where as, the values $b_{\mathfrak{e}1},b_{\mathfrak{e}2}$ are the entries of $(a_1,\ldots,a_m,b_{m+1},\ldots,b_{\left\vert\mathrm{H}_{\Gamma}\right\vert})$ corresponding to $\mathrm{Cont}_{\Gamma}^{\mathrm{A}}(\mathfrak{e})$.} $(a_1,\ldots,a_m,b_{m+1},\ldots,b_{\left\vert\mathrm{H}_{\Gamma}\right\vert})$ respectively, and they are given by
\begin{equation*}
\begin{split}
    \mathrm{Cont}_{\Gamma}^{\mathrm{A}}(\mathfrak{v})
    =&\sum_{k \geq 0} \frac{g^{\KP^{n-1}}({e}_{\mathrm{p}(\mathfrak{v})},{e}_{\mathrm{p}(\mathfrak{v})})^{-\frac{2\mathrm{g}(\mathfrak{v})-2+\mathrm{n}(\mathfrak{v})+k}{2}}}{k !}\\
    &\times\int_{\overline{M}_{\mathrm{g}(\mathfrak{v}),\mathrm{n}(\mathfrak{v})+k}}\psi_1^{a_{\mathfrak{v}1}}\cdots\psi_{\mathrm{l}(\mathfrak{v})}^{a_{\mathfrak{v}\mathrm{l}(\mathfrak{v})}}\psi_{\mathrm{l}(\mathfrak{v})+1}^{b_{\mathfrak{v}1}}\cdots\psi_{\mathrm{n}(\mathfrak{v})}^{b_{\mathfrak{v}\mathrm{h}(\mathfrak{v})}}t_{\mathrm{p}(\mathfrak{v})}(\psi_{\mathrm{n}(\mathfrak{v})+1})\cdots t_{\mathrm{p}(\mathfrak{v})}(\psi_{\mathrm{n}(\mathfrak{v})+k}),\\
    \mathrm{Cont}_{\Gamma}^{\mathrm{A}}(\mathfrak{e})
    =&\frac{(-1)^{b_{\mathfrak{e}1}+b_{\mathfrak{e}2}+1}}{n} \sum_{j=0}^{b_{\mathfrak{e}2}}(-1)^{j} \sum_{r=0}^{n-1}\frac{\widetilde{P}_{\mathrm{Inv}(r),\mathrm{p}(\mathfrak{v}_1)}^{b_{\mathfrak{e}1}+j+1,\KP^{n-1}}\widetilde{P}_{r,\mathrm{p}(\mathfrak{v}_2)}^{b_{\mathfrak{e}2}-j,\KP^{n-1}}}{\zeta^{(b_{\mathfrak{e}1}+j+1+\mathrm{Inv}(r))\mathrm{p}(\mathfrak{v}_1)}\zeta^{(b_{\mathfrak{e}2}-j+r)\mathrm{p}(\mathfrak{v}_2)}},\\
    \mathrm{Cont}_{\Gamma}^{\mathrm{A}}(\mathfrak{l})
    =&\frac{(-1)^{a_{\ell(\mathfrak{l})}+1}}{n}\frac{K^{\KP^{n-1}}_{\mathrm{Inv}(c_{\ell(\mathfrak{l})})}}{(L^{\KP^{n-1}})^{\mathrm{Inv}(c_{\ell(\mathfrak{l})})}}
    \frac{\widetilde{P}_{\mathrm{Inv}(c_{\ell(\mathfrak{l})}),\mathrm{p}(\nu(\mathfrak{l}))}^{{a_{\ell(\mathfrak{l})}},\KP^{n-1}}}{     \zeta^{({a_{\ell(\mathfrak{l})}}+{\mathrm{Inv}(c_{\ell(\mathfrak{l})})})\mathrm{p}(\nu(\mathfrak{l}))}},
\end{split}
\end{equation*}
where
\begin{equation*}
t_{\mathrm{p}(\mathfrak{v})}(z)=\sum_{i\geq{2}}\mathrm{T}_{\mathrm{p}(\mathfrak{v})i}z^i\quad\text{with}\quad \mathrm{T}_{\mathrm{p}(\mathfrak{v})i}=\frac{(-1)^{i+1}}{n}\widetilde{P}_{0,\mathrm{p}(\mathfrak{v})}^{i-1,\KP^{n-1}}\zeta^{-{(i-1)}\mathrm{p}(\mathfrak{v})}.
\end{equation*}
\end{prop}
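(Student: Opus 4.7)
The plan is to apply the Givental--Teleman classification of semisimple CohFTs to the Gromov--Witten CohFT of $\KP^{n-1}$. By the analysis of Section~\ref{sec:GWKP}, the quantum product at the origin is semisimple with idempotent basis $\{e_\alpha\}$, and the transition matrix $\Psi_{\KP^{n-1}}$ together with the $R$-matrix $\mathsf{R}^{\KP^{n-1}}(z)$ have been explicitly determined in Lemmas \ref{lem:canonicalcoorder}--\ref{lem:True_R_Matrix_KP}. The Givental--Teleman theorem then expresses the CohFT as $\mathsf{R}^{\KP^{n-1}}.\omega^{\mathrm{top}}$, where $\omega^{\mathrm{top}}$ is the topological field theory attached to the semisimple Frobenius algebra at that point. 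Equation (\ref{eqn:formula_Fg}) is then the standard graph-sum expansion of this $R$-matrix action.

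First I would recall how, in the normalized idempotent basis, the $R$-matrix action produces a sum over decorated stable graphs: each vertex $\mathfrak{v}$ carries a canonical coordinate label $\mathrm{p}(\mathfrak{v})\in\{0,\ldots,n-1\}$, half-edges attached to $\mathfrak{v}$ receive $\psi$-class exponents recorded by the flag assignment $\mathrm{A}$, and additional marked points decorated by the translation vector $T(z)=z(\mathrm{Id}-\mathsf{R}^{\KP^{n-1}}(z))(\mathbf{1})$ are added. In the normalized idempotent basis the topological CohFT at a vertex of genus $\mathrm{g}(\mathfrak{v})$ and valence $\mathrm{n}(\mathfrak{v})+k$ equals
\begin{equation*}
g^{\KP^{n-1}}(e_{\mathrm{p}(\mathfrak{v})},e_{\mathrm{p}(\mathfrak{v})})^{-\frac{2\mathrm{g}(\mathfrak{v})-2+\mathrm{n}(\mathfrak{v})+k}{2}}\cdot \widetilde{e}_{\mathrm{p}(\mathfrak{v})}^{\,\otimes(\mathrm{n}(\mathfrak{v})+k)},
\end{equation*}
and the integral over $\overline{M}_{\mathrm{g}(\mathfrak{v}),\mathrm{n}(\mathfrak{v})+k}$ of the $\psi$ and $T$ insertions gives precisely $\mathrm{Cont}^{\mathrm{A}}_{\Gamma}(\mathfrak{v})$ once one identifies the components $\mathrm{T}_{\mathrm{p}(\mathfrak{v})i}$ of $T(z)$ in the flat basis.

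Second I would convert everything from the normalized idempotent basis to the flat basis $\{H^i\}$ via $\Psi_{\KP^{n-1}}$ and $\Psi_{\KP^{n-1}}^{-1}$. By the definition (\ref{eqn:Flat_to_modflat_KP_2}) of $\widetilde{P}^{k,\KP^{n-1}}_{i,j}$ together with the formulas for the entries of $\Psi_{\KP^{n-1}}^{-1}$, absorbing one $\Psi^{-1}$ at each half-edge produces the factor $\sqrt{-1}\,\zeta^{-jk}\cdot\widetilde{P}_{i,j}^{k,\KP^{n-1}}/\zeta^{(k+i)\mathrm{p}(\mathfrak{v})}$ times $K_i^{\KP^{n-1}}/(L^{\KP^{n-1}})^i$. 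At a leg carrying input $H^{c_{\ell(\mathfrak{l})}}$, pairing against the Poincaré dual $(H^{c_{\ell(\mathfrak{l})}})^{\vee}=-nH^{\mathrm{Inv}(c_{\ell(\mathfrak{l})})}$ and applying $\mathsf{R}^{\KP^{n-1}}$ produces the explicit leg contribution; the overall factor $(-1)^{a_{\ell(\mathfrak{l})}+1}/n$ is accounted for by the metric (\ref{eqn:metric_KP}) and the expansion of $\mathsf{R}^{\KP^{n-1}}(-\psi)$. At an edge $\mathfrak{e}$ one uses the standard propagator
\begin{equation*}
V(\mathfrak{e})\;=\;\frac{g^{\KP^{n-1}}(\,\cdot\,,\,\cdot\,)-g^{\KP^{n-1}}\!\bigl(\mathsf{R}^{\KP^{n-1}}(z_1)^{-1}\,\cdot\,,\mathsf{R}^{\KP^{n-1}}(-z_2)^{-1}\,\cdot\,\bigr)}{z_1+z_2},
\end{equation*}
which after the symplectic identity $\mathsf{R}^{\KP^{n-1}}(z)\cdot\mathsf{R}^{\KP^{n-1}}(-z)^{*}=\mathrm{Id}$ and expansion in $z_1,z_2$ becomes the alternating sum $\frac{(-1)^{b_{\mathfrak{e}1}+b_{\mathfrak{e}2}+1}}{n}\sum_{j=0}^{b_{\mathfrak{e}2}}(-1)^j\sum_{r=0}^{n-1}\cdots$ appearing in $\mathrm{Cont}^{\mathrm{A}}_{\Gamma}(\mathfrak{e})$.

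The main obstacle is the combinatorial bookkeeping of the $\zeta$-phases, the factor $\sqrt{-1}$ coming from the normalization $\widetilde{e}_\alpha=-n\sqrt{-1}\,e_\alpha$, the overall factor $-1/n$ from the pairing, and the involutions $\mathrm{Inv},\mathrm{Ion}$ entering through Poincaré duality and through the cyclic shift built into Lemma~\ref{lem:P_Flatness_with_C}. These cancellations collapse into the clean formulas for $\mathrm{Cont}^{\mathrm{A}}_\Gamma(\mathfrak{v}), \mathrm{Cont}^{\mathrm{A}}_\Gamma(\mathfrak{e}), \mathrm{Cont}^{\mathrm{A}}_\Gamma(\mathfrak{l})$ stated in the proposition. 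Since the analogous calculation for $\CnZn$ was carried out in \cite[Proposition~3.3]{gt}, the most efficient route is to follow that same bookkeeping step by step and merely replace each ingredient by its $\KP^{n-1}$ counterpart from Sections \ref{sec:FrobKP} and \ref{sec:ringKP}; the only genuinely new check is that the degree zero quantum Riemann--Roch operator $\mathsf{Q}^{\KP^{n-1}}$ from Section~\ref{sec:QRRKP} has been correctly absorbed into $\mathsf{R}^{\KP^{n-1}}(z)$ via Lemma~\ref{lem:True_R_Matrix_KP}, which ensures that $\widetilde{\mathsf{R}}^{\KP^{n-1}}$ is modified to the true $R$-matrix before the Givental--Teleman formula is applied.
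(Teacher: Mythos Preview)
Your approach is essentially the same as the paper's: apply Givental--Teleman to the semisimple CohFT, write the graph sum in the normalized idempotent basis, then convert the $T$-vector, the edge propagator $\bigl(\delta_{ij}-[\mathsf{R}(z)^{-1}(\mathsf{R}(w)^{-1})^{T}]_{ij}\bigr)/(z+w)$, and the leg insertion to the flat basis via $\Psi_{\KP^{n-1}}$ and the substitution (\ref{eqn:Flat_to_modflat_KP_2}), exactly parallel to \cite[Proposition~3.3]{gt}. One small slip: the translation vector in the proof is $T(z)=z\bigl(\mathrm{Id}-\mathsf{R}^{\KP^{n-1}}(z)^{-1}\bigr)\cdot 1$, with the \emph{inverse} of $R$; without it your $T_{jk}$ would not come out as $\frac{(-1)^{k+1}}{n}P^{k-1,\KP^{n-1}}_{0,j}$, and the Riemann--Roch operator $\mathsf{Q}^{\KP^{n-1}}$ plays no role in this proposition itself---it enters only later in the comparison with $\CnZn$.
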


\begin{proof}
We write $\{\tilde{e}\}$ for the normalized idempotent basis $\{\tilde{e}_0,\ldots,\tilde{e}_{n-1}\}$ and $\{H\}$ for the basis $\{1,H,\ldots,H^{n-1}\}$ to simplify the notation. Let $\mathcal{T}_{\tilde{e}}^{H}$ be the transition matrix from $\{\tilde{e}\}$ to $\{H\}$ and let $\mathcal{T}_{H}^{\tilde{e}}$ be its inverse. Then, we have $$\mathcal{T}_{\tilde{e}}^{H}=\Psi^{-1},\quad \mathcal{T}_{H}^{\tilde{e}}=\Psi.$$

Let $G$ and $\widetilde{G}$ be matrix representations of the metric $g^{\KP^{n-1}}$ with respect to basis $\{H\}$ and  $\{\tilde{e}\}$. Then, the relation between them is given by
\begin{equation}\label{eq:contproofeq1}
\widetilde{G}=\left(\Psi^{-1}\right)^TG\Psi^{-1}.
\end{equation}
It can easily be seen that the matrix $\widetilde{G}$ is the identity matrix.

Define $T(z)=z\left(\mathsf{Id}-\mathsf{R}^{\KP^{n-1}}(z)^{-1}\right)\cdot{1}$. We provided $R$-matrix action with respect to normalized idempotent basis. To be consistent we need to write $1=H^0$ in terms of $\{\tilde{e}\}$ basis. Since we have
\begin{equation}\label{eq:contproofeq2}
1=\sum_{i=0}^{n-1}\Psi_{i0}\tilde{e}_i=\frac{\sqrt{-1}}{n}\left(\tilde{e}_0+\ldots+\tilde{e}_{n-1}\right),
\end{equation}
we see that $T(z)=z\left(\mathsf{Id}-\mathsf{R}^{\KP^{n-1}}(z)^{-1}\right)v$ where $v=\frac{\sqrt{-1}}{n}[1\,\cdots\, 1]^T$.

We now find $\mathsf{R}^{\KP^{n-1}}(z)^{-1}$. By the symplectic condition, $\mathsf{R}^{\KP^{n-1}}(z)^{-1}=\mathsf{R}^{\KP^{n-1}}(-z)^{t}$. Here $\mathsf{R}^{\KP^{n-1}}(-z)^{t}$ means adjoint with respect to the metric $g^{\KP^{n-1}}$ in the basis $\{\tilde{e}\}$. We see that
\begin{equation}\label{eq:contproofeq3}
\mathsf{R}^{\KP^{n-1}}(z)^{-1}=\widetilde{G}^{-1}\mathsf{R}^{\KP^{n-1}}(-z)^{T}\widetilde{G}=\mathsf{R}^{\KP^{n-1}}(-z)^{T}=\left(\Psi \mathsf{P}^{\KP^{n-1}}(-z)\right)^T=\mathsf{P}^{\KP^{n-1}}(-z)^T\Psi^T.
\end{equation}

Also, observe that 
\begin{equation}\label{eq:contproofeq4}
\begin{split}
{\left[ \Psi^Tv\right]}_i
=&\frac{\sqrt{-1}}{n}\sum_{j=0}^{n-1}\Psi_{ij}^T=\frac{\sqrt{-1}}{n}\sum_{j=0}^{n-1}\Psi_{ji}\\
=&\frac{\sqrt{-1}}{n}\sum_{j=0}^{n-1}\frac{\sqrt{-1}}{n}\zeta^{ij}\frac{(L^{\KP^{n-1}})^i}{K^{\KP^{n-1}}_i}\\
=&-\frac{1}{n^2}\frac{(L^{\KP^{n-1}})^i}{K^{\KP^{n-1}}_i}\sum_{j=0}^{n-1}\zeta^{ij}=-\frac{1}{n}\delta_{i0},
\end{split}
\end{equation}
so we have $\Psi^Tv=-\frac{1}{n}\left[1\,0\,\cdots\,0\right]^T$. This implies that the translation vector
\begin{equation}\label{eq:contproofeq5}
T(z)=z\left(\mathsf{Id}-\mathsf{R}^{\KP^{n-1}}(z)^{-1}\right)v=T_2z^2+T_3z^3+\cdots
\end{equation}
where $T_k$ is the coefficient of $z^{k-1}$ in $-\mathsf{R}^{\KP^{n-1}}(z)^{-1}v$ given by 
\begin{equation}\label{eq:contproofeq6}
\begin{split}
 T_{jk}
 &=\text{the coefficient of $z^{k-1}$  in the $j^{\text{th}}$ entry of}-\mathsf{R}^{\KP^{n-1}}(z)^{-1}v\\
 &=\text{the coefficient of $z^{k-1}$  in the $j^{\text{th}}$ entry of} -\mathsf{P}^{\KP^{n-1}}(-z)^T\Psi^Tv\\
 &=\frac{(-1)^{k+1}}{n}P_{0j}^{k-1,\KP^{n-1}}.
\end{split}
\end{equation}
This allows us to comprehend the effects of the translation action by $T(z)$ and the contributions arising from vertices. However, the following computations are needed to understand the contributions originating from edges and legs.

Now observe that
\begin{equation}\label{eq:contproofeq8}
\begin{split}
{\left[\Psi^T \Psi\right]}_{l j}
&=
\sum_{r=0}^{n-1} \Psi_{r l} \Psi_{r j}=\sum_{r=0}^{n-1} \frac{\sqrt{-1}}{n}\zeta^{r l} \frac{(L^{\KP^{n-1}})^{l}}{K^{\KP^{n-1}}_{l}} \frac{\sqrt{-1}}{n}\zeta^{r j} \frac{(L^{\KP^{n-1}})^{j}}{K^{\KP^{n-1}}_{j}}\\
&=
-\frac{1}{n^2}\frac{(L^{\KP^{n-1}})^{l}}{K^{\KP^{n-1}}_{l}}\frac{(L^{\KP^{n-1}})^{j}}{K^{\KP^{n-1}}_{j}}\sum_{r=0}^{n-1}\zeta^{r(l-\mathrm{Inv}(j))}=-\frac{1}{n^2}\frac{(L^{\KP^{n-1}})^{l}}{K^{\KP^{n-1}}_{l}}\frac{(L^{\KP^{n-1}})^{j}}{K^{\KP^{n-1}}_{j}}n\delta_{l,\mathrm{Inv}(j)}\\
&=
-\frac{1}{n}\underbrace{\frac{(L^{\KP^{n-1}})^{\mathrm{Inv}(j)+j}}{K^{\KP^{n-1}}_{\mathrm{Inv}(j)}K^{\KP^{n-1}}_j}}_{=1}\delta_{l,\mathrm{Inv}(j)}=-\frac{1}{n}\delta_{l,\mathrm{Inv}(j)}.
\end{split}
\end{equation}

Next, in order to understand the edge contributions, we compute
\begin{equation}\label{eq:contproofeq9}
\begin{split}
&{\delta_{ij}-{\left[\mathsf{R}^{\KP^{n-1}}(z)^{-1}\left(\mathsf{R}^{\KP^{n-1}}(w)^{-1}\right)^T\right]}_{ij}}\\
&=\delta_{ij}-\sum_{s,r=0}^{n-1}\left(\mathsf{P}^{\KP^{n-1}}(-z)\right)_{i,s}^T\left[\Psi^T\Psi\right]_{sr} \mathsf{P}^{\KP^{n-1}}_{r,j}(-w)\\
&=\delta_{ij}+\sum_{s,r=0}^{n-1}P^{\KP^{n-1}}_{s,i}(-z)\frac{1}{n}\delta_{s\mathrm{Inv}(r)} P^{\KP^{n-1}}_{r,j}(-w)\\
&=\delta_{ij}+\frac{1}{n}\sum_{r=0}^{n-1}\sum_{c,d\geq{0}}(-1)^{c+d}P^{c,\KP^{n-1}}_{\mathrm{Inv}(r),i}P^{d,\KP^{n-1}}_{r,j}z^cw^d\\
&=\delta_{ij}+\frac{1}{n}\sum_{r=0}^{n-1}\sum_{c,d\geq{0}}(-1)^{c+d}\frac{K_{\mathrm{Inv}(r)}}{L^{\mathrm{Inv}(r)}}\frac{\widetilde{P}^{c,\KP^{n-1}}_{\mathrm{Inv}(r),i}}{\zeta^{(c+\mathrm{Inv}(r))i}}\frac{K_r}{L^r}\frac{\widetilde{P}^{d,\KP^{n-1}}_{r,j}}{\zeta^{(d+r)j}}z^cw^d\\
&=\delta_{ij}+\frac{1}{n}\sum_{r=0}^{n-1}\sum_{c,d\geq{0}}(-1)^{c+d}\frac{\widetilde{P}^{c,\KP^{n-1}}_{\mathrm{Inv}(r),i}\widetilde{P}^{d,\KP^{n-1}}_{r,j}}{\zeta^{(c+\mathrm{Inv}(r))i}\zeta^{(d+r)j}}z^cw^d.
\end{split}
\end{equation}
So, we have\footnote{To clarify this step, we refer the reader to \cite[Equation 3.20]{gt}.}
\begin{equation}\label{eq:contproofeq10}
\frac{{\delta_{ij}-{\left[\mathsf{R}^{\KP^{n-1}}(z)^{-1}\left(\mathsf{R}^{\KP^{n-1}}(w)^{-1}\right)^T\right]}_{ij}}}{z+w}=\sum_{b_1,b_2\geq{0}}\beta^{i,j}_{b_1,b_2}z^{b_1}w^{b_2}
\end{equation}
with
\begin{equation}\label{eq:contproofeq11}
\beta^{i,j}_{b_1,b_2}=\frac{(-1)^{b_1+b_2+1}}{n}\sum_{m=0}^{b_2}(-1)^m\sum_{r=0}^{n-1}\frac{\widetilde{P}^{b_1+m+1,\KP^{n-1}}_{\mathrm{Inv}(r),i}\widetilde{P}^{b_2-m,\KP^{n-1}}_{r,j}}{\zeta^{(b_1+m+1+\mathrm{Inv}(r))i}\zeta^{(b_2-m+r)j}}.
\end{equation}

In order to understand the leg contributions, we compute 
\begin{equation}\label{eq:contproofeq12}
\begin{split}
\left[\mathsf{R}^{\KP^{n-1}}(z)^{-1}\cdot H^j\right]_i
&=\left[\mathsf{P}^{\KP^{n-1}}(-z)^T\Psi^T\Psi\right]_{ij}=\sum_{a\geq{0}}(-1)^a\sum_{r=0}^{n-1}P_{r,i}^{a,\KP^{n-1}}\left(-\frac{1}{n}\right)\delta_{r,\mathrm{Inv}(j)}z^a\\
&=\sum_{a\geq{0}}\frac{(-1)^{a+1}}{n}\frac{K^{\KP^{n-1}}_{\mathrm{Inv}(j)}}{(L^{\KP^{n-1}})^{\mathrm{Inv}(j)}}\frac{\widetilde{P}^{a,\KP^{n-1}}_{\mathrm{Inv}(j),i}}{\zeta^{(a+\mathrm{Inv}(j))i}}z^a
\end{split}
\end{equation}
for each $0\leq{i,j}\leq{n-1}$.

The proof follows from the descriptions of $R$-matrix and $T$-vector actions.
\end{proof}

The following finite generation property of the Gromov--Witten potential $\mathcal{F}_{g, m}^{\KP^{n-1}}\left(H^{c_{1}}, \ldots, H^{c_{m}}\right)$ is a corollary of Proposition \ref{prop:contributions}.

\begin{cor}[Finite Generation Property]\label{cor:VertexEdgeCont}
The graph contributions $\mathrm{Cont}_{\Gamma}\left(H^{c_{1}}, \ldots, H^{c_{m}}\right)$ lie in certain polynomial rings. More, precisely
\begin{equation*}
\begin{split}
&\mathrm{Cont}_{\Gamma}^{\mathrm{A}}(\mathfrak{v})\in\mathbb{C}[(L^{\KP^{n-1}})^{\pm{1}}],\\
&\mathrm{Cont}_{\Gamma}^{\mathrm{A}}(\mathfrak{e})\in\mathbb{C}[(L^{\KP^{n-1}})^{\pm{1}}][\mathfrak{S}_n^{\KP^{n-1}}],\\
&\mathrm{Cont}_{\Gamma}^{\mathrm{A}}(\mathfrak{l})\in\mathbb{C}[(L^{\KP^{n-1}})^{\pm{1}}][\mathfrak{S}_n^{\KP^{n-1}}][\mathfrak{C}_n^{\KP^{n-1}}]=\mathds{F}_{\KP^{n-1}}
\end{split}
\end{equation*}
where $\mathfrak{C}_n^{\KP^{n-1}}=\{C^{\KP^{n-1}}_1,\ldots,C^{\KP^{n-1}}_{n-1}\}$. Hence, we have
\begin{equation*}
\mathcal{F}_{g, m}^{\KP^{n-1}}\left(H^{c_{1}}, \ldots, H^{c_{m}}\right)\in\mathbb{C}[(L^{\KP^{n-1}})^{\pm{1}}][\mathfrak{S}_n^{\KP^{n-1}}][\mathfrak{C}_n^{\KP^{n-1}}]=\mathds{F}_{\KP^{n-1}}.
\end{equation*}
\end{cor}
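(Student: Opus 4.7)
The plan is to verify the three ring-membership assertions on the right-hand sides of Proposition \ref{prop:contributions} one type of flag at a time (vertex, edge, leg), and then assemble them via the graph sum formula (\ref{eqn:formula_Fg}). Most of the technical work has already been done in Sections \ref{sec:FrobKP}, \ref{sec:ringKP}, and the lifting discussion of Section 4.3, so the corollary should follow by careful bookkeeping.

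First I would handle the vertex contribution $\mathrm{Cont}_{\Gamma}^{\mathrm{A}}(\mathfrak{v})$. Each summand is a scalar (the Poincar\'e pairing factor $g^{\KP^{n-1}}(e_{\mathrm{p}(\mathfrak{v})},e_{\mathrm{p}(\mathfrak{v})}) = -1/n^2$ is a nonzero constant) times an integral on $\overline{M}_{\mathrm{g}(\mathfrak{v}),\mathrm{n}(\mathfrak{v})+k}$ whose integrand is a polynomial in $\psi$-classes and in the coefficients $\mathrm{T}_{\mathrm{p}(\mathfrak{v})i}$. By the explicit formula for $\mathrm{T}_{\mathrm{p}(\mathfrak{v})i}$, these coefficients are (up to roots of unity) equal to $\widetilde{P}_{0,\mathrm{p}(\mathfrak{v})}^{i-1,\KP^{n-1}}$, which by Corollary \ref{cor:polynomiality_of_P_0j_KP} lie in $\mathbb{C}[L^{\KP^{n-1}}]$. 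The sum over $k$ is finite by dimension reasons on $\overline{M}_{\mathrm{g}(\mathfrak{v}),\mathrm{n}(\mathfrak{v})+k}$ together with the fact that $t_{\mathrm{p}(\mathfrak{v})}(z) = O(z^2)$. Consequently $\mathrm{Cont}_{\Gamma}^{\mathrm{A}}(\mathfrak{v}) \in \mathbb{C}[(L^{\KP^{n-1}})^{\pm 1}]$.

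Next I would handle the edge contribution $\mathrm{Cont}_{\Gamma}^{\mathrm{A}}(\mathfrak{e})$. This is a finite sum of products of two factors of the form $\widetilde{P}_{i,j}^{k,\KP^{n-1}}$ (with arbitrary $i$), so it suffices to show $\widetilde{P}_{i,j}^{k,\KP^{n-1}} \in \mathbb{C}[(L^{\KP^{n-1}})^{\pm 1}][\mathfrak{S}_n^{\KP^{n-1}}]$. This is exactly the content of the lifting procedure (\ref{eq:ModifiedFlatnessLift}) in Section 4.3: the base case $i=0$ is $\widetilde{P}_{0,j}^{k,\KP^{n-1}} \in \mathbb{C}[L^{\KP^{n-1}}]$, a reformulation of Corollary \ref{cor:polynomiality_of_P_0j_KP}; the modified flatness equations (\ref{eqn:modflateqn_for_KP}) then propagate polynomiality in $i$, introducing along the way derivatives $\mathsf{D}_{\KP^{n-1}}^{j} A_r^{\KP^{n-1}}$, which by Lemmas \ref{lem:Properties_of_Ai_s} and \ref{lem:Equations_forDAl} (as packaged in Proposition \ref{pro:CDA_Simplification}) can be rewritten as polynomials in $\mathfrak{S}_n^{\KP^{n-1}}$ and $(L^{\KP^{n-1}})^{\pm 1}$. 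Moreover Lemma \ref{lem:D_Graded_Ring_X} ensures that only finitely many such derivatives actually occur at each step, so the lifted expression lies in the claimed ring.

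Finally, for the leg contribution $\mathrm{Cont}_{\Gamma}^{\mathrm{A}}(\mathfrak{l})$, the only new feature beyond the edge analysis is the prefactor $K^{\KP^{n-1}}_{\mathrm{Inv}(c_{\ell(\mathfrak{l})})}/(L^{\KP^{n-1}})^{\mathrm{Inv}(c_{\ell(\mathfrak{l})})}$. By the definition (\ref{eqn:K_i_definition}), $K^{\KP^{n-1}}_r = \prod_{i=0}^{r} C_i^{\KP^{n-1}}$ is a product of elements of $\mathfrak{C}_n^{\KP^{n-1}}$ (recalling $C_0^{\KP^{n-1}} = 1$), so this prefactor lies in $\mathbb{C}[(L^{\KP^{n-1}})^{\pm 1}][\mathfrak{C}_n^{\KP^{n-1}}]$. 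Combined with the edge-style analysis of $\widetilde{P}_{\mathrm{Inv}(c_{\ell(\mathfrak{l})}),\mathrm{p}(\nu(\mathfrak{l}))}^{a_{\ell(\mathfrak{l})},\KP^{n-1}}$, this puts $\mathrm{Cont}_{\Gamma}^{\mathrm{A}}(\mathfrak{l})$ into $\mathds{F}_{\KP^{n-1}}$. Assembling via Proposition \ref{prop:contributions} and summing over the finite set $\mathrm{G}_{g,m}^{\mathrm{Dec}}(n)$ and the finite (by the psi-dimension constraint $\sum a + \sum b \le 3g - 3 + m + \text{insertions}$) set of allowed flag assignments $\mathrm{A}$, we conclude that $\mathcal{F}_{g,m}^{\KP^{n-1}}(H^{c_1},\ldots,H^{c_m}) \in \mathds{F}_{\KP^{n-1}}$. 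There is no real obstacle here beyond the verification that the ingredients match up; the only mildly subtle point is confirming that the $R$-matrix action in Proposition \ref{prop:contributions} is correctly expressed in terms of the rescaled quantities $\widetilde{P}^{k,\KP^{n-1}}$ (rather than the raw $P^{k,\KP^{n-1}}$), which is precisely why the lifting of Section 4.3 was formulated in the $\widetilde{P}$-variables.
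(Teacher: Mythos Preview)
Your proposal is correct and follows essentially the same approach as the paper: both arguments use the dimension constraint on $\overline{M}_{\mathrm{g}(\mathfrak{v}),\mathrm{n}(\mathfrak{v})+k}$ together with $t_{\mathrm{p}(\mathfrak{v})}(z)=O(z^2)$ to get finiteness of the vertex sum, invoke Corollary \ref{cor:polynomiality_of_P_0j_KP} for the polynomiality of $\widetilde{P}_{0,j}^{k,\KP^{n-1}}$, appeal to the lifting procedure (\ref{eq:ModifiedFlatnessLift}) for the edge and leg contributions, and finish with the same dimension argument to reduce the sum over $\mathrm{A}$-values to a finite one. Your write-up is in fact slightly more explicit than the paper's in citing the specific lemmas (\ref{lem:Properties_of_Ai_s}, \ref{lem:Equations_forDAl}, Proposition \ref{pro:CDA_Simplification}) underpinning the lifting step.
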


\begin{proof}
The integral in the expression of the vertex contribution $\mathrm{Cont}_{\Gamma}^{\mathrm{A}}(\mathfrak{v})$ is equal to
\begin{equation}
\int_{\overline{M}_{\mathrm{g}(\mathfrak{v}),\mathrm{n}(\mathfrak{v})+k}}\psi_1^{a_{\mathfrak{v}1}}\cdots  \psi_{\mathrm{l}(\mathfrak{v})}^{a_{\mathfrak{v}\mathrm{l}(\mathfrak{v})}} \psi_{\mathrm{l}(\mathfrak{v})+1}^{b_{\mathfrak{v}1}} \cdots \psi_{\mathrm{n}(\mathfrak{v})}^{b_{\mathfrak{v}\mathrm{h}(\mathfrak{v})}}\prod_{j=1}^{k}\underbrace{\left(\sum_{i_j\geq{2}}\frac{(-1)^{i_j}}{n}\frac{\widetilde{P}_{0,\mathrm{p}(\mathfrak{v})}^{i_j-1,\KP^{n-1}}\psi_{\mathrm{n}(\mathfrak{v})+j}^{i_j}}{\zeta^{{(i_j-1)}\mathrm{p}(\mathfrak{v})}}\right)}_{\eqqcolon\Lambda_j}.
\end{equation}
Each summand in this is $0$ unless
\begin{equation}\label{eqn:dim_condition_for_integral}
a_{\mathfrak{v}1}+\cdots + a_{\mathfrak{v}\mathrm{l}(\mathfrak{v})}+b_{\mathfrak{v}1}+\cdots + b_{\mathfrak{v}\mathrm{h}(\mathfrak{v})}+i_1+\cdots + i_k=\dim \overline{M}_{\mathrm{g}(\mathfrak{v}),\mathrm{n}(\mathfrak{v})+k}=3\mathrm{g}(\mathfrak{v})-3+\mathrm{n}(\mathfrak{v})+k.
\end{equation}
Since each $i_j\geq 2$, the integral is 0 when $k>3\mathrm{g}(\mathfrak{v})-3+\mathrm{n}(\mathfrak{v})$. So, the vertex contribution $\mathrm{Cont}_{\Gamma}^{\mathrm{A}}(\mathfrak{v})$ is a finite sum over $k$. In a similar way, the integral is $0$ when one of $i_j>3\mathrm{g}(\mathfrak{v})-2+\mathrm{n}(\mathfrak{v})$. So, each $\Lambda_j$ can also be considered as a finite sum. This implies that the vertex contribution $\mathrm{Cont}_{\Gamma}^{\mathrm{A}}(\mathfrak{v})$ is a polynomial in $\widetilde{P}_{0,\mathrm{p}(\mathfrak{v})}^{i_j-1,\KP^{n-1}}$. Hence, it is a polynomial in $L^{\KP^{n-1}}$ by Corollary \ref{cor:polynomiality_of_P_0j_KP}.

For the edge contribution $\mathrm{Cont}_{\Gamma}^{\mathrm{A}}(\mathfrak{e})$  and leg contribution $\mathrm{Cont}_{\Gamma}^{\mathrm{A}}(\mathfrak{l})$, the polynomiality claims follow from the lifting procedure (\ref{eq:ModifiedFlatnessLift}) and the definition of $K_i^{\KP^{n-1}}$.

Equation (\ref{eqn:dim_condition_for_integral}) also implies that all but finitely many flag $\mathrm{A}-$values have $0$ contribution to $\mathrm{Cont}_{\Gamma}\left(H^{c_{1}}, \ldots, H^{c_{m}}\right)$. This implies that
\begin{equation*}
\mathrm{Cont}_{\Gamma}\left(H^{c_{1}}, \ldots, H^{c_{m}}\right)\in \mathds{F}_{\KP^{n-1}}.
\end{equation*}
So, the finite generation result for the Gromov--Witten potential $\mathcal{F}_{g, m}^{\KP^{n-1}}\left(H^{c_{1}}, \ldots, H^{c_{m}}\right)$ follows.
\end{proof}

\subsection{Crepant resolution correspondence for \texorpdfstring{$\KP^{n-1}$}{KP{n-1}} and \texorpdfstring{$[\mathbb{C}^n/\mathbb{Z}_n]$}{CnZn}}\label{sec:CRC}
The Gromov--Witten potential of $\CnZn$ is also described as a graph sum formula in \cite{gt}:
\begin{equation}\label{eqn:formula_Fg2}
\mathcal{F}_{g, m}^{\left[\mathbb{C}^{n} / \mathbb{Z}_{n}\right]}\left(\phi_{c_{1}}, \ldots, \phi_{c_{m}}\right)=\sum_{\Gamma\in\mathrm{G}_{g,m}^{\text{Dec}}(n)}\mathrm{Cont}_{\Gamma}^{\CnZn}\left(\phi_{c_{1}}, \ldots, \phi_{c_{m}}\right).
\end{equation}
where ${\mathrm{Cont}}^{\CnZn}_{\Gamma}\left(\phi_{c_{1}}, \ldots, \phi_{c_{m}}\right)$ is given in \cite[Proposition 3.3]{gt} in a similar fashion to Proposition \ref{prop:contributions}. We restate this result for the convenience of readers.

\begin{prop}[\cite{gt}]\label{prop:CnZn_ 
  contributions}
For each decorated stable graph $\Gamma\in\mathrm{G}_{g,m}^{\text{Dec}}(n)$, the associated contribution is given by
\begin{equation*}
\mathrm{Cont}_{\Gamma}^{\CnZn}\left(\phi_{c_{1}}, \ldots, \phi_{c_{m}}\right)=\frac{1}{|\mathrm{Aut}(\Gamma^{\mathrm{St}})|} \sum_{\mathrm{A} \in \mathbb{Z}_{\geq 0}^{\mathrm{F}(\Gamma)}} \prod_{\mathfrak{v} \in \mathrm{V}_{\Gamma}} \widetilde{\mathrm{Cont}}_{\Gamma}^{\mathrm{A}}(\mathfrak{v}) \prod_{\mathfrak{e}\in \mathrm{E}_{\Gamma}} \widetilde{\mathrm{Cont}}_{\Gamma}^{\mathrm{A}}(\mathfrak{e}) \prod_{\mathfrak{l} \in \mathrm{L}_{\Gamma}} \widetilde{\mathrm{Cont}}_{\Gamma}^{\mathrm{A}}(\mathfrak{l})
\end{equation*}
where $\mathrm{F}(\Gamma)=\left\vert\mathrm{H}_{\Gamma}\right\vert$. Here, $\widetilde{\mathrm{Cont}}_{\Gamma}^{\mathrm{A}}(\mathfrak{v})$, $\widetilde{\mathrm{Cont}}_{\Gamma}^{\mathrm{A}}(\mathfrak{e})$, and $\widetilde{\mathrm{Cont}}_{\Gamma}^{\mathrm{A}}(\mathfrak{l})$ are the {\em vertex}, {\em edge} and {\em leg} contributions with flag $\mathrm{A}-$values $(a_1,\ldots,a_m,b_{m+1},\ldots,b_{\left\vert\mathrm{H}_{\Gamma}\right\vert})$ respectively, and they are given by\footnote{The vectors ${e}_{\mathrm{p}(\mathfrak{v})}$ are the idempotent basis associated to $\CnZn$, see \cite{gt}.}
\begin{equation*}
\begin{split}
    \widetilde{\mathrm{Cont}}_{\Gamma}^{\mathrm{A}}(\mathfrak{v})
    =&\sum_{k \geq 0} \frac{g^{\CnZn}({e}_{\mathrm{p}(\mathfrak{v})},{e}_{\mathrm{p}(\mathfrak{v})})^{-\frac{2\mathrm{g}(\mathfrak{v})-2+\mathrm{n}(\mathfrak{v})+k}{2}}}{k !}\\
    &\times\int_{\overline{M}_{\mathrm{g}(\mathfrak{v}),\mathrm{n}(\mathfrak{v})+k}}\psi_1^{a_{\mathfrak{v}1}}\cdots\psi_{\mathrm{l}(\mathfrak{v})}^{a_{\mathfrak{v}\mathrm{l}(\mathfrak{v})}}\psi_{\mathrm{l}(\mathfrak{v})+1}^{b_{\mathfrak{v}1}}\cdots\psi_{\mathrm{n}(\mathfrak{v})}^{b_{\mathfrak{v}\mathrm{h}(\mathfrak{v})}}t_{\mathrm{p}(\mathfrak{v})}(\psi_{\mathrm{n}(\mathfrak{v})+1})\cdots t_{\mathrm{p}(\mathfrak{v})}(\psi_{\mathrm{n}(\mathfrak{v})+k}),\\
    \widetilde{\mathrm{Cont}}_{\Gamma}^{\mathrm{A}}(\mathfrak{e})
    =&\frac{(-1)^{b_{\mathfrak{e}1}+b_{\mathfrak{e}2}}}{n} \sum_{j=0}^{b_{\mathfrak{e}2}}(-1)^{j} \sum_{r=0}^{n-1}\frac{\widetilde{P}_{\mathrm{Inv}(r),\mathrm{p}(\mathfrak{v}_1)}^{b_{\mathfrak{e}1}+j+1,\CnZn}\widetilde{P}_{r,\mathrm{p}(\mathfrak{v}_2)}^{b_{\mathfrak{e}2}-j,\CnZn}}{\zeta^{(b_{\mathfrak{e}1}+j+1+\mathrm{Inv}(r))\mathrm{p}(\mathfrak{v}_1)}\zeta^{(b_{\mathfrak{e}2}-j+r)\mathrm{p}(\mathfrak{v}_2)}},\\
    \widetilde{\mathrm{Cont}}_{\Gamma}^{\mathrm{A}}(\mathfrak{l})
    =&\frac{(-1)^{a_{\ell(\mathfrak{l})}}}{n}\frac{K_{\mathrm{Inv}(c_{\ell(\mathfrak{l})})}}{L^{\mathrm{Inv}(c_{\ell(\mathfrak{l})})}}
    \frac{\widetilde{P}_{\mathrm{Inv}(c_{\ell(\mathfrak{l})}),\mathrm{p}(\nu(\mathfrak{l}))}^{a_{\ell(\mathfrak{l})},\CnZn}}{     \zeta^{({a_{\ell(\mathfrak{l})}}+{\mathrm{Inv}(c_{\ell(\mathfrak{l})})})\mathrm{p}(\nu(\mathfrak{l}))}},
\end{split}
\end{equation*}
where
\begin{equation*}
t_{\mathrm{p}(\mathfrak{v})}(z)=\sum_{i\geq{2}}\mathrm{T}_{\mathrm{p}(\mathfrak{v})i}z^i\quad\text{with}\quad \mathrm{T}_{\mathrm{p}(\mathfrak{v})i}=\frac{(-1)^i}{n}\widetilde{P}_{0,\mathrm{p}(\mathfrak{v})}^{i-1,\CnZn}\zeta^{-(i-1)\mathrm{p}(\mathfrak{v})}.
\end{equation*}
\end{prop}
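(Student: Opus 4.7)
The plan is to follow exactly the same template as the proof of Proposition \ref{prop:contributions} (for $\KP^{n-1}$), now applied to the semisimple CohFT underlying the Gromov-Witten theory of $\CnZn$. By the Givental-Teleman classification, this CohFT is reconstructed from its topological part by the combined action of the $R$-matrix $\mathsf{R}^{\CnZn}(z)$ and the translation vector $T^{\CnZn}(z) = z(\mathrm{Id}-\mathsf{R}^{\CnZn}(z)^{-1})\cdot 1$. Summing graph contributions coming from this reconstruction yields the formula in the statement; our job is just to identify each factor explicitly.

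First I would set up the two bases in parallel with the proof of Proposition \ref{prop:contributions}: the flat basis $\{\phi_0,\dots,\phi_{n-1}\}$ and the normalized idempotent basis $\{\tilde e_0,\dots,\tilde e_{n-1}\}$ coming from the Frobenius structure of $\CnZn$ computed in \cite{gt}. The transition matrix $\Psi_{\CnZn}$ and the metric $g^{\CnZn}$ in both bases, together with $\mathsf{P}^{\CnZn}(z)=\Psi_{\CnZn}^{-1}\mathsf{R}^{\CnZn}(z)$ and its entries $P_{i,j}^{k,\CnZn}$, are all already available from \cite{gt}. Using the symplectic condition $\mathsf{R}^{\CnZn}(z)^{-1}=\mathsf{R}^{\CnZn}(-z)^t$ (adjoint with respect to $g^{\CnZn}$ in the idempotent basis), I would rewrite
\begin{equation*}
\mathsf{R}^{\CnZn}(z)^{-1} \;=\; \mathsf{P}^{\CnZn}(-z)^T\,\Psi_{\CnZn}^T.
\end{equation*}
Then expanding $\phi_0=1$ in the $\{\tilde e_\alpha\}$ basis and computing $\Psi_{\CnZn}^T v$, where $v$ is the coefficient vector of $1$, gives a one-line vector (exactly as in \eqref{eq:contproofeq4}) from which one reads off the entries
\begin{equation*}
\mathrm{T}_{\mathrm{p}(\mathfrak v)\,i} \;=\; \tfrac{(-1)^i}{n}\,\widetilde P_{0,\mathrm{p}(\mathfrak v)}^{\,i-1,\CnZn}\,\zeta^{-(i-1)\mathrm{p}(\mathfrak v)}.
\end{equation*}
The overall sign differs from the $\KP^{n-1}$ case by exactly the factor $(-1)$ appearing in \eqref{eqn:metric_iden}, which propagates into the vertex/edge/leg contribution signs.

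Next I would compute $\Psi_{\CnZn}^T\Psi_{\CnZn}$ using the explicit formula for $\Psi_{\CnZn}$ from \cite{gt}; the character-sum identity gives a diagonal anti-diagonal of the form $\tfrac{1}{n}\delta_{l,\mathrm{Inv}(j)}$ (with the sign corresponding to $g^{\CnZn}$). Substituting this into
\begin{equation*}
\frac{\delta_{ij}-[\mathsf{R}^{\CnZn}(z)^{-1}(\mathsf{R}^{\CnZn}(w)^{-1})^T]_{ij}}{z+w}
\end{equation*}
and reorganizing the double series in $z,w$ in the same way as in \eqref{eq:contproofeq11} produces the claimed edge contribution $\widetilde{\mathrm{Cont}}^{\mathrm{A}}_\Gamma(\mathfrak e)$. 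The analogous computation of $[\mathsf{R}^{\CnZn}(z)^{-1}\phi_j]_i$ gives the leg contribution. The vertex contribution is then the standard $\psi$-class integral with insertions of the translation vector and the metric normalization factor $g^{\CnZn}(e_{\mathrm{p}(\mathfrak v)},e_{\mathrm{p}(\mathfrak v)})^{-(2g-2+n+k)/2}$, as dictated by the Givental-Teleman graph formula.

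There is essentially no new obstacle: the statement is the content of \cite[Proposition 3.3]{gt}, and the above walk-through is mechanical once one has (a) the Frobenius data of $\CnZn$ computed in \cite{gt} and (b) the $R$-matrix analysis of $\CnZn$ carried out there. The only subtle bookkeeping is the bookkeeping of signs and powers of $\zeta$, which differ from the $\KP^{n-1}$ case by the sign coming from \eqref{eqn:metric_iden} and by the $\zeta$-normalizations built into $\widetilde P^{k,\CnZn}_{i,j}$; these account precisely for the sign change $(-1)^{b_{\mathfrak e1}+b_{\mathfrak e2}+1}\mapsto(-1)^{b_{\mathfrak e1}+b_{\mathfrak e2}}$, $(-1)^{a_{\ell(\mathfrak l)}+1}\mapsto(-1)^{a_{\ell(\mathfrak l)}}$, and $(-1)^{i+1}\mapsto(-1)^i$ visible on comparing Propositions \ref{prop:contributions} and \ref{prop:CnZn_ contributions}.
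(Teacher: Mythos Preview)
Your proposal is correct and matches the paper's treatment: the paper does not give a proof of this proposition at all, but simply restates \cite[Proposition 3.3]{gt} for convenience, and your outline is precisely the $\CnZn$-analogue of the computation carried out in the proof of Proposition \ref{prop:contributions}. The sign bookkeeping you flag (coming from $g^{\CnZn}=-g^{\KP^{n-1}}$ under \eqref{eqn:state_space_isom}) is exactly the mechanism that produces the three sign discrepancies between the two propositions.
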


\begin{thm}[Crepant Resolution Correspondence]\label{thm:Main_Theorem}
For $g$ and $m$ in the stable range $2g-2+m>0$, the ring isomorphism $\Upsilon$ yields
\begin{equation*}
\mathcal{F}_{g, m}^{\CnZn}\left(\phi_{c_{1}}, \ldots, \phi_{c_{m}}\right)=(-1)^{1-g}\rho^{3g-3+m}\Upsilon \left(\mathcal{F}_{g, m}^{\KP^{n-1}}\left(H^{c_{1}}, \ldots, H^{c_{m}}\right)\right).
\end{equation*}
\end{thm}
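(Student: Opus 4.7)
The plan is to prove the Main Theorem by matching the two Gromov--Witten potentials graph by graph via the explicit CohFT reconstruction. Both $\mathcal{F}^{\KP^{n-1}}_{g,m}$ and $\mathcal{F}^{\CnZn}_{g,m}$ are expressed as sums over the same index set $\mathrm{G}_{g,m}^{\text{Dec}}(n)$ of decorated stable graphs (Propositions \ref{prop:contributions} and \ref{prop:CnZn_ contributions}), so it suffices to verify graph by graph that
\begin{equation*}
\mathrm{Cont}^{\CnZn}_\Gamma = (-1)^{1-g}\rho^{3g-3+m}\,\Upsilon\bigl(\mathrm{Cont}^{\KP^{n-1}}_\Gamma\bigr),
\end{equation*}
which reduces further to a local comparison of the vertex, edge, and leg contributions and a fixed scalar accounting. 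The key input is Lemma \ref{lem:mathcing_of_P0jzs} and its Corollary, which under the identifications of Section \ref{subsubsec:Change_of_variables} give $-\sqrt{-1}\,\mathsf{P}^{\CnZn}(z) = \mathsf{P}^{\KP^{n-1}}(\rho z)$. Equating coefficients of $z^k$, and noting that $\Upsilon$ rescales both $L^{\KP^{n-1}}$ and each $C^{\KP^{n-1}}_i$ by the same factor $-\rho/n$ so that the normalization $(L^{\KP^{n-1}})^i/K^{\KP^{n-1}}_i$ is $\Upsilon$-invariant, one obtains the entrywise identification $\widetilde{P}^{k,\CnZn}_{i,j} = \sqrt{-1}\,\rho^k\,\Upsilon(\widetilde{P}^{k,\KP^{n-1}}_{i,j})$. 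Apart from the $\widetilde{P}$-entries and the scalar metric factors $g(e,e)^{-(2\mathrm{g}(\mathfrak{v})-2+\mathrm{n}(\mathfrak{v})+k)/2}$, the two formulas coincide termwise: the $\zeta$-powers, the $1/n$ constants, the automorphism weights $1/|\mathrm{Aut}(\Gamma^{\mathrm{St}})|$, the $1/k!$ symmetry factors, and the $\psi$-integrals are structurally identical.

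Tracking the $\rho$-powers is transparent. Each $T$-insertion $\mathrm{T}_{\mathrm{p}(\mathfrak{v})i}\psi^i$ brings $\rho^{i-1}$, each edge contributes $\rho^{b_{\mathfrak{e}1}+b_{\mathfrak{e}2}+1}$, and each leg contributes $\rho^{a_{\ell(\mathfrak{l})}}$, so the total $\rho$-exponent is
\begin{equation*}
\Bigl(\textstyle\sum_{\mathfrak{l}} a_{\ell(\mathfrak{l})} + \sum_{\mathfrak{e}} (b_{\mathfrak{e}1}+b_{\mathfrak{e}2}) + \sum_{T\text{-insertions}} i_j\Bigr) - \sum_{\mathfrak{v}} k_{\mathfrak{v}} + |\mathrm{E}_\Gamma|.
\end{equation*}
The dimension constraint on each $\overline{M}_{\mathrm{g}(\mathfrak{v}),\mathrm{n}(\mathfrak{v})+k_{\mathfrak{v}}}$-integral forces the parenthesized sum to equal $\sum_{\mathfrak{v}}(3\mathrm{g}(\mathfrak{v})-3+\mathrm{n}(\mathfrak{v})+k_{\mathfrak{v}})$, which using $\sum \mathrm{n}(\mathfrak{v}) = 2|\mathrm{E}_\Gamma|+m$ and $\sum \mathrm{g}(\mathfrak{v}) = g - h^1(\Gamma)$ simplifies to $3g-3+m-|\mathrm{E}_\Gamma|+\sum k_{\mathfrak{v}}$. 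Substituting this in, the $\sum k_{\mathfrak{v}}$ and $|\mathrm{E}_\Gamma|$ contributions cancel and the $\rho$-exponent collapses to $3g-3+m$ independently of $\Gamma$ and $\mathrm{A}$.

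The main obstacle is the sign bookkeeping. Three sources enter: (i) the metric factor, which via $g^{\KP^{n-1}}(e,e)=-g^{\CnZn}(e,e)=-1/n^2$ contributes $(\sqrt{-1})^{D_{\mathfrak{v}}}$ per vertex with $D_{\mathfrak{v}} = 2\mathrm{g}(\mathfrak{v})-2+\mathrm{n}(\mathfrak{v})+k_{\mathfrak{v}}$; (ii) the factor $\sqrt{-1}$ attached to each $\widetilde{P}$-entry, for a total of $(\sqrt{-1})^{\sum k_{\mathfrak{v}} + 2|\mathrm{E}_\Gamma| + m}$; and (iii) the explicit sign discrepancies $(-1)^{i+1}$ vs $(-1)^i$ at each $T$-insertion, $(-1)^{b_{\mathfrak{e}1}+b_{\mathfrak{e}2}+1}$ vs $(-1)^{b_{\mathfrak{e}1}+b_{\mathfrak{e}2}}$ at each edge, and $(-1)^{a+1}$ vs $(-1)^a$ at each leg, yielding $(-1)^{\sum k_{\mathfrak{v}}+|\mathrm{E}_\Gamma|+m}$. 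Using $\sum_{\mathfrak{v}} D_{\mathfrak{v}} = 2g-2+m+\sum k_{\mathfrak{v}}$, the combined exponent of $\sqrt{-1}$ from (i) and (ii) is $2g-2+2m+2\sum k_{\mathfrak{v}}+2|\mathrm{E}_\Gamma|$, which is even, giving $(-1)^{g-1+m+\sum k_{\mathfrak{v}}+|\mathrm{E}_\Gamma|}$. Multiplying by the factor from (iii), the parities of $m$, $\sum k_{\mathfrak{v}}$, and $|\mathrm{E}_\Gamma|$ cancel pairwise and only $(-1)^{g-1}=(-1)^{1-g}$ survives. Combined with the $\rho^{3g-3+m}$ factor, this establishes the graph-by-graph identity and proves the theorem. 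The delicate point is that all of the non-$\rho$ factors above are scalars independent of the formal variables, so their matching is essentially a finite-dimensional parity check once the identification of $\widetilde{P}$-entries has been established.
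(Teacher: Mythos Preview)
Your proof is correct and follows essentially the same approach as the paper: both argue graph by graph via the explicit Givental--Teleman reconstruction (Propositions~\ref{prop:contributions} and \ref{prop:CnZn_ contributions}), using the $\widetilde{P}$-matrix identification from Lemma~\ref{lem:mathcing_of_P0jzs} together with the dimension constraints and Euler's formula for stable graphs. The only difference is organizational: the paper computes $\Upsilon(\mathrm{Cont}^{\mathrm{A}}_\Gamma(\mathfrak{l}))$, $\Upsilon(\mathrm{Cont}^{\mathrm{A}}_\Gamma(\mathfrak{e}))$, $\Upsilon(\mathrm{Cont}^{\mathrm{A}}_\Gamma(\mathfrak{v}))$ separately and then shuffles the $\rho$-factors from legs/edges into vertices before applying the dimension constraint, whereas you collect all $\rho$-powers globally first and then handle the signs by separating them into three sources (metric, $\widetilde{P}$-entries, explicit $(-1)$'s); both routes land on the same $(-1)^{1-g}\rho^{3g-3+m}$.
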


\begin{proof}
For a decorated stable graph $\Gamma$, let $\widetilde{\mathrm{Cont}}_{\Gamma}^{\mathrm{A}}(\mathfrak{v})$, $\widetilde{\mathrm{Cont}}_{\Gamma}^{\mathrm{A}}(\mathfrak{e})$, and $\widetilde{\mathrm{Cont}}_{\Gamma}^{\mathrm{A}}(\mathfrak{l})$ be the vertex, edge, and leg contributions for the potential $\mathcal{F}_{g, m}^{\CnZn}\left(\phi_{c_{1}}, \ldots, \phi_{c_{m}}\right)$ described in \cite[Proposition 3.3]{gt} for a flag $A$-value. For the same flag $A$-value, and the same decorated stable graph $\Gamma$ let $\mathrm{Cont}_{\Gamma}^{\mathrm{A}}(\mathfrak{v})$, $\mathrm{Cont}_{\Gamma}^{\mathrm{A}}(\mathfrak{e})$, and $\mathrm{Cont}_{\Gamma}^{\mathrm{A}}(\mathfrak{l})$ be the vertex, edge, and leg contributions for $\mathcal{F}_{g, m}^{\KP^{n-1}}\left(H^{c_{1}}, \ldots, H^{c_{m}}\right)$ in Proposition \ref{prop:contributions}.

The isomorphism $\Upsilon$ identifies $P_{i,j}^{k,\KP^{n-1}}$ with $-\sqrt{-1}\rho^{-k}P_{i,j}^{k,\CnZn}$. Under this identification, we will analyze what happens to $\mathrm{Cont}_{\Gamma}^{\mathrm{A}}(\mathfrak{v})$, $\mathrm{Cont}_{\Gamma}^{\mathrm{A}}(\mathfrak{e})$, and $\mathrm{Cont}_{\Gamma}^{\mathrm{A}}(\mathfrak{l})$. We start with the effect of $\Upsilon$ on $\mathrm{Cont}_{\Gamma}^{\mathrm{A}}(\mathfrak{l})$:
\begin{equation}\label{eqn:leg_contribution_after_upsilon}
\begin{aligned}
\Upsilon\left(\mathrm{Cont}_{\Gamma}^{\mathrm{A}}(\mathfrak{l})\right)
=&\frac{(-1)^{a_{\ell(\mathfrak{l})}+1}}{n}\frac{K^{\CnZn}_{\mathrm{Inv}(c_{\ell(\mathfrak{l})})}}{(L^{\CnZn})^{\mathrm{Inv}(c_{\ell(\mathfrak{l})})}}
\frac{-\sqrt{-1}\rho^{-{a_{\ell(\mathfrak{l})}}}\widetilde{P}_{\mathrm{Inv}(c_{\ell(\mathfrak{l})}),\mathrm{p}(\nu(\mathfrak{l}))}^{{a_{\ell(\mathfrak{l})}},\CnZn}}{     \zeta^{({a_{\ell(\mathfrak{l})}}+{\mathrm{Inv}(c_{\ell(\mathfrak{l})})})\mathrm{p}(\nu(\mathfrak{l}))}}\\
=&\sqrt{-1}\rho^{-{a_{\ell(\mathfrak{l})}}}\frac{(-1)^{a_{\ell(\mathfrak{l})}}}{n}\frac{K^{\CnZn}_{\mathrm{Inv}(c_{\ell(\mathfrak{l})})}}{(L^{\CnZn})^{\mathrm{Inv}(c_{\ell(\mathfrak{l})})}}
\frac{\widetilde{P}_{\mathrm{Inv}(c_{\ell(\mathfrak{l})}),\mathrm{p}(\nu(\mathfrak{l}))}^{{a_{\ell(\mathfrak{l})}},\CnZn}}{     \zeta^{({a_{\ell(\mathfrak{l})}}+{\mathrm{Inv}(c_{\ell(\mathfrak{l})})})\mathrm{p}(\nu(\mathfrak{l}))}} \\
=&\sqrt{-1}\rho^{-{a_{\ell(\mathfrak{l})}}}\widetilde{\mathrm{Cont}}_{\Gamma}^{\mathrm{A}}(\mathfrak{l}).
\end{aligned}
\end{equation}
Observe the effect of $\Upsilon$ on $\Upsilon\left(\mathrm{Cont}_{\Gamma}^{\mathrm{A}}(\mathfrak{e})\right)$:
\begin{equation}\label{eqn:edge_contribution_after_upsilon}
\begin{aligned}
\Upsilon\left(\mathrm{Cont}_{\Gamma}^{\mathrm{A}}(\mathfrak{e})\right)
=&\frac{(-1)^{b_{\mathfrak{e}1}+b_{\mathfrak{e}2}+1}}{n} \sum_{j=0}^{b_{\mathfrak{e}2}}(-1)^{j} \sum_{r=0}^{n-1}\frac{\left(-\rho^{-(b_{\mathfrak{e}1}+b_{\mathfrak{e}2}+1)}\right)\widetilde{P}_{\mathrm{Inv}(r),\mathrm{p}(\mathfrak{v}_1)}^{b_{\mathfrak{e}1}+j+1,\CnZn}\widetilde{P}_{r,\mathrm{p}(\mathfrak{v}_2)}^{b_{\mathfrak{e}2}-j,\CnZn}}{\zeta^{(b_{\mathfrak{e}1}+j+1+\mathrm{Inv}(r))\mathrm{p}(\mathfrak{v}_1)}\zeta^{(b_{\mathfrak{e}2}-j+r)\mathrm{p}(\mathfrak{v}_2)}}\\
=&\rho^{-b_{\mathfrak{e}1}}\rho^{-b_{\mathfrak{e}2}}\rho^{-1}\frac{(-1)^{b_{\mathfrak{e}1}+b_{\mathfrak{e}2}}}{n} \sum_{j=0}^{b_{\mathfrak{e}2}}(-1)^{j} \sum_{r=0}^{n-1}\frac{\widetilde{P}_{\mathrm{Inv}(r),\mathrm{p}(\mathfrak{v}_1)}^{b_{\mathfrak{e}1}+j+1,\CnZn}\widetilde{P}_{r,\mathrm{p}(\mathfrak{v}_2)}^{b_{\mathfrak{e}2}-j,\CnZn}}{\zeta^{(b_{\mathfrak{e}1}+j+1+\mathrm{Inv}(r))\mathrm{p}(\mathfrak{v}_1)}\zeta^{(b_{\mathfrak{e}2}-j+r)\mathrm{p}(\mathfrak{v}_2)}}\\
=&\rho^{-b_{\mathfrak{e}1}}\rho^{-b_{\mathfrak{e}2}}\rho^{-1}\widetilde{\mathrm{Cont}}_{\Gamma}^{\mathrm{A}}(\mathfrak{e}).
\end{aligned}
\end{equation}
Since we moved all $\psi$-classes to the vertex contribution in Proposition \ref{prop:contributions}, we will move $\rho^{-{a_{\ell(\mathfrak{l})}}}$ in equation (\ref{eqn:leg_contribution_after_upsilon}) and $\rho^{-b_{\mathfrak{e}i}}$ $(i=1,2)$ in equation (\ref{eqn:edge_contribution_after_upsilon}) to the $\Upsilon\left(\mathrm{Cont}_{\Gamma}^{\mathrm{A}}(\mathfrak{v})\right)$ and view equations (\ref{eqn:leg_contribution_after_upsilon}), and (\ref{eqn:edge_contribution_after_upsilon}) as
\begin{equation}\label{eqn:Upsilon_edge_and_leg_final}
\begin{aligned}
\Upsilon\left(\mathrm{Cont}_{\Gamma}^{\mathrm{A}}(\mathfrak{l})\right)
=&\sqrt{-1}\widetilde{\mathrm{Cont}}_{\Gamma}^{\mathrm{A}}(\mathfrak{l}),\\
\Upsilon\left(\mathrm{Cont}_{\Gamma}^{\mathrm{A}}(\mathfrak{e})\right)
=&\rho^{-1}\widetilde{\mathrm{Cont}}_{\Gamma}^{\mathrm{A}}(\mathfrak{e}),
\end{aligned}
\end{equation}
and we can view $\Upsilon\left(\mathrm{Cont}_{\Gamma}^{\mathrm{A}}(\mathfrak{v})\right)$ as
\begin{equation*}
\Upsilon  \left(\mathrm{Cont}_{\Gamma}^{\mathrm{A}}(\mathfrak{v})\right)
=\sum_{k \geq 0} \frac{\left(\sqrt{-1}\right)^{2-2\mathrm{g}(\mathfrak{v})-\mathrm{n}(\mathfrak{v})-k}g^{\CnZn}({e}_{\mathrm{p}(\mathfrak{v})},{e}_{\mathrm{p}(\mathfrak{v})})^{-\frac{2\mathrm{g}(\mathfrak{v})-2+\mathrm{n}(\mathfrak{v})+k}{2}}}{k !}\Lambda_{\mathfrak{v},k}
\end{equation*}
where $\Lambda_{\mathfrak{v},k}$ is given by
\begin{equation*}
\left(\sqrt{-1}\right)^k\int_{\overline{M}_{\mathrm{g}(\mathfrak{v}),\mathrm{n}(\mathfrak{v})+k}}\frac{\psi_1^{a_{\mathfrak{v}1}}}{\rho^{a_{\mathfrak{v}1}}}\cdots  \frac{\psi_{\mathrm{l}(\mathfrak{v})}^{a_{\mathfrak{v}\mathrm{l}(\mathfrak{v})}}}{\rho^{a_{\mathfrak{v}\mathrm{l}(\mathfrak{v})}}} \frac{\psi_{\mathrm{l}(\mathfrak{v})+1}^{b_{\mathfrak{v}1}}}{\rho^{b_{\mathfrak{v}1}}} \cdots \frac{\psi_{\mathrm{n}(\mathfrak{v})}^{b_{\mathfrak{v}\mathrm{h}(\mathfrak{v})}}}{\rho^{b_{\mathfrak{v}\mathrm{h}(\mathfrak{v})}}}\prod_{j=1}^{k}\left(\sum_{i_j\geq{2}}\frac{(-1)^{i_j}}{n}\frac{\widetilde{P}_{0,\mathrm{p}(\mathfrak{v})}^{i_j-1,\CnZn}\psi_{\mathrm{n}(\mathfrak{v})+j}^{i_j}}{\rho^{i_j-1}\zeta^{{(i_j-1)}\mathrm{p}(\mathfrak{v})}}\right).
\end{equation*}
Since $\dim \overline{M}_{\mathrm{g}(\mathfrak{v}),\mathrm{n}(\mathfrak{v})+k}=3\mathrm{g}(\mathfrak{v})-3+\mathrm{n}(\mathfrak{v})+k$, the above integral is $0$ unless
\begin{equation*}
a_{\mathfrak{v}1}+\cdots + a_{\mathfrak{v}\mathrm{l}(\mathfrak{v})}+b_{\mathfrak{v}1}+\cdots + b_{\mathfrak{v}\mathrm{h}(\mathfrak{v})}+i_1+\cdots + i_k=3\mathrm{g}(\mathfrak{v})-3+\mathrm{n}(\mathfrak{v})+k.
\end{equation*}
In this case, the sum of powers of $\rho$ is
\begin{equation*}
-(a_{\mathfrak{v}1}+\cdots + a_{\mathfrak{v}\mathrm{l}(\mathfrak{v})}+b_{\mathfrak{v}1}+\cdots + b_{\mathfrak{v}\mathrm{h}(\mathfrak{v})}+i_1+\cdots + i_k-k)=3-3\mathrm{g}(\mathfrak{v})-\mathrm{n}(\mathfrak{v}).
\end{equation*}
Hence, we get
\begin{equation}\label{eqn:Upsilon_vertex_final}
\Upsilon\left(\mathrm{Cont}_{\Gamma}^{\mathrm{A}}(\mathfrak{v})\right)=\rho^{3-3\mathrm{g}(\mathfrak{v})-\mathrm{n}(\mathfrak{v})}\left(\sqrt{-1}\right)^{2-2\mathrm{g}(\mathfrak{v})-\mathrm{n}(\mathfrak{v})}\widetilde{\mathrm{Cont}}_{\Gamma}^{\mathrm{A}}(\mathfrak{v}).
\end{equation}
By Euler's graph formula, we have
\begin{equation*}
\vert\mathrm{V}_{\Gamma}\vert-\vert\mathrm{E}_{\Gamma}\vert+h^1(\Gamma)=1,
\end{equation*}
and hence we have
\begin{equation*}
\begin{aligned}
g-1
&=\vert\mathrm{E}_{\Gamma}\vert-\vert\mathrm{V}_{\Gamma}\vert+\sum_{\mathfrak{v}\in\mathrm{V}_{\Gamma}}\mathrm{g}(\mathfrak{v})\\
&=\vert\mathrm{E}_{\Gamma}\vert+\sum_{\mathfrak{v}\in\mathrm{V}_{\Gamma}}\left(\mathrm{g}(\mathfrak{v})-1\right).
\end{aligned}
\end{equation*}
Also, we know
\begin{equation*}
\sum_{\mathfrak{v}\in\mathrm{V}_{\Gamma}}\mathrm{n}(\mathfrak{v})=2\vert\mathrm{E}_{\Gamma}\vert+\vert\mathrm{L}_{\Gamma}\vert=2\vert\mathrm{E}_{\Gamma}\vert+m.
\end{equation*}
Together with this basic graph theory of stable graphs, equations (\ref{eqn:Upsilon_edge_and_leg_final}), and (\ref{eqn:Upsilon_vertex_final}) give us that contributions arising from $\Gamma$ to the Gromov--Witten potentials are related to each other via
\begin{equation*}
\mathrm{Cont}_{\Gamma}^{\CnZn}\left(\phi_{c_{1}}, \ldots, \phi_{c_{m}}\right)=(-1)^{1-g}\rho^{3g-3+m}\Upsilon \left(\mathrm{Cont}^{\KP^{n-1}}_{\Gamma}\left(H^{c_{1}}, \ldots, H^{c_{m}}\right)\right).
\end{equation*}
\end{proof}

\begin{rem}
Theorem \ref{thm:Main_Theorem} is the generalization of \cite[Theorem 4']{lho-p2} when $\rho$ is chosen to be $-1$.
\end{rem}

\begin{rem}
Theorem \ref{thm:Main_Theorem} implies that the Gromov--Witten potential satisfies the holomorphic anomaly equations proved in \cite{gt} after the identifications we introduced in Section \ref{subsubsec:Change_of_variables}.
\end{rem}

\section{Asymptotics of oscillatory integrals} \label{sec:asymptotics_of_oscillatory_integrals}
The goal of this section is to prove Lemma \ref{lem:Final_of_proof_strategy}, following the strategy of \cite[Appendix]{lho-p2}.

The (equivariant) Landau--Ginzburg mirror to $K\mathbb{P}^{n-1}$ is 
\begin{equation*}
    F=w_0+w_1+...+w_{n-1}+w_n+\sum_{i=0}^{n-1}\chi_i \log w_i,
\end{equation*}
defined on the family of affine varieties
\begin{equation*}
    Y_q=\{(w_0,...,w_{n-1}, w_n)\in \mathbb{C}^{n+1} \,|\, w_0w_1...w_{n-1}=qw_n^n\}.
\end{equation*}
The associated oscillatory integral is of the form
\begin{equation}\label{eqn:oscillatory_int}
\mathcal{I}=\int_{\Gamma\subset Y_q} e^{F/z}g(w)\omega,    
\end{equation}
where $\omega$ is the meromorphic volume form on $Y_q$:
\begin{equation*}
\omega=\frac{d\log w_0\wedge d\log w_1\wedge...\wedge d\log w_n}{d\log q}.    
\end{equation*}
In the coordinate system $(w_0,w_1,...,w_{n-1})$ on $Y_q$, we have 
\begin{equation*}
\mathcal{I}=\int_{\Gamma\subset (\mathbb{C}^*)^n} e^{(w_0+w_1+...+w_{n-1}+q^{-1/n}(w_0...w_{n-1})^{1/n}+\sum_{i=0}^{n-1}\chi_i \log w_i)/z}g(w)\frac{1}{n}\frac{dw_0...dw_{n-1}}{w_0...w_{n-1}}.    
\end{equation*}

We impose the specialization (\ref{eqn:specialization_KP}). The critical points of $F$ are calculated as follows. For $0\leq i\leq n-1$, the critical point equation $\frac{\partial F}{\partial w_i}=0$ reads 
\begin{equation*}
1+\frac{1}{n}q^{-1/n}(w_0...w_{n-1})^{1/n-1}(w_0...\widehat{w_i}...w_{n-1})+\frac{\chi_i}{w_i}=0,     
\end{equation*}
which is the same as 
\begin{equation}\label{eqn:crit_pt1}
w_i=-\frac{1}{n}q^{-1/n}(w_0...w_{n-1})^{1/n}-\chi_i.    
\end{equation}
Multiplying equation (\ref{eqn:crit_pt1}) for $0\leq i\leq n-1$, we obtain
\begin{equation*}
\prod_{i=0}^{n-1}w_i=\prod_{i=0}^{n-1}\left(-\frac{1}{n}q^{-1/n}(w_0...w_{n-1})^{1/n}-\chi_i\right).     
\end{equation*}
By the equation of $Y_q$, the left-hand side is $qw_n^n$. By the specialization (\ref{eqn:specialization_KP}), the right-hand side is 
\begin{equation*}
\left(-\frac{1}{n}q^{-1/n}(w_0...w_{n-1})^{1/n}\right)^n-1=\left(-\frac{1}{n}\right)^n w_n^n-1.    
\end{equation*}
This implies 
\begin{equation*}
w_n=\left(\left(-\frac{1}{n}\right)^n-q\right)^{-1/n}, \quad w_i=-\frac{1}{n}\left(\left(-\frac{1}{n}\right)^n-q\right)^{-1/n}-\chi_i, \quad 0\leq i\leq n-1.,    
\end{equation*}
i.e.,
\begin{equation}\label{eqn:crit_pt2}
w_n=-n(1-(-n)^nq)^{-1/n}, \quad w_i=(1-(-n)^nq)^{-1/n}-\chi_i, \quad 0\leq i\leq n-1.    
\end{equation}
The $n$ choices of the branch for $$L^{\KP^{n-1}}=(1-(-n)^nq)^{-1/n}$$ give rise to $n$ critical points.

Assume $q>0$ and choose the critical point corresponding to a real positive $(1-(-n)^nq)^{-1/n}$. Denote by $\mathrm{w}_{\text{cr}}$ the critical point (\ref{eqn:crit_pt2}). The corresponding critical value is 
\begin{equation*}
F(\mathrm{w}_{\text{cr}})=\sum_{i=0}^{n-1}\chi_i\log \left((1-(-n)^nq)^{-1/n}-\chi_i\right)=\sum_{i=0}^{n-1}\chi_i\log(L^{\KP^{n-1}}-\chi_i).    
\end{equation*}
Using the definition of $L^{\KP^{n-1}}$, we calculate (recall that we impose the specialization (\ref{eqn:specialization_KP}))
\begin{equation*}
q\frac{d}{dq} F(\mathrm{w}_{\text{cr}})=\sum_{i=0}^{n-1}\chi_i\frac{q\frac{d}{dq}L^{\KP^{n-1}}}{L^{\KP^{n-1}}-\chi_i}=\frac{n}{(L^{\KP^{n-1}})^n-1}q\frac{d}{dq}L^{\KP^{n-1}}=L^{\KP^{n-1}}.    
\end{equation*}
It follows that\footnote{Note that this is the decomposition of the critical value as a sum of "classical" and "quantum" parts, c.f. \cite[Lemma 6.4]{ccit_toric3}.} 
\begin{equation*}
F(\mathrm{w}_{\text{cr}})=\sum_{i=1}^{n-1}\chi_i\log(1-\chi_i)+\underbrace{\int_0^q \left(L^{\KP^{n-1}}-1\right)\frac{dq}{q}}_{=:\mu}+\log((-1)^nn^{n-1}q).    
\end{equation*}

We calculate the Hessian of $F$ at $\mathrm{w}_{\text{cr}}$ as follows. 
\begin{equation*}
\begin{split}
\frac{\partial}{\partial \log w_i} F=&w_i\frac{\partial}{\partial w_i} F=w_i\left(1+\frac{\chi_i}{w_i}+\frac{\partial}{\partial w_i} w_n \right)\\
=&w_i\left(1+\frac{\chi_i}{w_i}+\frac{\partial}{\partial w_i} q^{-1/n}(w_0...w_{n-1})^{1/n} \right)\\
=&w_i+\chi_i+\frac{1}{n}q^{-1/n}(w_0...w_{n-1})^{1/n}. 
\end{split}
\end{equation*}
\begin{equation*}
\frac{\partial^2}{\partial \log w_j\partial \log w_i} F=\delta_{i,j}w_j+\frac{1}{n^2}q^{-1/n}(w_0...w_{n-1})^{1/n}=\delta_{i,j}w_j+\frac{1}{n^2}w_n.
\end{equation*}
It follows that
\begin{equation*}
\text{det}\left(\frac{\partial^2 F(\mathrm{w}_{\text{cr}})}{\partial \log w_j\partial \log w_i} \right)= w_0...w_{n-1}+\frac{1}{n^2}w_n\sum_{i=0}^{n-1}(w_0...\widehat{w_i}...w_{n-1}).
\end{equation*}
Using (\ref{eqn:crit_pt2}) and the definition of $Y_q$, this is 
\begin{equation*}
\begin{split}
&w_0...w_{n-1}+\frac{1}{n^2}w_n \cdot n(L^{\KP^{n-1}})^{n-1}\\
=&qw_n^n +\frac{1}{n^2}(-nL^{\KP^{n-1}})n(L^{\KP^{n-1}})^{n-1}\\
=&q(-n)^n(L^{\KP^{n-1}})^n+\frac{1}{n^2}(-nL^{\KP^{n-1}})n(L^{\KP^{n-1}})^{n-1}\\
=&-1.
\end{split}
\end{equation*}
In summary, 
\begin{equation}
\text{det}\left(\frac{\partial^2 F(\mathrm{w}_{\text{cr}})}{\partial \log w_j\partial \log w_i} \right)=-1.
\end{equation}

In the notation of \cite[Section 6.2]{ccit_toric3}, the formal asymptotic expansion of the integral $\int_\Gamma e^{F/z}\omega$ takes the form
\begin{equation}
e^{F(\mathrm{w}_{\text{cr}})/z}(-2\pi z)^{n/2}\text{Asym}_{\mathrm{w}_{\text{cr}}}(e^{F/z}\omega),    
\end{equation}
where $\text{Asym}_{\mathrm{w}_{\text{cr}}}(e^{F/z}\omega)$ is of the form
\begin{equation}
\frac{1}{\sqrt{\text{Hessian}(F)_{\mathrm{w}_{\text{cr}}}}}(1+a_1z+a_2z^2+...).    
\end{equation}

We calculate $\text{Asym}_{\mathrm{w}_{\text{cr}}}(e^{F/z}\omega)|_{q=\infty}$ in two ways.

In the limit $q=\infty$, we have
\begin{equation*}
\int_\Gamma e^{F/z}\omega|_{q=\infty}=\frac{1}{n}\prod_{i=0}^{n-1}\left(\Gamma\left(\frac{\chi_i}{z}\right)(-z)^{\chi_i/z}\right)\\
=\frac{1}{n}\left(\prod_{i=0}^{n-1}\Gamma\left(\frac{\chi_i}{z}\right)\right)\cdot (-z)^{\sum_{i=0}^{n-1}\chi_i/z}.
\end{equation*}
In the specialization (\ref{eqn:specialization_KP}), we have $\sum_{i=0}^{n-1}\chi_i=0$, thus the above is
\begin{equation*}
\int_\Gamma e^{F/z}\omega|_{q=\infty}=\frac{1}{n}\prod_{i=0}^{n-1}\Gamma\left(\frac{\chi_i}{z}\right).
\end{equation*}
By\footnote{Note that the odd Bernoulli numbers $B_{2k+1}=B_{2k+1}(0)=0$, $k\geq 1$.} \cite[5.11.1]{dlmf}, 
\begin{equation*}
\begin{split}
&\log\Gamma(x)\sim \left(x-\frac{1}{2}\right)\log x -x +\frac{1}{2}\log (2\pi)+ \sum_{k\geq 1}\frac{B_{k+1}(0)}{k(k+1)x^k}, \quad |\text{arg}(x)|<\pi-\delta, |x|>>1.
\end{split}
\end{equation*}
Here $\log$ is taken with principle values.

We pick $\delta>0$ sufficiently small and assume that $z$ satisfies
\begin{equation*}
0<\text{arg}(z)+\pi<<1, \quad |\text{arg}(\chi_i/z)|<\pi-\delta,\quad i=0,...,n-1.    
\end{equation*}

Therefore, we have  
\begin{equation*}
\begin{split}
\int_\Gamma e^{F/z}\omega|_{q=\infty}=&\frac{1}{n}\prod_{i=0}^{n-1}\Gamma\left(\frac{\chi_i}{z}\right)\\
\sim &\frac{1}{n}\prod_{i=0}^{n-1}\left(e^{\frac{\chi_i}{z}\log\left(\frac{\chi_i}{z}\right)-\frac{\chi_i}{z}} \left(\frac{\chi_i}{z} \right)^{-1/2}\sqrt{2\pi}\exp\left(\sum_{k\geq 1}\frac{B_{k+1}(0)z^k}{k(k+1)\chi_i^k}\right) \right).\\
\end{split}    
\end{equation*}
Next we process this asymptotic expansion. Consider the product
\begin{equation*}
\prod_{i=0}^{n-1}\left(e^{\frac{\chi_i}{z}\log\left(\frac{\chi_i}{z}\right)-\frac{\chi_i}{z}} \right)=\exp\left(\sum_{i=0}^{n-1}\left(\frac{\chi_i}{z}\log\left(\frac{\chi_i}{z} \right)-\frac{\chi_i}{z}\right). \right)    
\end{equation*}
We know that $\sum_{i=0}^{n-1}\chi_i/z=0$. Also, we can check that  
\begin{equation*}
\log(\chi_i/z)-(\log\chi_i-\log z)
=\begin{cases}
0 & i=0\\
-2\pi\sqrt{-1} & i=1,...,[n/2]\\
-\pi\sqrt{-1}  & i=[n/2]+1,...,n-1
\end{cases}   
\end{equation*}
 Since $\sum_{i=0}^{n-1}\frac{\chi_i}{z}\cdot 2\pi\sqrt{-1}=0$, we have $$\sum_{i=0}^{n-1}\frac{\chi_i}{z}\log\left(\frac{\chi_i}{z} \right)=A/z+\sum_{i=0}^{n-1}\frac{\chi_i}{z}(\log\chi_i-\log z)=A/z+\sum_{i=0}^{n-1}\frac{\chi_i}{z}\log\chi_i.$$
Here $A$ is a $z$-independent scalar. Therefore,
\begin{equation*}
\prod_{i=0}^{n-1}\left(e^{\frac{\chi_i}{z}\log\left(\frac{\chi_i}{z}\right)-\frac{\chi_i}{z}} \right)=e^{(A+\sum_{i=0}^{n-1}\chi_i\log\chi_i)/z}.    
\end{equation*}

We know that $\sum_{i=0}^{n-1}\frac{1}{\chi_i^k}=0$ unless $k$ is a multiple of $n$, in which case the sum is $n$. So
\begin{equation*}
\prod_{i=0}^{n-1}\exp\left(\sum_{k\geq 1}\frac{B_{k+1}(0)z^k}{k(k+1)\chi_i^k}\right)=\exp\left(\sum_{k\geq 1}\frac{B_{k+1}(0)z^k}{k(k+1)}\left(\sum_{i=0}^{n-1}\frac{1}{\chi_i^k} \right)\right)=\exp\left(n\sum_{l\geq 1}\frac{B_{nl+1}(0)z^{nl}}{nl(nl+1)}\right).    
\end{equation*}

We next consider the product
\begin{equation*}
\prod_{i=0}^{n-1}\left(\frac{\chi_i}{z} \right)^{-1/2}\sqrt{2\pi}=\exp\left(-\frac{1}{2}\sum_{i=0}^{n-1}\log\left(\frac{\chi_i}{z} \right)+\frac{n}{2}\log(2\pi) \right)    
\end{equation*}

We can check that for $0<\text{arg}(z)+\pi<<1$, i.e. $z=re^{\sqrt{-1}(\theta-\pi)}$ with $0<\theta<<1$, we have\footnote{Again, $\log$ is taken with principle values.}
\begin{equation*}
-\frac{1}{2}\sum_{i=0}^{n-1}\log\left(\frac{\chi_i}{z} \right)=-\frac{1}{2}\pi\sqrt{-1}+\frac{n}{2}\log(-z).    
\end{equation*}

Thus we have
\begin{equation*}
\prod_{i=0}^{n-1}\left(\frac{\chi_i}{z} \right)^{-1/2}\sqrt{2\pi}=\frac{(-2\pi z)^{n/2}}{\sqrt{-1}}.    
\end{equation*}

Putting these together, we find
\begin{equation*}
\int_\Gamma e^{F/z}\omega|_{q=\infty}\sim \frac{1}{n}\frac{(-2\pi z)^{n/2}}{\sqrt{-1}}e^{(A+\sum_{i=0}^{n-1}\chi_i\log\chi_i)/z}\exp\left(n\sum_{l\geq 1}\frac{B_{nl+1}(0)z^{nl}}{nl(nl+1)}\right). 
\end{equation*}
By the definition of $\text{Asym}$ and uniqueness of asymptotical expansion, we obtain
\begin{equation}\label{eqn:formal_asym_infty_1}
n\sqrt{-1}\text{Asym}_{\mathrm{w}_{\text{cr}}}(e^{F/z}\omega)|_{q=\infty}=\exp\left(n\sum_{l\geq 1}\frac{B_{nl+1}(0)z^{nl}}{nl(nl+1)}\right).    
\end{equation}

On the other hand, using\footnote{Besides the specialization (\ref{eqn:specialization_KP}), what is needed here is the observation that $u_j(\sigma)$ in \cite[Proposition 6.9]{ccit_toric3} are given by $\chi_0-\chi_1$, $\chi_0-\chi_2$, ..., $\chi_0-\chi_{n-1}$ and $-n\chi_0$.} \cite[Proposition 6.9]{ccit_toric3}, we have 
\begin{equation}\label{eqn:formal_asymp1}
\text{Asym}_{\mathrm{w}_{\text{cr}}}(e^{F/z}\omega)=e^{-\mu/z}I^{\KP^{n-1}}(q,z)\big|_{p_0}   \cdot \frac{1}{n\sqrt{-1}}\exp\left(-\sum_{k=1}^\infty \frac{B_{k+1}}{k(k+1)}N_{k,0}z^k \right).
\end{equation}
Restriction to $p_0$ is the same as setting $H=1$. Then, by Lemma \ref{lem:Asymptotics_of_I_function_of_KP} we know that
\begin{equation}
e^{-\mu/z}I^{\KP^{n-1}}(q,z)\big|_{p_0}\sim \underbrace{\sum_{k=0}^{\infty} \Phi_{k}(q) {z}^k}_{\Phi(z) \coloneqq } \quad \text{as} \quad {z} \rightarrow 0.
\end{equation}
By Corollary \ref{cor:polynomiality_of_P_0j_KP} and Corollary \ref{cor:polynomiality_of_Phi_k} we know that $P_{0,0}^{k,\KP^{n-1}}$ and $\Phi_k$ satisfy equation (\ref{eqn:Corollary_Eqn_of_General_Polynomiality}). Since $\mathds{L}_{j,1}=n\mathsf{D}_{\KP^{n-1}}$, equation (\ref{eqn:Corollary_Eqn_of_General_Polynomiality}) determines $\mathsf{P}_{0,0}^{\KP^{n-1}}(z)$ and $\Phi(z)$ once their constant terms with respect to $q$ are known. Since $I^{\KP^{n-1}}(q=0,z)=1=\Phi(z)\vert_{q=0}$ and $\mathsf{P}_{0,0}^{\KP^{n-1}}(z)\vert_{q=0}=-\sqrt{-1}$ then we have
\begin{equation*}
\Phi(z)=\frac{\mathsf{P}_{0,0}^{\KP^{n-1}}(z)}{-\sqrt{-1}}.
\end{equation*}
This implies that equation (\ref{eqn:formal_asymp1}) takes of the form
\begin{equation}\label{eqn:formal_asymp2}
n\sqrt{-1} \text{Asym}_{\mathrm{w}_{\text{cr}}}(e^{F/z}\omega)=\frac{1}{-\sqrt{-1}}{\mathsf{P}}_{0,0}^{\KP^{n-1}}(z)\exp\left(-\sum_{k=1}^\infty \frac{B_{k+1}}{k(k+1)}N_{k,0}z^k \right).  
\end{equation}
Passing to $q=\infty$, then by (\ref{eqn:const_at_infty}), (\ref{eqn:formal_asymp2}) becomes
\begin{equation}\label{eqn:formal_asymp_infty_2}
n\sqrt{-1} \text{Asym}_{\mathrm{w}_{\text{cr}}}(e^{F/z}\omega)|_{q=\infty}=\frac{1}{-\sqrt{-1}}\left(\sum_{k\geq{0}}a_{0,0}^kz^k \right)\exp\left(-\sum_{k=1}^\infty \frac{B_{k+1}}{k(k+1)}N_{k,0}z^k \right).      
\end{equation}
(\ref{eqn:R_matrix_identity}) now follows by combining (\ref{eqn:formal_asym_infty_1}) and (\ref{eqn:formal_asymp_infty_2}). So, we obtain the following result.
\begin{prop}[=Lemma \ref{lem:Final_of_proof_strategy}]
We have \begin{equation}
-\sqrt{-1}\exp\left(n\sum_{l>0}\frac{B_{nl+1}\left(0\right)}{nl+1}\frac{z^{nl}}{nl} \right)
=\left(\sum_{k\geq{0}}a_{0,0}^kz^k\right)\exp\left(\sum_{m>0}N_{2m-1, 0}\frac{(-1)^{2m-1}B_{2m}}{2m(2m-1)}z^{2m-1}\right).
\end{equation}
\end{prop}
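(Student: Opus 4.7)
The plan is to realize both sides of (\ref{eqn:R_matrix_identity}) as the same formal asymptotic expansion of a single oscillatory integral attached to the Landau-Ginzburg mirror of $\KP^{n-1}$, then match the two expansions term by term. Concretely, I would introduce
\[
F = w_0 + w_1 + \cdots + w_{n-1} + w_n + \sum_{i=0}^{n-1}\chi_i \log w_i
\]
on the family $Y_q = \{w_0 \cdots w_{n-1} = q w_n^n\}$ with its canonical relative volume form $\omega$, and study $\int_\Gamma e^{F/z}\omega$ for a Lefschetz thimble $\Gamma$ at a distinguished critical point $\mathrm{cr}$. A preliminary computation identifies $\mathrm{cr}$ by solving $\partial F = 0$, expresses the critical value as a classical piece plus the quantum piece $\mu = \int_0^q (L^{\KP^{n-1}}-1)\,dq/q$ plus $\log((-1)^n n^{n-1}q)$, and shows that the Hessian in logarithmic coordinates has determinant $-1$ (which is exactly what produces the $\sqrt{-1}$ in the identity).

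The core of the proof is to compute $\mathrm{Asym}_{\mathrm{cr}}(e^{F/z}\omega)\big|_{q=\infty}$ in two ways. On the one hand, at $q = \infty$ the constraint defining $Y_q$ collapses and the oscillatory integral factors as $\tfrac{1}{n}\prod_{i=0}^{n-1}\Gamma(\chi_i/z)(-z)^{\chi_i/z}$; the character condition $\sum \chi_i = 0$ kills the $(-z)$-prefactor, and Stirling's expansion
\[
\log \Gamma(x) \sim \left(x-\tfrac12\right)\log x - x + \tfrac12\log(2\pi) + \sum_{k \geq 1}\frac{B_{k+1}(0)}{k(k+1)\,x^k}
\]
combined with the sum identities $\sum_i \chi_i^{-k} = n$ for $n\mid k$ and $0$ otherwise (which hold under (\ref{eqn:specialization_KP})), produces exactly $\exp\!\bigl(n \sum_{l>0} \tfrac{B_{nl+1}(0)}{nl+1}\tfrac{z^{nl}}{nl}\bigr)$ up to a universal prefactor $(-2\pi z)^{n/2}/(n\sqrt{-1})$. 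This delivers the left-hand side of the target identity.

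On the other hand, \cite[Proposition 6.9]{ccit_toric3} asserts that the same formal asymptotic expansion coincides with
\[
e^{-\mu/z}\,I^{\KP^{n-1}}(q,z)\big|_{p_0} \cdot \frac{1}{n\sqrt{-1}} \exp\!\left(-\sum_{k\geq 1}\frac{B_{k+1}}{k(k+1)} N_{k,0}\,z^k\right),
\]
since the toric weights entering the general formula are precisely $\chi_0 - \chi_j$ and $-n\chi_0$ at the vertex $p_0$. Passing $q \to \infty$ here requires identifying the limit of $e^{-\mu/z}I^{\KP^{n-1}}(q,z)|_{p_0}$; the asymptotic analysis in Appendix~\ref{Appendix:Analysis_of_I_function}, together with Corollary~\ref{cor:polynomiality_of_P_0j_KP}, shows that this limit and $\mathsf{P}_{0,0}^{\KP^{n-1}}(z)/(-\sqrt{-1})$ satisfy the same recursion (\ref{eqn:Corollary_Eqn_of_General_Polynomiality}) with matching initial value at $q=0$, hence are equal; evaluation at $q = \infty$ then substitutes $\sum_k a_{0,0}^k z^k$ for $\mathsf{P}_{0,0}^{\KP^{n-1}}(z)$. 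Equating the two computations, the universal $(-2\pi z)^{n/2}/(n\sqrt{-1})$ prefactor cancels and the remaining identity is precisely (\ref{eqn:R_matrix_identity}).

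The main obstacle I anticipate is \emph{not} the combinatorics of Bernoulli sums but the careful bookkeeping of phases. Stirling's expansion is valid only in a sector $|\arg(x)| < \pi - \delta$, so one must simultaneously choose a range of $\arg(z)$ (e.g. $0 < \arg(z) + \pi \ll 1$) that puts every $\chi_i/z$ into the admissible sector, track the cuts in $\log(\chi_i/z) - (\log\chi_i - \log z)$ through the sectors $i = 1,\dots,\lfloor n/2 \rfloor$ versus $i = \lfloor n/2 \rfloor+1,\dots,n-1$, and verify that the resulting multiples of $2\pi\sqrt{-1}$ that appear in $\sum_i (\chi_i/z)\log(\chi_i/z)$ vanish because $\sum_i \chi_i = 0$. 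A secondary subtlety is ensuring the $-\sqrt{-1}$ scalar prefactor emerges from the half-integer powers $\prod_i (\chi_i/z)^{-1/2} \sqrt{2\pi}$ rather than a different root of unity; once those phase issues are pinned down, the remaining steps are mechanical applications of Stirling and of the stationary-phase formula.
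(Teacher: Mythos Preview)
Your proposal is correct and follows essentially the same route as the paper: the same Landau--Ginzburg potential and oscillatory integral, the same two evaluations of $\mathrm{Asym}_{\mathrm{cr}}(e^{F/z}\omega)\big|_{q=\infty}$ (product of Gamma functions expanded via Stirling on one side, \cite[Proposition~6.9]{ccit_toric3} plus the recursion matching $\Phi(z)$ with $\mathsf{P}_{0,0}^{\KP^{n-1}}(z)/(-\sqrt{-1})$ on the other), and the same phase bookkeeping in the sector $0<\arg(z)+\pi\ll 1$ that you flag as the delicate point.
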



\appendix
\section{An Analysis for \texorpdfstring{$I$}{I}-function of \texorpdfstring{$\KP^{n-1}$}{KP{n-1}}}\label{Appendix:Analysis_of_I_function}
By equation (\ref{eqn:I_KP_interms_of_F_minus_1}), we argued that the $I$-function of $I^{K\mathbb{P}^{n-1}}(q,z)$ is related to the $1$-shifted version $\mathcal{F}_{-1}(w,x)$ of the main hypergeometric series $\mathcal{F}(w,x)$ of \cite{zz} via
\begin{equation*}
I^{K\mathbb{P}^{n-1}}(q,z)=\mathcal{F}_{-1}\left(H/z,(-1)^nq\right)
\end{equation*}
and concluded that the proof of Lemma \ref{lem:properties_of_C_functions} follows from Theorem 1 and Theorem 2 of \cite{zz}. In a similar vein, we will also explain how other theorems in \cite{zz} apply or can be adapted to the $I$-function $I^{K\mathbb{P}^{n-1}}(q,z)$ of $\KP^{n-1}$.

We note that equation (24) on \cite[Page 6]{zz} implies\footnote{The series $I_{n-1}(x)$ is not a summand of $I$-functions we defined in our paper, it resembles some other series in \cite{zz}. We did not change it to be consistent with the notation of \cite{zz}.} $\mathcal{F}_{n-1}(w,x)/I_{n-1}(x)=\mathcal{F}_{-1}(w,x)$. On \cite[Page 8]{zz}, it is stated that $\mathcal{F}_p(w,x)$ has an asymptotic expansion of the form
\begin{equation*}
\mathcal{F}_p(w, x) \sim e^{\mu(x) w} \sum_{s=0}^{\infty} \Phi_{s,p}(x) w^{-s} \quad \text{as} \quad (w \rightarrow \infty),
\end{equation*}
see equation (28) of \cite{zz}. So $\mathcal{F}_{-1}(w,x)$ has one as well. This implies the following result.

\begin{lem}\label{lem:Asymptotics_of_I_function_of_KP}
The function $I^{K\mathbb{P}^{n-1}}(q,z)$ of $\KP^{n-1}$ has an asymptotic expansion of the form
\begin{equation*}
I^{K\mathbb{P}^{n-1}}(q,z) \sim e^{H\mu(q)/z}\sum_{s=0}^{\infty} \Phi_{s}(q) \left(\frac{z}{H}\right)^s \quad \text{as} \quad \left(\frac{z}{H} \rightarrow 0\right).
\end{equation*}
\end{lem}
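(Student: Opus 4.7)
The plan is to simply transport the asymptotic expansion from Zinger--Zagier \cite{zz} through the identification already established in Section \ref{sec:I_func}, namely
\begin{equation*}
I^{K\mathbb{P}^{n-1}}(q,z)=\mathcal{F}_{-1}\!\left(H/z,\,(-1)^nq\right),
\end{equation*}
see equation (\ref{eqn:I_KP_interms_of_F_minus_1}). Under the substitution $w=H/z$ and $x=(-1)^n q$, the regime $z/H\to 0$ corresponds precisely to $w\to\infty$, which is the regime in which the asymptotic expansion (28) of \cite{zz} applies.

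First I would recall (or invoke) the relation equation (24) of \cite{zz}, $\mathcal{F}_{n-1}(w,x)/I_{n-1}(x)=\mathcal{F}_{-1}(w,x)$, which ensures that $\mathcal{F}_{-1}(w,x)$ inherits an expansion of the same shape as $\mathcal{F}_{n-1}(w,x)$; since the denominator $I_{n-1}(x)$ is a power series in $x$ independent of $w$, it contributes only to the coefficient functions and does not affect the exponential factor. Then the asymptotic expansion of \cite[eq.~(28)]{zz} reads
\begin{equation*}
\mathcal{F}_{-1}(w,x)\ \sim\ e^{\mu(x)w}\sum_{s=0}^{\infty}\Phi_{s,-1}(x)\,w^{-s}\qquad (w\to\infty).
\end{equation*}
Substituting $w=H/z$ and $x=(-1)^n q$ and defining
\begin{equation*}
\mu(q)\coloneqq \mu((-1)^n q),\qquad \Phi_s(q)\coloneqq \Phi_{s,-1}((-1)^n q),
\end{equation*}
yields exactly the claimed expansion
\begin{equation*}
I^{K\mathbb{P}^{n-1}}(q,z)\ \sim\ e^{H\mu(q)/z}\sum_{s=0}^{\infty}\Phi_s(q)\left(\tfrac{z}{H}\right)^{s}\qquad \big(z/H\to 0\big).
\end{equation*}

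There is essentially no obstacle: the work has already been done in \cite{zz}, and the only thing to verify is that the change of variables $w=H/z$, $x=(-1)^n q$ is compatible with the $H$-power structure of $I^{\KP^{n-1}}$. This compatibility is automatic because $I^{\KP^{n-1}}(q,z)$ only involves $H$ through the combination $H/z$ (as is visible from the second line of (\ref{eqn:I_KP_interms_of_F_minus_1}), where $H^n$ has been replaced by $1$ via (\ref{eq:Hnisequalto1})), so each coefficient $\Phi_s(q)\,(z/H)^s$ and the exponent $H\mu(q)/z$ are well-defined elements of $H^*_{\mathrm{T}}(\KP^{n-1})[\![z/H]\!]$ and do not require any further interpretation. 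Hence the lemma follows directly from \cite[eq.~(28)]{zz}.
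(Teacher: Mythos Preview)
Your proposal is correct and follows essentially the same approach as the paper: both invoke equation (24) of \cite{zz} to pass from $\mathcal{F}_{n-1}$ to $\mathcal{F}_{-1}$, then apply the asymptotic expansion \cite[eq.~(28)]{zz} for $\mathcal{F}_p$, and finally substitute $w=H/z$, $x=(-1)^n q$ via (\ref{eqn:I_KP_interms_of_F_minus_1}). Your version is slightly more explicit about the change of variables and the role of $I_{n-1}(x)$, but the argument is the same.
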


In \cite{zz}, for $m\geq j \geq 0$, the series $\mathcal{H}_{m, j}\in\mathbb{Q}[X]$ is defined via the recurrence
\begin{equation}\label{eqn:recursion_for_Hmj}
\mathcal{H}_{0, j}=\delta_{0, j}, \quad \mathcal{H}_{m, j}=\mathcal{H}_{m-1, j}+(X-1)\left(X \frac{d}{d X}+\frac{m-j}{n}\right) \mathcal{H}_{m-1, j-1} \quad \text { for } m \geq 1
\end{equation}
with $\mathcal{H}_{m, -1}=0$, and first few $\mathcal{H}_{m, j}$'s are provided

\begin{equation*}
\begin{aligned}
& \mathcal{H}_{m, 0}(X)=1, \quad \mathcal{H}_{m, 1}(X)=\frac{1}{n}\left(\begin{array}{c}
m \\
2
\end{array}\right)(X-1), \\
& \mathcal{H}_{m, 2}(X)=\frac{1}{n^2}\left(\begin{array}{c}
m \\
3
\end{array}\right)((n+1) X-1)(X-1)+\frac{3}{n^2}\left(\begin{array}{c}
m \\
4
\end{array}\right)(X-1)^2 .
\end{aligned}
\end{equation*}
Set 
\begin{equation}\label{eqn:XandY}
\begin{split}
X^{\KP^{n-1}}=&(L^{\KP^{n-1}})^n,\\
Y^{\KP^{n-1}}=&\frac{\mathsf{D}_{\KP^{n-1}}L^{\KP^{n-1}}}{L^{\KP^{n-1}}}=\frac{1}{n}\left((L^{\KP^{n-1}})^n-1 \right).
\end{split}
\end{equation}

In equation (\ref{eqn:DLj_mutildej}), for $0\leq j \leq n-1$, we defined
\begin{equation*}
\mathsf{D}_{L_j}=\mathsf{D}_{\KP^{n-1}}+\frac{L^{\KP^{n-1}}_j}{z} \quad\text{and}\quad \widetilde{\mu}_j=\int_0^q\frac{L^{\KP^{n-1}}_j(u)}{u}du
\end{equation*}
where $L_j^{\KP^{n-1}}=L^{\KP^{n-1}}\zeta^j$.

For $1\leq k \leq n$, define\footnote{Note that the definition of $\mathds{L}_{j,k}$ does not depend on $j$ since $\frac{\mathsf{D}_{\KP^{n-1}}L_j^{\KP^{n-1}}}{L_j^{\KP^{n-1}}}=\frac{\mathsf{D}_{\KP^{n-1}}L^{\KP^{n-1}}}{L^{\KP^{n-1}}}.$}
\begin{equation}\label{eqn:mathdsLjk}
\mathds{L}_{j,k}=\sum_{i=0}^{k}\left(\binom{n}{i} \mathcal{H}_{n-i, k-i}-\frac{\mathsf{D}_{\KP^{n-1}}L_j^{\KP^{n-1}}}{n^{n-1}L_j^{\KP^{n-1}}} \sum_{r=1}^{k-i}\binom{n-r}{i} (-1)^r s_{n, n-r}n^{n-r} \mathcal{H}_{n-i-r, k-i-r}\right) \mathsf{D}_{\KP^{n-1}}^{i}.
\end{equation}

First two $\mathds{L}_{j,k}$ are given by
\begin{equation*}
\begin{split}
\mathds{L}_{j,1}=&n\mathsf{D}_{\KP^{n-1}},\\
\mathds{L}_{j,2}=&\binom{n}{2}\mathsf{D}^2_{\KP^{n-1}}-\frac{1}{n}\binom{n}{2}(X^{\KP^{n-1}}-1)\mathsf{D}_{\KP^{n-1}}+\frac{1}{n^2}\binom{n+1}{4}(X^{\KP^{n-1}}-1)X^{\KP^{n-1}}.
\end{split}
\end{equation*}
For $0\leq j \leq n-1$, define the following operator
\begin{equation*}
\mathds{L}_j={\mathsf{D}}_{L_j}^n-\frac{(L_j^{\KP^{n-1}})^{n}}{z^n}-\frac{{\mathsf{D}}_{\KP^{n-1}}L_j^{\KP^{n-1}}}{n^{n-1}L_j^{\KP^{n-1}}}\sum_{k=0}^{n-1}(-1)^{n-k}{s}_{n,k}n^k{\mathsf{D}}_{L_j}^k.
\end{equation*}
We can rewrite this as
\begin{equation*}
\mathds{L}_j={\mathsf{D}}_{L_j}^n-\frac{(L_j^{\KP^{n-1}})^{n}}{z^n}-\left((L^{\KP^{n-1}})^n-1 \right)\sum_{k=1}^{n}\frac{\widetilde{s}_{n,n-k}}{n^{k}}{\mathsf{D}}_{L_j}^{n-k}
\end{equation*}
where $\widetilde{s}_{n,n-k}$ are unsigned Stirling numbers of the first kind, and also equal to $k^{\text{th}}$ elementary symmetric polynomials evaluated at $0,1,\ldots, n-1$. So, one can see that $\mathds{L}_j$ is similar to the operator $\mathfrak{L}$ in \cite[page 9]{zz}. The difference is that they have elementary symmetric polynomials\footnote{The notation used in \cite{zz} for elementary symmetric polynomials evaluated at $1,2,\ldots, n$ is $S_k(n)$.} evaluated at $1,2,\ldots, n$. Yet, we can conclude the following result, which is similar to that of \cite{zz}. 
\begin{lem}\label{lem:decomposition_of_mathdsLj}
For all $0\leq{j}\leq{n-1}$, we have
\begin{equation*}
\mathds{L}_j=\sum_{k=1}^n\left(\frac{L_j^{\KP^{n-1}}}{z}\right)^{n-k}\mathds{L}_{j,k}.
\end{equation*}
\end{lem}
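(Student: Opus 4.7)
The proof proceeds by expanding both sides of the claimed identity in the operator variable $\mathsf{D}_{\KP^{n-1}}$ and matching the coefficients of each $\mathsf{D}_{\KP^{n-1}}^{i}$, $0\le i\le n$. The essential ingredient is the conjugation identity (\ref{eqn:Commutation_of_DL}), which applied iteratively yields
\begin{equation*}
\mathsf{D}_{L_j}^{m}=e^{-\widetilde{\mu}_{j}/z}\,\mathsf{D}_{\KP^{n-1}}^{m}\,e^{\widetilde{\mu}_{j}/z}
\end{equation*}
as operators. Expanding the right-hand side by the ordinary Leibniz rule and writing $u:=L_{j}^{\KP^{n-1}}/z$ yields
\begin{equation*}
\mathsf{D}_{L_j}^{m}=\sum_{i=0}^{m}\binom{m}{i}\,E_{m-i}\,\mathsf{D}_{\KP^{n-1}}^{i},
\qquad E_{k}:=e^{-\widetilde{\mu}_{j}/z}\,\mathsf{D}_{\KP^{n-1}}^{k}\bigl(e^{\widetilde{\mu}_{j}/z}\bigr),
\end{equation*}
where $E_{0}=1$ and the multiplication operators $E_{k}$ satisfy $E_{k+1}=\mathsf{D}_{\KP^{n-1}}(E_{k})+u\,E_{k}$.

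\textbf{Identification of $E_{k}$.} The heart of the proof is the closed-form expression
\begin{equation*}
E_{k}=\sum_{p=0}^{k}u^{k-p}\,\mathcal{H}_{k,p},\qquad k\ge 0,
\end{equation*}
proved by induction on $k$. The inductive step uses $\mathsf{D}_{\KP^{n-1}}(u)=Y^{\KP^{n-1}}u$, which follows from (\ref{eqn:DLLLemma1}), together with the consequence $\mathsf{D}_{\KP^{n-1}}(X^{\KP^{n-1}})=(X^{\KP^{n-1}}-1)X^{\KP^{n-1}}$ of (\ref{eqn:XandY}), so that for any polynomial $f$ one has $\mathsf{D}_{\KP^{n-1}}(f(X^{\KP^{n-1}}))=(X^{\KP^{n-1}}-1)X^{\KP^{n-1}}f'(X^{\KP^{n-1}})$. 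Collecting powers of $u$ on both sides of $E_{k+1}=\mathsf{D}_{\KP^{n-1}}(E_{k})+uE_{k}$ and invoking $nY^{\KP^{n-1}}=X^{\KP^{n-1}}-1$, the coefficient of $u^{k+1-p}$ for $1\le p\le k$ reduces exactly to the defining recurrence (\ref{eqn:recursion_for_Hmj}) for $\mathcal{H}_{k+1,p}$; the coefficient of $u^{k+1}$ matches trivially as $\mathcal{H}_{k,0}=\mathcal{H}_{k+1,0}=1$, while the coefficient of $u^{0}$ requires the auxiliary vanishing $\mathcal{H}_{k,k+1}=0$ for all $k\ge 0$, which follows by a short induction from $\mathcal{H}_{0,1}=0$.

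\textbf{Coefficient matching.} With the formula for $\mathsf{D}_{L_j}^{m}$ in hand, substitute into $\mathds{L}_{j}$. The coefficient of $\mathsf{D}_{\KP^{n-1}}^{i}$ in $\mathsf{D}_{L_j}^{n}-u^{n}$ equals $\binom{n}{i}\sum_{p=0}^{n-i}u^{n-i-p}\mathcal{H}_{n-i,p}$ for $i\ge 1$ and $\sum_{p=1}^{n}u^{n-p}\mathcal{H}_{n,p}$ for $i=0$ (the subtraction of $u^{n}$ kills the $p=0$ contribution). After the substitution $k=i+p$, this matches exactly the pure $\mathcal{H}$-part of the coefficient of $\mathsf{D}_{\KP^{n-1}}^{i}$ in $\sum_{k=1}^{n}u^{n-k}\mathds{L}_{j,k}$. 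For the contribution prefactored by $\mathsf{D}_{\KP^{n-1}}L_j^{\KP^{n-1}}/(n^{n-1}L_j^{\KP^{n-1}})$, reindex the sum in $\mathds{L}_{j}$ by $r:=n-k$ and expand each $\mathsf{D}_{L_j}^{n-r}$ using the formula above; after swapping the order of the resulting triple summation in $(i,r,p)$, one recovers precisely the corresponding prefactored contribution of $\sum_{k=1}^{n}u^{n-k}\mathds{L}_{j,k}$ via the change of variables $k=i+r+p$.

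\textbf{Main obstacle.} The principal difficulty is the verification of the identification $E_{k}=\sum_{p}u^{k-p}\mathcal{H}_{k,p}$: one must simultaneously track the contribution $(k-p)Y^{\KP^{n-1}}$ arising from $\mathsf{D}_{\KP^{n-1}}$ acting on $u^{k-p}$---which produces the $(k+1-p)/n$ factor in (\ref{eqn:recursion_for_Hmj}) via $Y^{\KP^{n-1}}=(X^{\KP^{n-1}}-1)/n$---and the contribution $(X^{\KP^{n-1}}-1)X^{\KP^{n-1}}\frac{d}{dX^{\KP^{n-1}}}$ arising from $\mathsf{D}_{\KP^{n-1}}$ acting on the polynomial coefficients $\mathcal{H}_{k,p}(X^{\KP^{n-1}})$, and show that their sum reassembles precisely into $\mathcal{H}_{k+1,p}$. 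Once this key identification is secured, the remaining coefficient matching is routine index bookkeeping.
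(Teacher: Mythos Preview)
Your proof is correct and follows the natural strategy for this kind of decomposition identity: expand $\mathsf{D}_{L_j}^{m}$ via the conjugation~(\ref{eqn:Commutation_of_DL}) and the Leibniz rule, identify the resulting Bell-type polynomials $E_k$ with $\sum_{p}u^{k-p}\mathcal{H}_{k,p}$ by matching the recursion~(\ref{eqn:recursion_for_Hmj}), and then compare coefficients of $\mathsf{D}_{\KP^{n-1}}^{i}$. The paper does not spell out a proof here but refers to \cite[Lemma~B.7]{gt}, whose argument is of exactly this type, so your approach is essentially the intended one; one small remark is that your appeal to ``$\mathcal{H}_{k,k+1}=0$'' is really the convention that $\mathcal{H}_{m,j}$ vanishes outside the range $m\ge j\ge 0$, and in fact $\mathcal{H}_{k,k}=0$ for $k\ge 1$ (immediate from $\mathcal{H}_{1,1}=0$ and the recursion) already makes the $u^{0}$-coefficient trivially consistent.
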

Lemma \ref{lem:decomposition_of_mathdsLj} gives a decomposition of $\mathds{L}_j$ in terms of $\mathds{L}_{j,k}$ and its proof is similar to \cite[Lemma B.7]{gt}. 
\begin{lem}\label{lem:AymptoticPFSolution}
Assume for $0\leq{j}\leq{n-1}$ a function of the form $e^{\frac{\widetilde{\mu}_{j}}{z}}\Psi_{j}(z)$ satisfies the following Picard--Fuchs equation:

\begin{equation*}
(L^{\KP^{n-1}})^{-n}\left(\mathsf{D}_{\KP^{n-1}}^n-\frac{{\mathsf{D}}_{\KP^{n-1}}L_j^{\KP^{n-1}}}{n^{n-1}L_j^{\KP^{n-1}}}\sum_{k=0}^{n-1}(-1)^{n-k}{s}_{n,k}n^k\mathsf{D}_{\KP^{n-1}}^k\right)\left(e^{\frac{\widetilde{\mu}_{j}}{z}}\Psi_{j}(z)\right)=z^{-n}e^{\frac{\widetilde{\mu}_{j}}{z}}\Psi_{j}(z).
\end{equation*}
where
\begin{equation*}
\Psi_{j}(z)=\sum_{k=0}^{\infty} \Psi_{j,k} z^{k}\quad\text{with } \Psi_{j,k}\in\mathbb{C}[\![q]\!]\text{ and }\Psi_{j,k}=0\text{ if } k<0.
\end{equation*}
Then, we have $\Psi_{j,k}\in\mathbb{C}[L_j^{\KP^{n-1}}]=\mathbb{C}[L^{\KP^{n-1}}]$.
\end{lem}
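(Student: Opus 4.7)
The plan is to convert the stated Picard--Fuchs equation into the cleaner form $\mathds{L}_j \Psi_j(z) = 0$, extract a scalar recurrence on the coefficients $\Psi_{j,k}$, and then prove polynomiality by strong induction on $k$. First I would apply the commutation rule (\ref{eqn:Commutation_of_DL}) to pull $e^{\widetilde{\mu}_j/z}$ through the operator, turning every $\mathsf{D}_{\KP^{n-1}}^k$ into $\mathsf{D}_{L_j}^k$; multiplying by $(L^{\KP^{n-1}})^n = (L_j^{\KP^{n-1}})^n$ and moving the $z^{-n}$ term to the left gives exactly $\mathds{L}_j \Psi_j(z) = 0$. Invoking Lemma \ref{lem:decomposition_of_mathdsLj} rewrites this as $\sum_{k=1}^n (L_j^{\KP^{n-1}}/z)^{n-k}\mathds{L}_{j,k}\Psi_j = 0$, and reading off the coefficient of $z^m$ yields, for each $m \geq -n+1$,
\[
\sum_{k=1}^n (L_j^{\KP^{n-1}})^{n-k}\,\mathds{L}_{j,k}\,\Psi_{j,\,m+n-k} \;=\; 0, \qquad \Psi_{j,l} := 0 \text{ for } l<0.
\]
Isolating the $k=1$ term and using $\mathds{L}_{j,1} = n\mathsf{D}_{\KP^{n-1}}$, the choice $m = M-n+1$ gives
\[
n\,(L_j^{\KP^{n-1}})^{n-1}\,\mathsf{D}_{\KP^{n-1}}\Psi_{j,M} \;=\; -\sum_{k=2}^n (L_j^{\KP^{n-1}})^{n-k}\,\mathds{L}_{j,k}\,\Psi_{j,\,M+1-k}.
\]

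I would induct on $M$. The base case $M=0$ reduces to $\mathsf{D}_{\KP^{n-1}}\Psi_{j,0}=0$, so $\Psi_{j,0}$ is a constant and lies in $\mathbb{C}[L^{\KP^{n-1}}]$. For the inductive step, assume $\Psi_{j,l}\in\mathbb{C}[L^{\KP^{n-1}}]$ for all $l<M$. Since each $\mathds{L}_{j,k}$ has coefficients in $\mathbb{C}[X^{\KP^{n-1}}]\subset\mathbb{C}[L^{\KP^{n-1}}]$ and $\mathsf{D}_{\KP^{n-1}} = \tfrac{1}{n}L_j^{\KP^{n-1}}\bigl((L_j^{\KP^{n-1}})^n-1\bigr)\,d/dL_j^{\KP^{n-1}}$ by Lemma \ref{lem:DLLemma} preserves $\mathbb{C}[L^{\KP^{n-1}}]$, the right-hand side above is a polynomial $R(L_j^{\KP^{n-1}})\in\mathbb{C}[L^{\KP^{n-1}}]$. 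Converting the left-hand side to a $d/dL_j^{\KP^{n-1}}$ derivative, the recurrence reads
\[
(L_j^{\KP^{n-1}})^n\bigl((L_j^{\KP^{n-1}})^n-1\bigr)\,\frac{d\Psi_{j,M}}{dL_j^{\KP^{n-1}}} \;=\; R(L_j^{\KP^{n-1}}),
\]
and hence to conclude $\Psi_{j,M}\in\mathbb{C}[L^{\KP^{n-1}}]$ it suffices to show that $R$ is divisible by $L_j^n\bigl(L_j^n-1\bigr)$ as a polynomial in $L_j^{\KP^{n-1}}$, since then $d\Psi_{j,M}/dL_j^{\KP^{n-1}}$ is polynomial and integrates to a polynomial (the constant of integration being fixed by the initial value of the power series).

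The main obstacle is precisely this divisibility claim. The factor $(L_j^n-1)$ comes relatively cheaply from two observations: for $k\geq 1$, the $\mathsf{D}^0$ coefficient of $\mathds{L}_{j,k}$ is divisible by $(X^{\KP^{n-1}}-1)$, since the recursion (\ref{eqn:recursion_for_Hmj}) forces $\mathcal{H}_{m,j}$ to be divisible by $(X-1)$ for $j\geq 1$ and the remaining summands all carry an explicit $Y^{\KP^{n-1}} = (X^{\KP^{n-1}}-1)/n$; and for $i\geq 1$, $\mathsf{D}^i$ applied to any element of $\mathbb{C}[L^{\KP^{n-1}}]$ already carries a factor $L_j^{\KP^{n-1}}(L_j^n-1)$, a property preserved under iteration. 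Extracting the further factor $L_j^n$ is the delicate crux, and requires a strengthened inductive hypothesis tracking the order of vanishing of $d\Psi_{j,l}/dL_j^{\KP^{n-1}}$ at $L_j^{\KP^{n-1}}=0$, together with the identities $\mathcal{H}_{n,n}=0$ and $s_{n,0}=0$ to kill the would-be $L_j^0$ contributions in the $\mathsf{D}^0$ coefficient of each $\mathds{L}_{j,k}$. This is the direct analog of the polynomiality argument carried out for $[\mathbb{C}^n/\mathbb{Z}_n]$ in \cite{gt} and in \cite{zz}; in view of Proposition \ref{prop:PF_Match}, the Picard--Fuchs equations, their $\mathds{L}_{j,k}$-decompositions, and the resulting recurrences all match term by term under the change of variables (\ref{eqn:variable_change}) and the identification (\ref{eqn:identify_Ls}), so the argument from those references transports verbatim and closes the induction.
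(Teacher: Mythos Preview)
Your proposal is correct and follows the same overall route as the paper: pass to $\mathds{L}_j\Psi_j(z)=0$ via the commutation rule, decompose using Lemma~\ref{lem:decomposition_of_mathdsLj} to get the scalar recurrence of Corollary~\ref{cor:LjkPsiEquation}, then induct on $k$ and defer the hard divisibility step to \cite{zz} and \cite[Appendix~B]{gt}. The one substantive difference is in how the divisibility is packaged: the paper singles out Lemma~\ref{lem:mathdsLk_modI}, the factorization $\mathds{L}_{k}\equiv\binom{n}{k}\mathsf{D}(\mathsf{D}-Y)\cdots(\mathsf{D}-(k-1)Y)\pmod{\mathcal{I}}$ with $\mathcal{I}=(X^{\KP^{n-1}}Y^{\KP^{n-1}})$, as the single technical input that drives the induction, whereas you argue the $(L^n-1)$-factor directly (correctly) and then gesture at a strengthened induction for the remaining $L^n$-factor before citing the references. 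Your direct argument for the $(X-1)$-divisibility of the $\mathsf{D}^0$-coefficient is cleaner than invoking the modular factorization, but for the $L^n$-factor the paper's route via Lemma~\ref{lem:mathdsLk_modI} is sharper: it immediately gives that modulo $\mathcal{I}$ every $\mathds{L}_{j,k}$ begins with a $\mathsf{D}$, so the $\mathsf{D}^0$-coefficient lies in $\mathcal{I}=(X(X-1))$ outright, obviating any need to separately track orders of vanishing of $d\Psi_{j,l}/dL$. Your closing appeal to Proposition~\ref{prop:PF_Match} and the identity $\mathds{L}_{j,k}=\tfrac{(-1)^k}{n^k}\mathds{L}_{j,k}^{\CnZn}$ (established at the end of Appendix~\ref{Appendix:Analysis_of_I_function}) is a legitimate shortcut and is exactly the spirit of what the paper does when it writes ``the details are similar to \cite[Appendix~B]{gt}''.
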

By the commutation rule (\ref{eqn:Commutation_of_DL}), and the definition of $\mathds{L}_j$, we see that
\begin{equation*}
\mathds{L}_j(\Psi_j(z))=0.
\end{equation*}
Then, an immediate corollary of Lemma \ref{lem:decomposition_of_mathdsLj} is the following result.
\begin{cor}\label{cor:LjkPsiEquation}
For $k\geq 0$, we have
\begin{equation}\label{eqn:Corollary_Eqn_of_General_Polynomiality}
\mathds{L}_{j,1}(\Psi_{j,k})
+\frac{1}{(L_j^{K\mathbb{P}^{n-1}})}\mathds{L}_{j,2}(\Psi_{j,k-1})
+\ldots
+\frac{1}{(L_j^{K\mathbb{P}^{n-1}})^{n-1}}\mathds{L}_{j,n}(\Psi_{j,k+1-n})=0.
\end{equation}
\end{cor}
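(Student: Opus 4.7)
The plan is to prove Corollary \ref{cor:LjkPsiEquation} by substituting the formal power series $\Psi_j(z) = \sum_{k \geq 0} \Psi_{j,k} z^k$ into the differential equation $\mathds{L}_j(\Psi_j(z))=0$ (already established just before the statement via the commutation rule (\ref{eqn:Commutation_of_DL})), applying the decomposition from Lemma \ref{lem:decomposition_of_mathdsLj}, and reading off the coefficient of a single power of $z$.

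More concretely, the first step is to observe that each operator $\mathds{L}_{j,m}$ in (\ref{eqn:mathdsLjk}) is a polynomial in $\mathsf{D}_{\KP^{n-1}} = q\frac{d}{dq}$ with coefficients depending only on $q$ (through $L^{\KP^{n-1}}$ and its derivatives), and therefore commutes with the scalar $z$. Consequently
\begin{equation*}
\mathds{L}_{j,m}(\Psi_j(z)) \;=\; \sum_{k \geq 0} \mathds{L}_{j,m}(\Psi_{j,k})\, z^{k}.
\end{equation*}
Combining this with Lemma \ref{lem:decomposition_of_mathdsLj} yields
\begin{equation*}
0 \;=\; \mathds{L}_j(\Psi_j(z)) \;=\; \sum_{m=1}^{n} (L_j^{\KP^{n-1}})^{n-m}\, z^{-(n-m)} \sum_{k \geq 0} \mathds{L}_{j,m}(\Psi_{j,k})\, z^{k}.
\end{equation*}

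Next I would extract the coefficient of $z^{N}$ on both sides, with the convention that $\Psi_{j,l} = 0$ for $l<0$. Collecting the contributions from each $m$, whose $z$-degree is $k - (n-m)$, the vanishing of the coefficient of $z^{N}$ reads
\begin{equation*}
\sum_{m=1}^{n} (L_j^{\KP^{n-1}})^{n-m}\, \mathds{L}_{j,m}\bigl(\Psi_{j,\, N+n-m}\bigr) \;=\; 0.
\end{equation*}
Setting $k := N + n - 1$ and dividing through by $(L_j^{\KP^{n-1}})^{n-1}$ rewrites this identity as
\begin{equation*}
\sum_{m=1}^{n} \frac{1}{(L_j^{\KP^{n-1}})^{m-1}}\, \mathds{L}_{j,m}\bigl(\Psi_{j,\, k-m+1}\bigr) \;=\; 0,
\end{equation*}
which is exactly the claimed formula.

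There is no substantive obstacle here: the argument is purely formal, and the main thing to be careful about is the index bookkeeping (matching the shift $k - m + 1$ in the corollary with the shift $N + n - m$ that falls out of the expansion) and checking that terms with negative-indexed $\Psi_{j,l}$ drop out automatically by the convention $\Psi_{j,l}=0$ for $l<0$. Once Lemma \ref{lem:decomposition_of_mathdsLj} is in hand, the corollary follows immediately by expanding and comparing coefficients.
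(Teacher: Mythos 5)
Your proposal is correct and is essentially the paper's own argument: the paper derives $\mathds{L}_j(\Psi_j(z))=0$ from the commutation rule (\ref{eqn:Commutation_of_DL}) and then obtains the corollary immediately from the decomposition in Lemma \ref{lem:decomposition_of_mathdsLj} by comparing coefficients of powers of $z$, exactly as you do. Your index bookkeeping (taking $k=N+n-1$, dividing by $(L_j^{\KP^{n-1}})^{n-1}$, and using $\Psi_{j,l}=0$ for $l<0$) matches the stated formula, so nothing further is needed.
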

This corollary is analogous to \cite[Theorem 4.i]{zz}.

Let $$\mathcal{I}\subset\mathbb{C}\left[L^{\KP^{n-1}}\right]$$ be the ideal generated by the product $X^{\KP^{n-1}}Y^{\KP^{n-1}}$ as in \cite{zz}. 
\begin{lem}\label{lem:mathdsLk_modI}
For any $k>1$, we have
\begin{equation*}
\mathds{L}_{k}\equiv\binom{n}{k}(\mathsf{D}_{\KP^{n-1}})(\mathsf{D}_{\KP^{n-1}}-Y^{\KP^{n-1}})\cdots(\mathsf{D}_{\KP^{n-1}}-(k-1)Y^{\KP^{n-1}})\mod\mathcal{I}.
\end{equation*}
\end{lem}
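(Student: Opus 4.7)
The plan is to establish the identity via the Chinese Remainder Theorem. Since $[\mathsf{D},X] = \mathsf{D}(X) = nXY \in \mathcal{I}$, the operator $\mathsf{D}$ commutes with $X$ (and thus with $Y$) modulo $\mathcal{I}$; both sides of the claim therefore lie in the commutative polynomial ring $(\mathbb{C}[X]/\mathcal{I})[\mathsf{D}]$. The decomposition $\mathbb{C}[X]/(X(X-1)) \cong \mathbb{C}\times\mathbb{C}$ via $X\mapsto(0,1)$ reduces the claim to two specializations, $X = 1$ (giving $Y = 0$) and $X = 0$ (giving $Y = -1/n$).

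As preparation, the recursion (\ref{eqn:recursion_for_Hmj}) simplifies modulo $\mathcal{I}$: since $(X-1)X\tfrac{d}{dX}P \in X(X-1)\mathbb{C}[X] \subset \mathcal{I}$ for every $P \in \mathbb{C}[X]$, one has $\mathcal{H}_{m,j} \equiv \mathcal{H}_{m-1,j} + (m-j)Y\mathcal{H}_{m-1,j-1} \pmod{\mathcal{I}}$. Iterating gives $\mathcal{H}_{m,0} \equiv 1$ and $\mathcal{H}_{m,j}|_{X=1} = \delta_{0,j}$, since the recursion's second summand has an $(X-1)$-factor. The $X = 1$ case is then immediate: the LHS $\mathds{L}_{j,k}|_{X=1}$ collapses to $\binom{n}{k}\mathcal{H}_{n-k,0}\mathsf{D}^k = \binom{n}{k}\mathsf{D}^k$, while the RHS reduces to $\binom{n}{k}\mathsf{D}^k$ since $Y = 0$.

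For the $X = 0$ case, the target identity reads $\mathds{L}_{j,k}|_{X=0} = \binom{n}{k}\prod_{r=0}^{k-1}(\mathsf{D}+r/n)$. Both sides are polynomials of degree $k$ in $\mathsf{D}$ with matching leading coefficient $\binom{n}{k}$, so it suffices to show the LHS vanishes at $\mathsf{D} \in \{0,-1/n,\ldots,-(k-1)/n\}$. The key input is the factorization of the full operator, which follows directly from the definition of $\mathds{L}_j$ upon setting $L_j = 0$ (so $\mathsf{D}_{L_j} = \mathsf{D}$) together with the standard Stirling identity $(nx)^{\overline n}/n^n = \prod_{r=0}^{n-1}(x+r/n)$:
$$\mathds{L}_j|_{X=0} = \mathsf{D}^n + \sum_{k=1}^{n}\frac{\widetilde{s}_{n,n-k}}{n^k}\mathsf{D}^{n-k} = \prod_{r=0}^{n-1}\left(\mathsf{D}+\frac{r}{n}\right).$$
The individual $\mathds{L}_{j,k}|_{X=0}$ are then extracted by combining this with Lemma \ref{lem:decomposition_of_mathdsLj}, writing $\mathds{L}_j = \sum_{k=1}^{n}(L_j/z)^{n-k}\mathds{L}_{j,k}$, and reading off the $L^{n-k}$-coefficient of $\mathds{L}_j$ as a polynomial in $L$. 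For this one expands $\mathsf{D}_{L_j}^n = (\mathsf{D}+L\zeta^j/z)^n$ via the commutation rule $\mathsf{D}\cdot(L\zeta^j/z) = (L\zeta^j/z)(\mathsf{D}+Y)$, obtaining operator coefficients $N_{m,p}$ determined by the recursion $N_{m,p} = (\mathsf{D}+pY)N_{m-1,p} + N_{m-1,p-1}$ with $N_{0,0} = 1$.

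The hard part is the combinatorial identity in the $X = 0$ case for $k < n$, which amounts to showing
$$N_{n,n-k}\big|_{X=0} + \sum_{\ell=1}^{k}\frac{\widetilde{s}_{n,n-\ell}}{n^\ell}\,N_{n-\ell,n-k}\big|_{X=0} \;=\; \binom{n}{k}\prod_{r=0}^{k-1}\left(\mathsf{D}+\frac{r}{n}\right).$$
I expect this to be the main technical obstacle, but it should be tractable by induction on $k$ using the recursion for the $N_{m,p}$ and the factorial structure of the right-hand side; alternatively one can verify directly that each of $\mathsf{D},\mathsf{D}+1/n,\ldots,\mathsf{D}+(k-1)/n$ divides the LHS by testing vanishing at the corresponding values of $\mathsf{D}$.
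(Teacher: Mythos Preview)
Your Chinese Remainder reduction is valid and the $X=1$ case is handled correctly. The genuine gap is in the $X=0$ case. The displayed factorization $\mathds{L}_j|_{X=0}=\prod_{r=0}^{n-1}(\mathsf{D}+r/n)$ is obtained by setting $L_j=0$, which from the decomposition of Lemma~\ref{lem:decomposition_of_mathdsLj} recovers only the $(L_j/z)^0$ piece, i.e.\ $\mathds{L}_{j,n}|_{X=0}$; it says nothing about $\mathds{L}_{j,k}|_{X=0}$ for $k<n$. For those you extract the $L^{n-k}$-coefficient via your $N_{m,p}$'s and land on the displayed combinatorial identity --- but then stop. Since the $X=1$ branch is essentially trivial, that unproven identity carries the full content of the lemma; declaring it ``should be tractable by induction'' or by root-checking is not a proof. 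The root-checking idea is in principle sound (at $X=0$ the coefficient $Y=-1/n$ is a scalar, so both sides are honest polynomials in a commuting variable $\mathsf{D}$), but carrying it out for all $k$ is precisely the work that remains.

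The paper does not argue via CRT or via Lemma~\ref{lem:decomposition_of_mathdsLj}; it invokes the proof of Lemma~4 in Zagier--Zinger, which works directly from the simplified recursion $\mathcal{H}_{m,j}\equiv \mathcal{H}_{m-1,j}+(m-j)Y\,\mathcal{H}_{m-1,j-1}\pmod{\mathcal I}$ (the one you also derived) together with the explicit definition (\ref{eqn:mathdsLjk}) of $\mathds{L}_{j,k}$, handling everything uniformly modulo $\mathcal I$ rather than splitting into two specializations and leaving one of them open.
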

The proof of this lemma is the same as the proof of \cite[Lemma 4]{zz}. The only difference arises having elementary symmetric polynomials evaluated at $0,1,\ldots,n-1$ in the expressions rather than elementary symmetric polynomials evaluated at $1,2,\ldots,n$. 

Again by the techniques of \cite{zz}, Lemma \ref{lem:AymptoticPFSolution} follows from Lemma \ref{lem:mathdsLk_modI}. The details are similar to \cite[Appendix B]{gt}.

Now, note that we have
\begin{align*}
 \mathsf{I}^{\KP^{n-1}}(q,z)\big\vert_{H=1}
 &=e^{\log q/z}I^{\KP^{n-1}}(q,z)\big\vert_{H=1}\\
&\sim e^{(\mu(q)+\log q)/z}\sum_{k=0}^{\infty} \Phi_{k}(q) {z}^k \quad \text{as} \quad {z} \rightarrow 0,
\end{align*}
and hence
\begin{equation*}
 \mathsf{I}^{\KP^{n-1}}(q,z)\big\vert_{H=1} \sim e^{\widetilde{\mu}_0/z}\sum_{k=0}^{\infty} \Phi_{k}(q) {z}^k \quad \text{as} \quad {z} \rightarrow 0. 
\end{equation*}
As a result, we obtain the following statement as a corollary of Lemma \ref{lem:AymptoticPFSolution}, since $\mathsf{I}^{\KP^{n-1}}(q,z)$ satisfies the Picard--Fuchs equation (\ref{eqn:PF_with_stirling_for_vert_I}).

\begin{cor}\label{cor:polynomiality_of_Phi_k}
For all $k \geq 0$, we have $\Phi_{k}(q)\in\mathbb{C}\left[L^{\KP^{n-1}}\right]$, and $\Phi_{k}$ satisfy equation (\ref{eqn:Corollary_Eqn_of_General_Polynomiality}).
\end{cor}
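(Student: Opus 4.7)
The proof reduces to a direct application of Lemma \ref{lem:AymptoticPFSolution}, with all the heavy lifting already done in the preceding results. The key observation is that the asymptotic expansion supplied by Lemma \ref{lem:Asymptotics_of_I_function_of_KP},
\[
\mathsf{I}^{\KP^{n-1}}(q,z)\big\vert_{H=1} \sim e^{\widetilde{\mu}_0/z}\sum_{k=0}^{\infty} \Phi_k(q)\, z^k \quad \text{as } z\to 0,
\]
has precisely the shape of the functions $e^{\widetilde{\mu}_j/z}\Psi_j(z)$ treated by Lemma \ref{lem:AymptoticPFSolution}, with $j=0$ and $\Psi_{0,k}=\Phi_k(q)$. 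The plan is to show that this formal expansion satisfies the Picard--Fuchs hypothesis of that lemma, then quote the lemma.

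First I would verify that $\mathsf{I}^{\KP^{n-1}}(q,z)\vert_{H=1}$ satisfies the Picard--Fuchs equation appearing in Lemma \ref{lem:AymptoticPFSolution} with $j=0$. This is immediate: equation (\ref{eqn:PF_with_stirling_for_vert_I}) is satisfied by $\mathsf{I}^{\KP^{n-1}}(q,z)$ as an $H$-valued series, and after setting $H=1$ the operator on its left-hand side involves only $\mathsf{D}_{\KP^{n-1}}$ and multiplication by functions of $q$, so it is unaffected. Observe also that since $L_0^{\KP^{n-1}}=L^{\KP^{n-1}}$, the operator coincides with the one in the hypothesis of Lemma \ref{lem:AymptoticPFSolution} for $j=0$.

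Next I would transfer the Picard--Fuchs relation from the honest function to the formal asymptotic series. If $f(q,z)$ has an asymptotic expansion of the prescribed form as $z\to 0$ and $\mathcal{D}f=0$ for a differential operator $\mathcal{D}$ polynomial in $z$ and $\mathsf{D}_{\KP^{n-1}}$ with coefficients regular in $q$, then applying $\mathcal{D}$ to the formal series and extracting the asymptotic expansion of the result must yield the zero series, by uniqueness of asymptotic expansions. Consequently the formal series $e^{\widetilde{\mu}_0/z}\sum_k \Phi_k(q)z^k$ satisfies the same Picard--Fuchs equation as a formal solution, which is exactly the hypothesis of Lemma \ref{lem:AymptoticPFSolution}. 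Applying that lemma yields $\Phi_k(q)\in\mathbb{C}[L^{\KP^{n-1}}]$ for all $k\geq 0$.

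Finally, the second claim that $\Phi_k$ satisfies equation (\ref{eqn:Corollary_Eqn_of_General_Polynomiality}) is then an automatic consequence of Corollary \ref{cor:LjkPsiEquation} applied with $j=0$ and $\Psi_{0,k}=\Phi_k$, since that corollary only requires the same Picard--Fuchs hypothesis and follows from the decomposition in Lemma \ref{lem:decomposition_of_mathdsLj}. The only mildly delicate step is the passage of the Picard--Fuchs relation through the asymptotic expansion, but this is standard and does not require any new ideas beyond uniqueness of asymptotic coefficients; after that point, the corollary is obtained by citation.
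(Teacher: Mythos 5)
Your argument is correct and follows essentially the same route as the paper: observe that $\mathsf{I}^{\KP^{n-1}}(q,z)\vert_{H=1}$ satisfies the Picard--Fuchs equation (\ref{eqn:PF_with_stirling_for_vert_I}) and has the asymptotic expansion $e^{\widetilde{\mu}_0/z}\sum_k\Phi_k(q)z^k$ from Lemma \ref{lem:Asymptotics_of_I_function_of_KP}, then apply Lemma \ref{lem:AymptoticPFSolution} (and Corollary \ref{cor:LjkPsiEquation}) with $j=0$ and $\Psi_{0,k}=\Phi_k$. The only difference is that you make explicit, via uniqueness of asymptotic expansions, the passage of the Picard--Fuchs relation from the function to the formal series, a step the paper leaves implicit.
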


Recall that the starting point of Section \ref{subsubsec:Change_of_variables} is the identification $q=x^{-n}$. We will provide a comparison of the operators $\mathds{L}_{j,k}$ given in (\ref{eqn:mathdsLjk}) and the analogous operators $\mathds{L}^{\CnZn}_{j,k}$ defined in \cite[Appendix B]{gt}. Firstly, we have
\begin{equation}\label{eqn:identify_X_s}
X^{\KP^{n-1}}=(L^{\KP^{n-1}})^n=\frac{(-1)^{n+1}}{n^n}(L^{\CnZn})^{n}= \frac{(-1)^{n+1}}{n^n}{X^{\CnZn}}.
\end{equation}
Next, we analyze the recursion (\ref{eqn:recursion_for_Hmj}) after (\ref{eqn:identify_X_s}):

\begin{equation*}
\begin{aligned}
\mathcal{H}_{m, j}
&=\mathcal{H}_{m-1, j}+(X-1)\left(X \frac{d}{d X}+\frac{m-j}{n}\right) \mathcal{H}_{m-1, j-1}\\
&=\mathcal{H}_{m-1, j}+(\frac{(-1)^{n+1}}{n^n}{X}^{\CnZn}-1)\left({X}^{\CnZn} \frac{d}{d {X}^{\CnZn}}+\frac{m-j}{n}\right) \mathcal{H}_{m-1, j-1}.\\
\end{aligned}
\end{equation*}

Let $\mathcal{H}_{m, j}=\frac{(-1)^j}{n^j}H_{m,j}$. Since $\mathcal{H}_{0, j}=\delta_{0,j}$, we obtain ${H}_{0, j}=\delta_{0,j}$. Also, the above recursion becomes

\begin{equation*}
\frac{(-1)^j}{n^j}{H}_{m, j}
=\frac{(-1)^j}{n^j}{H}_{m-1, j}+\left(\frac{(-1)^{n+1}}{n^n}{X}^{\CnZn}-1\right)\left({X}^{\CnZn} \frac{d}{d {X}^{\CnZn}}+\frac{m-j}{n}\right) \frac{(-1)^{j-1}}{n^{j-1}}{H}_{m-1, j-1}.
\end{equation*}
Then, after multiplying both sides with $(-1)^jn^j$, we obtain

\begin{equation*}
{H}_{0, j}=\delta_{0,j}, \quad \text{and} \quad{H}_{m, j}
={H}_{m-1, j}+n(1+\frac{(-1)^{n}}{n^n}{X}^{\CnZn})\left({X}^{\CnZn} \frac{d}{d {X}^{\CnZn}}+\frac{m-j}{n}\right) {H}_{m-1, j-1}.
\end{equation*}
This is nothing but the recursion given in \cite[Equation (B.7)]{gt}. So, we have

\begin{equation*}
\mathcal{H}_{m, j}=\frac{(-1)^j}{n^j}H_{m,j}^{\CnZn}
\end{equation*}
after $q=x^{-n}$.

If we anaylze $\mathds{L}_{j,k}$ defined by equation (\ref{eqn:mathdsLjk}) under the change of variables $q=x^{-n}$, we obtain the following

\begin{equation*}
\begin{aligned}
\mathds{L}_{j,k}
=&\sum_{i=0}^{k}\Bigg(\binom{n}{i} \frac{(-1)^{k-i}}{n^{k-i}}{H}^{\CnZn}_{n-i, k-i}\\ 
&+\frac{\mathsf{D}_{\CnZn}L_j^{\CnZn}}{n^{n}L_j^{\CnZn}} \sum_{r=1}^{k-i}\binom{n-r}{i} (-1)^r s_{n, n-r}n^{n-r} \frac{(-1)^{k-i-r}}{n^{k-i-r}}{H}^{\CnZn}_{n-i-r, k-i-r}\Bigg) \frac{(-1)^i}{n^i}\mathsf{D}_{\CnZn}^{i}\\
=&\frac{(-1)^k}{n^k}\sum_{i=0}^{k}\left(\binom{n}{i}{H}^{\CnZn}_{n-i, k-i}+\frac{\mathsf{D}_{\CnZn}L_j^{\CnZn}}{L_j^{\CnZn}} \sum_{r=1}^{k-i}\binom{n-r}{i} s_{n, n-r}{H}^{\CnZn}_{n-i-r, k-i-r}\right)\mathsf{D}_{\CnZn}^{i}.
\end{aligned}
\end{equation*}
Comparing this to $\mathds{L}_{j,k}^{\CnZn}$ defined in \cite[Appendix]{gt} we see that 
\begin{equation}
\mathds{L}_{j,k}=\frac{(-1)^k}{n^k}\mathds{L}_{j,k}^{\CnZn}  
\end{equation}
after the identification $q=x^{-n}$.

\end{document}